\newcommand\numberthis{\addtocounter{equation}{1}\tag{\theequation}}
\newtheorem{theorem}{Theorem} 
\newtheorem{proposition}[theorem]{Proposition}
\newtheorem{lemma}[theorem]{Lemma} 
\newtheorem{corollary}[theorem]{Corollary} 
\newtheorem{definition}[theorem]{Definition}
\newtheorem{remark}[theorem]{Remark}
\theoremstyle{definition}
\newtheorem{question}{Question}
\newtheorem{conjecture}[theorem]{Conjecture}
\newcommand*{\transpose}{%
  {\mathpalette\@transpose{}}%
}
\newcommand*{\@transpose}[2]{%
  \raisebox{\depth}{$\m@th#1\intercal$}%
}
\newenvironment{proof-sketch}{\noindent{\it Sketch of Proof.}\hspace*{1em}}{\qed\bigskip}
\newcommand*\stab{\textrm{Stab}(\mathcal{T})}
\title{}
\author{Arkadij Bojko and George Dimitrov}
\date{}
\def\mk {\mathfrak}
\def\CC {{\mathbb C}}     
\def\KK {{\mathbb K}}     
\def\PP {{\mathbb P}}     
\def\RR {{\mathbb R}}     
\def\ZZ {{\mathbb Z}}     
\def \bd {\begin{diagram}}
	\def \ed {\end{diagram}}
\def \Aut {\mathrm{Aut}}
\def\Hom {\mathrm{Hom}}
\def\be  {\begin{eqnarray}}
\def\ee  {\end{eqnarray}}
\def\st {\mathrm{Stab}}
\def\mc {\mathcal}
\newcommand{\abs}[1]{\left\vert#1\right\vert}
\newcommand{\ri}{{\mathrm i}}
\begin{document}
	
	\renewcommand{\contentsname}{Contents}
	\renewcommand{\refname}{References}
	
	\title[Non-commutative counting and stability]{Non-commutative counting and stability}
	

	Address: Professur für Mathematik ETH Zürich, HG J 16.2 Rämistrasse 101, Zürich 8092, Switzerland , E-mail: arkadij.bojko@math.ethz.ch
	\address[Bojko]{ETH Zürich\\
	 Professur für Mathematik\\
	HG J 16.2 R\"amistrasse 101, Zürich\\Switzerland
	}
	\email{arkadij.bojko@math.ethz.ch}
	
		\address[Dimitrov]{Universit\"at Wien\\ Fakultät für Mathematik\\
			Oskar-Morgenstern-Platz 1, 1090 Wien\\
			\"Osterreich \\
		}
		\email{george.dimitrov@univie.ac.at}
	
	\renewcommand{\abstractname}{Abstract}
	
	\renewcommand{\figurename}{Figure}

\begin{abstract}
The second author and Katzarkov introduced categorical invariants based on counting of full  triangulated subcategories in a given triangulated category $\mathcal T$, and they demonstrated   different choices of additional  properties  of the subcategories being counted, in particular  -  an approach to
make non-commutative counting in  $\mathcal T$ dependable on a stability condition $\sigma \in {\rm Stab}(\mathcal T)$.   In this paper, we focus on this approach.   After recalling the definitions  of a \textit{stable non-commutative curve in $\mathcal T$} and related notions, we prove a few general facts and study an example: $\mathcal T = D^b(Q)$, where $Q$ is the acyclic triangular quiver.  In previous papers, it was shown that there are two non-commutative curves of non-commutative genus $1$ and infinitely many non-commutative curves of non-commutative genus $0$ in  $D^b(Q)$. Our studies here imply that  for an open and dense subset in ${\rm Stab}(D^b(Q))$ the stable non-commutative curves in $D^b(Q)$ are finite.   This paper also introduces counting of semistable derived points and shows that the corresponding invariants are finite on an open dense subset of $\st\big(D^b(Q)\big)$. 
\end{abstract}

\maketitle
\setcounter{tocdepth}{1}
\tableofcontents

\section{Introduction}
\subsection{Overview} \label{setup and results}
Orlov's reconstruction theorem states that for a smooth variety with an ample (anti-)canonical divisor, one can uniquely recover the variety from its bounded derived category. This motivated the point of view of non-commutative derived geometry through considering general triangulated categories as non-commutative varieties. An Example of this appeared in Kontsevich--Rosenberg \cite[§3.3]{KR}, where it is conjectured that the derived category of the non-commutative projective space $N\mathbb{P}^{l-1}$ is the derived category of the wild Kronecker quiver with $l$ arrows:
$$
K(l)=
\begin{tikzcd}
    1
    \arrow[r, draw=none, "\raisebox{+1.5ex}{\vdots}" description]
    \arrow[r, bend left]
    \arrow[r, bend right, swap, "l\,\textnormal{- times}"]
    &
    2
\end{tikzcd}
$$
Following this, the second author and Katzarkov defined non-commutative counting in  \cite{DK1} as taking cardinalities of certain sets of fully faithful embeddings of a fixed triangulated subcategory $\mathcal{A}$ into a larger one $\mathcal{T}$. We will focus here on the cases $\mathcal{A}=N\mathbb{P}^l$ or  $\mathcal{A}=D^b(A_1)$. The former will be called a \textit{non-commutative curve} and the latter a \textit{derived point}. One can additionally introduce a dependence on a parameter $\sigma \in \st(\mathcal{T})$, where $\st(\mathcal{T})$ is the manifold of stability conditions of Bridgeland \cite{Bridgeland}. This leads to counting of stable non-commutative curves and derived points as suggested in \cite[Section 12.4]{DK1}.

Our main goal is to study this for non-trivial example of $\mathcal{T}=D^b(Q)$. where $Q$ is the quiver 
\begin{equation} \label{triangular quiver}
Q=\begin{diagram}[1em]
&       &  z  &       &    \\
& \ruTo &    & \rdTo &       \\
x  & \rTo  &    &       &  y
\end{diagram} .
\end{equation}
Knowing which derived points are semistable for each stability condition, we can determine the stability on non-commutative curves. It then turns out that to obtain a complete answer, we only need to study the behavior of stable exceptional objects in detail. 

We thus describe explicitly the wall and chamber structure of the space $\st(\mathcal{T})$, where non-commutative curves or derived points become stable. The most important observation is that in our example, the set of $\sigma\in \st(\mathcal{T})$, such that the number of semistable curves is finite is dense. This motivates Conjecture \ref{conjecture1}. As this does not apply verbatim to derived points, we propose an alternative: we count stable derived points lying outside of stable non-commutative curves. Then an analogous property is satisfied (see Conjecture \ref{conjecture derived points}, Theorem \ref{theorem derived points}). We now give a more detailed summary of the definitions, previous work in this area and our new results.

\subsection{Summary of the results}
 The  idea of  non-commutative counting in  \cite{DK1} is to count elements in a set $C_{\mc A, P}^{\Gamma}(\mc T)$ defined as follows:   given two  triangulated  categories $ \mc A $, $\mc T$, a subgroup $\Gamma \subset {\rm Aut}(\mc T)$, and  choosing  some additional restrictions $P$ of fully faithful exact functors,  the set  $C_{\mc A, P}^{\Gamma}(\mc T)$ consists of equivalence classes of fully faithful exact functors from $\mc A$   to $\mc T$  satisfying $P$ 
 with two functors  $F, F'$  being equivalent iff  $F\circ \alpha \cong \beta \circ F'$ for some $\alpha \in {\rm Aut}(\mc A)$, $\beta \in \Gamma$ (see  Definition \ref{nccurves}).
  Various  choices of  $\mc A, \Gamma, P, \mc T$ have been shown to  result in finite sets   $C_{\mc A, P}^{\Gamma}(\mc T)$  in   \cite{DK1, dimkatz2, DKlatest}. In this paper, we let  $P$ depend on $\sigma \in \st(\mathcal{T})$ and  demonstrate an example with fixed $\mc T$ and $\mc A$, in which  for some points $\sigma$ in the parameter space the  set  $C_{\mc A, P(\sigma)}^{\{\rm Id\}}(\mc T)$  is an infinite set which was already studied in \cite{dimkatz2}. However, for a  generic parameter $\sigma$ the set $C_{\mc A, P(\sigma)}^{\{\rm Id\}}(\mc T)$   is finite.   
 
 When the categories $\mc T$, $\mc A$ are $\KK$-linear, then choosing the following  property $P$ imposed on the functors:  \textit{to be $\KK$-linear}, and restricting  $\Gamma$ to be a subgroup of the group of $\KK$-linear auto-equivalences  $ {\rm Aut}_{\KK}(\mc T)$ results in  the set   $C_{\mc A, P}^{\Gamma}(\mc T)$, which is denoted by  $C_{\mc A, \KK}^{\Gamma}(\mc T)$ .

 As already mentioned, we denote  $D^b(\textnormal{Rep}_\KK(K(l+1)))$   by  $N\PP^l$  for $l\geq -1$ (see Section \ref{notations}  for basic notations). In \cite{DK1},  $N\PP^l$ is selected  as  a first   choice  for  $\mc A$,  being   non-trivial but a well studied category.  For a $\KK$-linear category $\mc T$ and any $l\geq -1$   the set $C_{N\PP^l, \KK}^\Gamma(\mc T)$ is denoted by  $C_{l}^{\Gamma}(\mc T)$ and is referred to   as \textit{the set of non-commutative curves of non-commutative genus $l$ in $\mc T$ modulo  $\Gamma$}.  
 On the other hand, in \cite{dimkatz2} the  theory of  intersection of the entities being counted  is initiated and,   in that respect, the elements in  $C^{\{\rm Id\}}_{D^b(pt), \KK}(\mc T)$  are referred   to as \textit{derived points in $\mc T$}. 

 A derived point in  a proper $\KK$-linear $\mc T$ with a dg-enhancement is just a full triangulated  subcategory generated by an exceptional object, whereas a non-commutative curve of non-commutative  genus $l$ is the same as  a full triangulated  subcategory generated by a strong exceptional pair $(A,B)$ with $\hom(A,B)=l+1$ (see \cite[Proposition 5.5]{dimkatz2}). 
 
For the rest of the introduction, we let $\mc T$ be as in the previous paragraph .  Let $\Gamma\subset \Aut(\mc T)$ be the  trivial subgroup $\Gamma=\{\rm Id\}$ and we will omit writing $\{\rm Id\}$. We will now explain $P(\sigma)$ (see also Definition \ref{main def} and Remark \ref{remark for general A}).

The notion of stability condition on a triangular category $\mc T$, introduced by T. Bridgeland in \cite{Bridgeland}, has its roots in string theory and mirror symmetry. In \cite{Bridgeland}, the set of so called locally finite stability conditions is equipped with a  structure of a complex manifold, denoted by $\st(\mc T)$.   Each $\sigma \in \st(\mc T)$  distinguishes a set of non-zero objects in $\mc T$ and labels them with real numbers,  these objects are referred to as \textit{$\sigma$-semistable objects} and the real number attached to such an object $X$ is denoted by $\phi_{\sigma}(X)\in \RR$ and called the  phase of $X$.  We denote the  set of semistable objects  by $\sigma^{ss}$.  Since the set $\sigma^{ss}$ is invariant under the translation functor, it follows that for a derived point $\mc A \in C_{D^b(pt), \KK}(\mc T)$ either no or all exceptional  objects in $\mc A$ are also in $\sigma^{ss}$, thus $\sigma^{ss}$ naturally distinguishes those derived points satisfying the latter. The set of derived  points, distinguished by $\sigma$, will be  denoted by $C_{D^b(pt),\sigma \sigma}(\mc T)\subset C_{D^b(pt), \KK}(\mc T)$  (see Definition \ref{def of stable derived points}). 

We will be mainly interested in two subsets of $C_{l}(\mc T)$, denoted by $C_{l, \sigma \sigma}(\mc T)$ and $C_{l, \sigma}(\mc T)$, whose elements are refered to as   $\sigma$-stable and, respectively $\sigma$-semistable non-commutative curves, as defined in  \cite[Section 12.4]{DK1}. \textbf{A non-commutative curve $\mc A \in C_l(\mc T)$ is in $C_{l,\sigma}(\mc T)$ if and only if  infinitely many derived points in it are also in $C_{D^b(pt),\sigma \sigma}(\mc T)$, and it is in  $C_{l,\sigma \sigma}(\mc T)$ if and only if all derived points in $\mc A$ are also in $C_{D^b(pt),\sigma \sigma}(\mc T)$ (see Lemma \ref{sigmasemistablesubcategory}).} Details on the definitions and basic properties are  given in Section \ref{noncommutativecurves}.  In particular, for $l\leq 0$ we obtain $C_{l,\sigma}(\mc T) = \emptyset$,  whereas for $l\geq 1$  $C_{l,\sigma\sigma}(\mc T) \subset C_{l,\sigma}(\mc T) \subset C_{l}(\mc T)$ (see Remark \ref{remark for helix}). For any $\mc A \in C_{l}(\mc T)$	the set $\{\sigma \in \st(\mc T):  \mc A\in C_{l,\sigma \sigma}(\mc T)\}$ is a closed subset in $\st(\mc T)$ (see Corollary \ref{closed subset}), which makes it the correct object to study. Because the set $C_l(\mathcal{T})$ can often become infinite, the dependence on stability conditions is introduced to cut down to a finite number of non-commutative (semi-)stable curves and define integer invariants counting these. This is a usual approach in enumerative theories (e.g. Donaldson--Thomas invariants  and Gromov--Witten invariants) where one restricts to moduli spaces of stable objects carrying well-defined virtual fundamental classes. In this paper, we also give an example, where $\textnormal{stable}\neq \textnormal{semistable}$ (see Proposition \ref{no semistable1}) showing a further similarity to other enumerative theories. This naturally motivates the following question.
\begin{question}
\label{question}
Let $\mathcal{T}$ be a triangulated $\KK$ category with a phase gap (\cite[Definition 4.6]{DK1}) and $\textnormal{rk}\big(K^0(\mathcal{T})\big)<\infty$. Does there exist a dense open subset of $\Sigma\subset \stab$, such that $|C^\Gamma_{l,\sigma\sigma}(\mathcal{T})|<\infty$ for all $\sigma\in\Sigma$ and $\Gamma$ a subgroup of $\textnormal{Aut}(\mathcal{T})$?
\end{question}

We now formulate a conjecture for a more approachable situation. 
 In \cite[12.1]{DK1} the first author and Katzarkov define a new invariant
$$
\left\{\begin{array}{c}\textnormal{triangulated}\\
\textnormal{categories}\\
\textnormal{with a phase gap}\end{array}\right\}\xrightarrow{\textnormal{dim}_{nc}}[0,+\infty]\,,
$$
which gives the following vanishing criterion:
$$
\textnormal{dim}_{nc}(\mathcal{T})\leq n\implies C_l(\mathcal{T}) = \emptyset \textnormal{ for } l>n\,.
$$
They show $\textnormal{dim}_{nc}\big(D^b(Q)\big)=1$ for an acyclic quiver $Q$  if and only if it is affine. In this case, we expect the following to hold.

\begin{conjecture}
\label{conjecture1}
Let $Q$ be an extended acyclic Dynking quiver and $\mathcal{T} = D^b(Q)$, then the set of stability conditions $\sigma\in \stab$ such that $|C_{l,\sigma\sigma}|<\infty$ is open and dense. 
\end{conjecture}
This paper contains  an  example of such $Q$, where $C_{0}(\mc T)$ is infinite however the set $\{ \sigma \in \st(\mc T) : \abs{\cup_{l\geq -1}C_{l,\sigma \sigma}(\mc T)} < \infty \}$ is open and dense subset in $\st(\mc T)$.   Furthermore, counting the  elements of $C_{0,\sigma \sigma}(\mc T)$ as $\sigma$ varies in this open and dense subset results in all the numbers in $\{1\} \cup 2 \ZZ_{\geq 0}$. The same approach fails when applied to derived points as shown in Proposition \ref{some equal subsets}. 
This motivates us to introduce a new approach to counting semistable exceptional objects in terms of counting derived points. In Definition \ref{definition derived points outside of curves}, we define the set $\mathfrak{D}_{\sigma}\subset C_{D^b(pt),\sigma\sigma}$ of \textit{stable derived points outside of stable curves of positive genus}. This set consists of all elements $p\in C_{D^b(pt),\sigma\sigma}$ which are not a derived point of a curve $c\in C_{l,\sigma\sigma}$. 

We can pose a question analogous to the Question \ref{question}. Moreover, Theorem \ref{theorem derived points} suggests the following.

\begin{conjecture}
    \label{conjecture derived points}
   Let $Q$ be an extended acyclic Dynking quiver and $\mathcal{T} = D^b(Q)$, then the set of stability conditions $\sigma\in \stab$ such that $|\mathfrak{D}_\sigma|<\infty$ is open and dense.  
\end{conjecture}

\subsection{Some general results and a complete example} 

Before focusing on our main example,  we show in Section \ref{stabilities without stable curves}  general situations ensuring non-trivial behavior of $C_{l,\sigma \sigma}(\mc T)$,  $C_{D^b(pt),\sigma \sigma}(\mc T)$ as $\sigma$ varies in $\st(\mc T)$.  To that end we   recall the notion of $\sigma$-exceptional collection, introduced in \cite{dimkatz3}. The following then summarizes multiple results.
\begin{proposition}[Cor. \ref{nonentangled non semistable} and Prop. \ref{stable curve part 1}]
Let $\mathcal{T}$ be a triangulated category with an enhancement and $\sigma\in  \st(\mathcal{T})$. Let $\mathcal {E}  = (E_0,\ldots, E_n)$ be a full  $\sigma$-exceptional collection such that
\begin{enumerate}[(i)]
\item
  $\phi(E_{i-1})<\phi(E_{i})$ for all $i=1,\ldots ,n$, then 
$C_ {D^b(pt),\sigma\sigma}= \{\langle E_0\rangle,  \langle E_1\rangle ,\ldots ,\langle E_n\rangle\}$. In particular, $C_{l.\sigma\sigma}(\mathcal{T}) = C_{l,\sigma}(\mathcal{T})= \emptyset$ for  $l\geq 0$.
\item such that for some  $0\leq i<n$ and $l\geq 0$, we have $\textnormal{hom}^1(E_i,E_{i+1})=l+1$, $\textnormal{hom}^j(E_i,E_{i+1})=0$ for $j \in \mathbb{Z}\backslash \{1\}$ and  $\phi_{\sigma}(E_i)<\phi_{\sigma}(E_{i+1})$ then   $\langle E_i,E_{i+1}\rangle \in C_{l,\sigma \sigma}(\mathcal{T})$.
\end{enumerate}
\end{proposition}
In Corollary  \ref{stable curve} this result is presented in a slightly different form.  In addition to this. To study our main example, we will use in this paper more subtle tools from \cite{dimkatz3, dimkatz4} to ensure that certain elements in $C_l(\mc T)$ are in $C_{l,\sigma \sigma}(\mc T)$ or in $C_{l,\sigma }(\mc T)$ for some $\sigma \in \st(\mc T)$. 

In \cite[Proposition 12.13]{DK1}  the behavior of $C_{l,\sigma}(N\PP^k)$ and $C_{l,\sigma \sigma}(N\PP^k)$ was described for any  $l\geq -1$, $\sigma \in \st(N\PP^k)$, $k\geq 1$. In this case   $C_{l}(N\PP^k)$ is non-trivial only for $l=k$, $C_{k}(N\PP^k)$ has only one element and it turns out that  $C_{k,\sigma}(N\PP^k)= C_{k,\sigma \sigma}(N\PP^k)$ for each $\sigma$. We see therefore that Conjecture  \ref{conjecture1} and \ref{conjecture derived points} are trivially satisfied. 

Here   we focus on another  example,    namely From now on, $\mathcal{T}=D^b(Q)$, where $Q$ is as in \eqref{triangular quiver}. Let us recall some results from \cite{dimkatz2, dimkatz3, dimkatz4}.  The derived points in $\mc T$ are (\cite[Subsection 3.2, Remark 3.6]{dimkatz4}):
 \begin{gather} \label{derived points}
 C_{D^b(pt), \KK}(\mc T)=\{\langle a^m \rangle, \langle b^m \rangle : m \in \ZZ \} \cup \{\langle M\rangle, \langle M'\rangle \},
 \end{gather}
 where $a^m$, $b^m$, $M$, $M'$ are exceptional objects, $M$, $M'$ is the only couple of exceptional objects in $\textnormal{Rep}_{\KK}(Q)$ such that ${\rm Ext}^1(M,M')\neq 0$,  ${\rm Ext}^1(M',M)\neq 0$ (see the  beginning of \cite[Subsection 2.3]{dimkatz3}). 
 \begin{proposition}\cite[Proposition 8.1, (34), (35), (36), (37))]{dimkatz2}.
 	\label{noncomcurves} For $l\not \in \{ 0,1\}$ we have $C_l(\mc T) = \emptyset$ and:
 		\begin{gather}  C_1(\mc T)=\{\langle a^m,a^{m+1}\rangle = \langle M\rangle^{\perp}, \langle b^m,b^{m+1}\rangle= \langle M'\rangle^{\perp}\} \\ \label{C_0^{{rm Id}}(mc T_1)}  C_0(\mc T) =\{ \langle M',a^{m}, b^{m+1}\rangle=  \langle a^{m+1}\rangle^{\perp}:m\in \ZZ \}\cup\{\langle M,b^{m}, a^{m}\rangle = \langle b^{m+1}\rangle^{\perp}:m\in \ZZ\} . \end{gather}

 	We will denote  $A=\langle a^m,a^{m+1}\rangle $, $B=\langle b^m,b^{m+1}\rangle$, and thus  $C_1(\mc T)=\{ A,B\}$.\footnote{here $(a^m, a^{m+1})$, $(b^m, b^{m+1})$ are exceptional pairs}

 		Denoting  $\alpha^m=\langle M',a^{m}, b^{m+1}\rangle $, $\beta^m=\langle M,b^{m}, a^{m}\rangle$ we can write  $C_0(\mc T)=\{ \alpha^m, \beta^m: m \in \ZZ\}$. 
 	\end{proposition}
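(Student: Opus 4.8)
The plan is to translate the counting of non-commutative curves into the enumeration of strong exceptional pairs and then to exploit that $\mc T = D^b(Q)$ is the derived category of a hereditary algebra. By the identification recalled in the introduction (\cite[Proposition 5.5]{dimkatz2}), an element of $C_l(\mc T)$ is a full triangulated subcategory $\langle E,F\rangle$ generated by a strong exceptional pair $(E,F)$, i.e. $\hom^\bullet(F,E)=0$, $\hom^j(E,F)=0$ for $j\neq 0$, and $\hom(E,F)=l+1$; conversely the genus is read off from this single Hom-dimension. Since $kQ$ is hereditary, every exceptional object is a shift of an exceptional $kQ$-module, and for modules $\hom^\bullet$ is concentrated in degrees $0$ and $1$, so that $\hom(E,F)-{\rm ext}^1(E,F)=\chi(E,F)$ is computed purely from dimension vectors via the Euler matrix of $Q$. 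The whole problem thus becomes: list the exceptional objects, compute the Euler form and the degree in which $\hom^\bullet$ is concentrated on each ordered pair, and retain the strong exceptional pairs.

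The input for the first step is already available: by \eqref{derived points} the exceptional objects, up to shift, are the two families $a^m,b^m$ ($m\in\ZZ$) together with the regular exceptional modules $M,M'$. First I would record their dimension vectors and their location in the Auslander--Reiten quiver of $kQ$: since $Q$ is tame of type $\widetilde A_2$, the modules $M,M'$ are the two quasi-simple objects of the unique rank-$2$ tube (with $\tau M = M'$, matching ${\rm Ext}^1(M,M')\neq 0\neq {\rm Ext}^1(M',M)$), while $a^m$ and $b^m$ run through the preprojective and preinjective components and their shifts. The essential structural point is that the two rank-$2$ admissible subcategories $\langle M\rangle^\perp$ and $\langle M'\rangle^\perp$ are each equivalent to $N\PP^1 = D^b(K(2))$, and the sequences $\{a^m\}$ and $\{b^m\}$ are precisely their exceptional objects, forming Kronecker helices. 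Consequently the dimension vectors of $a^m$ (resp. $b^m$) satisfy the linear recursion induced by the Coxeter transformation, which reduces the infinitely many Euler-form computations to finitely many cases together with an induction on $m$.

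With this in hand I would carry out the enumeration. For the vanishing $C_l(\mc T)=\emptyset$ when $l\geq 2$, I would argue that any strong exceptional pair $(E,F)$ lies, up to the Serre and shift functors, in one of finitely many configurations: either both objects belong to a single Kronecker helix, or the pair mixes the two families or involves $M,M'$. Inside a single helix only the consecutive pairs $(a^m,a^{m+1})$ (resp. $(b^m,b^{m+1})$) are exceptional pairs, and these have $\hom=2$ in degree $0$; non-adjacent members fail $\hom^\bullet(a^{m+k},a^m)=0$ and are excluded. The remaining finitely many cross-configurations are settled by a direct Euler-form computation, which yields $\hom=1$ and never $\geq 3$; together with $\hom\geq 0$ this gives $l\in\{0,1\}$. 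The pairs with $\hom=2$ are exactly the consecutive Kronecker pairs, which generate $A=\langle M\rangle^\perp$ resp. $B=\langle M'\rangle^\perp$ (the orthogonality being verified directly on dimension vectors), so $C_1(\mc T)=\{A,B\}$. The pairs with $\hom=1$ generate $A_2$-subcategories; since such a subcategory is the right orthogonal of a single exceptional object, comparison with \eqref{derived points} identifies them as the families $\alpha^m=\langle a^{m+1}\rangle^\perp$ and $\beta^m=\langle b^{m+1}\rangle^\perp$, whose three derived points are as displayed, giving $C_0(\mc T)$.

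The main obstacle is controlling the infinitude: because the families $\{a^m\},\{b^m\}$ are infinite, the delicate part is proving completeness and the absence of higher-genus curves, namely that no strong exceptional pair built from non-adjacent helix members, from the two families simultaneously, or involving $M,M'$, escapes the three classes above. This is exactly where the helix structure and the Coxeter recursion are indispensable: one must show that every exceptional pair is, up to the Serre and shift functors, one of finitely many local configurations, and that in each the unique nonzero $\hom^\bullet$ sits in the claimed degree and dimension. Everything else reduces to finite Euler-form bookkeeping with the Euler matrix of $Q$.
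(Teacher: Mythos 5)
The paper does not prove this proposition: it is imported verbatim from \cite[Proposition 8.1]{dimkatz2}, so there is no in-paper argument to compare yours against. Judged on its own terms, your strategy is the natural one and its conclusions are all correct. The reduction via \cite[Proposition 5.5]{dimkatz2} to strong exceptional pairs, the identification of $Q$ as tame of type $\widetilde A_2$ with $M,M'$ the quasi-simples of the rank-two tube, the identification of $\langle M\rangle^{\perp}$ and $\langle M'\rangle^{\perp}$ with $D^b(K(2))$ carrying the helices $\{a^m\}$, $\{b^m\}$, and the observation that within a Kronecker helix only consecutive pairs are exceptional pairs (with $\hom=2$) are all consistent with what the present paper records as supporting facts, namely the list of exceptional objects (Proposition \ref{the exceptionals}), the concentration of $\hom^{\bullet}$ in a single degree (Corollary \ref{one nonvanishing degree}), and the full table of (non)vanishings (Corollary \ref{nonvanishings}).

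The one place where your write-up is a plan rather than a proof is exactly the place where all the work lives: the complete determination of $\hom^{*}(X,Y)$ for every ordered pair of exceptional objects, including the infinite families of mixed pairs $(b^m,a^n)$, $(a^m,b^n)$ and the pairs involving $M,M'$. You correctly identify the mechanism (Coxeter recursion on dimension vectors reducing the Euler-form computations to finitely many cases plus induction, combined with one-degree concentration to split $\chi$ into $\hom$ and $\mathrm{ext}^1$), but you assert the outcome ``$\hom=1$ and never $\geq 3$'' rather than deriving it. That outcome is precisely the content of Corollary \ref{nonvanishings}, which this paper also only cites; so if your intent is to give a self-contained proof you must actually carry out that induction, e.g.\ by using the $K_0$-relations $[a^p]=[M']+[b^{p+1}]$ and $[b^q]=[M]+[a^q]$ from \eqref{short filtration 1} to propagate the Euler-form values along the helices. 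With that table in hand, the rest of your argument (only $(a^m,a^{m+1})$, $(b^m,b^{m+1})$ survive with $\hom=2$, and the remaining exceptional pairs $(b^m,a^m)$, $(a^m,b^{m+1})$, $(M',a^m)$, $(M,b^m)$, $(a^m,M)$, $(b^m,M')$ all have one-dimensional $\hom^{\bullet}$ and generate the $\alpha^m$, $\beta^m$ by the triangles \eqref{short filtration 1}) closes correctly.
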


 In \cite[Proposition 12.14]{DK1} it was suggested and in  \cite{Bojko}  proved  that  when $\sigma$ varies in $\st(\mc T)$ the  set $C_{1,\sigma}(\mc T)$ takes all  subsets in $C_1(\mc T)$. However,  only $C_{0,\sigma \sigma}(\mc T)$ is meaningful, since we already mentioned that $C_{0,\sigma}(\mc T)= \emptyset$ for any $\sigma$ and any $\mc T$. Additionally, for any $l \geq -1$ we expect that  studying $C_{l,\sigma \sigma}(\mc T)$ is more meaningful  than $C_{l,\sigma }(\mc T)$, for example  Corollary \ref{closed subset} does not seem to hold for $C_{l,\sigma }(\mc T)$. These are  reasons to suggest in \cite[Remark 12.15]{DK1}  that not only   $C_{1,\sigma}(\mc T)$ but also  $C_{1,\sigma \sigma}(\mc T) \subset C_{1,\sigma}(\mc T)$ takes all possible subsets in $C_1(\mc T)$.

 Before giving  detailed description of the functions   $$\st(\mc T) \ni \sigma \mapsto  \{C_{l,\sigma}(\mc T), C_{l,\sigma \sigma}(\mc T), C_{D^b(pt), \sigma \sigma}(\mc T)\}$$ for $l=0,1$ we present some features  of the full picture.   The following proposition captures the non-trivial behaviour of the invariants leading to wall-crossing.
 \begin{proposition} \label{positive answer}	As  $\sigma$ varies in $\st(\mc T)$:
 	
 {\rm	(a)} the subset  $C_{1,\sigma \sigma}(\mc T)\subset C_{1}(\mc T)$ takes all possible subsets of $C_{1}(\mc T)$; 
 	
 {\rm	(b) } the subset $C_{0,\sigma \sigma}(\mc T)\subset C_{0}(\mc T)$ is one of the following subsets and realizes each of them:   	
  $\emptyset$,  	$\{\alpha^i\}$, $\{\beta^i\}$, 	$\{\alpha^j, \beta^j: i\leq j \leq k \}$, 	$\{\alpha^j,  \beta^{j+1}: i\leq j \leq k \}$, $\{\alpha^j, \beta^j: i\leq j  \}$, 	$\{\alpha^j,  \beta^{j+1}: i\leq j  \}$, $\{\alpha^j, \beta^j:  j \leq k \}$, 	$\{\alpha^j,  \beta^{j+1}:  j \leq k \}$, where $i,k$ are arbitrary integers such that $i\leq k$.   In particular $\left \{\abs{C_{0, \sigma \sigma}(\mc T)}: \sigma \in \st(\mc T)\right  \}=\{1,+\infty \}\cup 2 \ZZ_{\geq 0}$.
 \end{proposition}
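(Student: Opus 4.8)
The plan is to translate both parts into pure combinatorics of the semistability of the exceptional objects $a^m,b^m,M,M'$, and then to classify which configurations occur as $\sigma$ varies. By Lemma~\ref{sigmasemistablesubcategory} a curve lies in $C_{l,\sigma\sigma}(\mc T)$ precisely when every derived point it contains lies in $C_{D^b(pt),\sigma\sigma}(\mc T)$, and a derived point $\langle E\rangle$ lies in the latter exactly when $E\in\sigma^{ss}$. Using \eqref{derived points} together with $A,B\cong N\PP^1$ and $\alpha^m,\beta^m\cong N\PP^0\cong D^b(A_2)$, I would first record that the only derived points of $\mc T$ contained in $A$ (resp.\ $B$) are the $\langle a^m\rangle$ (resp.\ $\langle b^m\rangle$), $m\in\ZZ$, while $\alpha^m$ contains exactly $\langle M'\rangle,\langle a^m\rangle,\langle b^{m+1}\rangle$ and $\beta^m$ exactly $\langle M\rangle,\langle a^m\rangle,\langle b^m\rangle$. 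Writing $\mathcal{S}_a=\{m:a^m\in\sigma^{ss}\}$, $\mathcal{S}_b=\{m:b^m\in\sigma^{ss}\}$ and $\epsilon_M,\epsilon_{M'}\in\{0,1\}$ for the semistability of $M,M'$, this yields $A\in C_{1,\sigma\sigma}\Leftrightarrow\mathcal{S}_a=\ZZ$, $B\in C_{1,\sigma\sigma}\Leftrightarrow\mathcal{S}_b=\ZZ$, and
\[
C_{0,\sigma\sigma}(\mc T)=\{\alpha^m:\epsilon_{M'}=1,\ m\in\mathcal{S}_a,\ m+1\in\mathcal{S}_b\}\cup\{\beta^m:\epsilon_M=1,\ m\in\mathcal{S}_a\cap\mathcal{S}_b\}.
\]
Both parts then become assertions about which tuples $(\mathcal{S}_a,\mathcal{S}_b,\epsilon_M,\epsilon_{M'})$ are realized as $\sigma$ ranges over $\st(\mc T)$.

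The heart of the argument is a lemma determining exactly these tuples, which I would prove in two steps. First, since each of $A,B$ is a Kronecker category and $\sigma^{ss}$ is translation invariant, the phases $\phi_\sigma(a^m)$ are monotone in $m$ and leave the semistable range from one side, so $\mathcal{S}_a$ (and likewise $\mathcal{S}_b$) is always an interval of $\ZZ$, i.e.\ one of $\emptyset,\ZZ,[i,\infty),(-\infty,k],[i,k]$. Second, the three derived points of each $\alpha^m$ (resp.\ $\beta^m$) fit into a nonsplit triangle in the $D^b(A_2)$ they generate; if all three are $\sigma$-semistable this triangle forces a chain of inequalities among $\phi_\sigma(a^m),\phi_\sigma(b^{m+1}),\phi_\sigma(M')$ (resp.\ $\phi_\sigma(a^m),\phi_\sigma(b^m),\phi_\sigma(M)$). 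Propagating these along the zig-zag $\cdots - \beta^m - a^m - \alpha^m - b^{m+1} - \beta^{m+1} - \cdots$ of shared objects pins the relative offset of $\mathcal{S}_a$ and $\mathcal{S}_b$ to exactly one of two alignments and couples $\epsilon_M,\epsilon_{M'}$ to the boundedness of the supports; in particular $\mathcal{S}_a=\mathcal{S}_b=\ZZ$ forces $\epsilon_M=\epsilon_{M'}=0$, and one cannot have two or more semistable $\alpha$'s (or $\beta$'s) present without a member of the other family appearing.

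Feeding the admissible tuples into the displayed formula gives, for $l=0$, precisely the nine listed subsets: $\epsilon_M=\epsilon_{M'}=0$ produces $\emptyset$; exactly one of $M,M'$ semistable with the supports meeting in a single index produces a lone $\{\alpha^i\}$ or $\{\beta^i\}$ (the coupling forbidding longer pure $\alpha$- or $\beta$-strings, so no odd cardinality $\ge 3$ and no $\{\alpha^i,\alpha^{i+1}\}$ occur); and $\epsilon_M=\epsilon_{M'}=1$ with finite or half-infinite supports in the two allowed offsets yields the aligned family $\{\alpha^j,\beta^j\}$ and the shifted family $\{\alpha^j,\beta^{j+1}\}$. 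Counting then gives $\{|C_{0,\sigma\sigma}(\mc T)|\}=\{1,+\infty\}\cup 2\ZZ_{\geq0}$. For $l=1$, the four subsets of $C_1(\mc T)=\{A,B\}$ correspond to the four cases $(\mathcal{S}_a=\ZZ\text{ or not})\times(\mathcal{S}_b=\ZZ\text{ or not})$, each admissible.

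For the surjectivity (``takes all of them'') I would, for each admissible tuple, construct an explicit $\sigma$ by placing a central charge on a suitable tilt of the standard heart $Rep_\KK(Q)$, the $\widetilde{GL}^+(2,\RR)$-action only normalizing phases; the totally-semistable case $\mathcal{S}_a=\mathcal{S}_b=\ZZ$ (giving $C_{1,\sigma\sigma}=\{A,B\}$ and $C_{0,\sigma\sigma}=\emptyset$) is the most delicate, realized on the common ``geometric'' chamber where every $a^m$ and $b^m$ stays semistable. I expect the main obstacle to be the tightness half of the classification lemma — showing the $A_2$-triangle inequalities force exactly the offset-by-one alignment and the coupling of $\epsilon_M,\epsilon_{M'}$, since this is precisely what excludes odd cardinalities and the ``all of $C_0(\mc T)$'' configuration. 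The local criteria of Corollaries~\ref{stable curve part 1} and~\ref{stable curve} give one direction, but the exclusions should require the finer global wall-and-chamber description of $\st(D^b(Q))$ developed via \cite{dimkatz3,dimkatz4}.
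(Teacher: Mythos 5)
Your reduction of both parts to the semistability pattern $(\mathcal{S}_a,\mathcal{S}_b,\epsilon_M,\epsilon_{M'})$ is sound and matches what the paper records in Subsection \ref{first remarks}, but your classification lemma is wrong at its most important point. You claim that $\mathcal{S}_a=\mathcal{S}_b=\ZZ$ forces $\epsilon_M=\epsilon_{M'}=0$, and later that the totally-semistable case gives $C_{1,\sigma\sigma}=\{A,B\}$ together with $C_{0,\sigma\sigma}=\emptyset$. The implication runs exactly the other way: by Proposition \ref{MeqMprime} (see also Lemma \ref{semi-stability of b} and cases (a.2.3), (f) of Proposition \ref{porposition for middle M}), $C_{1,\sigma\sigma}(\mc T)=\{A,B\}$ holds if and only if $M,M'\in\sigma^{ss}$ with $\phi(M)=\phi(M')$, and in that case \emph{every} $\alpha^m,\beta^m$ is stable, i.e.\ $C_{0,\sigma\sigma}(\mc T)=C_0(\mc T)$. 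Heuristically this is forced by the triangles \eqref{short filtration 1}: if all $a^m,b^m$ are semistable their phases accumulate from both sides at a common limit $t$, and $M$, $M'$ are then squeezed into phase $t$ rather than destabilized. So the tuple $(\ZZ,\ZZ,0,0)$ you rely on never occurs, the tuple $(\ZZ,\ZZ,1,1)$ you exclude is the only way to realize $\mathcal{S}_a=\mathcal{S}_b=\ZZ$, and the output of your classification for genus $0$ in that chamber is wrong. (Your final cardinality count survives only by accident, because $+\infty$ is also realized by the half-infinite families.)

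Two further steps are asserted rather than proved. First, the intervalness of $\mathcal{S}_a$ cannot be obtained by "restricting $\sigma$ to the Kronecker subcategory $A$" — stability conditions do not restrict to full subcategories, and the semistable objects of $\mc T$ lying in $A$ need not be the semistable objects of any stability condition on $A$. The paper gets this interval structure from the global Hom-vanishing pattern of Corollary \ref{nonvanishings} combined with phase-limit arguments ($\lim_{j\to\pm\infty}\phi(b^j)$, the position of $Z(\delta)$, and the case analysis of Lemma \ref{no semistable} and Propositions \ref{more careful part two}, \ref{more careful study}). Second, realizing the arbitrary finite families $\{\alpha^j,\beta^j:i\le j\le k\}$ (hence all even cardinalities) is genuinely delicate: it is the content of Proposition \ref{arbitrary even integer}, which fixes a chart on $(b^n,b^{n+1},M')$ and continuously shrinks $\abs{Z(M')}$ while tracking how the endpoint $u$ of the semistable interval marches to the right; "placing a central charge on a suitable tilt" does not by itself produce intervals of prescribed length. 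By contrast, the paper's proof is a frontal case-by-case sweep over the chamber decomposition \eqref{st with T} (tables \eqref{table for b b M'}--\eqref{table for b a b} plus Proposition \ref{sigma outside Tab}), which is less conceptual than your scheme but is what actually pins down the admissible configurations, including the one you got backwards.
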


Furthermore, we obtain a positive confirmation of Conjecture \ref{conjecture1}.

 \begin{theorem} \label{main theorem}  The set $\{\sigma \in \st(\mc T): \abs{C_{0,\sigma \sigma}(\mc T)}<\infty \}$ is an open and dense subset in $\st(\mc T)$.
 \end{theorem}
 
 One could investigate in more details the walls separating the connected components of the open subset from Theorem \ref{main theorem} and  to study  wall-crossing.  In section \ref{section walls and chambers}, we describe in Proposition \ref{proposition walls and chambers} the global picture of walls and chambers for $C_{1,\sigma\sigma}(\mathcal{T})$ by gluing the charts from tables  \eqref{table for b b M'}, \eqref{table for a a M},  \eqref{table for M b b},
\eqref{table for M' a a}, \eqref{table for b a b} and using the results from Proposition \ref{porposition for middle M}.

As already mentioned in §\ref{setup and results}, we can recover the above from studying $C_{D^b(pt), \sigma \sigma}(\mc T)$ for each $\sigma\in \st(\mc T)$. However, counting elements in $C_{D^b(pt), \sigma \sigma}(\mc T)$ is not finite  for generic $\sigma$. More precisely we have: 
 \begin{proposition} \label{some equal subsets}
 	The following subsets are equal and they are open but not dense subsets in $\st(\mc T)$:	\begin{gather}\left \{  \sigma :  \abs{C_{D^b(pt), \sigma \sigma}(\mc T)}<\infty \right \} =\left \{  \sigma :  \abs{C_{1, \sigma }(\mc T)}=0 \right \} = \left \{  \sigma :  \abs{C_{1, \sigma \sigma}(\mc T)}=0 \right \}. \nonumber  \end{gather} 
Furthermore, $\left \{\abs{C_{D^b(pt), \sigma \sigma}(\mc T)}: \sigma \in \st(\mc T) \right \}=\{3,4,5, +\infty\}$. 
\end{proposition}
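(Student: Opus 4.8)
The plan is to reduce everything to the behaviour of the two infinite families of derived points $\{\langle a^m\rangle\}_{m\in\ZZ}$ and $\{\langle b^m\rangle\}_{m\in\ZZ}$, which are precisely the derived points contained in $A=\langle a^m,a^{m+1}\rangle$ and in $B=\langle b^m,b^{m+1}\rangle$, the only two elements of $C_1(\mc T)$ (Proposition~\ref{noncomcurves}). Since $C_{D^b(pt),\KK}(\mc T)$ consists of these two families together with the two isolated points $\langle M\rangle,\langle M'\rangle$ by \eqref{derived points}, an infinite supply of $\sigma$-semistable derived points can only come from one of the families. First I would make this precise: by Lemma~\ref{sigmasemistablesubcategory}, $A\in C_{1,\sigma}(\mc T)$ holds iff infinitely many $a^m$ are $\sigma$-semistable, and likewise for $B$ and the $b^m$. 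Hence $\abs{C_{D^b(pt),\sigma\sigma}(\mc T)}=+\infty$ iff infinitely many $a^m$ or infinitely many $b^m$ are $\sigma$-semistable, i.e. iff $A\in C_{1,\sigma}(\mc T)$ or $B\in C_{1,\sigma}(\mc T)$, i.e. iff $C_{1,\sigma}(\mc T)\neq\emptyset$. Passing to complements yields the first claimed equality
\[
\{\sigma:\abs{C_{D^b(pt),\sigma\sigma}(\mc T)}<\infty\}=\{\sigma:\abs{C_{1,\sigma}(\mc T)}=0\}.
\]

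For the second equality the inclusion $\{\abs{C_{1,\sigma}(\mc T)}=0\}\subseteq\{\abs{C_{1,\sigma\sigma}(\mc T)}=0\}$ is immediate from $C_{1,\sigma\sigma}(\mc T)\subseteq C_{1,\sigma}(\mc T)$. The reverse inclusion is the heart of the proposition: I must show that if $A\in C_{1,\sigma}(\mc T)$ then already $A\in C_{1,\sigma\sigma}(\mc T)$ (and symmetrically for $B$), that is, that for a single curve the dichotomy ``infinitely many semistable derived points'' versus ``all derived points semistable'' has no intermediate case. Conceptually this is the statement that on $A\cong N\PP^1$ one has $C_{1,\sigma}(N\PP^1)=C_{1,\sigma\sigma}(N\PP^1)$ from \cite[Proposition 12.13]{DK1}; the subtlety is that a stability condition on $\mc T$ need not restrict to one on $A$, and an object $a^m$ may a priori be destabilised in $\mc T$ by an extension involving $M$ or $M'$, so the equality cannot simply be imported from $N\PP^1$. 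I would instead read it off the explicit stratification of $\st(\mc T)$ by the tables \eqref{table for b b M'}--\eqref{table for b a b} together with Proposition~\ref{porposition for middle M}, from which the loci $\{\sigma:A\in C_{1,\sigma}(\mc T)\}$ and $\{\sigma:A\in C_{1,\sigma\sigma}(\mc T)\}$ are seen to coincide. This case analysis is the main obstacle.

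Granting the two equalities, openness is formal: writing the common set as $\st(\mc T)\setminus\big(\{\sigma:A\in C_{1,\sigma\sigma}(\mc T)\}\cup\{\sigma:B\in C_{1,\sigma\sigma}(\mc T)\}\big)$ and invoking Corollary~\ref{closed subset}, it is the complement of a union of two closed sets, hence open. To see it is not dense I would exhibit an open region of $\st(\mc T)$ on which all of the $a^m$ are $\sigma$-semistable, so that $A\in C_{1,\sigma\sigma}(\mc T)$ there and the set above is avoided. Such a region is produced from the semiorthogonal decomposition $\mc T=\langle\langle M\rangle,A\rangle$ by gluing a geometric (all line bundles semistable) stability condition on $A\cong N\PP^1$ with one on $\langle M\rangle$ whose phase is separated from those on $A$; these gluings form an open subset of $\st(\mc T)$ and on it $A\in C_{1,\sigma\sigma}(\mc T)$. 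Equivalently this open region is one of the chambers appearing in the tables.

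Finally, for the value set I would count the semistable derived points chamber by chamber. The value $+\infty$ occurs exactly on the non-dense open complement just described. On the open set $\{\abs{C_{1,\sigma}(\mc T)}=0\}$ only finitely many $a^m$ and finitely many $b^m$ are semistable; using the class relations from the genus-$0$ curves $\alpha^m=\langle M',a^m,b^{m+1}\rangle$ and $\beta^m=\langle M,b^m,a^m\rangle$ (each an $A_2$-configuration, so that one of the three listed classes is the sum of the other two) one bounds the number of simultaneously semistable members of each family and checks that the combinations compatible with a single central charge give totals exactly in $\{3,4,5\}$. The persistent lower bound $3$ reflects that every $\sigma$ admits three semistable exceptional objects---e.g. coming from a full $\sigma$-exceptional collection, whose length equals $\mathrm{rk}\,K_0(\mc T)=3$---while the upper bound $5$ and the attainment of each of $3,4,5$ are read off the tables, giving $\{\abs{C_{D^b(pt),\sigma\sigma}(\mc T)}:\sigma\in\st(\mc T)\}=\{3,4,5,+\infty\}$.
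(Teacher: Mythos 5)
Your treatment of the first equality, of openness (via Corollary \ref{closed subset}), and of the value set $\{3,4,5,+\infty\}$ matches the paper's argument in substance. The problem is the step you yourself identify as ``the heart of the proposition'': you propose to prove that for each individual curve $A\in C_{1,\sigma}(\mc T)$ already forces $A\in C_{1,\sigma\sigma}(\mc T)$ (and likewise for $B$), i.e.\ that the loci $\{\sigma: A\in C_{1,\sigma}(\mc T)\}$ and $\{\sigma: A\in C_{1,\sigma\sigma}(\mc T)\}$ coincide. That statement is false in $D^b(Q)$: Proposition \ref{no semistable1} of the paper records precisely that $C_{1,\sigma}(\mc T)\setminus C_{1,\sigma\sigma}(\mc T)$ can be non-empty, and for instance the row $\phi(b^{p+1})=\phi(b^p)+1=\phi(M')$ of table \eqref{table for b b M'} gives $C_{1,\sigma}(\mc T)=\{A,B\}$ while $C_{1,\sigma\sigma}(\mc T)=\{B\}$, so $A$ lies in the first locus but not in the second. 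What the equality of the two emptiness loci actually requires is the weaker implication $C_{1,\sigma}(\mc T)\neq\emptyset\Rightarrow C_{1,\sigma\sigma}(\mc T)\neq\emptyset$, in which the curve witnessing non-emptiness of $C_{1,\sigma\sigma}(\mc T)$ may differ from the one witnessing non-emptiness of $C_{1,\sigma}(\mc T)$. This weaker statement is what the tables verify (in every row of tables \eqref{table for b b M'}--\eqref{table for b a b} one has $C_{1,\sigma}(\mc T)=\emptyset$ iff $C_{1,\sigma\sigma}(\mc T)=\emptyset$), and outside $\mk{T}_a^{st}\cup\mk{T}_b^{st}$ Proposition \ref{sigma outside Tab} gives $C_{1,\sigma}(\mc T)=C_{1,\sigma\sigma}(\mc T)$ outright. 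So the case analysis you defer to would succeed, but only after you replace the false per-curve claim by the correct set-level claim; as written, the key intermediate assertion of your proof is contradicted by the very data you intend to cite.

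A second, smaller point: for non-density you propose to glue a stability condition on $A\cong N\PP^1$ with one on $\langle M\rangle$ across the semiorthogonal decomposition; this gluing technique is not developed in the paper and would require separate justification. It is also unnecessary: the tables already exhibit explicit non-empty open subsets (e.g.\ $\{\sigma\in(M,b^p,b^{p+1}):\phi(b^{p+1})<\phi(b^p)+1\}$, shown to be open in Remark \ref{remark for a homeomorphism}) on which $C_{1,\sigma\sigma}(\mc T)\neq\emptyset$, which is exactly how the paper concludes.
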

  For $N\PP^l$ we have $C_{l,\sigma}(N\PP^l) =  C_{l,\sigma \sigma}(N\PP^l)$ for all $\sigma \in \st(\mc T)$ and all $l\geq 1$.  However,  for the case $\mc T=D^b(Q) $  we observe a new phenomenon:
 \begin{proposition} \label{no semistable1}
 	The set  $ C_{1,\sigma}(\mc T) \setminus  C_{1,\sigma \sigma}(\mc T)$ can be non-trivial.
 \end{proposition}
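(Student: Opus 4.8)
The plan is to produce a single explicit stability condition $\sigma \in \st(\mc T)$ for which the curve $A = \langle M\rangle^{\perp} = \langle a^m, a^{m+1}\rangle$ witnesses the asserted strict inclusion. By \eqref{derived points} and Proposition \ref{noncomcurves} the derived points contained in $A$ are exactly $\{\langle a^m\rangle : m \in \ZZ\}$, so by Lemma \ref{sigmasemistablesubcategory} it suffices to exhibit $\sigma$ such that infinitely many of the $a^m$ lie in $\sigma^{ss}$ (up to shift) while at least one $a^{m_0}$ does not. First I would recall why this cannot happen for the intrinsic stability conditions of $A \cong N\PP^1$: by \cite[Proposition 12.13]{DK1} one has $C_{1,\sigma'}(N\PP^1) = C_{1,\sigma'\sigma'}(N\PP^1)$ for every $\sigma' \in \st(N\PP^1)$, so there semistability of infinitely many $a^m$ forces semistability of all of them. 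The mechanism I would exploit is that the restriction of $\sigma$ to the full subcategory $A$ is \emph{not} in general a stability condition on $A$: the Harder--Narasimhan factors of an object of $A$, computed in $\mc T$, need not lie in $A$, because $\mc T = \langle A, \langle M\rangle\rangle$ is a nontrivial semiorthogonal decomposition. Thus one of the external exceptional objects $M, M'$ can serve as a destabilizing factor for a single $a^{m_0}$ without affecting the semistability of the remaining $a^m$, breaking the dichotomy that holds intrinsically on $N\PP^1$.

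Concretely, I would build $\sigma$ starting from the standard heart $Rep_{\KK}(Q)$ and a central charge $Z$ determined by its values on the three simple objects, choosing the phases so that $\phi_{\sigma}(a^m)$ forms a monotone sequence accumulating at the limiting phase of the regular (non-exceptional) objects of $A$; for all but finitely many $m$ the objects $a^m$ are then $\sigma$-semistable in $\mc T$, placing infinitely many $\langle a^m\rangle$ in $C_{D^b(pt),\sigma\sigma}(\mc T)$. To destabilize one distinguished $a^{m_0}$ I would arrange the central charge so that the phase of an external object that co-occurs with $a^{m_0}$ in a genus-$0$ curve of Proposition \ref{noncomcurves} (either $M$, via $\beta^{m_0} = \langle M, b^{m_0}, a^{m_0}\rangle$, or $M'$, via $\alpha^{m_0} = \langle M', a^{m_0}, b^{m_0+1}\rangle$) lies on the wrong side of $\phi_{\sigma}(a^{m_0})$, so that the triangle of $\mc T$ relating $a^{m_0}$ to that object produces a sub- or quotient object of $a^{m_0}$ of strictly destabilizing phase. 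The explicit charts of tables \eqref{table for b b M'}--\eqref{table for b a b} should single out an open region of $\st(\mc T)$ realizing exactly this configuration, and I would simply exhibit a point $\sigma$ in it.

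It then remains to verify the two semistability claims at the chosen $\sigma$. For the positive part I would check directly that, for $m$ outside a finite window, the only candidate destabilizing sub-/quotient objects of $a^m$ (read off from the semiorthogonal decomposition and the known morphisms among the exceptional objects) have phase on the correct side, so $a^m \in \sigma^{ss}$; this uses that the fixed external object interacts destabilizingly with only finitely many $a^m$, namely those whose phase is swept across $\phi_{\sigma}(M)$ (resp. $\phi_{\sigma}(M')$). For the negative part I would confirm that the destabilizing triangle at $m_0$ genuinely violates the semistability inequality, so $a^{m_0} \notin \sigma^{ss}$. The main obstacle I anticipate is precisely this bookkeeping: one must ensure that the single external object destabilizes $a^{m_0}$ while leaving infinitely many $a^m$ untouched, i.e.\ that the instability is isolated rather than propagating along the whole sequence, and that no other object of $\mc T$ accidentally re-stabilizes $a^{m_0}$ or destabilizes the remaining $a^m$. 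Once this is confirmed, Lemma \ref{sigmasemistablesubcategory} gives $A \in C_{1,\sigma}(\mc T) \setminus C_{1,\sigma\sigma}(\mc T)$, so the set in question is non-trivial.
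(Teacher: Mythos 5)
Your overall strategy coincides with the paper's: reduce the statement via Lemma \ref{sigmasemistablesubcategory} (equivalently, the criteria in Subsection \ref{first remarks}) to finding a $\sigma$ for which infinitely many, but not all, of the $a^m$ are $\sigma$-semistable, and then locate such a $\sigma$ inside one of the charts $(A,B,C)$. The paper's proof is a one-line citation of the fifth rows of tables \eqref{table for b b M'}--\eqref{table for M' a a}, where $C_{1,\sigma}(\mc T)=\{A,B\}$ while $C_{1,\sigma\sigma}(\mc T)$ is a singleton; the underlying computation is Proposition \ref{more careful part two}(b.3) and its mirror Proposition \ref{more careful study}(b.3). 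So the route is the same; what you supply is a sketch of the mechanism behind those table entries rather than an alternative argument.

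There is, however, one substantive inaccuracy in the configuration you propose to realize. You plan to destabilize a \emph{single} $a^{m_0}$ (or finitely many), keeping ``all but finitely many'' $a^m$ semistable, and you flag as the main obstacle that the instability must be ``isolated rather than propagating along the whole sequence.'' In this category no such stability condition exists: in every regime where $A\in C_{1,\sigma}(\mc T)\setminus C_{1,\sigma\sigma}(\mc T)$, the set $\{j: a^j\in\sigma^{ss}\}$ is a half-line (e.g.\ in Proposition \ref{more careful part two}(b.3), $a^j\in\sigma^{ss}$ iff $j\geq n$, forced by the constraint \eqref{non-ss a for big j} coming from $\hom(M',a^j)\neq 0$ and $\hom(a^j,b^n)$), so infinitely many $a^j$ fail to be semistable. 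The instability does propagate, just only in one direction. This does not sink the argument --- the proposition only needs ``infinitely many semistable, at least one not,'' and the half-line configuration delivers exactly that --- but the verification you outline (``for $m$ outside a finite window, $a^m\in\sigma^{ss}$'') would not go through as stated and would have to be replaced by the one-sided analysis of Proposition \ref{more careful part two}. With that correction, and with the semistability bookkeeping actually carried out (the paper does this via the case analysis of the HN factors in \cite[Lemma 7.2]{dimkatz4}), your plan lands on the same examples the paper uses.
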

 Next, we obtain a positive result towards Conjecture \ref{conjecture derived points}.
 
 \begin{theorem}
 \label{theorem derived points}
 The set $\{\sigma\in \textnormal{Stab}(\mathcal{T}): |\mathfrak{D}_\sigma|<\infty\}$ is an open and dense subset in $\textnormal{Stab}(\mathcal{T}).$
 \end{theorem}
 
 Another result, following from  Section \ref{section for semistable curves}, is: 
\begin{proposition} \label{MeqMprime}  Let  $\sigma\in \st(\mc T)$, then $C_{1,\sigma \sigma}(\mc T)=C_{1}(\mc T)\iff M,M' \in \sigma^{ss}$, $\phi(M)=\phi(M')$.    Furthermore,   $C_{1,\sigma \sigma}(\mc T)=C_{1}(\mc T) \iff C_{0,\sigma \sigma}(\mc T)=C_{0}(\mc T) \iff C_{D^b(pt),\sigma \sigma}(\mc T)=C_{D^b(pt), \KK}(\mc T)$.
\end{proposition}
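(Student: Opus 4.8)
The plan is to translate each of the four equalities into a statement about $\sigma$-semistability of the exceptional objects $a^m,b^m,M,M'$, and then read off the equivalences. First I would record the derived points lying on each curve. Since $A\cong B\cong N\PP^1$, their only exceptional objects up to shift are the members of the two helices, so the derived points contained in $A$ are exactly $\{\langle a^m\rangle\}_{m\in\ZZ}$ and those in $B$ are $\{\langle b^m\rangle\}_{m\in\ZZ}$; hence by Lemma \ref{sigmasemistablesubcategory}, $A\in C_{1,\sigma\sigma}(\mc T)$ iff every $a^m\in\sigma^{ss}$ and $B\in C_{1,\sigma\sigma}(\mc T)$ iff every $b^m\in\sigma^{ss}$. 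Each genus-$0$ curve is equivalent to $N\PP^0\cong D^b(A_2)$, which has exactly three exceptional objects up to shift, so the derived points on $\alpha^m=\langle M',a^m,b^{m+1}\rangle$ are $\{\langle M'\rangle,\langle a^m\rangle,\langle b^{m+1}\rangle\}$ and those on $\beta^m=\langle M,b^m,a^m\rangle$ are $\{\langle M\rangle,\langle b^m\rangle,\langle a^m\rangle\}$. Taking the union over $m$ recovers all of $C_{D^b(pt),\KK}(\mc T)$, so the three conditions $C_{0,\sigma\sigma}(\mc T)=C_0(\mc T)$, $C_{D^b(pt),\sigma\sigma}(\mc T)=C_{D^b(pt),\KK}(\mc T)$, and ``$a^m,b^m,M,M'\in\sigma^{ss}$ for all $m$'' are equivalent; this already settles the last two equivalences and reduces everything to comparing ``all $a^m,b^m\in\sigma^{ss}$'' with ``$M,M'\in\sigma^{ss}$ and $\phi(M)=\phi(M')$''.

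The structural input I would invoke (from Section \ref{section for semistable curves} and the cited computations) is that, writing $Z$ for the central charge of $\sigma$, for every $m$ there are triangles
\begin{gather}
b^m\to a^m\to M\to b^m[1], \qquad a^m\to b^{m+1}\to M'\to a^m[1],\nonumber
\end{gather}
so that $[a^m]=[b^m]+[M]$, $[b^{m+1}]=[a^m]+[M']$, and hence $[a^{m+1}]=[a^m]+\delta$ with $\delta:=[M]+[M']$; in particular $Z(a^m)=Z(a^0)+m\,Z(\delta)$ and likewise for $b^m$, so $\phi(a^m),\phi(b^m)$ sweep monotonically a common interval of length one, with $\phi(a^m)-\phi(b^m)\to0$ and both accumulating at $\phi_\delta:=\tfrac1\pi\arg Z(\delta)$ at one end. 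For ``all $a^m,b^m\in\sigma^{ss}$ $\Rightarrow$ $M,M'\in\sigma^{ss}$ with $\phi(M)=\phi(M')$'' I would use the standard bounds on the extremal Harder--Narasimhan phases $\phi^\pm$ of a cone: the first triangle gives $\phi^-(M)\ge\min(\phi(a^m),\phi(b^m)+1)$ and $\phi^+(M)\le\max(\phi(a^m),\phi(b^m)+1)$ for every $m$, which since $\phi(a^m)-\phi(b^m)\to0$ simplify to $\phi(a^m)\le\phi^-(M)$ and $\phi^+(M)\le\phi(b^m)+1$. As $m$ runs over $\ZZ$ one has $\sup_m\phi(a^m)=\inf_m\big(\phi(b^m)+1\big)$ (the two endpoints of the common length-one interval differ by $1$), and this pinches $\phi^-(M)=\phi^+(M)$, i.e. $M\in\sigma^{ss}$; the second triangle gives $M'\in\sigma^{ss}$ the same way, and both phases equal the common pinch value, so $\phi(M)=\phi(M')$. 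This is the clean half of the argument.

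For the converse I would assume $M,M'\in\sigma^{ss}$ with $\phi(M)=\phi(M')=:\phi_0$ and deduce that all $a^m,b^m$ are semistable. Then $Z(\delta)=Z(M)+Z(M')$ also has phase $\phi_0$, so $\phi_\delta=\phi_0$, and $\mathrm{Cone}(M[-1]\to M')$, being an extension of $M$ by $M'$ inside the single abelian slice $\mc P(\phi_0)$, is semistable of class $\delta$. The plan is to use this to place $\sigma$ in the ``geometric chamber'': show that $\phi(M)=\phi(M')$ forces the heart $\mc P((\phi_0-1,\phi_0])$ to be a tilt of $Rep_{\KK}(Q)$ on which $Z$ assigns phase $\phi_0$ to every object of class a multiple of $\delta$, and then apply Corollary \ref{stable curve part 1} (in the form of Corollary \ref{stable curve}) to one generating exceptional pair of $A$ and of $B$; since $A=\langle a^m,a^{m+1}\rangle$ for every $m$, verifying the hypotheses for a single pair already yields $A\in C_{1,\sigma\sigma}(\mc T)$, i.e. all $a^m\in\sigma^{ss}$, and symmetrically for $B$. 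Combining the two implications gives
\begin{gather}
C_{1,\sigma\sigma}(\mc T)=C_1(\mc T)\iff\big(M,M'\in\sigma^{ss}\ \text{and}\ \phi(M)=\phi(M')\big)\iff\text{all }a^m,b^m,M,M'\in\sigma^{ss},\nonumber
\end{gather}
which together with the first paragraph yields the full statement.

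The main obstacle is this converse: equality of the two phases must be shown to be \emph{sufficient} for the semistability of every preprojective/preinjective object, and here a cone of semistable objects of unequal phases need not be semistable, so the extension arguments that make the forward direction soft do not run backwards. I expect the genuine work to be (i) verifying that $\phi(M)=\phi(M')$ really pins $\sigma$ to the geometric chamber — equivalently, that no $a^m$ or $b^m$ is destabilised by a subobject built from $M,M'$ inside all of $\mc T$ rather than merely inside $A$ or $B$ — and (ii) the bookkeeping of integer shifts along the helix, i.e. identifying at which endpoint of the length-one interval the phases $\phi(a^m)$ accumulate. Both are precisely what the wall-and-chamber analysis of Section \ref{section for semistable curves} supplies, so in practice I would quote that analysis to close the converse.
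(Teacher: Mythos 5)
Your reduction in the first paragraph is correct and matches the paper's own bookkeeping: the identifications of the derived points on $A$, $B$, $\alpha^m$, $\beta^m$ are exactly Subsection \ref{first remarks}, and the equivalence of $C_{0,\sigma\sigma}(\mc T)=C_0(\mc T)$, $C_{D^b(pt),\sigma\sigma}(\mc T)=C_{D^b(pt),\KK}(\mc T)$, and ``all $a^m,b^m,M,M'\in\sigma^{ss}$'' is \eqref{a criterion}. Your forward implication (all $a^m,b^m\in\sigma^{ss}$ $\Rightarrow$ $M,M'\in\sigma^{ss}$ with equal phases) is a genuinely different and more unified route than the paper's, which instead goes chart by chart through \eqref{st with T}, Corollary \ref{in TaTb only one} and Corollary \ref{phases of MMprime2}; your pinching of $\phi^{\pm}$ via the triangles \eqref{short filtration 1} is the same device the paper uses locally in Lemmas \ref{lemma for semistability of M} and \ref{lemma for semistability of M'}, applied globally. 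Two caveats there: your signs are reversed relative to Remark \ref{remark for delta} (the paper has $[b^q]=[M]+[a^q]$ and $[a^{p+1}]=[a^p]-\delta$, and $\mathrm{Cone}(b^q\to a^q)\cong M[1]$, not $M$), and the step ``the two endpoints of the common length-one interval differ by $1$'' is not free — it needs $Z(a^m)=Z(a^0)-mZ(\delta)$ with $Z(\delta)\neq 0$ and the accumulation statement of \cite[Corollary 3.19]{dimkatz4}; as written you assert it.

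The genuine gap is the converse, and your own diagnosis of where the difficulty lies is accurate, but the mechanism you propose to close it does not work as stated. Corollary \ref{stable curve} requires $\sigma\in\Theta_{\mc E}$ for a full exceptional collection $\mc E$ containing the adjacent pair $(a^m,a^{m+1})$; by Remark \ref{remark for fe}, membership in $\Theta_{\mc E}$ already includes the semistability of $a^m$ and $a^{m+1}$, which is exactly what you are trying to deduce — so applying that corollary to ``one generating pair of $A$'' is circular. The paper's converse instead uses the global covering \eqref{st with T} to place $\sigma$ in some chart, Corollary \ref{phases of MMprime1} to exclude $\mk{T}_a^{st}\cup\mk{T}_b^{st}$ when $\phi(M)=\phi(M')$, and then the long argument of Lemma \ref{semi-stability of b} (via \cite[Lemmas 7.2, 7.3]{dimkatz4}), which rules out every possible destabilising HN factor of each $a^j$ and $b^j$ one case at a time; this is the content of cases (a.2.3) and (f) of Proposition \ref{porposition for middle M}. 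Saying you would ``quote the wall-and-chamber analysis'' defers precisely the half of the statement that carries all the work, and the alternative sketch (that $\phi(M)=\phi(M')$ forces the heart $\mc P((\phi_0-1,\phi_0])$ to be a tilt of $Rep_\KK(Q)$) is itself an unproved claim of comparable depth. So the proposal is sound on the reduction and the forward direction, but does not contain a proof of the reverse implication.
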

A complete summary of the results from which the above Theorems and Propositions were derived are collected in tables \eqref{table for b b M'}, \eqref{table for a a M}, \eqref{table for M b b}, \eqref{table for M' a a},  \eqref{table for b a b} and Proposition  \ref{sigma outside Tab} in the Appendix \ref{sectab}. We use there the notation set in §\ref{homeomoeprhism}. How these results glue to a global picture is described in §\ref{walls and chambers}.
\subsection*{Acknowledgements}
The authors are grateful to Ludmil Katzarkov for his interest and encouraging remarks. 
The first author was supported by Clarendon Fund Scholarship at University of Oxford.
The second author was
supported by FWF Project P 29178-N35.

\section{Notations} \label{notations} 

We fix a universe and assume that the  set of objects and  the set of morphisms of any category we consider are elements of this universe. 

The shift functor  in a triangulated category ${\mathcal T}$ is designated sometimes by $[1]$.

A \textit{triangulated subcategory}  in a triangulated category $\mc T$ is a non-empty full subcategory $\mc B $ in  $\mc T$, s. t. two conditions hold:

{\rm (a) } $Ob(\mc B)[1]=Ob(\mc B)$ ; 

{\rm (b)}  for any $X,Y \in Ob(\mc B) $ and any distinguished triangle $X\rightarrow Z \rightarrow Y \rightarrow X[1]$ in $\mc T$ the object $Z$ is also in $ Ob(\mc B)$.

We write $\langle  S \rangle  \subset \mc T$ for  the triangulated subcategory of $\mc T$ 
generated by $S$, when $S \subset Ob(\mc T)$. 
We write $\Hom^i(X,Y)$ for  $\Hom(X,Y[i])$.
In this paper $\KK$ denotes a field.  
If $\mc T$ is $\KK$-linear triangulated category  we write  $\hom^i(X,Y)$ for  $\dim_\KK(\Hom(X,Y[i]))$, where $X,Y\in \mc T$. 

A $\KK$-linear  triangulated category $\mc T$ is called  \textit{ proper} if $\sum_{i\in \ZZ} \hom^i(X,Y)<+\infty$ for any two objects $X,Y$ in $\mc T$.

 When we say that $\mc T$ \textit{has an enhancement} we mean  enhancement as explained in  \cite{Orlov}.

An \textit{exceptional object} in a $\KK$-linear triangulated category  is an object $E\in \mc T$ satisfying $\Hom^i(E,E)=0$ for $i\neq 0$ and  $\Hom(E,E)=\KK $. We denote  by ${\mc T}_{exc}$ the set of all exceptional objects in $\mc T$, 

We will often write \textit{ffe functor} instead of fully faithful exact functor in the sequel.

An \textit{exceptional collection} is a sequence $\mc E = (E_0,E_1,\dots,E_n)\subset \mc T_{exc}$ satisfying $\hom^*(E_i,E_j)=0$ for $i>j$.    If  in addition we have $\langle \mc E \rangle = \mc T$, then $\mc E$ will be called a full exceptional collection.  For a vector $\textbf{p}=(p_0,p_1,\dots,p_n)\in \ZZ^{n+1}$ we denote $\mc E[\textbf{p}]=(E_0[p_0], E_1[p_1],\dots, E_n[p_n])$. Obviously  $\mc E[\textbf{p}]$ is also an exceptional collection. The exceptional collections of the form   $\{\mc E[\textbf{p}]: \textbf{p} \in \ZZ^{n+1} \}$ will be said to be shifts of $\mc E$. 

If an exceptional collection  $\mc E = (E_0,E_1,\dots,E_n)\subset \mc T_{exc}$ satisfies  $\hom^k(E_i,E_j)=0$ for any $i,j$ and for $k\neq 0$, then it is said to be \textit{strong exceptional collection}.   

For two exceptional collections $\mc E_1$, $\mc E_2$ of equal length we  write $\mc E_1 \sim \mc E_2$ if $\mc E_2 \cong \mc E_1[\textbf{p}]$ for some $\textbf{p} \in \ZZ^{n+1}$.

\textit{An abelian category $\mc A$ is said to be hereditary, if ${\rm Ext}^i(X,Y)=0$ for any  $X,Y \in \mc A$ and $i\geq 2$,  it is said to be of finite length, if it is Artinian and Noterian.}

By $Q$ we denote an acyclic quiver and   by  $D^b(\textnormal{Rep}_\KK(Q))$, or just $D^b(Q)$, -  the derived category of the category of $\KK$-representations of $Q$. Sometimes we write $D^b(pt)$ instead of $D^b(A_1)$.

For an integer $l\geq 0$ the $l$-Kronecker quiver  (the quiver with two vertices and  $l$ parallel  arrows)  will be denoted by  $K(l)$: \begin{tikzcd}
	1
	\arrow[r, draw=none, "\raisebox{+1.5ex}{\vdots}" description]
	\arrow[r, bend left,]
	\arrow[r, bend right, swap]
	&
	2
\end{tikzcd}.

For a subset $S\subset G$  of a group $G$ we denote by $\langle S \rangle \subset G$ the subgroup   generated by $S$.

The number of elements of a finite set $X$ we denote by $\abs{X}$ or by $\#(X)$.

For integers  $a,b \in \ZZ$ we denote by $g.c.d(a,b)$ the greatest common divisor of $a,b$.

	For any $a\in \RR$ and any  complex number $z \in {\rm e}^{\ri \pi a} \cdot (\RR + \ri \RR_{>0})$, respectively  $z \in {\rm e}^{\ri \pi a} \cdot \left (\RR_{<0} \cup (\RR + \ri \RR_{>0}) \right )$,  we denote by $\arg_{(a,a+1)}(z)$, resp. $\arg_{(a,a+1]}(z)$,  the unique $\phi \in (a,a+1)$, resp. $\phi \in (a,a+1]$, satisfying $z=\abs{z} \exp(\ri \pi \phi)$.   
	
	For a non-zero complex number $v \in \CC$ we denote   the two connected components of  $\CC \setminus \RR v $ by:
	\begin{gather} \label{complement of a line} v^c_+ = v \cdot  (\RR + \ri \RR_{>0}) \qquad  v^c_- = v \cdot (\RR - \ri \RR_{>0}) \qquad \qquad v \in \CC \setminus \{0\}.\end{gather}

\begin{definition}
	Let $\mathcal{T}$ be a triangulated category and $\mathcal{T}_i$ for $i=1,\ldots,n$ be its triangulated subcategories, such that $\mathcal{T}=\langle\mathcal{T}_1,\ldots,\mathcal{T}_2\rangle$. One says that $\mathcal{T}=\langle \mathcal{T}_1,\ldots,\mathcal{T}_n\rangle $ is a semi-orthogonal decomposition of $\mathcal{T}$ when for any $X_i\in Ob(\mathcal{T}_i)$ and $X_j\in Ob(\mathcal{T}_j)$ the space of morphisms $\textrm{Hom}^l(X_i,X_j)$ is trivial for all $l$ whenever $i>j$. 
\end{definition}
\section{Triangulated categories and Bridgeland stability conditions}
\label{triangulatedcategories}
In this work, we use the definition of triangulated categories found for example in \cite[p. 239]{gelfmani}. We give here the form of the octahedral axiom that we will use later: If one has the following commutative diagram in a triangulated category $\mathcal{T}$
\begin{equation}
\label{lowerroof}
\begin{tikzcd}
E \arrow{rr}\arrow[bend left=15]{rrrr} && \arrow{dl} F \arrow{rr} && G\arrow{dl}\\
 & A \arrow{ul}{[1]} && B\arrow{ul}{[1]}\arrow{ll}{[1]}&
\end{tikzcd}
\end{equation}

with the triangles  $E,F,A$ and $F,B,G$ being distinguished and the others being commutative, then there exists an object $F'$ with a diagram 
\begin{equation}
\label{upperroof}
\begin{tikzcd}[column sep=1.5em]
E \arrow{rr}&&G\arrow{dl}\arrow{dd}\\
&F'\arrow{ul}{[1]}\arrow{dr}\\
A \arrow{uu}{[1]}\arrow{ur}&&B\arrow{ll}{[1]}
\end{tikzcd}\,,
\end{equation}

such that the upper and lower triangle are again distinguished and the left and right triangle are commutative. The arrows $A\to E$, $E\to G$, $G\to B$ and $B\to A$ are the same in both diagrams. 

\begin{remark}
\label{reverse8}
Using the axioms of a triangulated category one can also show that if one has a diagram \eqref{upperroof}, then one can construct a diagram of the form \eqref{lowerroof}, where the arrows $A\to E$ and $G\to B$ will be the same and the arrows $E\to G$ and $B\to A$ will get an additional minus sign in the newly constructed diagram. 
\end{remark}

The following  lemma  will be used in Section \ref{stabilities without stable curves}. 
\begin{lemma}
\label{equiavalentdiag}
If the diagram of distinguished triangles of the form
\begin{equation}
\label{equiavalentdiagequation}
\begin{tikzcd}
E \arrow{rr} && \arrow{dl}{g} F \arrow{rr} && G\arrow{dl}\\
 & A \arrow{ul}{[1]} && B\arrow{ul}{[1]}[swap]{f}&
\end{tikzcd}
\end{equation}
satisfies:

{\rm (a)}  $g[1]\circ f =0$, then there exists another diagram of distinguished triangles of the following form:
$$
\begin{tikzcd}
E \arrow{rr} && \arrow{dl} F'' \arrow{rr} && G\arrow{dl}\\
 & B \arrow{ul}{[1]} && A\arrow{ul}{[1]}&
\end{tikzcd} ;
$$

{\rm (b)}   $g[1]\circ f $ is isomorphism,  then the composition $
E\rightarrow F \rightarrow G$ is isomorphism. 
\end{lemma}
\begin{proof}
The commutative diagram 
$$
\begin{tikzcd}
E \arrow{rr}\arrow[bend left=15]{rrrr} && \arrow{dl}{g} F \arrow{rr} && G\arrow{dl}\\
 & A \arrow{ul}{[1]} && B\arrow{ul}{[1]}[swap]{f}\arrow{ll}{[1]}[swap]{0}&
\end{tikzcd}
$$
gives by the octahedral axiom explained above the commutative diagram of distinguished triangles of the form
$$
\begin{tikzcd}[column sep=1.5em]
E \arrow{rr}&&G\arrow{dl}\arrow{dd}\\
&F'\arrow{ul}{[1]}\arrow{dr}\\
A \arrow{uu}{[1]}\arrow{ur}&&B\arrow{ll}[swap]{[1]}
\end{tikzcd}\,.
$$ 

(b) In this case the arrow from $B$ to $A[1]$ in the lower distinguished triangle is isomorphism and then by the axioms of triangulated category  it follows that $F'$ is a zero object, hence by the upper distinguished triangle we see that the arrow  $E \rightarrow G$  is isomorphism as well.

(a)  By the vanishing of $B\to A$ the lower triangle is a biproduct diagram. Thus we can interchange $A$ and $B$ in the last diagram and apply Remark \ref{reverse8} to get the following diagram:
$$
\begin{tikzcd}
E \arrow{rr}\arrow[bend left=15]{rrrr} && \arrow{dl}{g} F'' \arrow{rr} && G\arrow{dl}\\
 & B \arrow{ul}{[1]} && A\arrow{ul}{[1]}\arrow{ll}{[1]}&
\end{tikzcd}
$$
\end{proof}

\subsection{Slicings and stability conditions} \label{stab conditions}
 T. Bridgeland  defined in \cite{Bridgeland}:

\begin{definition} \cite{Bridgeland}
\label{slicing}
Let $\mathcal{T}$ be a triangulated category. A slicing $\mathcal{P}$ is a collection of strictly full additive subcategories $\mathcal{P}(\phi)$ given for any $\phi\in\mathbb{R}$, such that the following conditions hold:
\begin{enumerate}
\item $\mathcal{P}(\phi)[1] = \mathcal{P}(\phi + 1)$\,.
\item If $X \in Ob\big(\mathcal{P}(\phi)\big)$ and $Y \in Ob\big(\mathcal{P}(\psi)\big)$ where $\phi>\psi$, then $\textrm{Hom}(X,Y) = 0$\,.
\item For any non-zero $E\in Ob(\mathcal{T})$ there exists a sequence $\phi_1>\phi_2>\ldots>\phi_n$ and a diagram of distinguished triangles
\begin{equation}
\label{17.1.18.1}
\begin{tikzcd}[column sep=1.5em]
 0 \arrow{rr} && \arrow{dl} E_1 \arrow{rr} && E_2\arrow{dl}\arrow{r}&\ldots \arrow[r]&E_{n-1}\arrow{rr}&&E\arrow{dl}\\
 & A_1 \arrow{ul}{[1]} && A_2\arrow{ul}{[1]}&& \,&&A_n\arrow{ul}{[1]}
\end{tikzcd}
\end{equation}
where $A_i\in Ob\big(\mathcal{P}(\phi_i)\big)$ are non-zero.
\end{enumerate}
\end{definition}
\begin{remark}
\label{uniqueHN}
One can prove that  the diagram \eqref{17.1.18.1} for any object $E$ is unique up to isomorphism.  We call this diagram the Harder-Narasimhan filtration of $E$. 
\end{remark}

\begin{definition} \cite{Bridgeland}
\label{stabcondef}
Let $\mathcal{T}$ be a triangulated category. A pair $\sigma = (Z, \mathcal{P})$, where $\mathcal{P}$ is a slicing on $\mathcal{T}$ and $Z: K_0(\mathcal{T})\to \mathbb{C}$ is a group homomorphism, is said to be a stability condition on $\mathcal{T}$ when for any non-zero $A\in \mathcal{P}(t)$ there exists $m_\sigma(A)\in\mathbb{R}^{>0}$, such that
\begin{equation} \label{property of stability}
Z(A) = m_\sigma(A) \exp\big(\ri\pi t\big)\,.
\end{equation}
The homomorphism $Z$ is then called the central charge of $\sigma$.
\end{definition}

For a stability condition $\sigma =(Z,\mathcal{P})$ a  non-zero object $A$ in $\mathcal{P}(t)$ for some $t$ is called $\sigma$-semistable,  we will write $\sigma^{ss}$ for the set of $\sigma$-semistable objects. Additionally,  we denote $\phi_\sigma(A) = t$  for  $A\in \mc P(t)\setminus \{0\}$,  and $\phi_\sigma^+(E) = \phi_\sigma(A_1)$, $\phi_\sigma^-(E) = \phi_\sigma(A_n)$ for any non-zero object $E$ with HN filtration as in \eqref{17.1.18.1}.
One is usually interested in a special case of stability conditions called \textit{locally finite} (see \cite[Definition 5.7]{Bridgeland} for details).

\begin{remark} \label{action}  In \cite{Bridgeland}, the set of locally finite stability conditions is equipped with a  structure of a complex manifold. This complex manifold is denoted by $\st(\mc T)$. 
Furthermore, 	in \cite{Bridgeland} a left action by biholomorphisms  of the group of exact autoequivalences $\Aut (\mc T)$ on $\st(\mc T)$ is constructed. This action is determined as follows:
	\begin{gather} \label{left action} \Aut(\mc T) \times \st(\mc T) \ni \left  (\Phi, (Z,\{ \mc P(t)\}_{t\in \RR}) \right ) \mapsto \left (Z\circ [\Phi]^{-1}, \left
	\{  \overline{\Phi(\mc P(t))} \right \}_{t\in \RR}\right )\in \st(\mc T), \end{gather}
	where $[\Phi]:K_0(\mc T) \rightarrow K_0(\mc T)$ is the induced isomorphism (we will often omit specifying the square brackets) and $\overline{\Phi(\mc P(t))} $ is the full isomorphism closed subcategory containing $\Phi(\mc P(t))$. 
	
	 For any $\Phi \in \Aut(\mc T)$, $\sigma\in \st(\mc T)$ let us denote $\sigma=(\mc P_{\sigma}, Z_\sigma)$, $\Phi\cdot \sigma=(\mc P_{\Phi \cdot \sigma}, Z_{\Phi\cdot \sigma})$, then  we have: 
	 \begin{gather} \label{aut properties 1}
	 (\Phi\cdot \sigma)^{ss}=\overline{\Phi(\sigma^{ss})} \qquad  \phi_{\Phi \cdot \sigma}(X)= \phi_{ \sigma}(\Phi^{-1} X) \qquad X\in (\Phi\cdot \sigma)^{ss} \\ 
	 \label{aut properties 2} Z_{\Phi \cdot \sigma}(X)=Z_\sigma(\Phi^{-1}(X)) \qquad X\in \mc T.\end{gather} 
\end{remark}

\section{Non-commutative curve counting. Dependence on a stability condition}
\label{noncommutativecurves}

In this section we present in details an approach  to make non-commutative counting dependable on stability condition. The idea is  sketched in \cite[Section 12.4]{DK1}.   We  make  also first  basic  observations. 
The main definition is 

\begin{definition} \cite[Definition 12.5]{DK1} \label{nccurves}   Let  $\mc A$, $\mc T$ be any  triangulated categories. And let $\Gamma \subset {\rm Aut}(\mc T)$ be a subgroup of the group of auto-equivalences. We denote 
	\begin{gather} \label{C'_l,P} 
	C'_{\mc A,P}(\mc T) = \{\bd \mc A & \rTo^{F} & \mc T \ed : F \ \mbox{is a fully faithful exact functor satisfying  properties} \ P \}.
	\end{gather} 
		Here we need $P$ to be a property of fully faithful functors, such that \eqref{C'_l,P} is a well defined set,   examples  are  in  Definitions \ref{remark for C_l},  \ref{main def}. 
	
	Next we fix an equivalence relation in $C'_{\mc A ,P}(\mc T)$: 
	\begin{gather}\label{equivalence}
	C_{\mc A,P}^{\Gamma}(\mc T) = C'_{\mc A,P}  (\mc T)/{\sim} \qquad F \sim F' \iff F \circ \alpha \cong \beta\circ F' \ \mbox{for some} \ \ \alpha \in {\rm Aut}(\mc A), \beta \in \Gamma  
	\end{gather}  where $ F \circ \alpha \cong \beta \circ F' $ means equivalence of exact functors between triangulated categories (this is so called graded equivalence).

\end{definition}

In  special cases, important role play the categories  $D^b(\textnormal{Rep}_{\KK}(K(l+1))) = N\PP^l$, $l\geq -1$. 

These special cases of Definition \ref{nccurves} are  relevant to non-commutative \textit{curve} counting and to intersection theory,  are:

\begin{definition} \cite[Definition 12.8]{DK1}, \cite[Definitions 4.3,  6.4]{dimkatz2} \label{remark for C_l}  
	
 Let $\mc T$ be $\KK$-linear and let  $\Gamma\subset\textrm{\rm Aut}_{\KK}(\mathcal{T})$ be a subgroup of the group of exact $\KK$-linear auto-equivalences on $\mathcal{T}$. Let  the property $P$ in \eqref{C'_l,P} be ``$F$ is $\KK$-linear''. 
	
		Let $l\geq -1$. We denote  $C_{N\PP^l, P  }'(\mc T)$ and  $C_{N\PP^l, P  }^\Gamma(\mc T)$  by  $C_{l }'(\mc T)$ and   $C_{l}^{\Gamma}(\mc T)$, respectively,  and refer to the elements of $C_{l}^{\Gamma}(\mc T)$ as to \underline{non-commutative curves  of non-commutative genus $l$ in $\mc T$} modulo $\Gamma$. 
	
	 We denote  $C_{D^b(pt), P  }'(\mc T)$ and  $C_{D^b(pt), P  }^\Gamma(\mc T)$  by  $C_{D^b(pt), \KK}'(\mc T)$ and   $C_{D^b(pt), \KK}^{\Gamma}(\mc T)$, respectively,  and refer to the elements of $C_{D^b(pt), \KK}^{\Gamma}(\mc T)$ as to \underline{derived points in $\mc T$} modulo $\Gamma$.

   Furthermore, in the special case when $\Gamma = \big\{[\textrm{\rm Id}_\mathcal{T}]\big\}$ we will omit writing $\Gamma$ in the superscript.
\end{definition} 
 The following proposition holds

\begin{proposition}   \cite[Proposition 5.5]{dimkatz2}.
\label{genbijec}
Let $\mathcal{T}$ be a proper $\KK$-linear triangulated category which has an enhancement, e.g. $\mathcal T = D^b(\textnormal{Rep}_\KK(Q))$ for an acyclic quiver $Q$. 

Then there are  bijections:

\begin{equation}
\label{25.1.18.1}
C_l(\mathcal{T})\to 
\left\{\begin{array}{ll}
&\mathcal{A} \textrm{ full triangulated subcategory of }\mathcal{T} \textrm{s.t.} \  \mathcal{A} = \langle E_0,E_1 \rangle \\
 \mathcal{A}\subset \mathcal{T} :& \ \textrm{for some strong exceptional pair} \  (E_0,E_1) \ \textrm{with} \\ & \hom(E_0,E_1)=l+1
\end{array}\right\}
\end{equation}
  \begin{gather}
 \label{bijection for eo} 	C_{ D^b(A_1), \KK }(\mc T) \rightarrow\left  \{\mc A\subset \mc T:\begin{array}{l} \mc A \ \mbox{is a full tr. subcategory s.t.} \ \mc A =\langle E\rangle \\  \mbox{for some exceptional object} \ E \end{array} \right \} 
 \end{gather}

defined  by $[F]\mapsto {\rm Im}(F)$. 
\end{proposition}

From now on,we will assume that  $\mathcal{T}$ is a proper $\KK$-linear triangulated category which has an enhancement and we will use Proposition \ref{genbijec}  to identify $C_l(\mathcal{T})$ and $	C_{ D^b(A_1), \KK }(\mc T)$ with the corresponding sets of subcategories in $\mc T$.

Using Definition \ref{nccurves}, the concept of $\sigma$-(semi)stability of  non-commutative curves in $\mathcal T$ was introduced  in  \cite{DK1}. The idea is to specify a restriction $P$ in Definition \ref{nccurves} depending on a stability condition $\sigma \in \st(\mathcal T)$.    The following definition is the same as  \cite[Definition 12.12]{DK1} when $l\geq 1$, the cases $l=-1, 0$ are not carefully explained in \cite[Definition 12.12]{DK1}, which we  repair here:  

\begin{definition} \label{main def}
Let $l\geq -1$.  Let  $\Gamma\subset\textrm{\rm Aut}_{\KK}(\mathcal{T})$ and let 
 $\sigma \in \mathcal{T}$.  

Let $\mc E \subset Ob(N\PP^l)$ be a set of exceptional objects in  $N\PP^l$, s.t. for each exceptional object $X \in  N\PP^l$ there exists unique $Y \in  \mc E$, such that $X \cong Y[k]$ for some $k\in \ZZ$. 

We specify the property $P$ from Definition \ref{nccurves} to two different cases and denote the set $C'_{N\PP^l,P}(\mathcal{T})$ from \eqref{C'_l,P}  by  $C'_{l,\sigma}(\mathcal{T})$,  $C'_{l,\sigma \sigma}(\mathcal{T})$, respectively. More precisely, (recall  $C'_l(\mc T)$ from Definition \ref{remark for C_l}):
\begin{gather}  \label{C'lsigma}	C'_{l,\sigma}(\mc T) = \left \{F \in C'_{l}(\mc T) : \abs{\{E\in \mc E: F(E)\in \sigma^{ss}\}}=\infty  \   \right \} \\ 
\label{C'lsigmasigma}	C'_{l,\sigma \sigma}(\mc T) = \{F \in C'_{l}(\mc T) :      
	\forall E\in \mc E \ \  F(E)\in \sigma^{ss} \  \}.  \end{gather}
 The formula \eqref{equivalence}  gives two sets of equivalence classes corresponding to the two different choices of  $P$ specified above  and we refer to these sets as sets of 
 $\sigma$-semistable (resp. $\sigma$-stable) non-commutative curves of non-commutative  genus $l$ in $\mc T$, and modulo $\Gamma$, we denote them as follows:
 \begin{gather} \nonumber 
 C_{l, \sigma
 }^{\Gamma}(\mc T) = C'_{l,\sigma}  (\mc T)/{\sim} \qquad 	C_{l, \sigma \sigma
}^{\Gamma}(\mc T) = C'_{l,\sigma \sigma}  (\mc T)/{\sim} 
\end{gather}

Again, we will skip  the superscript $\Gamma$ when $\Gamma$ contains only the identity.
\end{definition}
 
 \begin{remark} \label{remark for subsets} From \eqref{C'lsigma}, \eqref{C'lsigmasigma} and Definition  \ref{remark for C_l}  we see that   $ C'_{l,\sigma}(\mc T)$ and $ C'_{l,\sigma \sigma}(\mc T)$ are subsets of  $ C'_{l}(\mc T)$. We show here that these subsets  are unions of equivalence classes in  $ C_{l}(\mc T)$ (according to  Definition \ref{remark for C_l}   $ C_{l}(\mc T)$ is a set of equivalence classes, which are subsets of $ C'_{l}(\mc T)$). In particular,  inclusions  $C_{l,\sigma}(\mc T)\subset C_{l}(\mc T)$ and $ C_{l,\sigma \sigma}(\mc T)\subset  C_{l}(\mc T)$ hold.  
 	
 	Indeed, let $F\in C'_l(\mc T)$ and $G\in  C'_{l,\sigma}(\mc T)$, resp. $G\in  C'_{l,\sigma \sigma}(\mc T)$, be such that $F\circ \alpha \cong G$ for some $\alpha \in {\rm Aut}(N\PP^l)$. Since by definition both $F$ and $G$ are $\KK$-linear it follows  from \cite[Corollary 3.10]{dimkatz2}  that $\alpha$ is $\KK$-linear as well, and hence $\alpha(X)$ is exceptional iff $X$ is exceptional for any $X\in Ob(N\PP^l)$. It follows that $F\in  C'_{l,\sigma}(\mc T)$, resp. $F\in  C'_{l,\sigma\sigma}(\mc T)$,  as well. 
 	
 \end{remark}
 
 Next  we will discuss here the restriction of the bijection \eqref{25.1.18.1} to the subsets $C_{l,\sigma}(\mc T)\subset C_{l}(\mc T)$, $C_{l,\sigma\sigma}(\mc T)\subset C_{l}(\mc T)$.   We note first that the set $\mc E$ from  Definition \ref{main def} is bijective to the set of derived points in $N \PP^l$ (recall  Definition \ref{remark for C_l} and \eqref{bijection for eo}):
 
 \begin{remark} \label{remark for derived points 1}
 		The objects in a derived point  $\langle E \rangle$ in any proper $\KK$-lineat $\mc T$  are exactly all the possible  finite  direct sums of shifts of $E$.  Therefore the function $\mc E \rightarrow C_{D^b(pt),\KK}(N \PP^l)$ defined by $\mc E \ni E \mapsto \langle  E \rangle \in  C_{D^b(pt),\KK}(N \PP^l)$ is a bijection, where $\mc E\subset Ob(N \PP^l)$  is as in Definition \ref{main def}. Hence for any  $F \in C'_{l}(\mc T)$ the function $\mc E \ni E \mapsto \langle  F(E) \rangle \in  C_{D^b(pt),\KK}({\rm Im}(F))$ is also a bijection.  
 
 \end{remark}
 
 Taking into account this we define:
 
 \begin{definition} \label{def of stable derived points} Let $\sigma \in \st(\mc T)$.   Due to the first sentence in  Remark \ref{remark for derived points 1} we see that if we have a derived point $\mc A \in  C_{D^b(pt),\KK}(\mc T)$ and $\mc A =\langle A \rangle=\langle B  \rangle$ with $A$, $B$ exceptional objects, then $A \in \sigma^{ss}$ if and only if  $B \in \sigma^{ss}$. Hence the  set  $C_{D^b(pt), \sigma \sigma}(\mc T)=\{\langle E \rangle \in  C_{D^b(pt),\KK}(\mc T): E \in \sigma^{ss} \}$ is well defined. 
 \end{definition}
 The following is a new definition motivated by counting of derived points.
 
\begin{definition}
\label{definition derived points outside of curves}
Let $\sigma\in\stab$, we define
$$
\mathfrak{D}_{\sigma}=\{\langle E\rangle \in C_{D^b(pt),\sigma\sigma} : \langle E\rangle \not\subset \mathcal{A} \quad \forall \, \mathcal{A}\in C_{l,\sigma\sigma}, l\geq 1\}\,.
$$
The elements of $\mathfrak{D}_\sigma$ are called \underline{stable derived points outside of stable curves of positive genus}.
\end{definition}
 
 Using this terminology and Remark \ref{remark for derived points 1} we see that:
 
 \begin{lemma} \label{sigmasemistablesubcategory}
Let $l\geq -1$ and $\sigma \in \st(\mc T)$. In  Remark \ref{remark for subsets} we explained that $C_{l,\sigma}(\mc T)$, $C_{l,\sigma\sigma}(\mc T)$ are subsets of $C_{l}(\mc T)$.   Let $\mc A\subset \mc T$ be an element in the codomain of \eqref{25.1.18.1}. Then 

{\rm (a) } $\mc A \in C_{l,\sigma}(\mc T)$  iff $\abs{C_{D^b(pt),\sigma \sigma}(\mc T)\cap C_{D^b(pt),\KK}(\mc A)} =\infty$. 

{\rm (b) }  $\mc A \in C_{l,\sigma \sigma}(\mc T)$  iff $ C_{D^b(pt),\KK}(\mc A)\subset C_{D^b(pt),\sigma \sigma}(\mc T)$.
 \end{lemma}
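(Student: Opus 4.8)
The plan is to fix a representative functor and then transport the two statements through bijections that are already in place: the correspondence \eqref{25.1.18.1} identifying $C_l(\mc T)$ with a set of subcategories, and the bijection of Remark \ref{remark for derived points 1} between $\mc E$ and the derived points of an image category. First I would choose a fully faithful exact $\KK$-linear functor $F$ with ${\rm Im}(F)=\mc A$ representing the class that corresponds to $\mc A$ under \eqref{25.1.18.1}. By Remark \ref{remark for subsets} the sets $C'_{l,\sigma}(\mc T)$ and $C'_{l,\sigma\sigma}(\mc T)$ are unions of equivalence classes, so the assertions $\mc A\in C_{l,\sigma}(\mc T)$ and $\mc A\in C_{l,\sigma\sigma}(\mc T)$ are equivalent, independently of the chosen representative, to $F\in C'_{l,\sigma}(\mc T)$ and $F\in C'_{l,\sigma\sigma}(\mc T)$ respectively. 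This reduces the lemma to comparing the defining conditions \eqref{C'lsigma}, \eqref{C'lsigmasigma} with the right-hand sides of (a) and (b).

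Next I would set up the dictionary on derived points. By Remark \ref{remark for derived points 1} the assignment $\mc E\ni E\mapsto \langle F(E)\rangle$ is a bijection onto $C_{D^b(pt),\KK}(\mc A)$. Here $F(E)$ is exceptional, since $F$ is fully faithful and exact, so $\langle F(E)\rangle$ is genuinely a derived point; moreover, because $\mc A$ is a full triangulated subcategory of $\mc T$, it is simultaneously a derived point of $\mc A$ and of $\mc T$, generated in either ambient category by the same object $F(E)$. Applying Definition \ref{def of stable derived points} to this generator yields the pointwise equivalence $\langle F(E)\rangle\in C_{D^b(pt),\sigma\sigma}(\mc T)$ if and only if $F(E)\in\sigma^{ss}$. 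Restricting the above bijection then identifies $C_{D^b(pt),\sigma\sigma}(\mc T)\cap C_{D^b(pt),\KK}(\mc A)$ with precisely the set $\{E\in\mc E: F(E)\in\sigma^{ss}\}$ appearing in \eqref{C'lsigma} and \eqref{C'lsigmasigma}.

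With this dictionary both parts are immediate. For (a), comparison with \eqref{C'lsigma} shows that $F\in C'_{l,\sigma}(\mc T)$ means $\abs{\{E\in\mc E: F(E)\in\sigma^{ss}\}}=\infty$, which under the bijection is exactly $\abs{C_{D^b(pt),\sigma\sigma}(\mc T)\cap C_{D^b(pt),\KK}(\mc A)}=\infty$. For (b), comparison with \eqref{C'lsigmasigma} shows that $F\in C'_{l,\sigma\sigma}(\mc T)$ means $F(E)\in\sigma^{ss}$ for every $E\in\mc E$, i.e.\ every derived point $\langle F(E)\rangle$ lies in $C_{D^b(pt),\sigma\sigma}(\mc T)$; since these points exhaust $C_{D^b(pt),\KK}(\mc A)$, this is the inclusion $C_{D^b(pt),\KK}(\mc A)\subset C_{D^b(pt),\sigma\sigma}(\mc T)$.

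I expect no serious obstacle: the argument is essentially bookkeeping that unwinds the definitions through the two bijections. The only points deserving care are the well-definedness step, namely that membership of $\mc A$ in the semistable or stable locus does not depend on the representative $F$, which is exactly what Remark \ref{remark for subsets} supplies, and the observation that a derived point of $\mc A$ is also a derived point of $\mc T$ with the same exceptional generator, so that its $\sigma$-semistability may legitimately be tested inside $\mc T$ via Definition \ref{def of stable derived points}.
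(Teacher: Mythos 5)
Your proposal is correct and follows essentially the same route as the paper, which presents the lemma as an immediate consequence of Remark \ref{remark for derived points 1} and Definition \ref{def of stable derived points}: your argument simply makes explicit the bookkeeping the paper leaves implicit (choice of representative $F$, independence of that choice via Remark \ref{remark for subsets}, and the identification of $C_{D^b(pt),\sigma\sigma}(\mc T)\cap C_{D^b(pt),\KK}(\mc A)$ with $\{E\in\mc E: F(E)\in\sigma^{ss}\}$). No gaps.
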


 \begin{corollary} \label{closed subset} For any $\mc A \in C_l(\mc T)$,
 $l\geq -1$ 	the set $\{\sigma \in \st(\mc T):  \mc A\in C_{l,\sigma \sigma}(\mc T)\}$ is a closed subset in $\st(\mc T)$. 
 \end{corollary}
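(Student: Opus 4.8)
The plan is to translate the condition ``$\mathcal{A} \in C_{l,\sigma\sigma}(\mc T)$'' into a statement about semistability of individual objects, reduce to a single exceptional object, and then exploit the continuity of Harder--Narasimhan phases in the parameter $\sigma$.

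First I would apply Lemma \ref{sigmasemistablesubcategory}(b): $\mathcal{A} \in C_{l,\sigma\sigma}(\mc T)$ holds precisely when every derived point of $\mathcal{A}$ is $\sigma$-semistable, i.e. $C_{D^b(pt),\KK}(\mathcal{A}) \subseteq C_{D^b(pt),\sigma\sigma}(\mc T)$. Choosing for each derived point $\langle E\rangle \in C_{D^b(pt),\KK}(\mathcal{A})$ an exceptional generator $E$ --- the condition $E \in \sigma^{ss}$ being independent of this choice by Definition \ref{def of stable derived points} --- this gives
$$\{\sigma \in \st(\mc T) : \mathcal{A} \in C_{l,\sigma\sigma}(\mc T)\} = \bigcap_{\langle E\rangle \in C_{D^b(pt),\KK}(\mathcal{A})} \{\sigma \in \st(\mc T) : E \in \sigma^{ss}\}.$$
Since an arbitrary intersection of closed sets is closed (the index set may well be infinite, as in the main example, but that is harmless), it suffices to prove that for each fixed exceptional, in particular nonzero, object $E$ the semistable locus $\{\sigma : E \in \sigma^{ss}\}$ is closed.

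For this key step I would use that $E \in \sigma^{ss}$ if and only if the Harder--Narasimhan filtration of $E$ is trivial, that is, if and only if $\phi_\sigma^+(E) = \phi_\sigma^-(E)$, bearing in mind that one always has $\phi_\sigma^+(E) \geq \phi_\sigma^-(E)$. Thus $\{\sigma : E \in \sigma^{ss}\}$ is the zero locus of the nonnegative function $\sigma \mapsto \phi_\sigma^+(E) - \phi_\sigma^-(E)$. Invoking Bridgeland's construction of $\st(\mc T)$ --- with respect to the generalized metric that generates its topology, the functions $\sigma \mapsto \phi_\sigma^\pm(E)$ are $1$-Lipschitz for each fixed $E$, hence continuous --- this difference is a continuous function, so its zero locus is closed, as desired.

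The main obstacle is exactly the continuity of $\sigma \mapsto \phi_\sigma^\pm(E)$ against the manifold topology on $\st(\mc T)$: one must use Bridgeland's comparison between the metric topology and the topology induced by the local homeomorphism $\sigma \mapsto Z_\sigma$ to the space of central charges. Granting that input, the remaining steps --- the reformulation via Lemma \ref{sigmasemistablesubcategory}, the reduction to a single object, and closedness under intersection --- are purely formal.
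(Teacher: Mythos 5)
Your proposal is correct and follows essentially the same route as the paper: both reduce, via Lemma \ref{sigmasemistablesubcategory}, to the intersection over the exceptional objects (equivalently, derived points) of $\mc A$ of the loci $\{\sigma : E\in\sigma^{ss}\}$, and then use that each such locus is closed — a fact the paper simply cites from \cite[p.~342]{Bridgeland}, while you unpack the underlying argument via the continuity of $\sigma\mapsto\phi_\sigma^{\pm}(E)$ in the metric topology. The extra detail is accurate but does not change the structure of the proof.
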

 \begin{proof}  Let $\mc E$ be the set of exceptional objects  in $\mc A$. From \cite[p. 342]{Bridgeland} is known that for any object $X$ in $\mc T$ the set $\{\sigma \in \st(\mc T): X\in \sigma^{ss}\}$ is closed. From Lemma  \ref{sigmasemistablesubcategory} we see that  $\{\sigma \in \st(\mc T): \mc  A\in C_{l,\sigma \sigma}(\mc T)\} = \bigcap_{E \in\mc E} \{\sigma \in \st(\mc T): X\in \sigma^{ss}\}.$
 	\end{proof} 
  \begin{remark} \label{remark for helix}  Let $\mc E$ be a set of exceptional objects as in Definition  \ref{main def}.
  	
  (a)	Let $l\geq 1$ and  let $\{s_i\}_{i\in \ZZ}$ be a Helix in $N\PP^l$ (see \cite[Subsection 7.1]{DK1}). Then from \cite[(72), (73)]{DK1} we see that we can choose  $\mc E=\{s_i:i\in \ZZ\}$ and that the function $\ZZ \ni i \mapsto s_i \in \mc E$ is bijection. 
  	
  (b)	If $l=0$ or $l=-1$, then  $N\PP^l$, and hence $\mc E$, has three or respectively two  elements (see \cite[Figure 1]{dimkatz2}) and it follows that   for $l=0, l=-1$ we have always $	C_{l, \sigma
  	}^{\Gamma}(\mc T)=\emptyset$. The derived points in a genus zero nc curve cannot be ordered in a semi-orthogonal triple (follows also from \cite[Figure 1]{dimkatz2}).  
  	
  	(c) From (a) it follows that for $l\geq 1$  we have $C_{l,\sigma\sigma}(\mc T) \subset C_{l,\sigma}(\mc T) \subset C_{l}(\mc T)$  (recall  remark \ref{remark for subsets}).
  \end{remark}

\begin{remark} \label{remark for general A}
Taking into account Remark \ref{sigmasemistablesubcategory}, it seems reasonable  to call the elements in  $C_{l,\sigma \sigma}(\mc T)$ point-wise semistable and those in $C_{l,\sigma}(\mc T)$ - almost everywhere point-wise semistable. Also one can define similarly point-wise semistability for any subcategory in $\mc T$. Actually, provided that one has defined semi-stability for some triangulated category $\mc A$ ( could be different from derived point), then by analogy one gets in similar fashion notions of  $\mc A$-wise semistable and of almost everywhere $\mc A$-wise semistable subcategories in $\mc T$. However, here we will use the shorter terminology fixed in Definition \ref{main def} and its equivalent description in Lemma \ref{sigmasemistablesubcategory}. 
\end{remark}

\section{Some remarks for $\sigma$-exceptional collections and   $C_{l,\sigma \sigma}(\mc T)$} \label{stabilities without stable curves}

\textit{From now on, we will assume that the triangulated categories we are working with are proper. }

Firstly, we recall a Definition from \cite{dimkatz3} : 
\begin{definition} \cite[Definition 3.17, Remark 3.19]{dimkatz3} 
\label{sigma exceptional collection} Let $\sigma =({\mc P}, Z) \in \st(\mc T)$. An exceptional collection ${\mc E }=(E_0,E_1,\dots,E_n)$  is called
	\textit{$\sigma$-exceptional collection} if the following properties hold:
	\begin{itemize}
		\item $\mc E$ is semistable w. r. to $\sigma$ (i. e. all $E_i$ are semistable).
		\item $\forall i\neq j$ $\hom^{\leq 0}(E_i, E_j)=0$ (i. e. this is an Ext-exceptional collection).
		\item There exists $t \in \RR$, s. t. $\{\phi(E_i)\}_{i=0}^n \subset (t,t+1]$ (this is equivalent to $\abs{\phi(E_i)-\phi(E_j)}<1$ for $0\leq i<k\leq n$).
	\end{itemize}
\end{definition}
\begin{remark} \label{remark for fe} Let $\mc E$ be a full exceptional collection. The following is  a well defined assignment: \begin{gather} \label{assignment} \bd  \{ \sigma \in \st(\mc T): \mc E \subset \sigma^{ss}\} \ni (\mc P,Z) & \rMapsto^{f_{\mc E}}  & \left (\{\abs{Z(E_i)} \}_{i=0}^n, \{ \phi_\sigma(E_i) \}_{i=0}^n\right )  \in \RR^{2(n+1)} \ed \end{gather}
	
{\rm (a) } Let $\mc E$ be Ext\footnote{satisfies the second condition in Definition \ref{sigma exceptional collection}}	and let $\Theta'_{\mc E}$ denote the set of stability conditions for which $\mc E$ is $\sigma$-exceptional. From \cite[Corollary 3.18, Remark 3.19 ]{dimkatz3}  we know that  $\Theta'_{\mc E}$ is an open subset in $\st(\mc T)$ and \eqref{assignment} 
 restricted to  $\Theta_{\mc E}'$  gives a homeomorphism between $\Theta_{\mc E}'$ and  the following  simply connected set:\\ 
$\left \{(x_0,\dots,x_n,y_0,\dots,y_n) \in \RR^{2(n+1)}\ : \ x_i>0,\ \abs{y_i-y_j}<1 \right \}.$ 

{\rm (b) } Let $\mc E$ be any full exceptional collection in a proper $\mc T$, then after shifting it is an Ext-exceptional collection.  Let   $\Theta_{\mc E}$ denote the set of stability conditions $\sigma\in \st(\mc T)$, such that a shift of $\mc E$ is $\sigma$-exceptional.  In \cite[ (11) ]{dimkatz4} it is shown that \eqref{assignment} restricted to the open subset  $\Theta_{\mc E}$ is also homeomorphism, however the image is more complicated than for  $\Theta_{\mc E}'$. 
\end{remark}

For a class of objects $\mathcal{E}$ in  $\mathcal{T}$ we denote by $\hat{\mathcal{E}} $  the extension closure in $\mathcal{T}$ of $\mathcal{E}$.

\begin{remark}
	\label{exceptionalgenerator}
Let $E\in \mc T$ be an exceptional object. It is well known that  every object $X$ in $\langle E\rangle $ is a direct product of finitely many objects from the class $\{E[i]\}_{i\in\mathbb{Z}}$, that  $\hat E[i]$ is a heart of a bounded  t-structure in $\langle E\rangle $, and that  $\hat E[i]$ is of finite length for each $i\in \ZZ$. Due to  $\textnormal{\rm hom}^i(E,E)=0$ whenever $i\neq 0$, it follows that any object $X$ in $\hat E$ is a direct product of copies of $E$.
\end{remark}

\begin{proposition}
	\label{nonentangled}
	Let $\mathcal{T}$ be a triangulated category. Let $\sigma \in \st(\mc T)$.  If  $\mathcal{E} = (E_0,\ldots,E_n)$ its full $\sigma$-exceptional collection with strictly increasing phases of its objects, i. e.  $\phi_{\sigma}(E_{i+1})>\phi_{\sigma}(E_i)$ for all $0\leq i<n$, then the slicing $\mathcal{P}$ of $\sigma$ is given in the following way. 
	\begin{align}
		\label{27.2.18.1}
		\mathcal{P}(t) = \begin{cases}
			\hat{E_i}  [j]&\textrm{when }t = \phi_{\sigma}(E_i) + j\\
			\{0\}&\textrm{otherwise}
		\end{cases}
	\end{align}
\end{proposition}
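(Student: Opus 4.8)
The plan is to introduce the candidate slicing $\mathcal{Q}$ defined by the right-hand side of \eqref{27.2.18.1} and prove $\mathcal{Q}=\mathcal{P}$, where $\sigma=(Z,\mathcal{P})$. First I would note that $\mathcal{Q}$ is well defined: the phases $\phi_\sigma(E_i)$ are distinct, and since $\mathcal{E}$ is $\sigma$-exceptional they lie in an interval $(t,t+1]$ of length $<1$ (Definition \ref{sigma exceptional collection}), so no two numbers $\phi_\sigma(E_i)+j$ and $\phi_\sigma(E_{i'})+j'$ with $i\neq i'$ coincide. The easy half is the containment $\mathcal{Q}(t)\subseteq\mathcal{P}(t)$ for every $t$: each $E_i$ is $\sigma$-semistable of phase $\phi_\sigma(E_i)$, so $E_i\in\mathcal{P}(\phi_\sigma(E_i))$; by Remark \ref{exceptionalgenerator} every object of $\hat{E_i}$ is a finite direct sum of copies of $E_i$, hence lies in $\mathcal{P}(\phi_\sigma(E_i))$ by additivity, and shifting gives $\hat{E_i}[j]\subseteq\mathcal{P}(\phi_\sigma(E_i)+j)$. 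Granting this containment, it suffices to show that $\mathcal{Q}$ is \emph{itself} a slicing: for $X\in\mathcal{P}(t)$ its $\mathcal{Q}$-HN filtration has factors in slices $\mathcal{Q}(\psi_k)\subseteq\mathcal{P}(\psi_k)$ with $\psi_1>\cdots>\psi_m$, which by the containment are $\mathcal{P}$-semistable of phase $\psi_k$, so it is also a $\mathcal{P}$-HN filtration of $X$; uniqueness (Remark \ref{uniqueHN}) together with the $\mathcal{P}$-semistability of $X$ forces this filtration to be trivial, whence $X\in\mathcal{Q}(t)$ and $\mathcal{P}(t)=\mathcal{Q}(t)$.

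It then remains to verify the three slicing axioms for $\mathcal{Q}$ (Definition \ref{slicing}). Axiom (1) is immediate from the formula. For the Hom-vanishing axiom (2), additivity reduces it to $\hom(E_i[a],E_{i'}[b])=0$ whenever $\phi_\sigma(E_i)+a>\phi_\sigma(E_{i'})+b$, i.e.\ $\hom^{b-a}(E_i,E_{i'})=0$ under $b-a<\phi_\sigma(E_i)-\phi_\sigma(E_{i'})$. For $i=i'$ exceptionality forces $b=a$, which is incompatible with the strict inequality. For $i>i'$ the collection gives $\hom^{\ast}(E_i,E_{i'})=0$ in all degrees. For $i<i'$ one has $\phi_\sigma(E_i)-\phi_\sigma(E_{i'})<0$, hence $b-a<0$, and the Ext-exceptional condition $\hom^{\leq 0}(E_i,E_{i'})=0$ applies.

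The substantive point, and what I expect to be the main obstacle, is the existence of HN filtrations (axiom (3)). Since $\mathcal{E}$ is a full Ext-exceptional collection, its extension closure $\mathcal{A}=\hat{\mathcal{E}}$ is the heart of a bounded t-structure on $\mathcal{T}$, a finite-length abelian category whose simple objects are $E_0,\dots,E_n$ (the structural result underlying $\sigma$-exceptional collections, cf.\ \cite{dimkatz3}). Given $X\neq 0$, the t-structure yields a finite filtration with factors $H^k(X)[-k]$, $H^k(X)\in\mathcal{A}$; across distinct $k$ these factors have phases in the disjoint intervals $(t-k,t-k+1]$, so ordering by increasing $k$ already gives strictly decreasing phases between different cohomological degrees. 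Within a single $H^k(X)\in\mathcal{A}$ I would peel off the simples in order of decreasing phase: for each threshold let $T$ consist of the objects of $\mathcal{A}$ all of whose Jordan--H\"older factors are among the $E_i$ of phase above the threshold and $F$ those with all factors below, and check that $(T,F)$ is a torsion pair. The key input is that a high-phase factor forces a high-phase subobject: if $Y$ has a simple subobject $S=E_j$ with $j$ small but a Jordan--H\"older factor $E_i$ with $i>j$, then $\mathrm{Ext}^1_{\mathcal{A}}(E_i,E_j)=\hom^1(E_i,E_j)=0$ for $i>j$ (exceptionality) makes the relevant extension split, producing a subobject in $T$; iterating shows the torsion-free class is exactly the low-phase part. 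The resulting torsion filtration of $H^k(X)$ has successive quotients $E_i^{\oplus m_i}\in\hat{E_i}$ with phases decreasing from $\phi_\sigma(E_n)$ (as subobject) down to $\phi_\sigma(E_0)$ (as quotient), and splicing these into the t-structure filtration produces a filtration of $X$ with factors in $\hat{E_i}[-k]=\mathcal{Q}(\phi_\sigma(E_i)-k)$ of globally strictly decreasing phase, that is, a $\mathcal{Q}$-HN filtration as in \eqref{17.1.18.1}.

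This establishes that $\mathcal{Q}$ is a slicing, and combined with the first paragraph it gives $\mathcal{P}=\mathcal{Q}$, which is \eqref{27.2.18.1}. I expect the difficulty to be concentrated entirely in axiom (3): both the identification of $\mathcal{A}=\hat{\mathcal{E}}$ as a bounded-t-structure heart and the internal sorting of each $H^k(X)$ by phase (the torsion-pair argument, which crucially uses $\mathrm{Ext}^1(E_i,E_j)=0$ for $i>j$) require care, whereas the containment $\mathcal{Q}(t)\subseteq\mathcal{P}(t)$, axioms (1)--(2), and the concluding uniqueness argument are routine.
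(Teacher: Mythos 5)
Your proof is correct, but it reaches the conclusion by a genuinely different route than the paper in both of its two substantive steps. For the existence of HN filtrations (axiom (3)), the paper works directly with the filtration coming from the semiorthogonal decomposition $\langle E_0\rangle,\ldots,\langle E_n\rangle$, whose factors are direct sums of shifts $E_i[j]$, and then "bubble-sorts" adjacent factors: Lemma \ref{equiavalentdiag} (via the octahedral axiom) lets it either transpose two out-of-order neighbours when the connecting map $E_{i_2}[j_2]\to E_{i_1}[j_1+1]$ vanishes, or merge them when that map is an isomorphism, the Ext-exceptionality guaranteeing one of the two alternatives; you instead invoke the structural fact that $\hat{\mathcal{E}}$ is a finite-length heart with simples $E_0,\ldots,E_n$ and sort each cohomology object $H^k(X)$ by a torsion-pair/Jordan--H\"older argument resting on $\mathrm{Ext}^1_{\mathcal{A}}(E_i,E_j)=\hom^1(E_i,E_j)=0$ for $i>j$. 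Your route imports a stronger (though standard, and available in \cite{dimkatz3}) input, while the paper's stays at the level of the triangulated axioms. For the final identification of the candidate slicing with $\mathcal{P}$, the paper verifies local finiteness so that $(\mathcal{Q},Z)$ is a point of $\st(\mc T)$ and then appeals to the injectivity of the chart \eqref{assignment} on $\Theta'_{\mathcal{E}}$ from Remark \ref{remark for fe}; your argument --- the inclusion $\mathcal{Q}(t)\subseteq\mathcal{P}(t)$ plus uniqueness of HN filtrations applied to a $\mathcal{P}$-semistable object --- is more elementary and entirely bypasses both local finiteness and the homeomorphism result, which is a genuine simplification of that step. The only places where your write-up is thinner than it should be are the verification that $(T,F)$ is indeed a torsion pair (the induction on length producing the maximal high-index subobject) and the semisimplicity of each isotypic layer, which additionally uses $\hom^1(E_i,E_i)=0$; both are routine given what you have already set up.
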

\begin{proof} We note first that from the given properties of $\mc E$ and $\sigma$ it follows:
	
	\begin{equation} \label{remark for non-entangled} 
	\phi_{\sigma}(E_{i_2}[j_2]) > \phi_{\sigma}(E_{i_1}[j_1])\iff j_2>j_1\textrm{ or }j_2=j_1,i_2>i_1\,.
	\end{equation}

	Let us show that $\sigma' =(\mathcal{P},Z)$ indeed gives a locally finite  stability condition when $\mathcal{P}$ is given by the equation \eqref{27.2.18.1} and $Z$ is the central charge of the given stability condition. For that, we only need to show that $\mathcal{P}$ is a locally finite  slicing on $\mathcal{T}$.  The first two axioms of the Definition \ref{slicing}  follow immediately. By a property of semi-orthogonal decomposition,  for any object $F$ there is a filtration  with factors $A_i$ that are objects of $\langle E_i\rangle $ and as such are the direct sums of shifts of $E_i$ by Remark \ref{exceptionalgenerator}. So $\mathcal{T}$ is the extension closure of $\{E_i[j]\}_{0\leq i\leq n,j\in\mathbb{Z}}$. However, when there is a diagram of the form 
	$$
	\begin{tikzcd}
	A \arrow[rr]&&B\arrow[rr]\arrow{dl}&&C\arrow{dl}\\
	&E_{i_1}[j_1]\arrow{ul}{[1]}&&E_{i_2}[j_2]\arrow{ul}{[1]}
	\end{tikzcd}
	$$
	where $\phi_Z(E_{i_2}[j_2]) > \phi_Z(E_{i_1}[j_1]) $, then by $\mathcal{E}$ being Ext-exceptional and using  \ref{remark for non-entangled} it follows that  the composition $E_{i_2}[j_2]\to B[1]\to E_{i_1}[j_1+1]$ is either  zero morphism or isomorphism \footnote{in the case $i_1=i_2$ and $j_2=j_1+1$ this arrow is of the form $E_{i_1}[j_1+1]\to E_{i_1}[j_1+1]$}, so we see from Lemma \ref{equiavalentdiag} that we can either  interchange the order of the factors  $E_{i_1}[j_1]$, $ E_{i_2}[j_2]$  or the  composition $A\rightarrow B \rightarrow C $ is isomorphism and  we can stick $A$ and $C$. Doing so until any two neighboring factors $E_{i_1}[j_1]$ and $E_{i_2}[j_2]$ have $\phi_Z(E_{i_1}[j_1])\geq \phi_Z(E_{i_2}[j_2])$ and then using the octahedral axiom where  two neighboring factors $E_{i_1}[j_1]$ and $E_{i_2}[j_2]$ have $\phi_Z(E_{i_1}[j_1]) = \phi_Z(E_{i_2}[j_2])$, gives us a HN-filtration of $F$ with respect to $\mathcal{P}$. In this slicing  there exists  $\epsilon>0$, such that $\mc P(t-\epsilon,t+\epsilon)$  is either trivial or $\hat{E_i}  [j]$ for some $i,j$, which by Remark \ref{exceptionalgenerator} is a locally finite abelian category,  and therefore $\mc P$ is locally finite slicing (see \cite[Deifinition 5.7]{Bridgeland}). Therefore $\sigma'\in \st(\mc T)$. 
	
	Since $\phi_{\sigma'}(E_i)=\phi_{\sigma}(E_i)$ for $0\leq i \leq n$ it follows that $\mc E $ is also  $\sigma'$-exceptional. Furthermore, the central charges of $\sigma$ and $\sigma'$ are the same. Therefore the bijection \eqref{assignment} has equal values on $\sigma$ and $\sigma'$ and therefore $\sigma'=\sigma$.  
\end{proof}

\begin{corollary}

	\label{nonentangled non semistable}
	Let  $\mathcal{T}$ be a triangulated category and $\sigma\in \st(\mc T)$. Let $\mathcal{E} = (E_0,\ldots,E_n)$ be a full $\sigma$-exceptional collection with strictly increasing phases.  Then the set of   stable  derived points, as defined in Definition \ref{def of stable derived points}, is $C_{D^b(pt), \sigma \sigma}=\{\langle E_0 \rangle, \langle E_1 \rangle, \dots, \langle E_n \rangle \}$. In particular this set is finite and $C_{l,\sigma \sigma }(\mc T) =C_{l,\sigma}(\mc T) = \emptyset$ for $l\geq 0$. 

\end{corollary}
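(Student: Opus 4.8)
The plan is to read the entire set $\sigma^{ss}$ off the explicit slicing furnished by Proposition \ref{nonentangled}, and then to extract from it both the stable derived points and the (non-)existence of stable and semistable non-commutative curves.

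First I would record the precise shape of $\sigma^{ss}$. By Proposition \ref{nonentangled} every non-zero $\mc P(t)$ equals $\hat{E_i}[j]$ for a unique pair $(i,j)$ with $t=\phi_\sigma(E_i)+j$, and by Remark \ref{exceptionalgenerator} the objects of $\hat{E_i}$ are exactly the finite direct sums of copies of $E_i$. Hence a non-zero object is $\sigma$-semistable if and only if it is isomorphic to $E_i[j]^{\oplus k}$ for some $0\le i\le n$, $j\in\ZZ$, $k\ge 1$. In particular, an exceptional object $E\in\sigma^{ss}$ lies in some $\mc P(t)=\hat{E_i}[j]$, so $E\cong E_i[j]^{\oplus k}$, and exceptionality forces $k=1$, i.e.\ $E\cong E_i[j]$; conversely each $E_i[j]$ is exceptional and semistable. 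Thus the semistable exceptional objects are precisely the shifts of $E_0,\dots,E_n$.

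Next I would compute $C_{D^b(pt),\sigma\sigma}(\mc T)$. By Definition \ref{def of stable derived points} this is $\{\langle E\rangle: E \text{ exceptional}, E\in\sigma^{ss}\}$, and since every semistable exceptional object is a shift $E_i[j]$ with $\langle E_i[j]\rangle=\langle E_i\rangle$, we obtain $C_{D^b(pt),\sigma\sigma}(\mc T)=\{\langle E_0\rangle,\dots,\langle E_n\rangle\}$. These $n+1$ subcategories are pairwise distinct: if $\langle E_i\rangle=\langle E_{i'}\rangle$ with $i<i'$, then $E_{i'}\in\langle E_i\rangle$ is exceptional, hence by Remark \ref{exceptionalgenerator} a single shift $E_{i'}\cong E_i[k]$, whence $\hom^{k}(E_{i'},E_i)=\hom^0(E_i,E_i)\neq 0$, contradicting $\hom^{*}(E_{i'},E_i)=0$ which holds because $i'>i$ in the exceptional collection. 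This proves the displayed equality and shows the set is finite.

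Finally I would deduce the vanishings via Lemma \ref{sigmasemistablesubcategory}. For $C_{l,\sigma}(\mc T)$ with $l\ge 0$, part (a) requires an element to contain infinitely many stable derived points, impossible since $C_{D^b(pt),\sigma\sigma}(\mc T)$ is finite (for $l=0$ this is anyway forced by Remark \ref{remark for helix}(b)). For $C_{l,\sigma\sigma}(\mc T)$, part (b) requires \emph{every} derived point of $\mc A$ to be stable, and I would split on $l$. When $l\ge 1$, $\mc A\cong N\PP^l$ has infinitely many derived points by Remark \ref{remark for helix}(a) together with Remark \ref{remark for derived points 1}, so they cannot all lie in the finite set $C_{D^b(pt),\sigma\sigma}(\mc T)$. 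The genuinely separate case, and the main obstacle, is $l=0$, where $\mc A$ has only three derived points and the finiteness argument fails: here I would observe that if all three were stable they would equal $\langle E_{i_0}\rangle,\langle E_{i_1}\rangle,\langle E_{i_2}\rangle$ with pairwise distinct indices (by the distinctness just established), and, being a sub-collection of $(E_0,\dots,E_n)$, after reordering they form a semi-orthogonal exceptional triple generating $\mc A$; this contradicts Remark \ref{remark for helix}(b) (equivalently, it would yield three linearly independent classes in the rank-two group $K_0(\mc A)$), completing the proof.
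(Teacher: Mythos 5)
Your proposal is correct and follows essentially the same route as the paper: it reads $\sigma^{ss}$ off Proposition \ref{nonentangled} and Remark \ref{exceptionalgenerator} to identify the stable derived points with $\langle E_0\rangle,\dots,\langle E_n\rangle$, kills $C_{l,\sigma}$ and $C_{l,\sigma\sigma}$ for $l\ge 1$ by finiteness via Lemma \ref{sigmasemistablesubcategory}, and handles $l=0$ by the semi-orthogonality obstruction of Remark \ref{remark for helix}(b). The extra verifications you include (pairwise distinctness of the $\langle E_i\rangle$, the explicit case split) are fine and only make explicit what the paper leaves implicit.
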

\begin{proof}  From the previous  Proposition we have $\sigma^{ss}=\{E: E \in \hat{E_i}[j]: 0\leq i \leq n, j \in \ZZ\}$. If $\langle E \rangle$ is a stable derived point, then from Definition \ref{def of stable derived points} $E \in \sigma^{ss}$, hence $E \in  \hat{E_i}[j]$ for some $ 0\leq i \leq n $  and some $ j \in \ZZ$  and from Remark \ref{exceptionalgenerator} it follows that $\langle E \rangle = \langle E_i \rangle $. Obviously $ \langle E_i \rangle $ is stable for each $i$. Therefore indeed  $C_{D^b(pt), \sigma \sigma}=\{\langle E_0 \rangle, \langle E_1 \rangle, \dots, \langle E_n \rangle \}$, and it has $n+1$ elements.  Now from Lemma \ref{sigmasemistablesubcategory} it follows that  $C_{l,\sigma}(\mc T) =C_{l,\sigma \sigma}(\mc T) = \emptyset$ for $l\geq 1$. On the other hand, if $ C_{0,\sigma \sigma}(\mc T) \neq \emptyset$ and  $\mc A \in C_{0,\sigma \sigma}(\mc T)$, then the three derived points in $\mc A$ (see  Remark \ref{remark for helix} (b)) must be  of the form $ \langle E_i \rangle, \langle E_j \rangle,  \langle E_k \rangle$ with $0\leq i<j<k\leq n$, which contradicts the last sentence in  Remark \ref{remark for helix} (b). \end{proof}

\begin{remark} \label{remark for non-entangled 1} Let $\mathcal{E}=(E_0,\ldots,E_n)$ be a full Ext-exceptional collection. 
	The homeomorphism \eqref{assignment} restricted to the set of stability conditions $\sigma \in \Theta_{\mc E}'$ with strictly increasing phases on $\mc E$   gives a homeomorphism between this set and the following  subset of $\RR^{2(n+1)}$:
	$$\left \{(x_0,\dots,x_n,y_0,\dots,y_n) \in \RR^{2(n+1)}\ \ : \ \ x_i>0,\ y_0<y_1<\dots < y_n < y_0+1 \right \}.$$
\end{remark}

\begin{proposition} \label{stable curve part 1}
		Let $\mathcal{T}$  have an enhancement, let $\sigma \in \st(\mc T)$ and let $\mathcal{E} = (E_0,\ldots,E_n)$ be a  full $\sigma$-exceptional collection, such that   $\hom^{1}(E_i,E_{i+1})=l+1$ and $\hom^{j}(E_i,E_{i+1})=0$ for $j\in \ZZ \setminus \{1\}$  for some $0\leq i <n$ and some $l\geq 0$. If  $\phi(E_i)\geq \phi(E_{i+1})$, then $\langle E_i,E_{i+1} \rangle \in C_{l,\sigma \sigma}(\mc T)$. 
\end{proposition}
\begin{proof} From  Proposition \ref{genbijec} we see that $\langle E_i,E_{i+1} \rangle \in C_{l}(\mc T)$, in particular there is an exact equivalence \begin{gather} \label{equivalence of Kl and} F: \langle  E_i,E_{i+1} \rangle\rightarrow D^b(K(l)).\end{gather}  We will show that all exceptional objects of $\mc T$, which lie in  $\langle E_i,E_{i+1} \rangle$,  are in $\sigma^{ss}$ and then from Lemma \ref{sigmasemistablesubcategory} will follow that $\langle E_i,E_{i+1} \rangle \in C_{l,\sigma \sigma}(\mc T)$. 
	Let $\mc A$ be the extension closure of $(E_i,E_{i+1})$ in $\mc T$.    From \cite[Proposition 2.2]{dimkatz4} we see that each exceptional object in $\mc T$, which is also in $\mc A$, is in $\sigma^{ss}$. To finish the proof it is enough to  show that  each exceptional object in $\langle E_i, E_{i+1} \rangle$ is isomorphic to a shift of  an object in $\mc A$. Since $F$ in \eqref{equivalence of Kl and} is exact equivalence, it is enough to prove that 
	\begin{gather}\label{help thing for a lemma} \mbox{\it  Each exceptional object in $D^b(K(l+1))$ is isomophic to a shift of an object in $F(\mc A)$,} \end{gather}
	
Now $F(E_i)$ is an exceptional object in $D^b(K(l+1))$. Using an auto-equivalence of $D^b(K(l+1))$ we can assume that $F(E_i)$ is the simple representation in $\textnormal{Rep}_{\KK}(K(l+1))$ with dimension vector $(1,0)$ (for the case $l=0$ we use \cite[Proposition 10.9]{dimkatz2} and for the case $l\geq 1$ we use \cite[Corollary 5.3]{DK31}). On the other hand  $F(E_i), F(E_{i+1})$ is an  exceptional pair and it follows that $F(E_{i+1})$ is isomorphic to a shift of the simple representation in  $\textnormal{Rep}_{\KK}(K(l+1))$ with dimension vector $(0,1)$ (this follows for example from results in \cite{crawley}). However, we have also  $\hom^1(F(E_i), F(E_{i+1})=l+1$ and it follows that  this shift is trivial. Hence we see that we can assume that $(F(E_i), F(E_{i+1}))$ is the exceptional  pair of the simple representations of $K(l)$. It follows that each  $X\in \textnormal{Rep}_{\KK}(K(l+1))$ is isomorphic to $F(A)$ for some $A\in \mc A$, and since $\textnormal{Rep}_{\KK}(K(l+1))$ is hereditary we obtain \eqref{help thing for a lemma}.
\end{proof} 
\begin{corollary} \label{stable curve}	
		Let $\mathcal{T}$  have  an enhancement, let  $\mathcal{E} = (E_0,\ldots,E_n)$ be a  full exceptional collection such that for some  $0\leq i < n$, some $\alpha \in \ZZ$, and some $l\geq 0$ holds 		
		$\hom^{1+\alpha}(E_i,E_{i+1})=l+1$ and  $\hom^{j}(E_i,E_{i+1})=0$ for $j\in \ZZ \setminus \{1+\alpha\}$.
		 If	 $\sigma \in \Theta_{\mc E}$ and  $\phi(E_i)\geq \phi(E_{i+1}[\alpha])$, then $\langle E_i,E_{i+1} \rangle \in C_{l,\sigma \sigma}(\mc T)$.
\end{corollary}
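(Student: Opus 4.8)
The plan is to reduce the statement to Proposition \ref{stable curve part 1} by shifting $\mc E$ into a genuine $\sigma$-exceptional collection. Since $\sigma \in \Theta_{\mc E}$, Remark \ref{remark for fe}(b) guarantees a shift vector $\mathbf{p} = (p_0, \dots, p_n) \in \ZZ^{n+1}$ for which $\mc E[\mathbf{p}] = (E_0[p_0], \dots, E_n[p_n])$ is a $\sigma$-exceptional collection in the sense of Definition \ref{sigma exceptional collection}. Because generated subcategories are insensitive to shifts, $\langle E_i[p_i], E_{i+1}[p_{i+1}] \rangle = \langle E_i, E_{i+1} \rangle$, so it suffices to show that the consecutive pair of $\mc E[\mathbf{p}]$ at positions $i, i+1$ meets the hypotheses of Proposition \ref{stable curve part 1}: that $\hom^1(E_i[p_i], E_{i+1}[p_{i+1}]) = l+1$ with vanishing in every other degree.

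The crux, and the step I expect to require the most care, is to show that the relative shift satisfies $p_{i+1} - p_i = \alpha$. Here I would exploit the bookkeeping $\hom^m(E_i[p_i], E_{i+1}[p_{i+1}]) = \hom^{m + p_{i+1} - p_i}(E_i, E_{i+1})$, so that by hypothesis the unique nonzero degree is $m = 1 + \alpha - (p_{i+1} - p_i)$, where the value is $l+1 \neq 0$. Two opposite inequalities then pin down the shift. First, the Ext-exceptionality of $\mc E[\mathbf{p}]$ forces $\hom^{\leq 0}(E_i[p_i], E_{i+1}[p_{i+1}]) = 0$, hence the surviving degree is $\geq 1$, giving $p_{i+1} - p_i \leq \alpha$. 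Second, the phase-window condition of a $\sigma$-exceptional collection yields $\phi_\sigma(E_i) + p_i - \phi_\sigma(E_{i+1}) - p_{i+1} < 1$; combining this with the hypothesis $\phi_\sigma(E_i) - \phi_\sigma(E_{i+1}) \geq \alpha$, which is exactly the inequality $\phi(E_i) \geq \phi(E_{i+1}[\alpha])$, and invoking integrality gives $p_{i+1} - p_i \geq \alpha$. Together these force $p_{i+1} - p_i = \alpha$.

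With the relative shift determined, $\hom^1(E_i[p_i], E_{i+1}[p_{i+1}]) = \hom^{1+\alpha}(E_i, E_{i+1}) = l+1$ and the vanishing in the remaining degrees is automatic, so Proposition \ref{stable curve part 1} applies to the full $\sigma$-exceptional collection $\mc E[\mathbf{p}]$ at the pair $(i,i+1)$ and yields $\langle E_i, E_{i+1} \rangle \in C_{l, \sigma \sigma}(\mc T)$. I would emphasise that the argument needs no uniqueness statement for $\mathbf{p}$: the two inequalities hold for \emph{any} shift making $\mc E[\mathbf{p}]$ $\sigma$-exceptional, so it is harmless that $\Theta_{\mc E}$ might a priori be realised by several shifts. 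The only mild preliminary is to note that $E_i, E_{i+1}$ are themselves $\sigma$-semistable, so that the phases $\phi_\sigma(E_i), \phi_\sigma(E_{i+1})$ appearing in the hypothesis are defined; this is immediate, since their shifts lie in the semistable collection $\mc E[\mathbf{p}]$ and semistability is preserved by shifting.
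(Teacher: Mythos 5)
Your proposal is correct and follows essentially the same route as the paper's own proof: obtain a shift $\mathbf{p}$ from $\sigma\in\Theta_{\mc E}$, pin down the relative shift $p_{i+1}-p_i=\alpha$ by playing the Ext-vanishing bound ($\leq\alpha$) against the phase-window bound ($\geq\alpha$, via integrality), and then invoke Proposition \ref{stable curve part 1}. The only cosmetic difference is that the paper normalises $p_i=0$ while you keep the general difference $p_{i+1}-p_i$; the substance is identical.
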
	
\begin{proof}
	By the definition of $\Theta_{\mc E}$ in   Remark \ref{remark for fe},  the sequence $(E_0[p_0], \dots E_{n}[p_n])$ is $\sigma$-exceptional for some $(p_0,\dots, p_n)\in \ZZ^{n+1}$. We can ssumme that $p_i=0$ and therefore $\hom^j(E_i,E_{i+1}[p_{i+1}])=0$ for $j\leq 0$, $\abs{\phi(E_i)-\phi(E_{i+1}[p_{i+1}])}<1$.  Since we are given $\hom^{1+\alpha}(E_i,E_{i+1})=l+1$ and  $\hom^{j}(E_i,E_{i+1})=0$ for $j\in \ZZ \setminus \{1+\alpha\}$, therefore $p_{i+1}\leq \alpha$. If $p_{i+1}<\alpha$, then $p_{i+1}=\alpha-\delta$ for some $\delta \geq 1$ and $\phi(E_i)-\phi(E_{i+1}[p_{i+1}])=\phi(E_i)-\phi(E_{i+1}[\alpha])+\delta \geq \delta\geq 1$ by the given $\phi(E_i)-\phi(E_{i+1}[\alpha])\geq 0$, and this contradicts $\abs{\phi(E_i)-\phi(E_{i+1}[p_{i+1}])}<1$, Therefore $p_{i+1}=\alpha$ and we apply the  Proposition \ref{stable curve part 1} to the sequence $(E_0[p_0], \dots E_{n}[p_n])$.
\end{proof}

\section{The example: Stable  non-commutative curves  in $D^b(Q)$} \label{section for semistable curves}

 We fix  $\mc T=D^b(Q)$, where  $Q$ is as in \eqref{triangular quiver}. 
Here we will study how the sets $C_{i,\sigma \sigma}(\mc T) \subset C_{i,\sigma}(\mc T)$ (defined in Definition \ref{main def})  change   as $\sigma$ varies in $\st(\mc T)$ and $i\in \ZZ$.   Recalling that $C_l(\mc T) = \emptyset$  for $l\geq 2$ (see Proposition \eqref{noncomcurves}) and using  Remark \ref{remark for helix} (c) we see that  $C_{l,\sigma \sigma}(\mc T) = C_{l,\sigma}(\mc T)=\emptyset$ for $l\geq 2$.

Our study uses to a great extent   results from \cite{dimkatz2, dimkatz3, dimkatz4} regarding exceptional objects in $\mc T$ and $\st(\mc T)$. We  present these results in the first two subsections.

\subsection{On the exceptional objects in $D^b(Q)$}
\label{exceptionalobjectofthequiver}

Let $\underline{\dim}(E) =(x,y,z)$ be the dimension vector of an exceptional representation in $\textrm{Rep}_{\KK}(Q)$.
Let $\pi^m_{\pm}:{\KK}^{m+1}\to {\KK}^m$ and $j^m_{\pm}:{\KK}^m\to {\KK}^{m+1}$ be linear maps such that
\begin{align*}
\pi^m_{+}(x_1,\ldots,x_{m+1}) = (x_1,\ldots,x_{m})\qquad\pi^m_{-}(x_1,\ldots,x_{m+1}) = (x_2,\ldots,x_{m+1})\\
j^m_+(x_1,\ldots,x_m) = (x_1,\ldots,x_m,0)\qquad
j^m_-(x_1,\ldots,x_m) = (0,x_1,\ldots,x_m)\,.
\end{align*}
\begin{proposition}\cite[Proposition 2.2]{dimkatz3} \label{the exceptionals}
	The exceptional objects of $\mathcal{T}$ up to equivalence are:
	$$
	E^m_1=\begin{tikzcd}[column sep = 1em,row sep =2em]
	&{\KK}^m\\
	{\KK}^{m+1}\arrow{rr}{\pi^m_-}\arrow{ur}{\pi^m_+}&&{\KK}^m\arrow[ul,swap, "\textrm{id}"]
	\end{tikzcd}\,E^m_2 =\begin{tikzcd}[column sep =1em,row sep =2em]
	&{\KK}^{m+1}\\
	{\KK}^m\arrow{rr}{j^m_-}\arrow{ur}{j^m_+}&&{\KK}^{m+1}\arrow[ul,swap, "\textrm{id}"]
	\end{tikzcd}\,E^m_3=\begin{tikzcd}[column sep = 1em,row sep =2em]
	&{\KK}^{m+1}\\
	{\KK}^m\arrow{rr}{\textrm{id}}\arrow{ur}{j^m_+}&&{\KK}^m\arrow[ul,swap, "j^m_-"]
	\end{tikzcd}
	$$
	$$
	E^m_4= \begin{tikzcd}[column sep = 1em,row sep =2em]
	&{\KK}^m\\
	{\KK}^{m+1}\arrow{rr}{\textrm{id}}\arrow{ur}{\pi^m_+}&&{\KK}^{m+1}\arrow[ul,swap,"\pi^m_-"]
	\end{tikzcd}\,M = \begin{tikzcd}[column sep = 2em,row sep =2em]
	&0\\
	0\arrow{rr}\arrow{ur}&&{\KK}\arrow[ul]
	\end{tikzcd}\,M' = \begin{tikzcd}[column sep = 2em,row sep =2em]
	&{\KK}\\
	{\KK}\arrow{rr}\arrow{ur}&&0\arrow[ul,""]
	\end{tikzcd}
	$$
	where $m$ goes over all non-negative integers. 
\end{proposition}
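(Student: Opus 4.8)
The category $\mathcal{T}=D^b(Q)=D^b(Rep_{\KK}(Q))$ is the bounded derived category of the \emph{hereditary} abelian category $Rep_{\KK}(Q)$, and the plan is to reduce the statement to a classification of representations. Since $Rep_{\KK}(Q)$ is hereditary, every indecomposable object of $\mathcal{T}$ is isomorphic to a shift of an object of the heart; an exceptional object is indecomposable (its endomorphism ring $\KK$ is local), hence it is a shift of an \emph{exceptional representation}, that is, an indecomposable $E\in Rep_{\KK}(Q)$ with ${\rm End}(E)=\KK$ and ${\rm Ext}^1(E,E)=0$. As the listed objects are honest representations and ``up to equivalence'' means up to shift and isomorphism, it then suffices to prove that $E^m_1,\dots,E^m_4$ $(m\geq 0)$, $M$ and $M'$ are exactly the exceptional representations of $Q$.

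The next step is to encode exceptionality root-theoretically. The underlying graph of $Q$ is the $3$-cycle, so $\KK Q$ is a tame hereditary algebra of affine type $\widetilde{A}_2$, with Euler form $\langle \mathbf{d},\mathbf{e}\rangle$ and quadratic Tits form $q(\mathbf{d})=\langle\mathbf{d},\mathbf{d}\rangle$ on $K_0(\mathcal{T})\cong\ZZ^3$, and null root $\delta=(1,1,1)$. For an indecomposable $E$ one has $\hom(E,E)-\hom^1(E,E)=q(\underline{\dim}E)$, so an exceptional representation is precisely a brick whose dimension vector is a positive real root $(q=1)$; equivalently $\underline{\dim}E$ is a positive real Schur root. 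The organizing invariant is the defect $\partial(\mathbf{d})=\langle\delta,\mathbf{d}\rangle$: a direct computation solving $q(\mathbf{d})=1$ shows that every positive real root of $Q$ has defect in $\{-1,0,+1\}$, that the defect $\pm1$ roots are exactly the dimension vectors of the four families $E^m_1,\dots,E^m_4$, and that the defect $0$ (regular) real roots form two sequences among which only $\underline{\dim}M=(0,1,0)$ and $\underline{\dim}M'=(1,0,1)$ satisfy $\underline{\dim}M+\underline{\dim}M'=\delta$.

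I would then treat the preprojective/preinjective part and the regular part separately. Indecomposables of nonzero defect are preprojective or preinjective, and for a tame hereditary algebra these are all bricks without self-extensions, hence exceptional; so it remains to check that the explicit representations $E^m_i$, built from the maps $\pi^m_\pm$ and $j^m_\pm$, are indecomposable and realise precisely the defect $\pm1$ dimension vectors (indecomposability from the maps, exceptionality then free from $q=1$). The regular indecomposables live in tubes; here I would invoke that for type $\widetilde{A}_{2,1}$ (the orientation of $Q$) there is a single non-homogeneous tube, of rank $2$, whose two quasi-simple objects are $M$ and $M'$, and that a regular indecomposable is a brick exactly when its regular length is strictly less than the rank of its tube. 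Consequently the only regular exceptionals are the two rank-$2$ quasi-simples $M,M'$ --- which is precisely the statement that they form the unique pair with ${\rm Ext}^1(M,M')\neq0$ and ${\rm Ext}^1(M',M)\neq0$ --- while every other regular real root, e.g.\ $\delta+\underline{\dim}M=(1,2,1)$, has regular length $\geq 2$ and fails to be a brick.

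The main obstacle is the regular part. The preprojective/preinjective families fall out cleanly from the bounded-defect computation, but ruling out all remaining regular real roots requires genuine control of the tube combinatorics of $\widetilde{A}_{2,1}$ (equivalently, computing ${\rm End}$ of the non-Schur regular indecomposables of dimensions $\delta+\underline{\dim}M$, $\delta+\underline{\dim}M'$, $2\delta+\cdots$ and confirming each exceeds $\KK$), together with the fact --- confirmed by the tubular-type count $\sum_i(r_i-1)=\#\{\text{simples}\}-2=1$ --- that exactly one exceptional tube has rank $>1$. A self-contained alternative that avoids Auslander--Reiten theory would characterise the real Schur roots via Schofield's criterion on the canonical decomposition, exhibiting explicit bricks for the listed roots and verifying ${\rm End}\neq\KK$ for the excluded regular ones; in either approach this regular bookkeeping is where the real work lies.
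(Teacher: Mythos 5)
The paper does not prove this proposition at all: it is imported verbatim as \cite[Proposition 2.2]{dimkatz3}, so there is no internal argument to compare against. Judged on its own, your proposal is a legitimate and essentially complete strategy: reduce to the heart via heredity, characterise exceptional representations as bricks with real Schur root dimension vectors, and then run the standard structure theory of the tame hereditary algebra of type $\widetilde{A}_{2,1}$ (defect, preprojective/preinjective components, and the unique non-homogeneous tube of rank $2$). The defect computation does come out as you predict: solving $q(\mathbf d)=1$ with defect $\pm 1$ yields exactly the dimension vectors $(m{+}1,m,m)$, $(m,m{+}1,m{+}1)$, $(m,m{+}1,m)$, $(m{+}1,m,m{+}1)$ of $E^m_1,\dots,E^m_4$, and the defect-$0$ real roots are $k\delta\pm\underline{\dim}M$, of which only the two quasi-simples survive the brick test.

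Three small slips are worth correcting. First, with the paper's stated ordering $\underline{\dim}=(x,y,z)$ and $y$ the sink, one has $\underline{\dim}M=(0,0,1)$ (the simple at the middle vertex $z$) and $\underline{\dim}M'=(1,1,0)$, not $(0,1,0)$ and $(1,0,1)$; with your convention $M$ would be the simple projective at the sink, which has nonzero defect and cannot satisfy ${\rm Ext}^1(M,M')\neq 0$. This is pure bookkeeping but should be fixed. Second, the criterion ``brick exactly when regular length is strictly less than the rank'' is not quite right: the regular indecomposable of regular length equal to the rank (dimension $\delta$) is still a brick, just not exceptional since $q(\delta)=0$ forces $\hom^1=\hom=1$; the correct dichotomy is brick iff length $\le r$, exceptional iff length $<r$. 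Your conclusion is unaffected because the length-$2$ object has imaginary dimension vector and is already excluded by the real-root requirement. Third, for the families $E^m_i$ exceptionality is not ``free from $q=1$'' together with indecomposability alone (the regular real roots $(1,1,2)$ etc.\ show this fails); it is free from the nonzero defect, which forces the object to be preprojective or preinjective and hence exceptional — your preceding sentence already contains this, so only the phrasing needs tightening.
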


Following \cite{dimkatz4}, we denote
\begin{equation} \label{ab m}
a^m=\begin{cases}
E^{-m}_1& m\leq 0\\
E^{m-1}_2[1]& m\geq 1
\end{cases}\,,
\qquad b^m=\begin{cases}
E^{-m}_4& m\leq 0\\
E^{m-1}_3[1]& m\geq 1
\end{cases}\,.
\end{equation} 
Due to Proposition \ref{the exceptionals} and Corollary \ref{bijection for eo} holds \eqref{derived points}.

The following results from \cite{dimkatz3,dimkatz4} will be used often later.

\begin{corollary}(\cite[Corollary 3.7]{dimkatz4})\label{nonvanishings} For each $m\in \ZZ$ we have: 
	\begin{gather} \label{nonvanishing1} \hom(M',a^m)\neq 0; \quad \hom(M,b^m)\neq 0; \quad  \hom^*(a^m,M')= 0;  \\ 
	\label{nonvanishing2} \hom^1(a^m,M)\neq 0; \quad \hom^1(b^m,M')\neq 0; \quad  \hom^*(b^m, M)=0 \\ 
	\label{nonvanishing3}  \hom^*(b^{m},a^{m-1}) = 0; \quad  \hom(b^{m},a^{n}) \neq 0   \ \mbox{for} \ m\leq n;  \quad  \hom^1(b^{m},a^n) \neq 0 \ \mbox{for} \ m> n+1;   \\
	\label{nonvanishing4}  \hom^*(a^m,b^{m}) = 0;  \quad   \hom(a^m,b^{n+1}) \neq 0    \ \mbox{for} \ m\leq n; \quad  \hom^1(a^{m},b^{n}) \neq 0 \ \mbox{for} \ m>n;    \\
	\label{nonvanishing5}  \hom^*(a^{m},a^{m-1})=0; \quad \hom(a^{m},a^n) \neq 0 \ \mbox{for} \ m\leq  n;   \quad  \hom^1(a^{m},a^{n}) \neq 0   \ \mbox{for} \ m> n+1; \\ 
	\label{nonvanishing6} \hom^*(b^{m},b^{m-1})=0; \quad  \hom(b^{m},b^n) \neq 0 \ \mbox{for} \ m\leq  n;   \quad  \hom^1(b^{m},b^{n}) \neq 0   \ \mbox{for} \ m> n+1;\\ 
	\label{nonvanishing7}	\hom^1(M,M') \neq 0 \qquad \hom^1(M',M)\neq 0 .\end{gather}
\end{corollary}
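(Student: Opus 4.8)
The plan is to reduce every line of \eqref{nonvanishing1}--\eqref{nonvanishing7} to an explicit computation between the concrete representations of Proposition \ref{the exceptionals}. By \eqref{ab m} each $a^m$ and $b^m$ is either one of those representations (for $m\leq 0$) or its shift by $[1]$ (for $m\geq 1$), and $M,M'$ are among them; since $Q$ is acyclic, $Rep_\KK(Q)$ is hereditary, so $\hom^i(X,Y)=0$ for $i\notin\{0,1\}$ when $X,Y$ are representations. Using $\hom^i(X[p],Y[q])=\hom^{i+q-p}(X,Y)$, every pairing among $a^\bullet,b^\bullet,M,M'$ collapses to a pairing of honest representations in one prescribed degree, that degree being dictated by the signs of the indices through \eqref{ab m}. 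The statement thus separates into a numerical part and a set of exact vanishings $\hom^*=0$.

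For the numerical part I would first read off the dimension vectors from Proposition \ref{the exceptionals} (e.g. $\underline{\dim}E^m_1=(m{+}1,m,m)$, $\underline{\dim}M=(0,1,0)$, $\underline{\dim}M'=(1,0,1)$, and similarly for $E^m_2,E^m_3,E^m_4$) and evaluate on them the Euler form
\[
\langle\alpha,\beta\rangle=\sum_{v}\alpha_v\beta_v-\sum_{(v\to w)\in Q_1}\alpha_v\beta_w=\hom(X,Y)-\hom^1(X,Y).
\]
Each such value gives the difference of the two candidate groups in the relevant degree; combined with a vanishing of the complementary group it pins down the surviving one and shows it is nonzero. The degree in which the nonzero group sits (degree $0$ for the $\hom\neq0$ claims, degree $1$ for the $\hom^1\neq0$ claims) is exactly the one forced by the shift bookkeeping of the previous paragraph, and where the Euler form does not by itself decide the issue I would exhibit a nonzero morphism of representations directly, which is mechanical given the explicit maps $\pi^m_\pm,j^m_\pm,\mathrm{id}$.

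The exact vanishings are cleanest to obtain structurally, as semiorthogonality relations inside the full exceptional triples of Proposition \ref{exctriples}: if $(E_0,E_1,E_2)$ is such a triple then $\hom^*(E_j,E_i)=0$ whenever $i<j$. In this way $\hom^*(a^m,M')=0$ and $\hom^*(a^m,a^{m-1})=0$ follow from $(M',a^{m-1},a^m)\in\mk{T}_a$; $\hom^*(b^m,M)=0$ from $(M,b^m,b^{m+1})\in\mk{T}_b$; $\hom^*(b^m,a^{m-1})=0$ from $(a^{m-1},M,b^{m})$; $\hom^*(a^m,b^m)=0$ from $(b^m,M',a^m)$; and $\hom^*(b^m,b^{m-1})=0$ from $(M,b^{m-1},b^m)$. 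For the intra-family patterns \eqref{nonvanishing5}, \eqref{nonvanishing6} one can bypass the bookkeeping altogether: by Proposition \ref{noncomcurves} the pairs $(a^m,a^{m+1})$ and $(b^m,b^{m+1})$ generate the genus-one curves $A,B$, i.e. copies of $D^b(K(2))$, so $\{a^m\}$ and $\{b^m\}$ are the helices of a $2$-Kronecker pair and the asserted $\hom$/$\hom^1$ pattern is precisely the standard helix pattern; finally \eqref{nonvanishing7} is the defining property of the pair $(M,M')$ recalled before \eqref{derived points}, which one may also confirm in a line from the dimension vectors above.

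The main obstacle is not any single calculation but the bookkeeping enforced by \eqref{ab m}: one must track the shifts so that each Euler number is charged to the correct cohomological degree, and one must remember that the Euler form delivers only $\hom-\hom^1$. Consequently every $\hom^*=0$ assertion requires a genuine second ingredient --- the semiorthogonality of an exceptional triple, or a direct check that no nonzero morphism of representations exists --- to annihilate \emph{both} degrees simultaneously. Once the dimension vectors and the triples of Proposition \ref{exctriples} are in place, all of \eqref{nonvanishing1}--\eqref{nonvanishing7} follow by reading off signs.
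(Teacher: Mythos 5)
The paper does not prove this statement at all: it is imported verbatim as \cite[Corollary 3.7]{dimkatz4}, so there is no internal argument to compare against. What you supply is a genuine self-contained verification, and it is essentially correct. Your division of labour is sound: the strict non-vanishings follow from the sign of the Euler form $\langle\underline{\dim}X,\underline{\dim}Y\rangle=\hom(X,Y)-\hom^1(X,Y)$ evaluated on the dimension vectors of Proposition \ref{the exceptionals}, with the shift bookkeeping of \eqref{ab m} relocating the nonzero group to the asserted degree (a spot check: $\langle\underline{\dim}M',\underline{\dim}E^k_1\rangle=1$ gives $\hom(M',a^m)\neq 0$ for $m\le 0$, and $\langle\underline{\dim}M',\underline{\dim}E^{m-1}_2\rangle=-1$ gives it for $m\ge 1$; the intra-family computations yield $k-l+1$, matching the stated trichotomies exactly). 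Note that for these claims you do not actually need the ``vanishing of the complementary group'': whenever the Euler characteristic is strictly positive (resp.\ negative) the degree-$0$ (resp.\ degree-$1$) group is forced to be nonzero on its own, and in every case listed the Euler form is indeed nonzero, so your fallback of exhibiting explicit morphisms is never invoked. The total vanishings via semiorthogonality in the triples of Proposition \ref{exctriples} are all correctly matched to triples that actually occur in \eqref{exceptional triples}. The one caveat worth recording is logical economy rather than correctness: Proposition \ref{exctriples} is itself an imported result from the same source, and in that source the exceptional triples are established alongside the hom-computations you are trying to prove, so to avoid any appearance of circularity you should either check the six $\hom^*=0$ statements directly on the explicit representations (each is a short kernel/cokernel computation with the maps $\pi^m_\pm,j^m_\pm$) or be explicit that you are taking Proposition \ref{exctriples} as given. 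A final practical warning: your dimension vectors commit to the arrow orientation visible in the representation diagrams of Proposition \ref{the exceptionals} ($x\to y$, $x\to z$, $y\to z$), which is the orientation under which the Euler-form signs come out as asserted; the displayed quiver \eqref{triangular quiver} is easy to misread on this point, and getting one arrow backwards flips several signs.
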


\begin{corollary} \label{one nonvanishing degree} \cite[Corollary 2.6 (b)]{dimkatz3}  For any two exceptional objects $X, Y \in D^b(Q )$ at most one  element of the family $\{ \hom^p(X,Y) \}_{p\in \ZZ}$ is nonzero.
\end{corollary}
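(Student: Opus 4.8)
The plan is to reduce the statement to a computation among exceptional \emph{representations} and then exploit that $Rep_\KK(Q)$ is hereditary. Since $Q$ is acyclic, $Rep_\KK(Q)$ is a hereditary abelian category, so every indecomposable object of $D^b(Q)$ is isomorphic to $R[k]$ for an indecomposable representation $R$ and some $k\in\ZZ$. An exceptional object $E$ satisfies $\Hom(E,E)=\KK$, so ${\rm End}(E)$ has no nontrivial idempotents and $E$ is indecomposable; thus $E\cong R[k]$ with $R$ exceptional in $Rep_\KK(Q)$. Using $\hom^p(R[k],S[l])=\hom^{p+l-k}(R,S)$, the family $\{\hom^p(X,Y)\}_{p\in\ZZ}$ for arbitrary exceptional $X,Y$ is, up to an overall shift of the index $p$, of the form $\{\hom^p(R,S)\}_{p\in\ZZ}$ for exceptional representations $R,S$. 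Hence it suffices to prove the claim when $X=R$ and $Y=S$ both lie in the heart.

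Next I would cut down the range of degrees. For $R,S$ in the heart, positivity of the standard $t$-structure gives $\hom^p(R,S)=0$ for $p<0$, while heredity (${\rm Ext}^{\geq 2}=0$ in $Rep_\KK(Q)$) gives $\hom^p(R,S)=0$ for $p\geq 2$. Therefore the only possibly nonzero members of the family are $\hom^0(R,S)=\dim_\KK\Hom(R,S)$ and $\hom^1(R,S)=\dim_\KK{\rm Ext}^1(R,S)$, and the assertion becomes: \emph{for exceptional representations $R,S$ one cannot have both $\Hom(R,S)\neq 0$ and ${\rm Ext}^1(R,S)\neq 0$.}

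The remaining step is the crux, and I expect it to be the only real obstacle. I would first stress that exceptionality of $R,S$ alone does not force the conclusion: feeding a nonsplit extension $0\to S\to E\to R\to 0$ and a nonzero $f\colon R\to S$ into the long exact sequences obtained by applying $\Hom(R,-)$ and $\Hom(-,S)$ only shows mutual consistency (the connecting map $\KK=\Hom(R,R)\to{\rm Ext}^1(R,S)$ hits the extension class, and nothing contradicts it), so no purely formal argument closes the gap. Instead I would invoke the explicit classification in Proposition \ref{the exceptionals}: every exceptional representation is one of $E^m_1,\dots,E^m_4,M,M'$. The verification is then carried out by the $\Hom$- and ${\rm Ext}^1$-computations between these representations, uniformly in $m$ — equivalently, by placing each exceptional module in the Auslander--Reiten structure of the tame hereditary algebra $\KK Q$ (whose underlying graph is the cycle, i.e.\ type $\widetilde A_2$): the preprojective, preinjective and regular parts are mutually orthogonal in the appropriate directions, so for $R,S$ in distinct parts one of $\Hom$, ${\rm Ext}^1$ vanishes by $\tau$-duality; within a directed preprojective (resp.\ preinjective) component, $\Hom(R,S)\neq 0$ and ${\rm Ext}^1(R,S)\cong D\Hom(S,\tau R)\neq 0$ impose opposite orderings of $R,S$ and hence cannot hold simultaneously; and the only exceptional regular modules are $M,M'$, the two simples of the rank-$2$ tube, for which $\Hom(M,M')=\Hom(M',M)=0$ while ${\rm Ext}^1$ is nonzero in both directions. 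In every case exactly one of $\Hom(R,S)$, ${\rm Ext}^1(R,S)$ is nonzero, or both vanish, which is the desired conclusion; this is exactly the vanishing/nonvanishing pattern recorded in Corollary \ref{nonvanishings}.
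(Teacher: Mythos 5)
The paper does not actually prove this statement: it is imported verbatim from \cite[Corollary 2.6 (b)]{dimkatz3}, so there is no internal proof to compare against, and what you have written supplies an argument where the paper has only a citation. Your proof is correct. The two reductions (every exceptional object of $D^b(Q)$ is a shift of an exceptional representation because $Rep_\KK(Q)$ is hereditary and $\Hom(E,E)=\KK$ forces indecomposability; shifting only translates the index $p$, so one is left with showing that $\Hom(R,S)$ and ${\rm Ext}^1(R,S)$ cannot both be nonzero for exceptional representations) are standard and correctly executed, and you are right that the remaining step is not formal: the assertion genuinely fails for wild quivers and even for tame quivers with an exceptional tube of rank $\geq 3$, where two exceptional regular modules can have both $\Hom$ and ${\rm Ext}^1$ nonzero, so input specific to the $\widetilde{A}_{2,1}$ orientation of the triangle is unavoidable. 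Your Auslander--Reiten argument supplies exactly that input: cross-terms between the preprojective, regular and preinjective parts die by the usual orthogonality together with ${\rm Ext}^1(X,Y)\cong D\Hom(Y,\tau X)$; inside a directed component the two nonvanishings force incompatible orderings of $R$ and $S$; and the only exceptional regular modules are the two quasi-simples $M,M'$ of the rank-$2$ tube, between which $\Hom$ vanishes in both directions. If you write this up, make two small points explicit: (i) justify that no exceptional regular module of quasi-length $\geq 2$ exists in the rank-$2$ tube (for $X$ of quasi-length $2$ one has $\tau X\not\cong X$ but $\Hom(X,\tau X)\neq 0$, so ${\rm Ext}^1(X,X)\neq 0$), rather than only pointing to the classification in Proposition \ref{the exceptionals}; and (ii) present the agreement with Corollary \ref{nonvanishings} as a consistency check only, since that corollary is itself imported from the same source and using it as the proof would be circular.
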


In \cite[Remark 3.15]{dimkatz4} is explained that 
for any  $p$, $q \in \ZZ$ there are   distinguished triangles
\begin{equation} \label{short filtration 1} 
\begin{diagram}[size=1em] 
b^{p+1}[-1] & \rTo      &        &       &   M'   \\
& \luDashto &        & \ldTo &        \\
&           &  a^p   &       &        
\end{diagram} \qquad \qquad 
\begin{diagram}[size=1em] 
a^{q}[-1] & \rTo      &        &       &   M   \\
& \luDashto &        & \ldTo &        \\
&           &  b^q   &       &        
\end{diagram}.
\end{equation} 
\begin{remark} \label{remark for delta}
	We will denote by $\delta \in K_0(D^b(Q))$ the element $\delta = [E_1^0]+[M]+[E_3^0]$. For $\delta$ holds $\delta =[M]+[M']$. In particular  for any stability condition $(Z, \mc P) \in \st(\mc T)$ hold the equalities $Z(\delta)=Z(M)+ Z(M')$. Furthermore, from \eqref{short filtration 1}  we see that $Z(a^p)=Z(M')+ Z(b^{p+1})$ and $Z(b^q)=Z(M)+ Z(a^{q})$ for any $p,q\in \ZZ$. On the other hand, the latter two equalities imply that $Z(b^p)=Z(\delta)+Z(b^{p+1})$,  $Z(a^p)=Z(\delta)+Z(a^{p+1})$.
\end{remark}
\subsection{On stability conditions on $D^b(Q)$}\label{homeomoeprhism}
 The entire  manifold $\st(\mc T)$ was  described in \cite{dimkatz4}, and there it was proved that it is contractible.
We recollect some material from loc cit.

For a given triple $(A,B,C)\in \mk{T}$  we will denote the open subset $\Theta_{(A,B,C)} \subset \st(D^b(Q))$ from  Remark \ref{remark for fe} (b) by   $(A,B,C)$, when no confusion may arise.
In \cite[Proposition 2.7]{dimkatz4} it is shown that the image of the homeomorphism in Remark \ref{remark for fe} (b) is a set of the form  \begin{equation}
\label{eqSet}\RR_{>0}^3 \times   \left \{  \begin{array}{c} y_0 - y_1 < 1+\alpha \\  y_0 - y_2 < 1+\min\{\beta, \alpha+\gamma\} \\ y_1 - y_2 < 1+\gamma\end{array} \right \}\,,\end{equation}
where $\alpha, \beta, \gamma\in \ZZ$ are defined  in \cite[(30)]{dimkatz4}. An important role play the exceptional triples. They are:
\begin{proposition}\cite[Corollary 3.12, Remark 3.14]{dimkatz4}
	\label{excpairs}
	\label{exctriples}
 Define the following sets of full exceptional triples in $\mc T$: 
 \begin{enumerate}[(i)]
     \item ${\mk T}_a = \left \{   (M',a^{m},a^{m+1}), (a^m, b^{m+1},a^{m+1}),  (a^m,a^{m+1},M) : m\in \ZZ  \right \}$,
     \item ${\mk T}_b =	\left \{ (M,b^{m},b^{m+1}), (b^m,a^m,b^{m+1}), (b^m,b^{m+1},M')  : m\in \ZZ \right \}$.
 \end{enumerate}	 
  	The set of all full exceptional collections  in $\mathcal{T}$ up to shifts  is:
	\begin{align} \label{exceptional triples}
	{\mk T}=	{\mk T}_a \cup \left \{ (a^m , M, b^{m+1}) : m\in \ZZ \right \} \cup 	\left \{ (b^{m},M',a^{m}) : m\in \ZZ \right \} \cup {\mk T}_b. 
	\end{align}
	
\end{proposition}  

The stability conditions in $(A,B,C)$ are determined by the properties that the objects $A,B,C$ are  in $\sigma^{ss}$ and their phases satisfy the inequalities summarized in the following tables:
\begin{gather} \label{left right M} \begin{array}{| c   | c  |}   \hline  
\mk{T}_{a}^{st}       &  \mk{T}_{b}^{st}  \\
\hline
\left \{  M',a^{j},a^{j+1} \in \sigma^{ss} :   \begin{array}{c} \phi\left (M'\right) < \phi\left (a^{j}\right) \\ \phi\left ( M'\right)+1 < \phi\left (a^{j+1}\right) \\ \phi\left ( a^{j}\right) < \phi\left (a^{j+1} \right)\end{array}\right \}                                                &    \left \{  M,b^{n},b^{n+1} \in \sigma^{ss} :                     \begin{array}{c} \phi\left (M\right) < \phi\left (b^{n}\right) \\ \phi\left ( M\right)+1 < \phi\left (b^{n+1}\right) \\ \phi\left ( b^{n}\right) < \phi\left (b^{n+1} \right)\end{array} \right \}  \\
\hline
\left \{a^p,b^{p+1},a^{p+1} \in \sigma^{ss} :                      \begin{array}{c} \phi\left ( a^{p}\right) < \phi\left (b^{p+1}\right) \\ \phi\left ( a^{p}\right)+1 < \phi\left (a^{p+1}\right) \\ \phi\left ( b^{p+1}\right) < \phi\left (a^{p+1}\right)\end{array}  \right \}                            &       \left \{ b^q,a^q,b^{q+1} \in \sigma^{ss} :                      \begin{array}{c} \phi\left ( b^{q}\right) < \phi\left (a^{q}\right) \\ \phi\left ( b^{q}\right)+1 < \phi\left (b^{q+1}\right) \\ \phi\left ( a^{q}\right) < \phi\left (b^{q+1}\right)\end{array}  \right \}   \\   \hline
\left \{ a^{m},a^{m+1}, M \in \sigma^{ss} :                      \begin{array}{c} \phi\left ( a^{m}\right) < \phi\left (a^{m+1}\right) \\ \phi\left ( a^{m}\right) < \phi\left (M\right) \\ \phi\left ( a^{m+1}\right) < \phi\left (M\right)+1\end{array}  \right \}                            &    \left \{ b^{i},b^{i+1}, M' \in \sigma^{ss}:                       \begin{array}{c} \phi\left ( b^{i}\right) < \phi\left (b^{i+1}\right) \\ \phi\left ( b^{i}\right) < \phi\left (M'\right) \\ \phi\left ( b^{i+1}\right) < \phi\left (M'\right)+1\end{array}    \right \}   .    \\ \hline
\end{array}
\end{gather}
    
\begin{gather} \label{middle M} \begin{array}{| c   | c  |}   \hline  
(a^p,M,b^{p+1})   & (b^q,M',a^{q})  \\
\hline
\left \{a^p,M,b^{p+1}  \in \sigma^{ss} :                      \begin{array}{c} \phi\left ( a^{p}\right) < \phi\left (M\right)+1 \\ \phi\left ( a^{p}\right) < \phi\left (b^{p+1}\right) \\ \phi\left ( M\right) < \phi\left (b^{p+1}\right)\end{array}  \right \}                            &       \left \{b^q,M',a^{q} \in \sigma^{ss} :                      \begin{array}{c} \phi\left ( b^{q}\right) < \phi\left (M'\right) +1\\ \phi\left ( b^{q}\right) < \phi\left (a^{q}\right) \\ \phi\left ( M'\right) < \phi\left (a^{q}\right)\end{array}  \right \}   \\  \hline
\end{array}.
\end{gather}

In \cite{dimkatz3, dimkatz4}, it was shown that 
$ \st(D^b(Q )) = \bigcup_{(A,B,C) \in \mk{T}} (A,B,C)$ (see \cite[(92)]{dimkatz4}). 
Following \eqref{exceptional triples} we reorganize the union as follows  (see \cite[(95) and Section 5]{dimkatz4}):    
\begin{gather} \label{st with T} \st(D^b(Q )) = \mk{T}_{a}^{st} \cup (\_,M,\_) \cup (\_,M',\_) \cup \mk{T}_{b}^{st} \qquad \qquad   \mk{T}_{a}^{st} \cap  \mk{T}_{b}^{st} = \emptyset, \end{gather}
where  $\mk{T}_a^{st}$, resp.  $\mk{T}_b^{st}$, is the union of all the subsets described in the first, resp. second, column of table \eqref{left right M} as $j,p,m$, resp.  $n,q,i$, vary in $\ZZ$. The union  of all the subsets described in the first, resp. second, column of table \eqref{middle M} as $p$ vary in $\ZZ$ will be denoted by $(\_, M,  \_)$, resp $(\_, M', \_)$.

More generally, the subset $(A,B,C)\subset \st(\mc T)$ is described in \cite[Proposition 2.7]{dimkatz4} for any $\mc T$ as those stability conditions such that $A,B,C$ are semistable and the phases $\phi(A)$, $\phi(B)$, $\phi(C)$ satisfy the inequalities in \eqref{eqSet} with $y_0,y_1,y_2$  replaced  by $\phi(A), \phi(B), \phi(C)$. 
\subsection{First remarks on $C_{i,\sigma }(\mc T)$, $C_{i,\sigma \sigma}(\mc T)$} \label{first remarks}
From \ref{derived points}, Proposition  \ref{noncomcurves}, and Remark \ref{remark for helix} we  see that:

$\alpha^m \in C_{0,\sigma \sigma}(\mc T)$ iff $\{M',a^m,b^{m+1}\}\subset \sigma^{ss}$; \ \  
$\beta^m \in C_{0,\sigma \sigma}(\mc T)$ iff $\{M,a^m,b^{m}\}\subset \sigma^{ss}$;

$A \in C_{1,\sigma \sigma}(\mc T)$ iff $\{a^m:m\in \ZZ \}\subset \sigma^{ss}$; \ \  $B \in C_{1,\sigma \sigma}(\mc T)$ iff $\{b^m:m\in \ZZ \}\subset \sigma^{ss}$; 

$A \in C_{1,\sigma }(\mc T)$ iff $\abs{\{m\in \ZZ: a^m \in  \sigma^{ss} \} }=\infty $; \ \  $B \in C_{1,\sigma }(\mc T)$ iff $\abs{\{m\in \ZZ: b^m \in  \sigma^{ss} \} }=\infty$
.

In particular, it follows  (taking into account  \eqref{derived points}) 
\begin{gather} \label{a criterion} C_{D^b(pt),\sigma \sigma}(\mc T)=C_{D^b(pt),\KK}(\mc T) \iff C_{0,\sigma \sigma}(\mc T)=C_{0}(\mc T).  \\
\label{first equality of} \left \{  \sigma :  \abs{C_{D^b(pt), \sigma \sigma}(\mc T)}<\infty \right \} =\left \{  \sigma :  \abs{C_{1, \sigma }(\mc T)}=0 \right \}.\end{gather}
Furthermore, if infinitely many genus zero nc curves are in $C_{0,\sigma \sigma}(\mc T)$, then either for infinitely many $m$ we have $\alpha^m \in C_{0,\sigma \sigma}(\mc T)$ and hence   $\{M',a^m,b^{m+1}\}\subset \sigma^{ss}$ or for infinitely many $m$ we have $\beta^m \in C_{0,\sigma \sigma}(\mc T)$ and hence  $\{M,a^m,b^{m}\}\subset \sigma^{ss}$, and it follows that  $C_{1,\sigma}(\mc T)=C_{1}(\mc T)$. Thus we deduce:
\begin{gather} \label{finitenes of genus zero stables}
C_{1,\sigma}(\mc T) \neq \{A,B\} \ \ \Rightarrow \ \ \abs{C_{0,\sigma \sigma}(\mc T)}<\infty.
\end{gather}

\subsection{Study of $C_{i,\sigma}(\mc T)$, $C_{i,\sigma \sigma}(\mc T)$, $C_{D^b(pt),\sigma \sigma}(\mc T)$ as $\sigma \in \mk{T}_a^{st} \cup \mk{T}_b^{st}$ and $i\geq  -1$} \label{subsection for Tab} Recall that $\mk{T}_a^{st} \cup \mk{T}_b^{st}$ is in   \eqref{st with T}.    It will be enough to study only one of  both the cases, $\sigma \in \mk{T}_a^{st}$ or $\sigma \in \mk{T}_b^{st}$, because  there is an  
auto-equivalence $\zeta$ such that $\zeta(\mk{T}_a^{st})=\mk{T}_b^{st}$. More precisely, there is $\zeta$ such that 
\begin{gather} \label{useful autoequivalence} \zeta \in {\rm Aut}_{\KK}(\mc T) \qquad \zeta(a^m)\cong b^m \ \ \zeta(b^m) \cong a^{m-1} \ \ \zeta(M)\cong M' \ \ \zeta(M')\cong M \end{gather}
for each $m\in \ZZ$. 
Indeed from \cite[Proposition 8.4]{dimkatz2} we see that there is an auto-equivalence $\zeta \in {\rm Aut}(\mc T)$ such that $\zeta(a^m)\cong b^m$, $\zeta(b^m)\cong {a}^{m-1}$ for each $m\in \ZZ$. On the other hand,  from the full exceptional triples $(M',a^m,a^{m+1})$,   $(M,b^m,b^{m+1})$, $(a^m,a^{m+1}, M)$,   $(b^m,b^{m+1}, M')$ it follows that $\zeta(M')\cong M[p]$, $\zeta(M)\cong M'[q]$ for some $p.q \in \ZZ$. However, due to Corollary \ref{nonvanishings} we have also $0\neq \hom(M',a^m) =\hom(\zeta(M'), \zeta(a^m))=\hom(M[p], b^m)$    and from Corollaries \ref{nonvanishings}, \ref{one nonvanishing degree} we see that $p=0$. Also we have  $0\neq \hom(M,b^m) =\hom(\zeta(M), \zeta(b^m))=\hom(M'[q], a^{m-1})$  and again  Corollaries \ref{nonvanishings}, \ref{one nonvanishing degree} imply $q=0$. From Proposition \ref{noncomcurves} and \eqref{useful autoequivalence} we obtain 
\begin{gather} \label{zeta on alpha beta}
\zeta(\alpha^m)=\beta^m \qquad \zeta(\beta^m)=\alpha^{m-1}  \qquad \zeta(A) = B \qquad \zeta(B) = A.
\end{gather}
 Using \eqref{aut properties 1} in  Remark \ref{action} and Subsection \ref{first remarks} we see that 
\begin{remark} \label{action1} For any $\sigma \in \st(\mc T)$ and any $m\in \ZZ$ we have 
	\begin{gather} M \in \sigma^{ss} \iff M' \in (\zeta \cdot \sigma)^{ss} \ \ \Rightarrow \phi_{\zeta \cdot \sigma}(M')= \phi_{ \sigma}(M) \\
	 M' \in \sigma^{ss} \iff M \in (\zeta \cdot \sigma)^{ss} \ \ \Rightarrow \phi_{\zeta \cdot \sigma}(M)= \phi_{ \sigma}(M') \\
	 a^m \in \sigma^{ss} \iff b^m \in (\zeta \cdot \sigma)^{ss} \ \ \Rightarrow \phi_{\zeta \cdot \sigma}(b^m)= \phi_{ \sigma}(a^m) \\ b^m \in \sigma^{ss} \iff a^{m-1} \in (\zeta \cdot \sigma)^{ss} \ \ \Rightarrow \phi_{\zeta \cdot \sigma}(a^{m-1})= \phi_{ \sigma}(b^m) \\ \alpha^m \in C_{0,\sigma \sigma}(\mc T) \iff \beta^m \in C_{0,(\zeta \cdot \sigma) ( \zeta \cdot \sigma)}(\mc T)\\ \beta^m \in C_{0,\sigma \sigma}(\mc T) \iff \alpha^{m-1} \in C_{0,(\zeta \cdot \sigma) ( \zeta \cdot \sigma)}(\mc T)\\ A \in C_{1,\sigma \sigma}(\mc T) \iff B \in C_{1, (\zeta \cdot \sigma) ( \zeta \cdot \sigma)}(\mc T) \ \ A \in C_{1,\sigma}(\mc T) \iff B \in C_{1, \zeta \cdot \sigma}(\mc T).\end{gather}
	
\end{remark}

\begin{lemma} \label{no semistable}  Let $p\in \ZZ$. In any of the following cases holds $C_{1,\sigma \sigma}(\mc T)=C_{1,\sigma}(\mc T)=\emptyset$:
	
{\rm	(a)} $a^p, a^{p+1} \in \sigma^{ss}$ and $\phi(a^{p+1})> \phi(a^{p})+1$. In fact in this case $a^j\not \in \sigma^{ss}$ for $j\not \in \{p,p+1\}$, $b^{j}\not \in \sigma^{ss}$ for $j\neq p+1$,  and it follows that $ C_{0,\sigma \sigma}(\mc T)\subset\{\alpha^{p}, \beta^{p+1}\}$,   $\beta^{p+1} \in  C_{0,\sigma \sigma}(\mc T)$ iff $\{M, b^{p+1}\} \subset \sigma^{ss}$,  $\alpha^{p} \in  C_{0,\sigma \sigma}(\mc T)$ iff $\{M', b^{p+1}\} \subset  \sigma^{ss}$.
	
{\rm	(b)} $b^p, b^{p+1} \in \sigma^{ss}$ and $\phi(b^{p+1})> \phi(b^{p})+1$. In  this case $b^j\not \in \sigma^{ss}$ for $j\not \in \{p,p+1\}$, $a^{j}\not \in \sigma^{ss}$ for $j\neq p$,   and it follows that $ C_{0,\sigma \sigma}(\mc T)\subset\{\alpha^{p}, \beta^{p}\}$, $\beta^{p} \in  C_{0,\sigma \sigma}(\mc T)$ iff $\{M, a^{p}\} \subset \sigma^{ss}$,    $\alpha^{p} \in  C_{0,\sigma \sigma}(\mc T)$ iff $\{M', a^{p}\} \subset \sigma^{ss}$.

{\rm	(c)}  $a^p, b^{p+1} \in \sigma^{ss}$ and $\phi(b^{p+1})> \phi(a^{p})+1$. In fact, in this case $b^j, a^j\not \in \sigma^{ss}$ for $j\not \in \{p,p+1\}$.

\end{lemma}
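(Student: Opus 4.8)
The plan is to reduce everything to one elementary observation about slicings (axiom (2) of Definition \ref{slicing}): if $X,Y\in\sigma^{ss}$ and $\hom^k(X,Y)\neq 0$ for some $k\in\ZZ$, then $\phi(X)\leq\phi(Y)+k$, because $Y[k]$ is semistable of phase $\phi(Y)+k$ and a nonzero morphism cannot raise the phase. Feeding the explicit (non)vanishings of Corollary \ref{nonvanishings} into this inequality will, for each index $j$ that the lemma excludes, produce a lower and an upper bound on the phase of the hypothetical semistable object whose combination contradicts the assumed gap of strictly more than $1$.

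Concretely, for part (a) I would argue by contradiction for each excluded $j$. If $a^j\in\sigma^{ss}$ with $j\geq p+2$, then \eqref{nonvanishing5} supplies $\hom(a^{p+1},a^j)\neq 0$ and $\hom^1(a^j,a^p)\neq 0$, giving $\phi(a^{p+1})\leq\phi(a^j)\leq\phi(a^p)+1$, which violates $\phi(a^{p+1})>\phi(a^p)+1$; if instead $j\leq p-1$, then $\hom(a^j,a^p)\neq 0$ and $\hom^1(a^{p+1},a^j)\neq 0$ give $\phi(a^{p+1})\leq\phi(a^j)+1\leq\phi(a^p)+1$, the same contradiction. For the $b^j$ I would pair the mixed (non)vanishings of \eqref{nonvanishing3} and \eqref{nonvanishing4}: for $j\leq p$ the maps $\hom(b^j,a^p)\neq 0$ and $\hom^1(a^{p+1},b^j)\neq 0$ force both $\phi(b^j)\leq\phi(a^p)$ and $\phi(b^j)\geq\phi(a^{p+1})-1>\phi(a^p)$; for $j\geq p+2$ the maps $\hom(a^{p+1},b^j)\neq 0$ and $\hom^1(b^j,a^p)\neq 0$ squeeze $\phi(a^{p+1})\leq\phi(b^j)\leq\phi(a^p)+1$. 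Once only $a^p,a^{p+1},b^{p+1}$ can be semistable, the criteria of Subsection \ref{first remarks} give $A,B\notin C_{1,\sigma}(\mc T)$, whence $C_{1,\sigma}(\mc T)=C_{1,\sigma\sigma}(\mc T)=\emptyset$ by Remark \ref{remark for helix}(c); the same criteria leave only $\alpha^p$ and $\beta^{p+1}$ as possible stable genus-zero curves, with the stated conditions on $M',M,b^{p+1}$.

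For part (b) I would avoid a direct computation and instead transport (a) along the auto-equivalence $\zeta$ of \eqref{useful autoequivalence}. By Remark \ref{action1} one has $b^m\in\sigma^{ss}\iff a^{m-1}\in(\zeta\cdot\sigma)^{ss}$ with equal phases, so the hypothesis $\phi(b^{p+1})>\phi(b^p)+1$ becomes exactly the hypothesis of (a) for $\zeta\cdot\sigma$ at index $p-1$; applying (a) there and pushing the conclusions back through Remark \ref{action1} together with the dictionary $M\leftrightarrow M'$ should reproduce (b) verbatim. For part (c) I would run the contradiction scheme of (a) once more, now anchored on the semistable pair $a^p,b^{p+1}$: for $a^j$ I combine $\hom(a^j,a^p)$ with $\hom^1(b^{p+1},a^j)$ (case $j\leq p-1$) and $\hom(b^{p+1},a^j)$ with $\hom^1(a^j,a^p)$ (case $j\geq p+2$), and for $b^j$ I combine $\hom(b^j,a^p)$ with $\hom^1(b^{p+1},b^j)$ (case $j\leq p-1$) and $\hom(b^{p+1},b^j)$ with $\hom^1(b^j,a^p)$ (case $j\geq p+2$), each pair squeezing the relevant phase against $\phi(b^{p+1})>\phi(a^p)+1$.

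The only genuinely delicate point, and the thing I would double-check most carefully, is the bookkeeping: for every excluded $j$ one must pick exactly the two entries of Corollary \ref{nonvanishings} whose validity ranges match $j\leq p-1$ and $j\geq p+2$ and which yield a lower and an upper phase bound pointing in opposite directions. There is no conceptual difficulty beyond confirming that the index constraints in \eqref{nonvanishing3}--\eqref{nonvanishing6} (e.g.\ $\hom^1(a^m,a^n)\neq 0$ only for $m>n+1$) align with the cases, and that it is precisely the strict gap $>1$ which makes each squeeze inconsistent.
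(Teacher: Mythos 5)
Your proposal is correct and follows essentially the same route as the paper's proof: for each excluded index you pair a degree-$0$ and a degree-$1$ nonvanishing from Corollary \ref{nonvanishings} with axiom (2) of Definition \ref{slicing} to squeeze the phase of the hypothetical semistable object against the gap $>1$, you deduce part (b) from part (a) via the auto-equivalence $\zeta$ of \eqref{useful autoequivalence} and Remark \ref{action1}, and you finish with the criteria of Subsection \ref{first remarks}. The specific pairs of (non)vanishings you select, and their index ranges, match those used in the paper.
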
 
\begin{proof} (a) If $b^j\in\sigma^{ss}$ for $j\leq p$, then from  $\hom(b^j, a^p)\neq 0$ in \eqref{nonvanishing3} and axiom (2) in Definition  \ref{slicing} it follows that $\phi(b^j)\leq \phi(a^p)$, and therefore axiom (2) in Definition  \ref{slicing} implies $\hom^1(a^{p+1},b^j)=0$, which contradicts \eqref{nonvanishing4}. 
	
If $b^j\in\sigma^{ss}$ for $j > p+1$,  then from  $\hom(a^{p+1}, b^j)\neq 0$ in \eqref{nonvanishing4} and axiom (2) in Definition  \ref{slicing} it follows that $\phi(a^{p+1})\leq \phi(b^j)$, and therefore axiom (2) in Definition  \ref{slicing} implies $\hom^1(b^{j},a^p)=0$, which contradicts \eqref{nonvanishing3}. 	
The cases of $a^j$ for $j < p$, $j>p+1$ are obtained by similar arguments,  relying on  \eqref{nonvanishing5}.

The statement for $C_{0,\sigma \sigma}(\mc T)$ follows from the already proved part and Subsection \ref{first remarks}. 

(b) follows from (a) due to the auto-equivalence $\zeta$ with \eqref{useful autoequivalence} and \eqref{zeta on alpha beta}.

(c) If $a^j \in \sigma^{ss}$ for some $j<p$, then using Corollary \ref{nonvanishings} and axiom (2) in Definition \ref{slicing} we deduce $\phi(a^j)\leq \phi(a^p)$ and therefore $\phi(a^j)+1 < \phi(b^{p+1})$. Hence vie axiom (2) in Definition \ref{slicing} we have $\hom^1(b^{p+1}, a^j)=0$, which contradicts \eqref{nonvanishing3}. 

If $a^j \in \sigma^{ss}$ for some $j>p+1$, then by $\hom(b^{p+1},a^j)\neq 0$ in \eqref{nonvanishing3}  we get $\phi(b^{p+1})\leq \phi(a^j)$ and therefore $\hom^1(a^j,a^p)=0$, which contradicts \eqref{nonvanishing5}.

If $b^j \in \sigma^{ss}$ for some $j<p$, then  \ref{nonvanishing3} ensures $\phi(b^j)\leq \phi(a^p)$, which however implies  $\hom^1(b^{p+1}, b^j)=0$, which contradicts \eqref{nonvanishing6}. 

If $b^j \in \sigma^{ss}$ for some $j>p+1$, then by $\hom(b^{p+1},b^j)\neq 0$ in \eqref{nonvanishing6}  we get $\phi(b^{p+1})\leq \phi(b^j)$ and therefore $\hom^1(b^j,a^p)=0$, which contradicts \eqref{nonvanishing3}.

\end{proof}

We will prove Propositions \ref{proposition for  B in Tb}, \ref{more careful study}  and \ref{more careful part two}. From these follow  tables  \eqref{table for b b M'}, \eqref{table for M b b}  and then  using Remark \ref{action1} follows  tables  \eqref{table for a a M}, \eqref{table for M' a a}.

We will prove also  Proposition \ref{proposition for  M,Mp in Tb}, from which follows
\begin{corollary} \label{phases of MMprime1} If $\sigma \in \mk{T}_b^{st}$ and $M,M'\in \sigma^{ss}$, then $\phi(M) <\phi(M')$. If $\sigma \in \mk{T}_a^{st}$ and $M,M'\in \sigma^{ss}$, then $\phi(M) >\phi(M')$. 
	\end{corollary}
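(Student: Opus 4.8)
The plan is to reduce the assertion for $\mk{T}_{a}^{st}$ to the one for $\mk{T}_{b}^{st}$, and then to verify the latter region by region, using nothing beyond the semistability axiom and the nonvanishings of Corollary \ref{nonvanishings}.

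First I would dispose of the case $\sigma \in \mk{T}_{a}^{st}$ by means of the autoequivalence $\zeta$ of \eqref{useful autoequivalence}. Suppose $\sigma \in \mk{T}_{a}^{st}$ with $M,M' \in \sigma^{ss}$. Since $\zeta(\mk{T}_{a}^{st}) = \mk{T}_{b}^{st}$, the point $\zeta \cdot \sigma$ lies in $\mk{T}_{b}^{st}$, and by Remark \ref{action1} we have $M,M' \in (\zeta \cdot \sigma)^{ss}$ with $\phi_{\zeta \cdot \sigma}(M) = \phi_{\sigma}(M')$ and $\phi_{\zeta \cdot \sigma}(M') = \phi_{\sigma}(M)$. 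Hence the inequality $\phi_{\zeta \cdot \sigma}(M) < \phi_{\zeta \cdot \sigma}(M')$ claimed for $\mk{T}_{b}^{st}$ reads $\phi_{\sigma}(M') < \phi_{\sigma}(M)$, which is exactly the $\mk{T}_{a}^{st}$ assertion. So it suffices to treat $\sigma \in \mk{T}_{b}^{st}$.

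For $\sigma \in \mk{T}_{b}^{st}$, by \eqref{st with T} and Proposition \ref{exctriples} the point $\sigma$ lies in one of the three families of open regions displayed in the second column of table \eqref{left right M}, namely $(M,b^{n},b^{n+1})$, $(b^{q},a^{q},b^{q+1})$, or $(b^{i},b^{i+1},M')$. Recall that for $\sigma$-semistable $X,Y$ axiom (2) of Definition \ref{slicing} gives $\hom(X,Y)\neq 0 \Rightarrow \phi(X)\leq \phi(Y)$ and $\hom^{1}(X,Y)\neq 0 \Rightarrow \phi(X)\leq \phi(Y)+1$. In each region I would chain the defining phase inequalities with the inequalities obtained this way from $\hom(M,b^{m})\neq 0$ (see \eqref{nonvanishing1}) and $\hom^{1}(b^{m},M')\neq 0$ (see \eqref{nonvanishing2}). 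In the region $(M,b^{n},b^{n+1})$ the defining inequality $\phi(M)+1<\phi(b^{n+1})$ together with $\phi(b^{n+1})\leq \phi(M')+1$ (from $\hom^{1}(b^{n+1},M')\neq 0$) gives $\phi(M)<\phi(M')$. In the region $(b^{i},b^{i+1},M')$ the defining inequality $\phi(b^{i})<\phi(M')$ together with $\phi(M)\leq \phi(b^{i})$ (from $\hom(M,b^{i})\neq 0$) gives $\phi(M)<\phi(M')$. In the region $(b^{q},a^{q},b^{q+1})$ the defining inequality $\phi(b^{q})+1<\phi(b^{q+1})$ combined with $\phi(M)\leq \phi(b^{q})$ and $\phi(b^{q+1})\leq \phi(M')+1$ yields $\phi(M)\leq \phi(b^{q})<\phi(b^{q+1})-1\leq \phi(M')$. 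Thus $\phi(M)<\phi(M')$ in all three regions, which finishes the $\mk{T}_{b}^{st}$ case.

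The argument is essentially bookkeeping, and the only point needing care is that in each invocation of axiom (2) both objects are genuinely $\sigma$-semistable: in the first two regions one of the two objects belongs to the defining triple and is hence automatically semistable, while the other is $M$ or $M'$, semistable by hypothesis; in $(b^{q},a^{q},b^{q+1})$ the objects $b^{q},b^{q+1}$ lie in the triple and $M,M'$ are semistable by assumption. I do not expect any genuine obstacle beyond this; alternatively the same inequalities may be extracted directly from the explicit semistability criteria for $M$ and $M'$ recorded in Proposition \ref{proposition for  M,Mp in Tb}.
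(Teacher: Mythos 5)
Your proposal is correct and follows essentially the same route as the paper: the paper derives this corollary from Proposition \ref{proposition for  M,Mp in Tb}, whose proof covers exactly the same three families of regions in $\mk{T}_b^{st}$ using the same defining phase inequalities from table \eqref{left right M} chained with $\hom(M,b^{m})\neq 0$ and $\hom^{1}(b^{m},M')\neq 0$ from Corollary \ref{nonvanishings}, and the $\mk{T}_a^{st}$ case is likewise handled by the symmetry $\zeta$ of \eqref{useful autoequivalence} and Remark \ref{action1}. Your treatment of the region $(b^{q},a^{q},b^{q+1})$ is a slight streamlining of the paper's (which also records $\phi(M')\leq\phi(a^{p})$ and $\phi(a^{p})-1\leq\phi(M)$ to get the extra bound $\phi(M')\leq\phi(M)+1$), but the argument is the same.
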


Now we start proving tables \eqref{table for b b M'}, \eqref{table for M b b}. We first determine completely  the behavior of the stability and semi-stability of the curve $B$ in $\mk{T}_b^{st}$, and mention something about $A$.

\begin{proposition} \label{proposition for  B in Tb} Let $\sigma \in \mk{T}_b^{st}$. 
	
	If $\sigma \in (b^q,a^q,b^{q+1})$ for some $q$, then $C_{1,\sigma}(\mc T)=C_{1,\sigma \sigma}(\mc T)=\emptyset$. 
	
	If $\sigma \in (M,b^q,b^{q+1})$ or $\sigma \in (b^q,b^{q+1}, M')$  for some $q$, then for $\phi(b^{q+1})> \phi(b^{q})+1$ we have $C_{1,\sigma \sigma}(\mc T)=C_{1,\sigma}(\mc T)=\emptyset$ and otherwise for  $\phi(b^{q+1})\leq \phi(b^{q})+1$ holds  $B\in C_{1,\sigma \sigma}(\mc T)$.

\end{proposition}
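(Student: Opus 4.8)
The plan is to split $\mk{T}_b^{st}$ according to the three families of triples whose open charts cover it, namely $(M,b^n,b^{n+1})$, $(b^q,a^q,b^{q+1})$ and $(b^i,b^{i+1},M')$ (the second column of table \eqref{left right M}), and in each chart to read off the defining phase inequalities and feed them into the two ready-made criteria from Section \ref{stabilities without stable curves}: Lemma \ref{no semistable} for the vanishing statements and Corollary \ref{stable curve} for the stability of $B$. Throughout I use that for $\sigma$ in any chart $(A,B,C)$ the three objects $A,B,C$ are $\sigma$-semistable and their phases obey the inequalities of table \eqref{left right M}.

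First I would treat the vanishing cases. If $\sigma \in (b^q,a^q,b^{q+1})$, then the middle inequality of that chart reads $\phi(b^q)+1<\phi(b^{q+1})$, so $b^q,b^{q+1}\in\sigma^{ss}$ with $\phi(b^{q+1})>\phi(b^q)+1$; Lemma \ref{no semistable}(b), applied with $p=q$, then gives $C_{1,\sigma}(\mc T)=C_{1,\sigma\sigma}(\mc T)=\emptyset$. Likewise, if $\sigma \in (M,b^q,b^{q+1})$ or $\sigma \in (b^q,b^{q+1},M')$ and $\phi(b^{q+1})>\phi(b^q)+1$, then again $b^q,b^{q+1}\in\sigma^{ss}$ with the same phase gap, and Lemma \ref{no semistable}(b) yields the claimed emptiness. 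This disposes of the first assertion of the proposition, and of the half of the second assertion where $\phi(b^{q+1})>\phi(b^q)+1$.

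It remains to show $B\in C_{1,\sigma\sigma}(\mc T)$ when $\sigma\in(M,b^q,b^{q+1})$ or $\sigma\in(b^q,b^{q+1},M')$ and $\phi(b^{q+1})\leq\phi(b^q)+1$; here I would invoke Corollary \ref{stable curve} with $\mc E$ the given full exceptional triple. The relevant consecutive pair is $(b^q,b^{q+1})$: it is an exceptional pair by Proposition \ref{noncomcurves}, and by \eqref{nonvanishing6} together with Corollary \ref{one nonvanishing degree} its only non-zero $\Hom$ sits in degree $0$; since $B=\langle b^q,b^{q+1}\rangle\in C_1(\mc T)$ is of genus $1$, the bijection \eqref{25.1.18.1} forces $\hom(b^q,b^{q+1})=2$. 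Thus the hypotheses of Corollary \ref{stable curve} hold with $\alpha=-1$ and $l=1$, and the required phase condition $\phi(b^q)\geq\phi(b^{q+1}[-1])=\phi(b^{q+1})-1$ is precisely $\phi(b^{q+1})\leq\phi(b^q)+1$. Since $\sigma$ lies in the chart $\Theta_{\mc E}=(M,b^q,b^{q+1})$, resp.\ $(b^q,b^{q+1},M')$, the corollary gives $\langle b^q,b^{q+1}\rangle=B\in C_{1,\sigma\sigma}(\mc T)$, as claimed.

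Once the two criteria are in place the argument is essentially bookkeeping, and the only point needing care is the matching of the shift $\alpha=-1$ in Corollary \ref{stable curve} with the fact that $(b^q,b^{q+1})$ is \emph{strong} (its $\Hom$ lives in degree $0$, not in degree $1$ as in Proposition \ref{stable curve part 1}), together with the verification that the single inequality $\phi(b^{q+1})\leq\phi(b^q)+1$ is exactly the translate of $\phi(E_i)\geq\phi(E_{i+1}[\alpha])$. I expect no obstacle of substance beyond this matching.
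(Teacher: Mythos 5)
Your proposal is correct and follows essentially the same route as the paper: both reduce the vanishing cases to Lemma \ref{no semistable}(b) via the chart inequalities of table \eqref{left right M} (noting that $\phi(b^{q+1})>\phi(b^q)+1$ is automatic in $(b^q,a^q,b^{q+1})$), and both obtain $B\in C_{1,\sigma\sigma}(\mc T)$ in the remaining case from Corollary \ref{stable curve}. Your explicit bookkeeping of $\alpha=-1$, $l=1$ and the translation of $\phi(E_i)\geq\phi(E_{i+1}[\alpha])$ into $\phi(b^{q+1})\leq\phi(b^q)+1$ simply spells out what the paper leaves implicit.
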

\begin{proof}
All the cases are in the second column of table  \eqref{left right M} and therefore   $b^q, b^{q+1}\in \sigma^{ss}$.  From Lemma \ref{no semistable} it follows that $A\not \in C_{1,\sigma}(\mc T)$ and $B\not \in C_{1,\sigma}(\mc T)$, when $\phi(b^{q+1})> \phi(b^{q})+1$, furthermore 	for $\sigma \in (b^q,a^q,b^{q+1})$  we see in table \eqref{left right M} that $\phi(b^{q+1})> \phi(b^q)+1$ is automatically satisfied. 

	It remains the case when $\sigma \in (M,b^q,b^{q+1})$ or $\sigma \in (b^q,b^{q+1}, M')$  for some $q$ and $\phi(b^{q+1})\leq \phi(b^{q})+1$. In this case Corollary \ref{stable curve} ensure  $B\in C_{1,\sigma \sigma}(\mc T)$. 
 \end{proof}
 
Now we discuss the behavior of  $\phi_{\sigma}(M)$, $\phi_{\sigma}(M')$ for some stability conditions. 
 \begin{proposition} \label{proposition for  M,Mp in Tb} Let $p \in \ZZ$. 
 	
 {\rm (a) }	Let $\sigma \in (b^p,a^p,b^{p+1})$. If $M,M'\in \sigma^{ss}$, then $\phi(M)<\phi(M')\leq \phi(M)+1$. Furthermore:  
 	\begin{gather}   M \in \sigma^{ss} \iff \phi(a^p)-1 \leq \phi(b^p)< \phi\left (a^{p}\right) \nonumber  \\[-3mm] \label{some inequalities} \\[-3mm]
 	  M' \in \sigma^{ss} \iff \phi(b^{p+1})-1 \leq \phi(a^p)< \phi\left (b^{p+1}\right).  \nonumber \end{gather} 
  For example one can choose $\sigma \in  (b^p,a^p,b^{p+1})$ with  $\phi(b^p)=t$, $\phi(a^p)=t+1$, $\phi(b^{p+1})=t+2$ and in this case both inequalities in \eqref{some inequalities} hold and $\phi(M)=t$, $\phi(M')= t+1$.

  {\rm (b) }	Let $\sigma \in (M,b^p,b^{p+1})$ or $\sigma \in (b^p,b^{p+1}, M')$. If $M,M'\in \sigma^{ss}$ then $\phi(M)<\phi(M')$.
  
\end{proposition}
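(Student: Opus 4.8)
The plan is to read everything off the two distinguished triangles \eqref{short filtration 1} (taken with $q=p$), which realise $M$ and $M'$ as two–step extensions of the semistable objects $a^p,b^p,b^{p+1}$, together with the central–charge identities of Remark \ref{remark for delta}. Rotating the triangles, $M$ sits in $a^p[-1]\to M\to b^p\to a^p$ and $M'$ in $b^{p+1}[-1]\to M'\to a^p\to b^{p+1}$, so $M$ is an extension of $b^p$ by $a^p[-1]$ and $M'$ an extension of $a^p$ by $b^{p+1}[-1]$. Using Corollaries \ref{nonvanishings} and \ref{one nonvanishing degree} I would first record that $\hom^{\le 0}(b^p,a^p[-1])=\hom^{\le 0}(a^p,b^{p+1}[-1])=0$ while $\hom^1(b^p,a^p[-1])\neq0$ and $\hom^1(a^p,b^{p+1}[-1])\neq0$; thus $(b^p,a^p[-1])$ and $(a^p,b^{p+1}[-1])$ are Ext-exceptional pairs and the two extensions are non-split.

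For part (a), where $\sigma\in(b^p,a^p,b^{p+1})$ forces $\phi(b^p)<\phi(a^p)<\phi(b^{p+1})$ and $\phi(b^{p})+1<\phi(b^{p+1})$, I would settle semistability of $M$ by the dichotomy for a non-split triangle $S\to E\to Q$ with $S,Q\in\sigma^{ss}$: if $\phi(S)>\phi(Q)$ the triangle is the Harder--Narasimhan filtration of $E$, so $E\notin\sigma^{ss}$; if $\phi(S)\le\phi(Q)$ the phases lie in an interval of length $<1$, the pair is $\sigma$-exceptional in the sense of Definition \ref{sigma exceptional collection}, $E$ is an exceptional object of its extension closure, and hence $E\in\sigma^{ss}$ by \cite[Proposition 2.2]{dimkatz4}, with $\phi(E)$ between $\phi(S)$ and $\phi(Q)$. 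Applied to $S=a^p[-1]$, $Q=b^p$ this gives $M\in\sigma^{ss}\iff\phi(a^p)-1\le\phi(b^p)$, and to $S=b^{p+1}[-1]$, $Q=a^p$ it gives $M'\in\sigma^{ss}\iff\phi(b^{p+1})-1\le\phi(a^p)$, which together with the automatic chamber inequalities are exactly \eqref{some inequalities}. When both hold, the phase bounds read $\phi(M)\in[\phi(a^p)-1,\phi(b^p)]$ and $\phi(M')\in[\phi(b^{p+1})-1,\phi(a^p)]$; combining them with $\phi(b^{p+1})>\phi(b^p)+1$ yields $\phi(M)\le\phi(b^p)<\phi(b^{p+1})-1\le\phi(M')$ and $\phi(M')\le\phi(a^p)\le\phi(M)+1$, i.e. $\phi(M)<\phi(M')\le\phi(M)+1$. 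The stated example is then a one-line substitution in which both bounds collapse to $\phi(M)=t$, $\phi(M')=t+1$.

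For part (b) I would apply the octahedral axiom to the composition $b^p\to a^p\to b^{p+1}$ produced by the two triangles: the cones of the two factors being $M[1]$ and $M'[1]$, the octahedron yields a distinguished triangle $M\to F\to M'\to M[1]$ with $F=\mathrm{fib}(b^p\to b^{p+1})$, which also sits in $F\to b^p\to b^{p+1}\to F[1]$. Assuming $M,M'\in\sigma^{ss}$, suppose for contradiction $\phi(M)\ge\phi(M')$; then the triangle $M\to F\to M'$ forces $\phi^+(F)=\phi(M)$ and $\phi^-(F)=\phi(M')$ (uniqueness of Harder--Narasimhan filtrations, whether or not the triangle splits). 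Feeding this into the see-saw inequalities for $F\to b^p\to b^{p+1}$ and the semistability of $b^p,b^{p+1}$ gives a contradiction with the chamber inequalities: for $\sigma\in(M,b^p,b^{p+1})$ one gets $\phi(b^{p+1})\le\max(\phi(b^p),\phi(M)+1)$, against $\phi(b^{p+1})>\phi(b^p)$ and $\phi(b^{p+1})>\phi(M)+1$; for $\sigma\in(b^p,b^{p+1},M')$ one gets $\phi(b^p)\ge\min(\phi(M'),\phi(b^{p+1}))$, against $\phi(b^p)<\phi(M')$ and $\phi(b^p)<\phi(b^{p+1})$. Hence $\phi(M)<\phi(M')$.

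The routine parts are the $\hom$-vanishing bookkeeping and the phase-of-a-cone inequalities, both of which follow directly from Definition \ref{slicing}. The main obstacle I anticipate is the boundary casework in the semistability dichotomy of part (a): making sure that the equal-phase case $\phi(S)=\phi(Q)$, where the extension lives in a single $\mathcal P(\phi)$, and the strict case are uniformly covered, and that the pairs genuinely satisfy the width-$<1$ phase condition of Definition \ref{sigma exceptional collection} so that \cite[Proposition 2.2]{dimkatz4} applies. A secondary care point is extracting $\phi^\pm(F)$ from $M\to F\to M'$ regardless of splitting in part (b), which I would justify through the uniqueness of Harder--Narasimhan filtrations.
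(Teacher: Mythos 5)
Your argument is correct. For part (a) it is essentially the paper's own proof: the paper likewise reads the two triangles of \eqref{short filtration 1} as exhibiting $M$ and $M'$ in the extension closures of the Ext-exceptional pairs $(b^p,a^p[-1])$ and $(a^p,b^{p+1}[-1])$, and obtains the ``if'' directions of \eqref{some inequalities} from Corollary \ref{stable curve}, i.e.\ ultimately from the same semistability statement for exceptional objects in the extension closure of a $\sigma$-exceptional pair that you invoke. The only cosmetic difference is in the ``only if'' directions: the paper reads the bounds $\phi(a^p)-1\le\phi(M)\le\phi(b^p)$ and $\phi(b^{p+1})-1\le\phi(M')\le\phi(a^p)$ directly off the non-vanishings $\hom(M,b^p)\neq 0$, $\hom^1(a^p,M)\neq 0$, $\hom(M',a^p)\neq 0$, $\hom^1(b^{p+1},M')\neq 0$ of Corollary \ref{nonvanishings}, where you instead run the HN-filtration dichotomy; both yield the same chain $\phi(a^p)-1\le\phi(M)\le\phi(b^p)<\phi(b^{p+1})-1\le\phi(M')\le\phi(a^p)$ and hence the same conclusion and example. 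For part (b) you take a genuinely different and noticeably heavier route. The paper's argument is two lines: for $\sigma\in(M,b^p,b^{p+1})$ combine the chamber inequality $\phi(M)+1<\phi(b^{p+1})$ with $\phi(b^{p+1})\le\phi(M')+1$ (from $\hom^1(b^{p+1},M')\neq 0$); for $\sigma\in(b^p,b^{p+1},M')$ combine $\phi(b^p)<\phi(M')$ with $\phi(M)\le\phi(b^p)$ (from $\hom(M,b^p)\neq 0$). Your octahedron producing $M\to F\to M'$ with $F\to b^p\to b^{p+1}$, the extraction of $\phi^{\pm}(F)$ by HN-uniqueness, and the see-saw contradiction are all sound, and they have the mild virtue of using only the triangles of \eqref{short filtration 1} rather than the full list of non-vanishings; but they buy nothing that the direct phase comparison does not already give.
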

 \begin{proof} (a) Let $M, M' \in \sigma^{ss}$.  Combining the inequalities $  \begin{array}{c} \phi\left ( b^{p}\right) < \phi\left (a^{p}\right) \\ \phi\left ( b^{p}\right)+1 < \phi\left (b^{p+1}\right) \\ \phi\left ( a^{p}\right) < \phi\left (b^{p+1}\right)\end{array}$ in   \eqref{left right M} with the following inequalities due to  \eqref{nonvanishing1}, \eqref{nonvanishing2}:   	
 	$\begin{array}{c} \phi(b^{p+1})-1 \leq \phi(M')\leq \phi(a^p)   \\  \phi(a^p)-1 \leq \phi(M)\leq \phi(b^p) \end{array} $ we 
 	we obtain the following equivalent system of inequalities: $\phi(a^p)-1 \leq \phi(M)\leq \phi(b^p) <  \phi(b^{p+1})-1 \leq \phi(M')\leq \phi(a^p)$.
 		 In particular, $\phi(M)<\phi(M')\leq \phi(M)+1$, if $M,M'\in \sigma^{ss}$. 
 	
 	Conversely,  let $\sigma \in (b^p,a^p,b^{p+1})$ and some of the inequalities  \eqref{some inequalities} hold.  	
 	 The constants $\alpha, \gamma$ defined in \cite[Lemma 2.7]{dimkatz4} for the triple  $(b^p,a^p,b^{p+1})$ are $\alpha=\gamma=-1$, which follows from Corollary \ref{nonvanishings}.  From Corollary \ref{stable curve} we see that if the first, resp. the second, inequality  in \eqref{some inequalities} holds, then the exceptional objects in the extension closures of $(b^p,a^p[-1])$, resp. in $(a^p,b^{p+1}[-1])$, are in $\sigma^{ss}$. Therefore $M\in \sigma^{ss}$, resp. $M'\in \sigma^{ss}$ (see \eqref{short filtration 1}).  The last sentence follows also from \eqref{short filtration 1}.

(b) Let $M,M'\in \sigma^{ss}$.  If $\sigma \in (M,b^p,b^{p+1})$, then from table \eqref{left right M} we have $\phi(M)+1 < \phi(b^{p+1})$, and from \eqref{nonvanishing2} it follows that $\phi(b^{p+1})\leq \phi(M')+1$. Therefore $\phi(M)< \phi(M')$. If $\sigma \in (b^p,b^{p+1}, M')$, then from table \eqref{left right M} we have $ \phi(b^{p})< \phi(M')$, and from \eqref{nonvanishing1} it follows that $\phi(M)\leq \phi(b^{p})$. Therefore $\phi(M)< \phi(M')$.
 \end{proof}
 \begin{corollary} \label{genus 0 in bab}
 	Let $\sigma \in (b^p,a^p,b^{p+1})$, then $C_{0,\sigma \sigma}(\mc T)\subset \{\alpha^p,\beta^p\}$ and $\alpha^p \in C_{0,\sigma \sigma}(\mc T)$ iff $\phi(b^{p+1})-1 \leq \phi(a^{p})< \phi(b^{p+1})$,  $\beta^p \in C_{0,\sigma \sigma}(\mc T)$ iff $\phi(a^{p})-1 \leq \phi(b^{p})<\phi(a^{p})$. In particular, the first row of table \eqref{table for b a b} follows, taking into account also Proposition \eqref{proposition for  B in Tb}.
 	
 		Let $\sigma \in (a^p,b^{p+1},a^{p+1})$, then $C_{0,\sigma \sigma}(\mc T)\subset \{\alpha^p,\beta^{p+1}\}$ and $\beta^{p+1} \in C_{0,\sigma \sigma}(\mc T)$ iff $\phi(a^{p+1})-1 \leq \phi(b^{p+1})$,  $\alpha^p \in C_{0,\sigma \sigma}(\mc T)$ iff $\phi(b^{p+1})-1 \leq \phi(a^{p})$. In particular, the second  row of table \eqref{table for b a b} follows.
 \end{corollary}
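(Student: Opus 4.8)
The plan is to derive both statements from \textbf{Lemma \ref{no semistable}(b)}, \textbf{Proposition \ref{proposition for  M,Mp in Tb}(a)}, and the description of $C_{0,\sigma\sigma}(\mc T)$ recorded in Subsection \ref{first remarks}, and then to obtain the second statement from the first by transporting everything through the auto-equivalence $\zeta$ of \eqref{useful autoequivalence}.

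First I would treat $\sigma \in (b^p,a^p,b^{p+1})$. By table \eqref{left right M} we have $b^p,a^p,b^{p+1}\in\sigma^{ss}$, and in particular $\phi(b^p)+1<\phi(b^{p+1})$, so Lemma \ref{no semistable}(b) applies with this $p$. It yields at once $C_{0,\sigma\sigma}(\mc T)\subset\{\alpha^p,\beta^p\}$ together with the equivalences $\beta^p\in C_{0,\sigma\sigma}(\mc T)\iff\{M,a^p\}\subset\sigma^{ss}$ and $\alpha^p\in C_{0,\sigma\sigma}(\mc T)\iff\{M',a^p\}\subset\sigma^{ss}$. Since $a^p\in\sigma^{ss}$ is already part of the hypothesis $\sigma \in (b^p,a^p,b^{p+1})$, these collapse to $\beta^p\in C_{0,\sigma\sigma}(\mc T)\iff M\in\sigma^{ss}$ and $\alpha^p\in C_{0,\sigma\sigma}(\mc T)\iff M'\in\sigma^{ss}$. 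Now \eqref{some inequalities} in Proposition \ref{proposition for  M,Mp in Tb}(a) rewrites $M\in\sigma^{ss}$ as $\phi(a^p)-1\leq\phi(b^p)<\phi(a^p)$ and $M'\in\sigma^{ss}$ as $\phi(b^{p+1})-1\leq\phi(a^p)<\phi(b^{p+1})$, which is exactly the asserted characterisation. The first row of table \eqref{table for b a b} then follows, the entry $C_{1,\sigma}(\mc T)=\emptyset$ being the content of Proposition \ref{proposition for  B in Tb}.

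For the second statement I would use that $\zeta$ sends the triple $(a^p,b^{p+1},a^{p+1})$ to $(b^p,a^p,b^{p+1})$: indeed $\zeta(a^p)\cong b^p$, $\zeta(b^{p+1})\cong a^p$, $\zeta(a^{p+1})\cong b^{p+1}$ by \eqref{useful autoequivalence}, so $\sigma\in(a^p,b^{p+1},a^{p+1})$ forces $\tau:=\zeta\cdot\sigma\in(b^p,a^p,b^{p+1})$, with phases related via \eqref{aut properties 1} by $\phi_\tau(b^p)=\phi_\sigma(a^p)$, $\phi_\tau(a^p)=\phi_\sigma(b^{p+1})$, $\phi_\tau(b^{p+1})=\phi_\sigma(a^{p+1})$. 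Applying the already proved first statement to $\tau$ gives $C_{0,\tau\tau}(\mc T)\subset\{\alpha^p,\beta^p\}$ with the two phase criteria for $\alpha^p,\beta^p$. Feeding this into the equivalences of Remark \ref{action1}, namely $\alpha^m\in C_{0,\sigma\sigma}(\mc T)\iff\beta^m\in C_{0,\tau\tau}(\mc T)$ and $\beta^m\in C_{0,\sigma\sigma}(\mc T)\iff\alpha^{m-1}\in C_{0,\tau\tau}(\mc T)$, forces $m=p$ in the first and $m=p+1$ in the second (the only indices for which the right-hand members can lie in $\{\alpha^p,\beta^p\}$). Hence $C_{0,\sigma\sigma}(\mc T)\subset\{\alpha^p,\beta^{p+1}\}$, and substituting the phase relations above turns $\alpha^p\in C_{0,\sigma\sigma}(\mc T)$ into $\phi(b^{p+1})-1\leq\phi(a^p)<\phi(b^{p+1})$ and $\beta^{p+1}\in C_{0,\sigma\sigma}(\mc T)$ into $\phi(a^{p+1})-1\leq\phi(b^{p+1})<\phi(a^{p+1})$, as claimed. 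The vanishing $C_{1,\sigma}(\mc T)=\emptyset$ in the second row of table \eqref{table for b a b} follows from $C_{1,\tau}(\mc T)=\emptyset$ together with $\zeta(A)=B$, $\zeta(B)=A$.

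The argument is essentially bookkeeping, so there is no serious obstacle; the one point that needs care is the index shift built into $\zeta$ (note $\zeta(b^m)\cong a^{m-1}$ and $\zeta(\beta^m)=\alpha^{m-1}$), which is precisely what produces $\beta^{p+1}$ rather than $\beta^p$ in the second statement and which must be matched against the three phase substitutions to land on the correct inequalities. Before appealing to \eqref{some inequalities} I would also note that the right-hand halves $\phi(b^p)<\phi(a^p)$ and $\phi(a^p)<\phi(b^{p+1})$ (and, for the transported case, $\phi(b^{p+1})<\phi(a^{p+1})$) hold automatically, since they are among the defining inequalities of the regions in table \eqref{left right M}, so no extra verification is required there.
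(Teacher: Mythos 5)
Your proposal is correct and follows essentially the same route as the paper: the first statement via Lemma \ref{no semistable}(b) combined with the equivalences \eqref{some inequalities} of Proposition \ref{proposition for  M,Mp in Tb}(a), and the second statement by transport through $\zeta$ using \eqref{useful autoequivalence}, \eqref{zeta on alpha beta} and Remark \ref{action1}. Your write-up merely spells out the index bookkeeping that the paper leaves implicit, and does so accurately.
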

 \begin{proof}
	Let $\sigma \in (b^p,a^p,b^{p+1})$. In table \ref{left right M} we have $b^p,a^p,b^{p+1} \in \sigma^{ss}$ and$  \begin{array}{c} \phi\left ( b^{p}\right) < \phi\left (a^{p}\right) \\ \phi\left ( b^{p}\right)+1 < \phi\left (b^{p+1}\right) \\ \phi\left ( a^{p}\right) < \phi\left (b^{p+1}\right)\end{array}$.	From Lemma \ref{no semistable} (b) and  Proposition \ref{proposition for  M,Mp in Tb} (a) we obtain the statement. 
	
	The second part follows from the first with the help of \eqref{useful autoequivalence}, \eqref{zeta on alpha beta} and Remark \ref{action1}.
 \end{proof}

  \begin{corollary} \label{coro for b b M'} Let $\sigma \in (b^p,b^{p+1}, M')$ and $\phi(b^{p+1})>\phi(b^p)+1$. Then
  	
  	{\rm (a)}  $ C_{0,\sigma \sigma}(\mc T) \subset \{\alpha^p, \beta^{p}\}$ and
  	\begin{gather} \label{equivalences for nonempty Co}  C_{0,\sigma \sigma}(\mc T)\not = \emptyset \iff \phi\left (M'\right)+1 > \phi(b^{p+1}) \geq \phi(M') \iff \alpha^p \in  C_{0,\sigma \sigma}(\mc T)\iff a^p \in \sigma^{ss}. \end{gather} 
  	Furthermore for $ \phi\left (M'\right)+1 > \phi(b^{p+1}) \geq \phi(M') $ we have  $\phi(a^p)=\arg_{[\phi(M'), \phi(M')+1)}(Z(b^{p+1})+ Z(M'))$ and $\beta^p \in C_{0,\sigma \sigma}(\mc T) \iff  \arg_{[\phi(M'), \phi(M')+1)}(Z(b^{p+1})+ Z(M')) \leq   \phi(b^{p})+1 $.
  	
  	{\rm (b)}  $M\in \sigma^{ss}$ if and only if $\beta^p \in C_{0,\sigma \sigma}(\mc T)$, which in turn by (a)  is equivalent to  $ \phi\left (M'\right)+1 > \phi(b^{p+1}) \geq \phi(M') $ and $ \arg_{[\phi(M'), \phi(M')+1)}(Z(b^{p+1})+ Z(M')) \leq   \phi(b^{p})+1 $.
  	
  {\rm (c)}	From {\rm (a)}  and {\rm (b)}  follow the third  and the second  row of table \eqref{table for b b M'} (taking into account Proposition \ref{proposition for  B in Tb}).
  \end{corollary}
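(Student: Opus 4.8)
The plan is to reduce the whole statement to the semistability of the two exceptional objects $a^p$ and $M$, exploiting that $M'\in\sigma^{ss}$ is automatic from $\sigma\in(b^p,b^{p+1},M')$ and that we work in the regime $\phi(b^{p+1})>\phi(b^p)+1$. Because of the latter, Lemma \ref{no semistable}(b) applies directly and gives at once $C_{0,\sigma\sigma}(\mc T)\subset\{\alpha^p,\beta^p\}$ together with $\alpha^p\in C_{0,\sigma\sigma}(\mc T)\iff\{M',a^p\}\subset\sigma^{ss}$ and $\beta^p\in C_{0,\sigma\sigma}(\mc T)\iff\{M,a^p\}\subset\sigma^{ss}$ (see also Subsection \ref{first remarks}). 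Since $M'\in\sigma^{ss}$, the first becomes $\alpha^p\in C_{0,\sigma\sigma}(\mc T)\iff a^p\in\sigma^{ss}$, while the second forces $a^p\in\sigma^{ss}$ whenever $\beta^p\in C_{0,\sigma\sigma}(\mc T)$; hence $C_{0,\sigma\sigma}(\mc T)\neq\emptyset\iff\alpha^p\in C_{0,\sigma\sigma}(\mc T)$. It then remains to decide when $a^p\in\sigma^{ss}$ and, conditionally on that, when $M\in\sigma^{ss}$.

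First I would handle $a^p$ using the triangle $M'\to a^p\to b^{p+1}\to M'[1]$ of \eqref{short filtration 1}, which presents $a^p$ as an extension with subobject $M'$ and quotient $b^{p+1}$, and for which $Z(a^p)=Z(M')+Z(b^{p+1})$ by Remark \ref{remark for delta}. If $\phi(M')>\phi(b^{p+1})$ this is, by uniqueness of Harder--Narasimhan filtrations, the HN filtration of $a^p$, so $a^p\notin\sigma^{ss}$. If $\phi(M')\le\phi(b^{p+1})$, then $\phi(b^{p+1})<\phi(M')+1$ (chamber inequality, table \eqref{left right M}) and the semi-orthogonal pair $(b^{p+1},M')$ (semi-orthogonal since $(b^p,b^{p+1},M')$ is a full exceptional collection, with $\hom^1(b^{p+1},M')=1$ by \eqref{nonvanishing2} and genus $0$) satisfies the hypotheses of Corollary \ref{stable curve} with $\alpha=0$; applying it to $(b^p,b^{p+1},M')$, for which $\sigma\in\Theta_{(b^p,b^{p+1},M')}$, yields $\langle b^{p+1},M'\rangle=\alpha^p\in C_{0,\sigma\sigma}(\mc T)$ and hence $a^p\in\sigma^{ss}$. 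Thus $a^p\in\sigma^{ss}\iff\phi(M')\le\phi(b^{p+1})$, and in that case both summands of $Z(a^p)$ have argument in $[\pi\phi(M'),\pi\phi(M')+\pi)$, so $\phi(a^p)=\arg_{[\phi(M'),\phi(M')+1)}(Z(b^{p+1})+Z(M'))$. This is part (a).

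The core of the proof is $M$, which I would analyze through the triangle $a^p[-1]\to M\to b^p\to a^p$ of \eqref{short filtration 1}, presenting $M$ with subobject $a^p[-1]$ and quotient $b^p$. Suppose $a^p\notin\sigma^{ss}$, i.e. $\phi(M')>\phi(b^{p+1})$; then $a^p[-1]$ has HN factors $M'[-1]$ and $b^{p+1}[-1]$, and refining the triangle produces a filtration of $M$ with semistable factors $M'[-1],\,b^{p+1}[-1],\,b^p$ of phases $\phi(M')-1>\phi(b^{p+1})-1>\phi(b^p)$, the last inequality being exactly $\phi(b^{p+1})>\phi(b^p)+1$. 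The phases strictly decrease, so this is the HN filtration and $M\notin\sigma^{ss}$; in particular $M\in\sigma^{ss}\Rightarrow a^p\in\sigma^{ss}$. Suppose instead $a^p\in\sigma^{ss}$: if $\phi(a^p)-1>\phi(b^p)$ the triangle is again an HN filtration and $M\notin\sigma^{ss}$; if $\phi(a^p)\le\phi(b^p)+1$ I would apply Corollary \ref{stable curve} to the pair $(b^p,a^p)$ inside the full collection $(b^p,a^p,b^{p+1})$ with $\alpha=-1$ (using $\hom(b^p,a^p)=1$ from \eqref{nonvanishing3}, semi-orthogonality from \eqref{nonvanishing4}, and that $(b^p,a^p)$ generates $\beta^p$), whose defining condition $\phi(b^p)\ge\phi(a^p[-1])=\phi(a^p)-1$ is precisely $\phi(a^p)\le\phi(b^p)+1$. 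To legitimize this I verify $\sigma\in(b^p,a^p,b^{p+1})$ from table \eqref{left right M}: the inequalities $\phi(b^p)<\phi(a^p)$, $\phi(b^p)+1<\phi(b^{p+1})$, $\phi(a^p)<\phi(b^{p+1})$ all hold here (the first from $\phi(a^p)\ge\phi(M')>\phi(b^p)$, the second by hypothesis, the third from $\phi(a^p)\le\phi(b^p)+1<\phi(b^{p+1})$). Corollary \ref{stable curve} then gives $\langle b^p,a^p\rangle=\beta^p\in C_{0,\sigma\sigma}(\mc T)$, so $M\in\sigma^{ss}$. Hence $M\in\sigma^{ss}\iff a^p\in\sigma^{ss}$ and $\phi(a^p)\le\phi(b^p)+1$, which is part (b); combining these equivalences with Proposition \ref{proposition for B in Tb} in the two cases $\phi(M')>\phi(b^{p+1})$ and $\phi(M')\le\phi(b^{p+1})$ produces the two rows of table \eqref{table for b b M'} asserted in (c).

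The main obstacle I anticipate is precisely the $M$-step. Beyond the shift bookkeeping (it is easy to slip a sign, since $\phi(X[-1])=\phi(X)-1$), the delicate point is that Corollary \ref{stable curve} must be invoked for $(b^p,a^p,b^{p+1})$, a genuinely different chamber from the $(b^p,b^{p+1},M')$ we start in; the verification that $\sigma$ nonetheless lies in $\Theta_{(b^p,a^p,b^{p+1})}$ is exactly where the standing hypothesis $\phi(b^{p+1})>\phi(b^p)+1$ and the interval $\phi(a^p)\in[\phi(M'),\phi(b^{p+1})]$ coming from $Z(a^p)=Z(M')+Z(b^{p+1})$ are used in full. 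The unstable sub-case is the other subtle spot, where one must recognize the three-step refinement of $M$ as an honest HN filtration via the strict chain $\phi(M')-1>\phi(b^{p+1})-1>\phi(b^p)$.
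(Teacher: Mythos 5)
Your proof is correct and follows essentially the same route as the paper: the reduction via Lemma \ref{no semistable}(b), the use of Corollary \ref{stable curve} on the pair $(b^{p+1},M')$ to get $\alpha^p$ and $a^p\in\sigma^{ss}$, the triangles \eqref{short filtration 1} for the phase formula and the instability arguments, and the final landing in the chamber $(b^p,a^p,b^{p+1})$. The only (cosmetic) differences are that the paper derives $M\not\in\sigma^{ss}$ in the regime $\phi(M')>\phi(b^{p+1})$ by a Hom-vanishing contradiction with $\hom^1(M',M)\neq 0$ rather than your HN-refinement of $M$, and it concludes the $M$-step by citing Corollary \ref{genus 0 in bab} (equivalently Proposition \ref{proposition for  M,Mp in Tb}(a)) instead of applying Corollary \ref{stable curve} directly to $(b^p,a^p)$.
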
	
  \begin{proof} Recall that in table  \eqref{left right M} we have $\phi\left ( b^{p+1}\right) < \phi\left (M'\right)+1$.
  	From Lemma \ref{no semistable} (b) it follows that  $ C_{0,\sigma \sigma}(\mc T)\subset\{\alpha^{p}, \beta^{p}\}$, $\beta^{p} \in  C_{0,\sigma \sigma}(\mc T)$ iff $\{M, a^{p}\} \subset \sigma^{ss}$,    $\alpha^{p} \in  C_{0,\sigma \sigma}(\mc T)$ iff $\{M', a^{p}\} \subset \sigma^{ss}$. In particular $ C_{0,\sigma \sigma}(\mc T)\not = \emptyset$ iff $a^p \in \sigma^{ss}$. On the other hand,  if $a^p \in \sigma^{ss}$, then  \eqref{nonvanishing1},  \eqref{nonvanishing4}  imply that $\phi(M')\leq \phi(a^p)\leq \phi(b^{p+1})$ and vice versa, if $\phi(b^{p+1}) \geq  \phi(M') $, then Corollary \ref{stable curve} and \eqref{nonvanishing2} imply that $\alpha^p = \langle M', a^p, b^{p+1} \rangle \in C_{0,\sigma \sigma}(\mc T)$ and hence $a^p \in \sigma^{ss}$. So we proved \eqref{equivalences for nonempty Co}. Before continuing with analyzing the case   $\phi(b^{p+1}) \geq \phi(M')$, we note that when $\phi(b^{p+1}) < \phi(M')$ we have $ M\not \in \sigma^{ss}$. Indeed, the non-vanishing $\hom(M, b^p)\neq 0$ in \eqref{nonvanishing1}  combined with inequalities  $\phi(b^{p+1}) < \phi(M')$, $\phi(b^{p+1})>\phi(b^p)+1$, and $M\in \sigma^{ss}$, recalling (2) in Definition \ref{slicing}, would imply $\hom^1(M', M)=0$, which contradicts \eqref{nonvanishing7}.  Recalling that $\beta^{p} \in  C_{0,\sigma \sigma}(\mc T)$ iff $\{M, a^{p}\} \subset \sigma^{ss}$ and using the already proved  \eqref{equivalences for nonempty Co} we see that $\beta^{p} \in  C_{0,\sigma \sigma}(\mc T)$ iff $M \in \sigma^{ss}$.

  	Let $\phi(b^{p+1}) \geq \phi(M')$ and $\phi(b^{p+1})>\phi(b^p)+1$. It remains to determine when  $\beta^p \in C_{0,\sigma \sigma}(\mc T)$, i.e. when $M\in \sigma^{ss}$.   We already explained that $a^p \in \sigma^{ss}$ and   $\phi(M')\leq \phi(a^p)\leq \phi(b^{p+1})$. In table \eqref{left right M} we have also   $\begin{array}{c} \phi\left ( b^{p}\right) < \phi\left (b^{p+1}\right) \\ \phi\left ( b^{p}\right) < \phi\left (M'\right) \\ \phi\left ( b^{p+1}\right) < \phi\left (M'\right)+1\end{array}$. Therefore $\phi(M')\leq \phi(a^p)\leq \phi(b^{p+1})<\phi(M')+1$. Recalling that $Z(a^p)= Z(b^{p+1})+ Z(M')$  (the first triangle in \eqref{short filtration 1}) we deduce that  $\phi(a^p)=\arg_{[\phi(M'), \phi(M')+1)}(Z(b^{p+1})+ Z(M'))$, Furthermore,  $\phi(a^p)= \phi(b^{p+1})$ iff $\phi(b^{p+1}) = \phi(M')$ otherwise $\phi(M')<\phi(a^p)<\phi(b^{p+1})$. If $\phi(b^{p+1}) = \phi(M')$,  then  $\phi(a^p)= \phi(b^{p+1})$ and therefore, taking into account $\phi(b^p)+1 < \phi(b^{p+1})$, we deduce $ \phi(a^{p}[-1])>\phi(b^p)$ and then the second triangle in \eqref{short filtration 1} implies $M \not \in \sigma^{ss}$. Otherwise   $\phi\left ( b^{p}\right) < \phi(M')<\phi(a^p)<\phi(b^{p+1})$ and from table \eqref{left right M} if follows that $\sigma \in (b^p, a^p, b^{p+1})$ and now applying  Corollary \ref{genus 0 in bab} we complete the proof.   
  \end{proof}

 \begin{corollary} \label{coro for M b b} Let $\sigma \in (M,b^p,b^{p+1})$ and $\phi(b^{p+1})>\phi(b^p)+1$. Then
 	
 	{\rm (a)}  $ C_{0,\sigma \sigma}(\mc T) \subset \{\alpha^p, \beta^{p}\}$ and
 	\begin{gather} \label{equivalences fo C_0 etc}  C_{0,\sigma \sigma}(\mc T)\not = \emptyset \iff  \phi\left (b^{p}\right) > \phi(M)\geq \phi(b^p)-1 \iff \beta^p \in  C_{0,\sigma \sigma}(\mc T)\iff a^p \in \sigma^{ss}. \end{gather} 
 	Furthermore for $ \phi\left (b^{p}\right) > \phi(M)\geq \phi(b^p)-1$ we have $\phi(a^p)=\arg_{(\phi(M), \phi(M)+1]}(Z(b^p)- Z(M))$ and $\alpha^p \in C_{0,\sigma \sigma}(\mc T) \iff \phi(b^{p+1})-1 \leq   \arg_{(\phi(M), \phi(M)+1]}(Z(b^p)- Z(M))$. 
 	
 	{\rm (b)}  $M'\in \sigma^{ss}$ if and only if $\alpha^p \in C_{0,\sigma \sigma}(\mc T)$, which in turn by (a)  is equivalent to  $ \phi\left (b^{p}\right) > \phi(M)\geq \phi(b^p)-1 $ and $ \phi(b^{p+1})-1 \leq   \arg_{(\phi(M), \phi(M)+1]}(Z(b^p)- Z(M)) $.
 	
 	{\rm (c)}	From 	{\rm(a)} and {\rm(b)} follow the second and the third row of table \eqref{table for M b b} (taking into account Proposition \ref{proposition for  B in Tb}).
 \end{corollary}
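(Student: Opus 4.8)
The plan is to follow the same three-step scheme as in the proof of Corollary \ref{coro for b b M'}, with the roles of $M'$ and $M$ interchanged and with the triangle $M\to b^p\to a^p\to M[1]$ from \eqref{short filtration 1} playing the part that the $M'$-triangle played there. Throughout, $\sigma\in(M,b^p,b^{p+1})$ gives $M,b^p,b^{p+1}\in\sigma^{ss}$ together with $\phi(M)<\phi(b^p)$, $\phi(M)+1<\phi(b^{p+1})$, $\phi(b^p)<\phi(b^{p+1})$ from table \eqref{left right M}, and we also assume $\phi(b^{p+1})>\phi(b^p)+1$. First I would invoke Lemma \ref{no semistable} (b), whose hypotheses $b^p,b^{p+1}\in\sigma^{ss}$, $\phi(b^{p+1})>\phi(b^p)+1$ hold, to obtain $C_{0,\sigma\sigma}(\mc T)\subset\{\alpha^p,\beta^p\}$ with $\beta^p\in C_{0,\sigma\sigma}(\mc T)\iff\{M,a^p\}\subset\sigma^{ss}$ and $\alpha^p\in C_{0,\sigma\sigma}(\mc T)\iff\{M',a^p\}\subset\sigma^{ss}$. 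Since $M\in\sigma^{ss}$ here, this already yields $\beta^p\in C_{0,\sigma\sigma}(\mc T)\iff a^p\in\sigma^{ss}\iff C_{0,\sigma\sigma}(\mc T)\neq\emptyset$.

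Next I would characterize $a^p\in\sigma^{ss}$, which establishes \eqref{equivalences fo C_0 etc}. If $a^p\in\sigma^{ss}$, then $\hom(b^p,a^p)\neq0$ from \eqref{nonvanishing3} and $\hom^1(a^p,M)\neq0$ from \eqref{nonvanishing2}, combined with axiom (2) of Definition \ref{slicing}, give $\phi(b^p)\leq\phi(a^p)\leq\phi(M)+1$; with $\phi(M)<\phi(b^p)$ this is exactly $\phi(b^p)>\phi(M)\geq\phi(b^p)-1$. Conversely, if $\phi(M)\geq\phi(b^p)-1$, I would apply Corollary \ref{stable curve} to the pair $(M,b^p)$: by \eqref{nonvanishing1}, \eqref{nonvanishing2} and Corollary \ref{one nonvanishing degree} it is an exceptional pair with $\hom$ concentrated in degree $0$, the triple $(M,b^p,b^{p+1})$ is a full exceptional collection with $\sigma$ in its chamber, and the required inequality $\phi(M)\geq\phi(b^p[-1])=\phi(b^p)-1$ holds, so $\langle M,b^p\rangle\in C_{0,\sigma\sigma}(\mc T)$; since $a^p$ is the cone of $M\to b^p$ it lies in $\langle M,b^p\rangle$ and is $\sigma$-semistable. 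The bounds $\phi(b^p)\leq\phi(a^p)\leq\phi(M)+1$ and $Z(a^p)=Z(b^p)-Z(M)$ (Remark \ref{remark for delta}) then give $\phi(a^p)\in(\phi(M),\phi(M)+1]$, hence $\phi(a^p)=\arg_{(\phi(M),\phi(M)+1]}(Z(b^p)-Z(M))$, completing (a).

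For part (b) I would determine when $M'\in\sigma^{ss}$, separating the two regimes. In the regime $a^p\notin\sigma^{ss}$, i.e. $\phi(M)+1<\phi(b^p)$, I claim $M'\notin\sigma^{ss}$: if it were semistable, then $\hom^1(b^{p+1},M')\neq0$ from \eqref{nonvanishing2} would force $\phi(M')\geq\phi(b^{p+1})-1$, and since $\phi(b^{p+1})-1>\phi(b^p)>\phi(M)+1$ this gives $\phi(M')>\phi(M)+1$, contradicting $\hom^1(M',M)\neq0$ from \eqref{nonvanishing7}; here $C_{0,\sigma\sigma}(\mc T)=\emptyset$ and the equivalence $M'\in\sigma^{ss}\iff\alpha^p\in C_{0,\sigma\sigma}(\mc T)$ holds trivially. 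In the regime $a^p\in\sigma^{ss}$ one has $\alpha^p\in C_{0,\sigma\sigma}(\mc T)\iff M'\in\sigma^{ss}$, so it remains to prove $M'\in\sigma^{ss}\iff\phi(b^{p+1})-1\leq\phi(a^p)$. For the forward implication I argue by contraposition through the triangle $b^{p+1}[-1]\to M'\to a^p\to b^{p+1}$ of \eqref{short filtration 1}: if $\phi(b^{p+1})-1>\phi(a^p)$, then $b^{p+1}[-1]$ and $a^p$ are semistable with strictly decreasing phases, so they are the Harder--Narasimhan factors of $M'$ and $M'\notin\sigma^{ss}$. For the reverse implication, $\phi(b^{p+1})-1\leq\phi(a^p)$ together with $\phi(b^{p+1})-1>\phi(b^p)$ forces $\phi(a^p)>\phi(b^p)$; combined with $\phi(a^p)\leq\phi(M)+1<\phi(b^{p+1})$ and table \eqref{left right M} this places $\sigma\in(b^p,a^p,b^{p+1})$, and Corollary \ref{genus 0 in bab} then yields $\alpha^p\in C_{0,\sigma\sigma}(\mc T)$, i.e. $M'\in\sigma^{ss}$. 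Rewriting $\phi(a^p)$ via the $\arg$-expression of (a) gives the stated criterion.

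The step I expect to be the main obstacle is the non-semistability of $M'$ in the regime $a^p\notin\sigma^{ss}$: the naive bound coming from $\hom^1(b^p,M')\neq0$ only gives $\phi(M')\geq\phi(b^p)-1$, which is too weak to contradict anything, and the argument succeeds only because the larger object $b^{p+1}$ supplies the stronger bound $\phi(M')\geq\phi(b^{p+1})-1$; selecting this non-vanishing is the decisive move. Once (a) and (b) are in place, part (c) is immediate: with $v=\phi(b^{p+1})-1>\phi(b^p)$ and $s=\phi(M)+1$, the two regimes are precisely $s<\phi(b^p)$ and $s\geq\phi(b^p)$, and Proposition \ref{proposition for B in Tb} supplies the columns $C_{1,\sigma}(\mc T)=C_{1,\sigma\sigma}(\mc T)=\emptyset$ (valid since $\phi(b^{p+1})>\phi(b^p)+1$), which together read off the second and third rows of table \eqref{table for M b b}.
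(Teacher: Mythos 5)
Your proposal is correct and follows essentially the same route as the paper's proof: Lemma \ref{no semistable} (b) for the reduction to $\{\alpha^p,\beta^p\}$ and to the semistability of $a^p$, the non-vanishings \eqref{nonvanishing2}--\eqref{nonvanishing4} plus Corollary \ref{stable curve} for the characterization of $a^p\in\sigma^{ss}$ and the $\arg$-formula, the $\hom^1(b^{p+1},M')\neq 0$ bound against \eqref{nonvanishing7} for the regime $\phi(M)<\phi(b^p)-1$, and the first triangle of \eqref{short filtration 1} together with Corollary \ref{genus 0 in bab} for the final criterion. The only difference is cosmetic: you treat the forward implication of the $M'$-criterion uniformly by a Harder--Narasimhan contraposition, whereas the paper splits off the degenerate case $\phi(M)=\phi(b^p)-1$ before invoking Corollary \ref{genus 0 in bab}.
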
	
 \begin{proof} In table \eqref{left right M} we have  $\phi\left (M\right) < \phi\left (b^{p}\right) $.
 	From Lemma \ref{no semistable} (b) it follows that  $ C_{0,\sigma \sigma}(\mc T)\subset\{\alpha^{p}, \beta^{p}\}$, $\beta^{p} \in  C_{0,\sigma \sigma}(\mc T)$ iff $\{M, a^{p}\} \subset \sigma^{ss}$,    $\alpha^{p} \in  C_{0,\sigma \sigma}(\mc T)$ iff $\{M', a^{p}\} \subset \sigma^{ss}$. In particular $ C_{0,\sigma \sigma}(\mc T)\not = \emptyset$ iff $a^p \in \sigma^{ss}$. On the other hand,  if $a^p \in \sigma^{ss}$, then  \eqref{nonvanishing2},  \eqref{nonvanishing3} and \eqref{nonvanishing4} imply that $\phi(b^p)\leq \phi(a^p)\leq \phi(M)+1$,  $\phi(a^p)\leq \phi(b^{p+1})$ and vice versa, if $\phi(M)\geq \phi(b^p)-1$, then Corollary \ref{stable curve} implies that $\beta^p = \langle M, b^p, a^p \rangle \in C_{0,\sigma \sigma}(\mc T)$ and hence $a^p \in \sigma^{ss}$.  Thus, we proved \eqref{equivalences fo C_0 etc}.  
 	
 	 Note that when $ \phi(M)< \phi(b^{p}) -1$  we have $ M'\not \in \sigma^{ss}$. Indeed, the non-vanishing $\hom^1( b^{p+1}, M')\neq 0$ in \eqref{nonvanishing2}  combined with inequalities  $\phi(b^{p})-1 > \phi(M)$, $\phi(b^{p+1})>\phi(b^p)+1$, and $M'\in \sigma^{ss}$, recalling (2) in Definition \ref{slicing}, would imply $\hom^1(M', M)=0$, which contradicts \eqref{nonvanishing7}. Having \eqref{equivalences fo C_0 etc} and  $\alpha^{p} \in  C_{0,\sigma \sigma}(\mc T)$ iff $\{M', a^{p}\} \subset \sigma^{ss}$ we see that $M'\in \sigma^{ss} \iff \alpha^{p} \in  C_{0,\sigma \sigma}(\mc T)$. 
 	
 	Let $\phi(M)\geq \phi(b^p)-1 $ and $\phi(b^{p+1})>\phi(b^p)+1$. It remains to determine when  $\alpha^p \in C_{0,\sigma \sigma}(\mc T)$, i.e. when $M'\in \sigma^{ss}$ ?   We already explained that $a^p \in \sigma^{ss}$ and  $\phi(b^p)\leq \phi(a^p)\leq \phi(M)+1$,  $\phi(a^p)\leq \phi(b^{p+1})$. In table \eqref{left right M} we have also $\begin{array}{c}\phi\left (M\right) < \phi\left (b^{p}\right) \\ \phi\left ( M\right)+1 < \phi\left (b^{p+1}\right) \\ \phi\left ( b^{p}\right) < \phi\left (b^{p+1} \right) \end{array}$. Therefore $\phi(M)< \phi\left (b^{p}\right)\leq \phi(a^p)\leq \phi(M)+1$. Recalling that $Z(a^p)= Z(b^p)- Z(M)$  (the second triangle in \eqref{short filtration 1}) we deduce that  $\phi(a^p)=\arg_{(\phi(M), \phi(M)+1]}(Z(b^p)- Z(M))$, Furthermore,  $\phi(a^p)= \phi(b^p)$ iff $\phi(b^p)-1 = \phi(M)$ otherwise $\phi(b^p)<\phi(a^p)< \phi(M)+1<\phi(b^{p+1})$. If $\phi(b^p)-1 = \phi(M)$,  then  $\phi(a^p)= \phi(b^p)$ and therefore, taking into account $\phi(b^p)+1 < \phi(b^{p+1})$, we deduce $ \phi(b^{p+1}[-1])>\phi(a^p)$ and then the first triangle in \eqref{short filtration 1} implies $M'\not \in \sigma^{ss}$. Otherwise    $\phi(b^p)<\phi(a^p)< \phi(M)+1<\phi(b^{p+1})$ and from table \eqref{left right M} if follows that $\sigma \in (b^p, a^p, b^{p+1})$ and now applying  Corollary \ref{genus 0 in bab} we complete the proof.   
 \end{proof}

 In Proposition \ref{proposition for  B in Tb} and Corollaries \ref{coro for M b b}, \ref{coro for b b M'}  we filled  in some boxes of tables \eqref{table for b b M'}, \eqref{table for M b b}. 
 The rest boxes in these tables are proved in  the next two propositions.

 \begin{proposition} \label{more careful part two} Let $\sigma \in (b^n,b^{n+1}, M')$ for some $n\in \ZZ$ and  $\phi(b^{n+1})\leq \phi(b^{n})+1$. Then  $A\not \in C_{1,\sigma \sigma}(\mc T)$.
 		
 		{\rm (a)} If    $\phi(b^{n+1}) < \phi(b^{n})+1$, we denote\footnote{Recall that $Z(b^n)-Z(b^{n+1})=Z(\delta)$ - see Remark \ref{remark for delta}.} \begin{gather} \label{the case of} t=\arg_{(\phi(b^{n})-1,\phi(b^{n}))}(Z(b^n)-Z(b^{n+1})), \end{gather}  and then consider two subcases:
 		
 	{\rm (a.1)}	  $\phi(M')\geq t+1$: in this sub-case $a^j\not \in \sigma^{ss}$ for all $j\in \ZZ$ and $C_{0,\sigma \sigma}(\mc T)=\emptyset$ and  $A\not \in C_{1,\sigma}(\mc T)$; 
 	
 		{\rm (a.2)}	  $\phi(M')< t+1$: in this sub-case if we denote $U=\arg_{(t,t+1)}(Z(M')+Z(b^{n+1})-Z(b^n))$, then it holds $U>\phi(M')$, $M\in \sigma^{ss}$, $U=\phi(M)+1$ and there are uniquely determined  integers $N\leq u$ such that \begin{gather} \label{tbmprime}   t < \phi(b^N)<\phi(M')\leq \phi(b^{N+1})<t+1\\ \label{small u and big U}
 		t< \phi(b^u)\leq U < \phi(b^{u+1}) < t+1,
 		 \end{gather} 
 		 and for these integers $ N, u$  holds 
 		 $ a^j\in \sigma^{ss} \iff N\leq j \leq u $ and $C_{0,\sigma \sigma}(\mc T)=\{\alpha^j, \beta^j: N\leq j \leq u\}$.  In particular here we have   $A\not \in C_{1,\sigma}(\mc T)$ again. 
 		 
 		{\rm (b) } If    $\phi(b^{n+1}) = \phi(b^{n})+1$, then we  consider three  sub-cases
 		
 		{\rm (b.1) }  $\phi(M')> \phi(b^{n})+1$: in this case $a^j\not \in \sigma^{ss}$ for all $j\in \ZZ$ and $A\not \in C_{1,\sigma}(\mc T)$, $C_{0,\sigma \sigma}(\mc T)=\emptyset$. Furthermore, in this case $M\not \in \sigma^{ss}$.
 		
 			{\rm (b.2) }  $\phi(M')< \phi(b^{n})+1$: in this case $a^j\in  \ \sigma^{ss}$ iff $j=n$, $A\not \in C_{1,\sigma}(\mc T)$, $C_{0,\sigma \sigma}(\mc T)=\{\alpha^n, \beta^n \}$. Furthermore, in this case $M\in \sigma^{ss}$ and  $\phi(M')<\phi(M)+1$.
 		
 			{\rm (b.3) } $\phi(M')=  \phi(b^{n+1})=\phi(b^n)+1$:  in this case     $M\in \sigma^{ss}$, $\phi(M')=\phi(M)+1$, $j\geq n$ iff  $a^j\in \sigma^{ss}$,   $C_{0,\sigma \sigma}(\mc T)=\{\alpha^j, \beta^j: j\geq n\}$,  $A \in C_{1,\sigma}(\mc T)$.

 		 In particular, follows the rest of table \eqref{table for b b M'} and then from Remark \ref{useful autoequivalence} follows table \eqref{table for a a M},
 \end{proposition}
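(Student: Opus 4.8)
The plan is to reduce the entire statement to deciding, for each $j\in\ZZ$, whether $a^j\in\sigma^{ss}$, together with the semistability of $M$ (that of $M'$ being built into the hypothesis $\sigma\in(b^n,b^{n+1},M')$). Once this is known, Subsection \ref{first remarks} reads everything off: $\alpha^j\in C_{0,\sigma\sigma}(\mc T)\iff\{M',a^j,b^{j+1}\}\subset\sigma^{ss}$, $\beta^j\in C_{0,\sigma\sigma}(\mc T)\iff\{M,a^j,b^{j}\}\subset\sigma^{ss}$, and $A\in C_{1,\sigma}(\mc T)$ amounts to infinitely many $a^j$ being semistable. The crucial first reduction is that $\phi(b^{n+1})\le\phi(b^n)+1$ lets me invoke Proposition \ref{proposition for B in Tb} to get $B\in C_{1,\sigma\sigma}(\mc T)$, i.e. \emph{every} $b^j\in\sigma^{ss}$; this is what makes the analysis tractable. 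In particular $A\notin C_{1,\sigma\sigma}(\mc T)$ will follow in every case once I show that not all $a^j$ are semistable.

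Next I set up the central-charge geometry of Remark \ref{remark for delta}. From $Z(b^j)=Z(b^{n+1})+(n+1-j)Z(\delta)$ and $Z(\delta)=Z(b^n)-Z(b^{n+1})$, all $Z(b^j)$ lie on one affine line of direction $Z(\delta)$, whose argument is $t$; the phases $\phi(b^j)$ are strictly increasing in $j$ and, in case (a), sweep the open interval $(t,t+1)$ with limits $t$ and $t+1$, while in case (b) the line passes through the origin and $\phi(b^j)$ takes only the values $\phi(b^n)$ (for $j\le n$) and $\phi(b^n)+1$ (for $j\ge n+1$), with $t=\phi(b^n)$. I will use throughout the two triangles of \eqref{short filtration 1}, namely $M'\to a^j\to b^{j+1}\to M'[1]$ and, after rotating $M\to b^j\to a^j$, the triangle $b^j\to a^j\to M[1]\to b^j[1]$; these give $Z(a^j)=Z(M')+Z(b^{j+1})=Z(b^j)-Z(M)$, hence $-Z(M)=Z(a^n)-Z(b^n)=Z(M')+Z(b^{n+1})-Z(b^n)$ and $U=\arg_{(t,t+1)}(-Z(M))$, to be identified with $\phi(M)+1$ once $M$ is shown semistable with $\phi(M)\in(t-1,t)$.

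The heart of the argument is the sharp determination of $\{j:a^j\in\sigma^{ss}\}$. The two triangles give one-sided \emph{instability} criteria through axiom (2) of Definition \ref{slicing}: if $\phi(M')>\phi(b^{j+1})$ then $M'\to a^j\to b^{j+1}$ is already a Harder--Narasimhan filtration, so $a^j\notin\sigma^{ss}$; dually, once $M\in\sigma^{ss}$, if $\phi(b^j)>\phi(M[1])=U$ then $b^j\to a^j\to M[1]$ is an HN filtration, so again $a^j\notin\sigma^{ss}$. Read through the monotone sequence $\{\phi(b^j)\}$, the thresholds $\phi(M')$ and $U$ become the integers $N$ and $u$ cut out by \eqref{tbmprime} and \eqref{small u and big U}. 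For the converse (genuine semistability in the admissible range) the see-saw bounds from the triangles are not by themselves enough, so I would produce, for each $N\le j\le u$, a full exceptional triple $\mathcal E_j$ with $\sigma\in\Theta_{\mathcal E_j}$ in whose region $a^j$ (resp. $M$) occurs, and apply Corollary \ref{stable curve}; the set of $j$ admitting such a chamber is precisely $[N,u]$. Concretely, in case (a.2) the hypothesis $\phi(M')<t+1$ forces $N\ge n$ and $a^N\in\sigma^{ss}$ with $\phi(b^N)<\phi(M')\le\phi(a^N)\le\phi(b^{N+1})<\phi(b^N)+1$, so that $(b^N,a^N[-1])$ is a $\sigma$-exceptional pair whose extension closure contains $M$ via $a^N[-1]\to M\to b^N$; by the mechanism of Proposition \ref{stable curve part 1} (i.e. \cite[Proposition 2.2]{dimkatz4}) this gives $M\in\sigma^{ss}$, and the central-charge identity pins $\phi(M)+1=U$ with $U>\phi(M')$. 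With $M$ in hand the two instability criteria become jointly sharp, yielding $a^j\in\sigma^{ss}\iff N\le j\le u$, a finite set; hence $A\notin C_{1,\sigma}(\mc T)$ and $C_{0,\sigma\sigma}(\mc T)=\{\alpha^j,\beta^j:N\le j\le u\}$. Case (a.1), $\phi(M')\ge t+1$, needs only the first criterion: $\phi(b^{j+1})<t+1\le\phi(M')$ for every $j$, so no $a^j$ is semistable, $C_{0,\sigma\sigma}(\mc T)=\emptyset$ and $A\notin C_{1,\sigma}(\mc T)$.

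In the degenerate case (b) the $b$-phases take only the values $t$ and $t+1$, so the same two criteria settle matters by inspection, with no counting of $N,u$. In (b.1), $\phi(M')>t+1$ destabilises every $a^j$, and $M\notin\sigma^{ss}$ follows by the contradiction used in Corollary \ref{coro for b b M'}: $\hom(M,b^n)\ne0$ would force $\phi(M)\le\phi(b^n)=t$, while $\hom^1(M',M)\ne0$ would force $\phi(M')\le\phi(M)+1\le t+1<\phi(M')$, the non-vanishings coming from Corollary \ref{nonvanishings}. In (b.2), $\phi(M')<t+1=\phi(b^{n+1})$ keeps $a^n$ semistable while $\phi(b^{j})=t+1>\phi(M[1])$ destabilises $a^j$ for $j\ge n+1$ and $\phi(M')>\phi(b^{j+1})=t$ destabilises $a^j$ for $j\le n-1$, giving $C_{0,\sigma\sigma}(\mc T)=\{\alpha^n,\beta^n\}$ with $M\in\sigma^{ss}$ and $\phi(M')<\phi(M)+1$. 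In (b.3), $\phi(M')=\phi(b^{j+1})$ for all $j\ge n$ makes $M'\to a^j\to b^{j+1}$ an equal-phase extension, so $a^j\in\sigma^{ss}$ exactly for $j\ge n$; thus $A\in C_{1,\sigma}(\mc T)$ but $A\notin C_{1,\sigma\sigma}(\mc T)$, and the equal-phase triangle $a^n[-1]\to M\to b^n$ gives $M\in\sigma^{ss}$ with $\phi(M')=\phi(M)+1$, whence $C_{0,\sigma\sigma}(\mc T)=\{\alpha^j,\beta^j:j\ge n\}$. Finally, table \eqref{table for a a M} is obtained from \eqref{table for b b M'} by transporting every statement through the auto-equivalence $\zeta$ of \eqref{useful autoequivalence}, as recorded in Remark \ref{action1}. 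The main obstacle throughout is case (a.2): showing that the two one-sided criteria are jointly \emph{sharp}, i.e. that $a^j$ is genuinely semistable for every $N\le j\le u$ and that $M$ is semistable with the exact phase $U-1$, without circularity — which is exactly what the chamber/extension-closure argument via \cite[Proposition 2.2]{dimkatz4} and Proposition \ref{stable curve part 1} is meant to resolve.
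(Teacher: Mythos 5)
Your reduction and overall architecture coincide with the paper's: all $b^j$ are semistable by Proposition \ref{proposition for B in Tb}, everything is read off from $\{j: a^j\in\sigma^{ss}\}$ and the semistability of $M$, the two triangles of \eqref{short filtration 1} provide the destabilizing filtrations that cut out $N$ and $u$, and cases (a.1), (b.1), (b.3) go through essentially as in the paper. The genuine gap is in case (a.2), at the step you yourself flag as the main obstacle: proving that $a^j$ \emph{is} semistable for every $N\le j\le u$. The failure of the two specific filtrations $M'\to a^j\to b^{j+1}$ and $b^j\to a^j\to M[1]$ does not imply semistability, and your proposed remedy --- a full exceptional triple $\mc E_j$ with $\sigma\in\Theta_{\mc E_j}$ certifying $a^j$ via Corollary \ref{stable curve} for each $j\in[N,u]$ --- only exists at the two endpoints. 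The only triples whose membership in table \eqref{left right M} can be verified from objects already known to be semistable are $(b^j,b^{j+1},M')$, which needs $\phi(b^j)<\phi(M')$, i.e.\ $j\le N$, and $(M,b^j,b^{j+1})$, which needs $\phi(M)+1<\phi(b^{j+1})$, i.e.\ $j\ge u$; every other triple containing $a^j$ presupposes $a^j\in\sigma^{ss}$. For $N<j<u$ the paper does not use a chamber argument at all: it assumes $a^j\notin\sigma^{ss}$ and eliminates, one by one, all five possibilities for the phase $\phi_-(a^j)$ of the lowest Harder--Narasimhan factor supplied by \cite[Lemma 7.2]{dimkatz4}. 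You have no substitute for this classification, and without it the identity $\{j:a^j\in\sigma^{ss}\}=[N,u]$, hence $C_{0,\sigma\sigma}(\mc T)=\{\alpha^j,\beta^j:N\le j\le u\}$, is unproven.

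A second gap is your derivation of $M\in\sigma^{ss}$ in (a.2). The pair $(b^N,a^N[-1])$ is Ext-exceptional with $M$ in its extension closure, but Proposition \ref{stable curve part 1} applies \cite[Proposition 2.2]{dimkatz4} to a consecutive pair of a \emph{full} $\sigma$-exceptional collection, and $(b^N,a^N[-1])$ admits no such completion: the candidates $(b^N,a^N[-1],b^{N+1}[k])$ violate the Ext-condition for $k\ge -1$ because $\hom(a^N,b^{N+1})\neq 0$ and the unit phase window for $k\le -2$, while $(a^{N-1}[k],b^N,a^N[-1])$ would require $a^{N-1}\in\sigma^{ss}$, which you have already excluded. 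So this application needs separate justification; the paper instead obtains $M\in\sigma^{ss}$ as a by-product of the same \cite[Lemma 7.2]{dimkatz4} analysis applied to the unstable $a^j$ with $j>u$. The same issue makes (b.2) circular as written: you destabilize $a^j$ for $j>n$ using $b^j\to a^j\to M[1]$, which presupposes $M\in\sigma^{ss}$ with $\phi(M)<t$, a fact you only establish afterwards; the paper first kills these $a^j$ by a central-charge collinearity argument and only then deduces $M\in\sigma^{ss}$.
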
	
 \begin{proof} From   Proposition \ref{proposition for  B in Tb} we see that $\{b^j\}_{j\in \ZZ}\subset \sigma^{ss}$.  	From Corollary \ref{nonvanishings} we see that $\hom(b^{j},a^{j})\neq 0$, $\hom(a^{j},b^{j+1})\neq 0$, $\hom( M', a^j)\neq 0$ for any $j\in \ZZ$  and it follows \begin{gather} \label{help inequalities 3'} \forall j \in \ZZ \qquad a^{j} \in \sigma^{ss}  \ \Rightarrow \ \phi(b^{j})\leq \phi(a^{j})\leq \phi(b^{j+1}) \ \ \mbox{and} \ \ \phi(M') \leq \phi(a^j). \end{gather}	
 	
  So, if $a^j \in \sigma^{ss}$ for some $j\in \ZZ$,  then  $\phi(M') \leq \phi(a^j) $ and from table \eqref{left right M} it follows that $\phi(b^n)< \phi(a^{j})$, hence $\hom(a^{j},b^n)=0$. Now   the non-vanishing $ \hom(a^p,b^{q+1})\neq 0$ for $p\leq q$ in   corollary \ref{nonvanishings} implies that $j \geq n $. So we see that  	
 	\begin{gather} \label{non-ss a for big j}  j<n  \ \ \Rightarrow  \ \  a^j \not \in \sigma^{ss} \ \ \Rightarrow \ \ A\not \in C_{1,\sigma \sigma}(\mc T).
 	\end{gather} 
 	
(a)  Here we  have 	$\phi(b^{n})<\phi(b^{n+1})< \phi(b^{n})+1$. Recalling that (see \cite[(84)]{dimkatz4}) $Z(\delta) = Z(b^n)-Z(b^{n+1})$ we see that  choosing  $t$ as in  \eqref{the case of} and  combining the facts: $\{b^j:j\in \ZZ\}\subset \sigma^{ss}$, $\hom(b^i,b^j)\neq 0$ for $i\leq j$, $\hom^1(b^i,b^j)\neq 0$ for $i>j+1$ and \cite[Corollaries 3.19, 3.18]{dimkatz4} one obtains:  
 	\begin{gather} \label{inequalities help 2'}  0\neq Z(\delta)=\abs{Z(\delta)} \exp({\rm i} \pi t)  \ \lim_{j\rightarrow + \infty } \phi(b^j)  = t+1 \    \lim_{j\rightarrow -\infty } \phi(b^j) = t \qquad  \forall j \in \ZZ \ \ \phi(b^j) < \phi(b^{j+1}).
 	\end{gather} 
 	Having \eqref{inequalities help 2'} and \eqref{help inequalities 3'} and using again \cite[Corollary 3.19]{dimkatz4} we deduce:
 	\begin{gather}  \{Z(a^j)\}_{j\in \ZZ} \subset Z(\delta)^c_+ \ \    \lim_{k\rightarrow +\infty } \arg_{(t,t+1)}(Z(a^j)) = t +1 \nonumber  \\[-2mm] \label{inequalities help 5'}  \\[-2mm] \forall k \in \ZZ \ \ \ t < \arg_{(t,t+1)}(Z(a^k)) < \arg_{(t,t+1)}(Z(a^{k+1})) < t+1. \nonumber
 	\end{gather}

 (a.1) If $\phi(M')\geq t+1$, then  using   \eqref{help inequalities 3'} and \eqref{inequalities help 2'} it follows that $a^j\not \in \sigma^{ss}$ for any $j\in \ZZ$ and hence in Subsection \ref{first remarks} we deduce that  $C_{0,\sigma \sigma}(\mc T) = \emptyset $.
 
 (a.2)  So, assume that $\phi(M') < t+1$. In table \eqref{left right M} we have $\phi(b^n)<\phi(M')$ and due to \eqref{inequalities help 2'} we see that for some $N\in \ZZ$ holds $ n \leq N$ and \eqref{tbmprime}.
 
 Table \eqref{left right M} ensure that $\sigma \in (b^N,b^{N+1},M')$ and recalling that $\hom^1(b^{N+1},M')\neq 0$ in \eqref{nonvanishing2} we can use Corollary \ref{stable curve} to deduce that \begin{gather}  \alpha^{N}=\langle b^{N+1},M' \rangle = \langle a^N, b^{N+1},M' \rangle \in C_{0,\sigma \sigma}(\mc T). \end{gather}
 For $j<N$ we use \eqref{help inequalities 3'} and $\phi(b^N)<\phi(M')$ to deduce that $a^j\not \in \sigma^{ss}$ for  $j<N$. From the first triangle in \eqref{short filtration 1} we have actually that for $j>N$ holds $Z(a^j)= Z(M')+Z(b^{j+1})$,  $a^j\in \mc P([\phi(M'),\phi(b^{j+1})])\subset \mc P((t,t+1])$ hence from \eqref{tbmprime} and \cite[Remark 2.1]{dimkatz4} we see that \begin{gather}\label{phases of ajbjp1} N<j \ \ \Rightarrow  \ \ \arg_{(t,t+1)}(Z(a^j)) < \arg_{(t,t+1)}(Z(b^{j+1}))=\phi(b^{j+1}) \\ 
 \label{phiminusaaj} N<j, \ a^j\not \in \sigma^{ss}  \ \ \Rightarrow  \ \   \phi_-(a^j) < \arg_{(t,t+1)}(Z(a^j)) ,  \ \  \phi(M')\leq \phi_-(a^j)\leq \phi(b^{j+1}) \end{gather}
 Furthermore, we will show \eqref{if aj is not ss} below with the help of  \cite[Lemma 7.2]{dimkatz4}. Let  $N<j$ and $ \ a^j\not \in \sigma^{ss}$. One of the five  cases given there must appear. In case (a) of \cite[Lemma 7.2]{dimkatz4}   we have $a^k\in \sigma^{ss}$,   $\phi_-(a^j) =\phi(a^k)+1 $ for some $k<j-1$, however then \eqref{phiminusaaj} and \eqref{property of stability} imply $Z(a^k)\in Z(\delta)_-^c$, which contradicts \eqref{inequalities help 5'}. In case (b) of \cite[Lemma 7.2]{dimkatz4} we have   $\phi_-(a^j) =\phi(a^k) $ for some $k>j$, however  then \eqref{phiminusaaj}\ \eqref{help inequalities 3'}, and \eqref{property of stability} imply $\arg_{(t,t+1)}(Z(a^k))<\arg_{(t,t+1)}(Z(a^j))$ with $k>j$, which contradicts \eqref{inequalities help 5'}.  In case (c) of \cite[Lemma 7.2]{dimkatz4}  we have  $\phi_-(a^j) = \phi(b^k)+1$  for some $k<j$, which would imply  $Z(b^k)\in Z(\delta)_-^c$, which contradicts \eqref{inequalities help 2'}. Case  (d)  of \cite[Lemma 7.2]{dimkatz4} ensures  $\phi_-(a^j) =\phi(b^k) $  for some $ j<k$, and then  \eqref{phiminusaaj},  \eqref{property of stability} ensure   $\arg_{(t,t+1)}(Z(b^k))=\phi(b^k)<\arg_{(t,t+1)}(Z(a^j))$, which contradicts \eqref{phases of ajbjp1}, \eqref{inequalities help 2'} and $j<k$. So only case (e) in Lemma \cite[Lemma 7.2]{dimkatz4} remains possible, hence 
 \begin{gather} \label{if aj is not ss}
 j> N, \ \ a^j \not \in \sigma^{ss} \ \ \Rightarrow \ \ M  \in \sigma^{ss}, \ \ \phi_-(a^j) =\phi(M)+1. 
 \end{gather}

 In Remark \ref{remark for delta} we see that $-Z(M)=Z(M')-Z(\delta)$ and \eqref{tbmprime} implies 
 
 \begin{gather} \label{u is bigger than mprime} -Z(M) \in Z(\delta)^c_+ \ \  \qquad  \phi(M')< U=\arg_{(t,t+1)}(-Z(M)) < t+1  \end{gather} increasing the superscript starting from $N$ and taking into account \eqref{tbmprime} we get an integer $u\geq N$ such that \eqref{small u and big U} holds.
 In \eqref{short filtration 1}  we see that $Z(a^j)= Z(b^j)-Z(M)$ and from \eqref{small u and big U}, \eqref{u is bigger than mprime} it follows that  for $j>u$ holds  $\arg_{(t,t+1)}(Z(a^j))<\arg_{(t,t+1)}(Z(b^j))$. Therefore using  \eqref{help inequalities 3'}, \eqref{property of stability} we deduce that $a^j \in \sigma^{ss}$ for $j>u$ implies $\phi(a^j)<\phi(b^j)$ and $\hom(b^j,a^j)=0$, which contradicts \eqref{nonvanishing3}. Hence we get (using also \eqref{if aj is not ss}):
 \begin{gather} \label{j bigger than u} j >u \ \ \Rightarrow a^j \not \in \sigma^{ss} \Rightarrow \ \ M \in \sigma^{ss}, \phi_-(a^j)=\phi(M)+1.  \end{gather}

 For $N<j\leq u$ we have $\phi(b^j)\leq U$ due to \eqref{small u and big U} and  the formula $Z(a^j)= Z(b^j)-Z(M)$ implies that $\arg_{(t,t+1)}(Z(a^j))\leq U$.  Using  \eqref{phiminusaaj}, \eqref{if aj is not ss} we see that $N<j\leq u$, $a^j\not \in \sigma^{ss}$ imply $\phi(M')=\phi(M)+1 < U$, which is impossible, since $U=\arg_{(t,t+1)}(-Z(M))$ and we proved that $M\in \sigma^{ss}$. Therefore we proved $a^j \in \sigma^{ss} \iff N \leq j \leq u$. Taking into account that $\{b^j: j \in \ZZ\}\subset \sigma^{ss}$ and $M\in \sigma^{ss}$ we use Subsection \ref{first remarks} to deduce $C_{0,\sigma \sigma}(\mc T)=\{\alpha^j, \beta^j: N\leq j \leq u\}$.

 (b)  Assume now  	$\phi(b^{n+1})= \phi(b^{n})+1$.  Let us denote $\phi(b^n)=t$, then $\phi(b^{n+1})= t+1$.  Using that  $\{b^j\}_{j\in \ZZ}\subset \sigma^{ss}$, $\hom(b^n,b^m)\neq 0$ for $n\leq m$ and $\hom^1(b^m,b^n)\neq 0$ for $m >n+1$ it follows that:  
 \begin{gather} \label{help equalities 1'}   \phi(b^j)=t \ \ \mbox{for} \ j\leq n \ \  \qquad  \phi(b^j)=t+1  \ \ \mbox{for} \ \  j\geq n+1. \end{gather}
 
  (b.1) If $\phi(M')>t+1$, then in \eqref{help inequalities 3'} we see that  $a^j\in \sigma^{ss}$ implies $\phi(a^j)>t+1$ and hence $\hom^1(a^j,b^k)=0$ for small enough $k$, which contradicts \eqref{nonvanishing4}. Furthermore, if $M\in \sigma^{ss}$, then from $\hom(M,b^n)\neq 0$ in \eqref{nonvanishing1} it follows that $\phi(M)\leq \phi(b^n)$ and therefore    $ \phi(M')> \phi(M)+1$ which contradicts $\hom^1(M',M)\neq 0$ in \eqref{nonvanishing7}.
  
  (b.2) If  $\phi(M')<t+1$,  due to table \ref{left right M} we get $t<\phi(M')<t+1=\phi(b^n)+1$ and from Corollary  \ref{stable curve} it follows $\alpha^n=\langle M', a^n, b^{n+1}\rangle \in C_{0,\sigma \sigma}(\mc T)$, $a^n \in \sigma^{ss}$. From $Z(a^n)=Z(M')+Z(b^{n+1})$ in Remark \ref{remark for delta} it follows that $\phi(M')<\phi(a^n)<t+1$.  Hence $\phi(M')-1<\phi(a^n)-1<t=\phi(b^n)$.

   If $a^j\in \sigma^{ss}$ for some  $j > n$, then from \eqref{help inequalities 3'} and \eqref{help equalities 1'}, \eqref{property of stability} we see that   $Z(b^{j+1})$, $Z(a^{j})$ are non-zero and collinear and  from Remark \ref{remark for delta} we know that $Z(M')=Z(a^{j})- Z(b^{j+1})$, which contradicts $t<\phi(M')<t+1=\phi(b^n)+1$. So for  $j>n$ we have $a^j \not \in \sigma^{ss}$.  Recalling \eqref{non-ss a for big j}  and Subsection \ref{first remarks} we deduce that $\alpha^j,\beta^j \not \in C_{0,\sigma \sigma}(\mc T)$ for $j\neq n$. We will show that $M\in \sigma^{ss}$ and this would imply that  $\beta^n=\langle M,a^n,b^n \rangle \in C_{0,\sigma \sigma}(\mc T) $. 
  
  Indeed,  let  $j >n$.  From the first triangle in \eqref{short filtration 1} we have       $a^j\in \mc P([\phi(M'),\phi(b^{j+1})])\subset \mc P((t,t+1])$, therefore $\phi(M')\leq \phi_-(a^j)<\phi_+(a^j)\leq \phi(b^{j+1})$   and \cite[Remark 2.1]{dimkatz4} implies
  \begin{gather} \label{again phimaj} j >n \ \ \Rightarrow \ \
  t<\phi_-(a^j) < \arg_{(t,t+1)}(Z(a^j))< t+1.  \ \  
  \end{gather}
  One of the five cases given in \cite[Lemma 7.2]{dimkatz4}   must appear. In case (a) of \cite[Lemma 7.2]{dimkatz4}   we have $a^k\in \sigma^{ss}$,   $\phi_-(a^j) =\phi(a^k)+1 $ for some $k<j-1$, and since we already showed that only $j=n$ gives $a^j\in \sigma^{ss}$ it would follow that $\phi(a^n)+1=\phi_-(a^j)$, which contradicts \eqref{again phimaj} and   $\phi(M')<\phi(a^n)<t+1$.  In case (b) of \cite[Lemma 7.2]{dimkatz4} we have   $\phi_-(a^j) =\phi(a^k) $ for some $k>j$, which contradicts the already proved instability of $a^k$ for $k>n$.   In cases (c) and (d) of \cite[Lemma 7.2]{dimkatz4}  we have    $\phi_-(a^j) = \phi(b^k)+1$  or    $\phi_-(a^j) =\phi(b^k) $  for some $ k \in \ZZ$,  which contradicts \eqref{again phimaj} and the already proved fact that $\phi(b^k) = t$ or $\phi(b^k) = t+1$.  So only case (e) in Lemma \cite[Lemma 7.2]{dimkatz4} remains possible, hence  $M \in \sigma^{ss}$. 
  
  Finally, from $\hom(M,b^n)\neq 0$ in  \eqref{nonvanishing1} and $\hom^1(a^n,M)\neq 0$ in \eqref{nonvanishing2}  we have $\phi(a^n)-1\leq \phi(M) \leq t$ and then the second triangle in \eqref{short filtration 1} implies that $\phi(M')-1<\phi(a^n)-1<\phi(M)<t=\phi(b^n)$.
  
  (b.3) Let $\phi(M')=t+1$, then  for $p\geq n$ the first  distinguished triangle in \eqref{short filtration 1}  ensures that $a^p\in \sigma^{ss}$ and $\phi(a^p)=t+1$. In particular, we have $\phi(a^n) = \phi(b^{n})+1$ and from the second triangle in \eqref{short filtration 1}  it follows that $M\in \sigma^{ss}$, $\phi(M)=\phi(b^n)$.  Recalling \eqref{non-ss a for big j} and using Subsection \ref{first remarks} we obtain $C_{0,\sigma \sigma}(\mc T)=\{\alpha^j, \beta^j: j\geq n\}$.
 \end{proof} 
 
 \begin{lemma}  \label{lemma for semistability of M} Let us assume that we are in the situation of {\rm (a.1)} of  Proposition \ref{more careful part two}.In this case  $M\in \sigma^{ss} \iff \phi(M')=t+1$.
 \end{lemma}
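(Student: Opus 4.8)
The plan is to prove the two implications separately: the necessity from phase inequalities forced by the non-vanishings of Corollary~\ref{nonvanishings}, and the sufficiency by an approximation argument based on Proposition~\ref{more careful part two}(a.2) together with the closedness of the semistable locus. First I record the central-charge bookkeeping. By Remark~\ref{remark for delta} we have $Z(M)=Z(\delta)-Z(M')$, and by \eqref{the case of} together with Remark~\ref{remark for delta} the element $Z(\delta)=Z(b^n)-Z(b^{n+1})=\abs{Z(\delta)}\exp(\ri\pi t)$ has phase $t$. Moreover \eqref{inequalities help 2'}, which is established within case (a) before the split into (a.1)/(a.2), gives $\phi(b^j)<\phi(b^{j+1})$ for all $j$, $\lim_{j\to-\infty}\phi(b^j)=t$ and $\lim_{j\to+\infty}\phi(b^j)=t+1$; in particular every $b^j\in\sigma^{ss}$ has $\phi(b^j)\in(t,t+1)$ and $\inf_j\phi(b^j)=t$.

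For the implication $M\in\sigma^{ss}\Rightarrow\phi(M')=t+1$, suppose $M\in\sigma^{ss}$. From $\hom(M,b^m)\neq0$ in \eqref{nonvanishing1} and axiom (2) of Definition~\ref{slicing}, applied to the semistable objects $M$ and $b^m$, I get $\phi(M)\le\phi(b^m)$ for every $m$, hence $\phi(M)\le\inf_m\phi(b^m)=t$. Next, $\hom^1(M',M)\neq0$ in \eqref{nonvanishing7} means $\Hom(M',M[1])\neq0$ with $M'$ and $M[1]$ both semistable, so again by axiom (2) we obtain $\phi(M')\le\phi(M)+1\le t+1$. Since we are in case (a.1) we also have $\phi(M')\ge t+1$, and therefore $\phi(M')=t+1$.

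For the converse I would argue by continuity. Assume $\phi(M')=t+1$. Working inside the open chamber $(b^n,b^{n+1},M')$, I keep the coordinates of $b^n$ and $b^{n+1}$ fixed — so $Z(\delta)$, hence $t$, is unchanged — and use the homeomorphism $f_{\mc E}$ of Remark~\ref{remark for fe}(b), with $\mc E=(b^n,b^{n+1},M')$, to lower the phase of $M'$ to $t+1-\tfrac1k$, keeping the other coordinates as in $\sigma$. For $k$ large this stays inside the chamber, since the inequalities of the $(b^i,b^{i+1},M')$ entry of table~\eqref{left right M} remain strict: $\phi(b^n)<t+1$ and $\phi(b^{n+1})<t+2$. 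It lands in case (a.2), because $\phi(M')<t+1$ while still $\phi(b^{n+1})<\phi(b^n)+1$. By Proposition~\ref{more careful part two}(a.2) each such $\sigma_k$ satisfies $M\in\sigma_k^{ss}$, and $\sigma_k\to\sigma$ by continuity of $f_{\mc E}^{-1}$. The set $\{\sigma:M\in\sigma^{ss}\}$ is closed (the fact of \cite[p.~342]{Bridgeland} invoked in the proof of Corollary~\ref{closed subset}), so $M\in\sigma^{ss}$, as wanted.

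The main obstacle is the converse: one must be certain that the approximating stability conditions genuinely lie in case (a.2), so that Proposition~\ref{more careful part two}(a.2) applies, while still converging to $\sigma$. Freezing $Z(b^n),Z(b^{n+1})$ is what makes this transparent, because it freezes $t$ and reduces the perturbation to a one-parameter decrease of $\phi(M')$ across the internal wall $\{\phi(M')=t+1\}$ of the chamber. Alternatively one can attempt a direct argument: since $\phi(M')=t+1$ forces $Z(M)=(\abs{Z(\delta)}+\abs{Z(M')})\exp(\ri\pi t)$ to have phase exactly $t$, an unstable $M$ would need Harder--Narasimhan factors straddling $t$, i.e. $\phi^-(M)<t<\phi^+(M)$; ruling this out via the triangles \eqref{short filtration 1} and \cite[Lemma~7.2]{dimkatz4}, in the style of the proof of Proposition~\ref{more careful part two}, yields the same conclusion but is considerably more laborious, which is why I favour the continuity route.
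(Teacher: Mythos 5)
Your proof is correct, and the forward implication ($M\in\sigma^{ss}\Rightarrow\phi(M')=t+1$) is essentially the paper's own argument in contrapositive form: $\hom(M,b^m)\neq 0$ forces $\phi(M)\leq\inf_m\phi(b^m)=t$, and then $\hom^1(M',M)\neq 0$ caps $\phi(M')$ at $t+1$. For the converse, however, you take a genuinely different route. The paper argues intrinsically at the single stability condition $\sigma$: combining the two triangles of \eqref{short filtration 1} it exhibits $M$ in the extension closure of $M'[-1]$, $b^{p+1}[-1]$, $b^{p}$ for every $p$, deduces $M\in\mc P[\phi(b^{p+1})-1,\phi(b^{p})]$, and then squeezes $\phi_{\pm}(M)$ to $t$ by letting $p\to\pm\infty$ via \eqref{inequalities help 2'} — which also hands over $\phi(M)=t$ directly. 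You instead deform $\sigma$ inside the chart of $(b^n,b^{n+1},M')$, freezing $Z(b^n)$, $Z(b^{n+1})$ (hence $t$) and lowering $\phi(M')$ to $t+1-\tfrac1k$, so that the perturbed conditions land in case (a.2) where $M\in\sigma_k^{ss}$ is already proved, and then invoke closedness of $\{\sigma: M\in\sigma^{ss}\}$. This is legitimate: the chamber inequalities stay strict for large $k$, the homeomorphism of Remark \ref{remark for fe}(b) is used exactly as in Proposition \ref{arbitrary even integer}, Proposition \ref{more careful part two}(a.2) does not depend on the lemma (so there is no circularity), and the closedness fact is the same one the paper cites for Corollary \ref{closed subset}. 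What your route buys is brevity and reuse of (a.2); what it gives up is the explicit identification $\phi_-(M)=\phi_+(M)=t$, which the paper's filtration argument produces for free (though it can be recovered afterwards from $Z(M)=Z(\delta)-Z(M')$ together with $\phi(M)\le t$).
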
 
 \begin{proof}	
 	First,  we note that for $\phi(M')>t+1$ holds $M\not \in  \sigma^{ss}$. Indeed, if $M\in \sigma^{ss}$, then from \eqref{nonvanishing1} and \eqref{inequalities help 2'} it follows that $\phi(M)\leq t$ and $\hom^1(M', M)=0$, which contradicts \eqref{nonvanishing7}.
 	
 	So, it remains to consider the case   $\phi(M')=t+1$.  Combining the triangles in \eqref{short filtration 1}  we get a diagram  for  any $p \in \ZZ$:
 	\be \label{T12Zcap(E_1)the filtation} 
 	\begin{diagram}[size=1em] 
 		0 & \rTo      &        &       &   M'[-1] & \rTo      &             &        & a^{p}[-1] &   \rTo      &           &       & M \\
 		& \luDashto &        & \ldTo &         &\luDashto  &             &  \ldTo &           &   \luDashto &           & \ldTo &       \\
 		&           &  M'[-1] &       &         &           &  b^{p+1}[-1]  &        &           &             & b^{p} &
 	\end{diagram}
 	\ee
Therefore $M$ is in the extension closure of $M'[-1]$, $b^{p+1}[-1]$, $b^{p}$ for each $p\in \ZZ$.  Since for any $p\in \ZZ$ we have $\phi(b^{p+1}[-1])<t=\phi(M')-1< \phi(b^{p})$, it follows that $M\in \mc P[\phi(b^{p+1}[-1]),\phi(b^{p})]$ for each $p\in \ZZ$, hence $\phi(b^{p+1}[-1])\leq \phi_-(M)$  and $\phi_+(M)\leq \phi(b^{p})$ for each $p\in \ZZ$.  Now using \eqref{inequalities help 2'} and limiting $p$ to $+\infty$ we obtain $t\leq  \phi_-(M)$, whereas limiting $p$ to  $-\infty$ ensures $\phi_+(M)\leq t$, therefore $\phi_-(M)=\phi_+(M)=t$ and indeed $M\in \sigma^{ss}$.
 	   \end{proof}
 
 \begin{proposition} \label{arbitrary even integer}  Varying $\sigma \in (b^n,b^{n+1}, M')$ in the region with  $\phi(b^{n+1}) < \phi(b^{n})+1$ and $\phi(M')<t+1$, where $t$ is as in \eqref{the case of}, one can arrange that the pair $(N,u)$   is  $(n,n+i)$ for any $i\in \ZZ_{\geq 0}$, where $u,N$ are as in (a.2) of Proposition \ref{more careful part two}. Therefore for any $i\in \ZZ_{\geq 0}$ we have $\sigma$ such that  $C_{0,\sigma \sigma}(\mc T)=\{\alpha^j, \beta^j: n\leq  j \leq n+i \}$. Hence  $\abs{C_{0,\sigma \sigma}(\mc T)}$ can be any  positive even integer.  	
 \end{proposition}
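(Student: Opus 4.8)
The plan is to exploit the freedom in choosing a stability condition inside the chamber $(b^n,b^{n+1},M')$ so as to place the two ``cutoffs'' of part (a.2) of Proposition \ref{more careful part two} at prescribed positions of the sequence $\{\phi(b^j)\}$. Recall that \eqref{tbmprime} characterises $N$ as the largest integer with $\phi(b^N)<\phi(M')$, while \eqref{small u and big U}, together with the identity $U=\phi(M)+1$ proved there, characterises $u$ as the largest integer with $\phi(b^u)\le U$, where $U=\arg_{(t,t+1)}(-Z(M))$. Hence it suffices to arrange $\phi(M')\in(\phi(b^n),\phi(b^{n+1}))$, which forces $N=n$, and $U\in[\phi(b^{n+i}),\phi(b^{n+i+1}))$, which forces $u=n+i$.

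By the homeomorphism of Remark \ref{remark for fe} (b), in the form recalled from \cite[Proposition 2.7]{dimkatz4}, the chamber $(b^n,b^{n+1},M')$ is parametrised by the three moduli $\abs{Z(b^n)},\abs{Z(b^{n+1})},\abs{Z(M')}\in\RR_{>0}$, which are free, together with the phases $\phi(b^n),\phi(b^{n+1}),\phi(M')$, constrained only by the inequalities of table \eqref{left right M}. In particular a modulus may be varied without changing any phase and without leaving the chamber. Moreover, by Remark \ref{remark for delta} one has $Z(b^j)=Z(b^n)-(j-n)Z(\delta)$ with $Z(\delta)=Z(b^n)-Z(b^{n+1})$, so the entire phase sequence $\{\phi(b^j)\}$ and the number $t$ depend only on $Z(b^n),Z(b^{n+1})$ and are unaffected by changes of $\abs{Z(M')}$.

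First I would fix $Z(b^n),Z(b^{n+1})$ with $\phi(b^n)<\phi(b^{n+1})<\phi(b^n)+1$, so as to be in case (a), and then pick $\phi(M')\in(\phi(b^n),\phi(b^{n+1}))$. The remaining inequalities of table \eqref{left right M} are then automatic, $\phi(M')<t+1$ holds because $\phi(b^{n+1})<t+1$ by \eqref{inequalities help 2'}, and this choice already gives $N=n$. It remains to steer $U$. Writing $-Z(M)=Z(M')-Z(\delta)$ from Remark \ref{remark for delta}, and noting that $Z(M')$ has the fixed phase $\phi(M')\in(t,t+1)$ while $-Z(\delta)$ has phase $t+1$ (since $Z(\delta)=\abs{Z(\delta)}\exp(\ri\pi t)$ by \eqref{inequalities help 2'}), the vector $-Z(M)=\abs{Z(M')}\exp(\ri\pi\phi(M'))-Z(\delta)$ runs, as $\abs{Z(M')}$ ranges over $(0,\infty)$, along a ray issuing from $-Z(\delta)$ in the direction $\exp(\ri\pi\phi(M'))$. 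Because $\phi(M')$ and $t+1$ differ by strictly less than $1$, this ray stays in an open half-plane and its argument $U=\arg_{(t,t+1)}(-Z(M))$ decreases continuously and strictly from $t+1$ (limit as $\abs{Z(M')}\to 0$) to $\phi(M')$ (limit as $\abs{Z(M')}\to\infty$). Thus $U$ attains every value in the open interval $(\phi(M'),t+1)$.

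Since $\phi(M')<\phi(b^{n+1})\le\phi(b^{n+i})<\phi(b^{n+i+1})<t+1$ for every $i\ge 1$, and $(\phi(M'),\phi(b^{n+1}))$ is a nonempty subinterval of the range for $i=0$, I can choose $\abs{Z(M')}$ with $\phi(b^{n+i})\le U<\phi(b^{n+i+1})$ (noting that for $i=0$ the constraint $\phi(b^n)\le U$ is automatic from $U>\phi(M')>\phi(b^n)$), which by \eqref{small u and big U} yields $u=n+i$. By Proposition \ref{more careful part two} (a.2) the resulting $\sigma$ satisfies $C_{0,\sigma\sigma}(\mc T)=\{\alpha^j,\beta^j:n\le j\le n+i\}$, a set of $2(i+1)$ elements; as $i$ ranges over $\ZZ_{\ge 0}$ this realises every positive even integer. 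The one delicate point is the monotonicity and the two limits of $U$ as a function of $\abs{Z(M')}$, which rely precisely on $\phi(M')>t$; everything else reduces to verifying that the chamber inequalities persist, which they do because only the modulus $\abs{Z(M')}$ is being varied.
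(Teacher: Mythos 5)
Your proposal is correct and follows essentially the same route as the paper's proof: fix a stability condition in $(b^n,b^{n+1},M')$ with $\phi(b^n)<\phi(M')<\phi(b^{n+1})<\phi(b^n)+1$ so that $N=n$, then use the chart of Remark \ref{remark for fe} (b) to vary only the modulus $\abs{Z(M')}$, observing that $Z(\delta)$, the phases $\phi(b^j)=\arg_{(t,t+1)}(Z(b^j))$ and hence $N$ are unchanged, while $U=\arg_{(t,t+1)}(-Z(M))$ moves monotonically and continuously with limit $t+1$ as $\abs{Z(M')}\to 0$, so that $u$ can be made equal to $n+i$ for every $i\in\ZZ_{\geq 0}$. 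The only difference is that where the paper justifies the monotonicity and the limit of $U$ by appeal to its figures, you give an explicit computation of the argument along the ray $\abs{Z(M')}\mapsto Z(M')-Z(\delta)$, which sharpens but does not change the argument.
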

 \begin{proof}
   From Remark \ref{remark for fe}  (b) and table \eqref{left right M} we know that  the map \eqref{assignment} gives a homeomorphism  from $(b^n,b^{n+1},M')\subset \st(\mc T)$ to  $\RR_{>0}^3 \times \left \{  \begin{array}{c} y_0 - y_1 < 0 \\  y_0 - y_2 < 0 \\ y_1 - y_2 < 1\end{array}\right\}$. Taking into account also \eqref{property of stability}, it  follows that for any $(r_0,r_1,r_2) \in \RR_{>0}^3$ and any $(y_0, y_1, y_2)$ satisfying the specified  inequalities we have $\sigma \in (b^n,b^{n+1},M')$ such that $Z(b^n)=r_0 \exp(\ri \pi y_0)$,  $Z(b^{n+1})=r_1 \exp(\ri \pi y_1)$, $Z(M')=r_2 \exp(\ri \pi y_2)$. These arguments and the equalities $Z(b^{n+1})=Z(b^n)-Z(\delta)$, $Z(\delta)= Z(M)+Z(M')$ imply that we have a stability condition $\sigma \in (b^n,b^{n+1},M')$, such that $Z(b^{n})$, $Z(b^{n+1})$, $Z(M')$, $-Z(M)$, $Z(\delta)$ are as in Figure \ref{Figure for several stable genus zero nc curves first}, and for this $\sigma$ we have $\phi(b^n)<\phi(M')<\phi(b^{n+1})<\phi(b^n)+1$. Furthermore, choosing $t$ as in \eqref{the case of}, we see from Figure \ref{Figure for several stable genus zero nc curves first} that $\phi(M')<t+1$. Hence the chosen  stability condition $\sigma$ falls in the case (a.2) of Proposition \ref{more careful part two}.   In particular, denoting $U=\arg_{(t,t+1)}(Z(M')+Z(b^{n+1})-Z(b^n))$, we have  $M\in \sigma^{ss}$, $U=\phi(M)+1$, and from  Figure \ref{Figure for several stable genus zero nc curves first} we see that not only $\phi(b^n)<\phi(M')<\phi(b^{n+1})$ but also $\phi(b^n)<\phi(M)+1=U<\phi(b^{n+1})$. Now  the defining properties of $N$, $u$ in \eqref{tbmprime} and \eqref{small u and big U} imply  that $u=N=n$ for this $\sigma$.
   
    The constructed $\sigma$ corresponds to a point $(r_0, r_1, r_2,y_0,y_1,y_2)$ via the  map \eqref{assignment}. Varying $r_2$ to arbitrary $s>0$  without changing $r_0, r_1,y_0,y_1,y_2$ corresponds, via the homeomorphism   specified above, to varying the stability condition $\sigma(s)$ in $(b^n,b^{n+1},M')$ and $\sigma(s)$  falls in the case (a.2) of Proposition \ref{more careful part two} for any $s>0$. Since we change only $r_2$, the entities   $\phi_{\sigma(s)}(M')$, $\phi_{\sigma(s)}(b^n)$,   $\phi_{\sigma(s)}(b^{n+1})$, $Z_{\sigma(s)}(b^n)$, $Z_{\sigma(s)}(b^{n+1})$ remain the same as for the initial $\sigma=\sigma(r_2)$ for any $s>0$. It follows that $Z_{\sigma(s)}(\delta) = \arg_{(\phi_{\sigma(s)}(b^{n})-1,\phi_{\sigma(s)}(b^{n}))}(Z_{\sigma(s)}(b^n)-Z_{\sigma(s)}(b^{n+1}))$ and $\{Z_{\sigma(s)}(b^{j})\}_{j\in \ZZ}$ remain the same as well  for any $s>0$. By the arguments used  before \eqref{inequalities help 2'} it follows that for any $j\in \ZZ$ the real number  $\phi_{\sigma(s)}(b^{j})$ equals  $  \arg_{(t,t+1)}(Z_{\sigma(s)}(b^{j})) $ and therefore it does not change as $s$ varies in $\RR_{>0}$.  Now from \eqref{tbmprime} we  see that  $N(s)$ does not change as $s$ varies in $\RR_{>0}$ and remains equal to $n$.  
    
     On the other hand from Figures \ref{Figure for several stable genus zero nc curves second} and  \eqref{Figure for several stable genus zero nc curves} we see that $U(s)=\phi_{\sigma(s)}(M)+1$ increases monotone and continuously as $s$ decreases,  and that $\lim_{s\rightarrow 0}U(s) = t+1$. Recalling \eqref{inequalities help 2'} and the defining property of $u(s)$ in \eqref{small u and big U}   which hold for $\sigma(s)$ and each $s>0$ we see that we can obtain an infinite sequence $r_0=s_0>s_1>s_2 > s_3 > \dots >0 $  such that $u(s_i)=n+i$ for each $i\in \ZZ_{\geq 0}$.

 	\begin{figure} \centering
 		\includegraphics[scale=0.8]{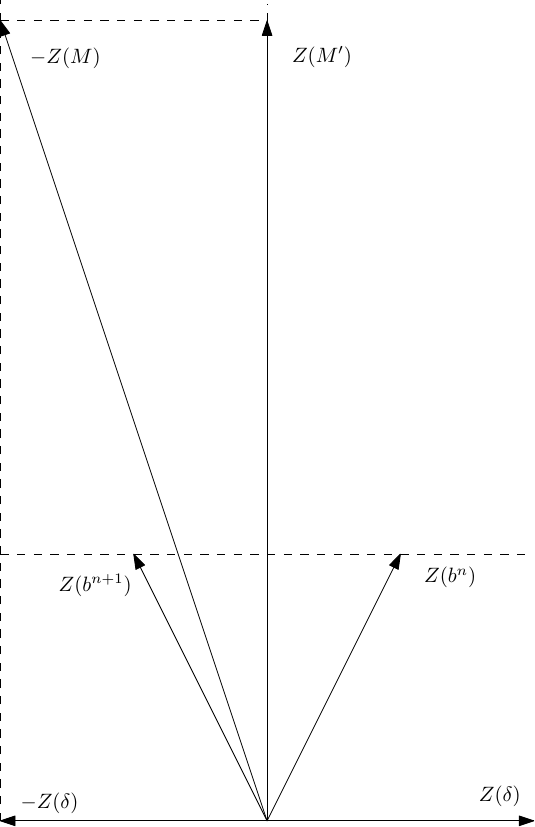}
 		\centering
 		\caption{$n=N=u$}\label{Figure for several stable genus zero nc curves first}
 	\end{figure}
 	
 	\begin{figure} \centering
 		\includegraphics{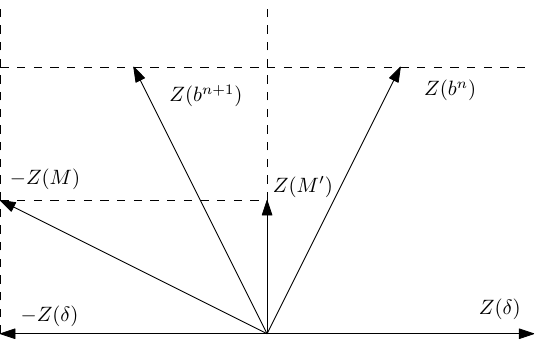}
 		\centering
 		\caption{$n=N<u$}\label{Figure for several stable genus zero nc curves second}
 	\end{figure}

 	\begin{figure}[h]
 		\includegraphics[scale=0.6]{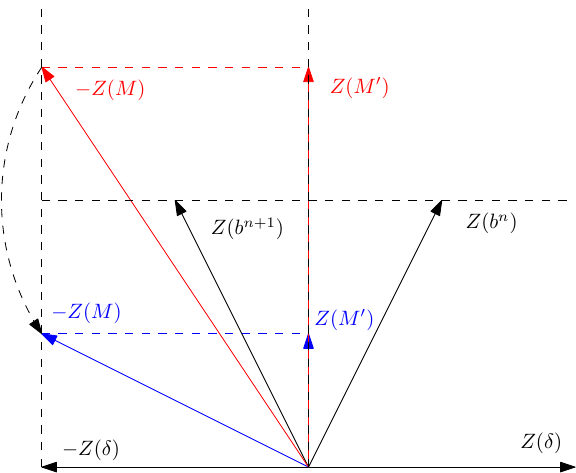}
 		\centering
 		\caption{}\label{Figure for several stable genus zero nc curves}
 	\end{figure}
 	
 \end{proof}

 \begin{proposition}  \label{more careful study}Let $\sigma \in (M,b^n,b^{n+1})$ for some $n\in \ZZ$ and  $\phi(b^{n+1})\leq \phi(b^{n})+1$. Then $A\not \in C_{1,\sigma \sigma}(\mc T)$.

 	{\rm (a)} If    $\phi(b^{n+1}) < \phi(b^{n})+1$,  we denote $ t=\arg_{(\phi(b^{n})-1,\phi(b^{n}))}(Z(b^n)-Z(b^{n+1}))$  and then must hold $\phi(M) < t$ and  consider two sub-cases:
 		
 	{\rm (a.1)}  $\phi(M)\leq t-1$, then in this sub-case $a^j\not \in \sigma^{ss}$ for all $j\in \ZZ$ and $C_{0,\sigma \sigma}(\mc T)=\emptyset$ and  $A\not \in C_{1,\sigma}(\mc T)$; 
 	
 	{\rm (a.2)} $\phi(M)> t-1$: in this sub-case if we denote $V=\arg_{(t,t+1)}(Z(b^{n})-Z(b^{n+1})-Z(M))$, and then holds $V<\phi(M)+1$, $M'\in \sigma^{ss}$, $\phi(M')=V$, and  there are integers $N\leq u$ such that
 	\begin{gather}  \label{tbmprime1} t < \phi(b^N)<V\leq \phi(b^{N+1})<t+1\\   \label{small u and big U1}  
 	t< \phi(b^u)\leq \phi(M)+1 < \phi(b^{u+1}) < t+1.
 	\end{gather} 
 	 and for these integers $ N, u$  holds 
 	 $ a^j\in \sigma^{ss} \iff N\leq j \leq u $ and $C_{0,\sigma \sigma}(\mc T)=\{\alpha^j, \beta^j: N\leq j \leq u\}$.  In particular here we have   $A\not \in C_{1,\sigma}(\mc T)$ again.

 	{\rm (b)} If $\phi(b^{n+1})=\phi(b^n)+1$, then we consider three sub-cases
 	
 	{\rm	(b.1) }  $ \phi(M)< \phi(b^n)-1 $: in this case $a^j\not \in \sigma^{ss}$ for all $j\in \ZZ$ and $A\not \in C_{1,\sigma}(\mc T)$, $C_{0,\sigma \sigma}(\mc T)=\emptyset$. Furthermore, in this case  $M' \not \in \sigma^{ss}$.
 		
 		{\rm	(b.2) }  $ \phi(M)> \phi(b^n)-1 $: in this case $a^j\in  \ \sigma^{ss}$ iff $j=n$, $A\not \in C_{1,\sigma}(\mc T)$, $C_{0,\sigma \sigma}(\mc T)=\{\alpha^n, \beta^n \}$. Furthermore, in this case  $M'\in \sigma^{ss}$ and $\phi(M')<\phi(M)+1$.
 		
 		{\rm	(b.3) }  $ \phi(M)=\phi(b^n)-1 $: in this sub-case   for  $j\leq n$ hold  $a^j\in \sigma^{ss}$, $\phi(a^j)=\phi(b^j)=\phi(b^n)$ and for  $j> n$ hold  $a^j\not \in \sigma^{ss}$,  $M'\in \sigma^{ss}$, $\phi(M')=\phi(M)+1$. In particular,  $A\in C_{1,\sigma}(\mc T)$ and  $C_{0,\sigma \sigma}(\mc T)=\{\alpha^j, \beta^j: j\leq n\}$.

In particular, the rest of table \eqref{table for  M b b}  follows,  and then from Remark \ref{useful autoequivalence} follows table \eqref{table for M' a a}, 	
 	
 \end{proposition}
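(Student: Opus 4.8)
The plan is to mirror the proof of Proposition \ref{more careful part two} essentially verbatim, with the roles of $M$ and $M'$ interchanged and the ordering of the index $j$ reversed, since $(M,b^n,b^{n+1})$ lies in the same (second) column of table \eqref{left right M} as $(b^n,b^{n+1},M')$ and is governed by the same triangles \eqref{short filtration 1} and nonvanishings of Corollary \ref{nonvanishings}. First I would invoke Proposition \ref{proposition for B in Tb} to get $\{b^j\}_{j\in\ZZ}\subset\sigma^{ss}$, and combine $\hom(b^j,a^j)\neq 0$, $\hom(a^j,b^{j+1})\neq 0$ and $\hom^1(a^j,M)\neq 0$ with axiom (2) of Definition \ref{slicing} to obtain, for each $j$ with $a^j\in\sigma^{ss}$, the chain $\phi(b^j)\le\phi(a^j)\le\min\{\phi(b^{j+1}),\phi(M)+1\}$ (the analogue of \eqref{help inequalities 3'}). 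Because the chamber forces $\phi(M)+1<\phi(b^{n+1})$, combining this with $\hom(b^{n+1},a^j)\neq 0$ for $j\ge n+1$ yields $a^j\notin\sigma^{ss}$ for all $j>n$; in particular $A\notin C_{1,\sigma\sigma}(\mc T)$.

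For case (a), $\phi(b^{n+1})<\phi(b^n)+1$, I would reproduce \eqref{inequalities help 2'}: from $Z(\delta)=Z(b^n)-Z(b^{n+1})$ (Remark \ref{remark for delta}), semistability of all $b^j$, and \cite[Corollaries 3.18, 3.19]{dimkatz4}, the phases $\phi(b^j)$ are strictly increasing and lie in $(t,t+1)$ with limits $t$ and $t+1$. The chamber inequality then forces $\phi(M)+1<\phi(b^{n+1})<t+1$, i.e.\ $\phi(M)<t$. In subcase (a.1), $\phi(M)\le t-1$ gives $\phi(a^j)\le\phi(M)+1\le t<\phi(b^j)\le\phi(a^j)$ for any hypothetical semistable $a^j$, a contradiction; hence no $a^j$ is semistable and $C_{0,\sigma\sigma}(\mc T)=\emptyset$, while the borderline semistability of $M'$ is recorded separately in Lemma \ref{lemma for semistability of M'} (the analogue of Lemma \ref{lemma for semistability of M}).

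Subcase (a.2) is the heart of the argument and the step I expect to cost the most care. Here I would set $V=\arg_{(t,t+1)}(Z(b^n)-Z(b^{n+1})-Z(M))=\arg_{(t,t+1)}(Z(M'))$, using $Z(M')=Z(\delta)-Z(M)$. The upper bound $u$ is immediate from the second triangle $Z(a^j)=Z(b^j)-Z(M)$ together with $M\in\sigma^{ss}$: the extension $b^j\to a^j\to M[1]$ is destabilised exactly when $\phi(b^j)>\phi(M)+1$, which by \eqref{small u and big U1} happens iff $j>u$. The delicate point is the lower bound $N$ and the semistability of $M'$: for $j<N$ one has $\phi(b^{j+1})<V$, so the first triangle $M'\to a^j\to b^{j+1}$ should destabilise $a^j$ from above, and I would make this rigorous by feeding the Harder--Narasimhan data of $a^j$ into \cite[Lemma 7.2]{dimkatz4}. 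Ruling out the four of its five cases that reference shifts of $a^k$ and $b^k$ — each by a collinearity contradiction with the monotonicity of $\arg_{(t,t+1)}(Z(a^j))$ (the analogue of \eqref{inequalities help 5'}) and with \eqref{inequalities help 2'} — leaves only the case that forces $M'\in\sigma^{ss}$ with $\phi(M')=V$. Tracking the arguments of $Z(a^j)=Z(b^j)-Z(M)$ and $Z(a^j)=Z(M')+Z(b^{j+1})$ then yields $a^j\in\sigma^{ss}\iff N\le j\le u$, whence $C_{0,\sigma\sigma}(\mc T)=\{\alpha^j,\beta^j:N\le j\le u\}$ by Subsection \ref{first remarks}. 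The genuinely error-prone part is the bookkeeping of which of the five cases survive and the correct placement of $V$ and $\phi(M)+1$ among the $\phi(b^j)$.

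Case (b), $\phi(b^{n+1})=\phi(b^n)+1$, is comparatively routine: the nonvanishings $\hom(b^i,b^j)\neq 0$ for $i\le j$ and $\hom^1(b^j,b^i)\neq 0$ for $j>i+1$ force $\phi(b^j)=\phi(b^n)$ for $j\le n$ and $\phi(b^j)=\phi(b^n)+1$ for $j\ge n+1$ (the analogue of \eqref{help equalities 1'}). Then (b.1) follows from a collinearity argument excluding every semistable $a^j$, together with $\hom^1(M',M)\neq 0$ to rule out $M'\in\sigma^{ss}$; (b.2) isolates $j=n$ as the unique semistable index and extracts $M'\in\sigma^{ss}$ with $\phi(M')<\phi(M)+1$ from the single remaining application of \cite[Lemma 7.2]{dimkatz4}; and (b.3) uses the triangles \eqref{short filtration 1} directly to show $a^j\in\sigma^{ss}$ with $\phi(a^j)=\phi(b^n)$ for $j\le n$ and $M'\in\sigma^{ss}$ with $\phi(M')=\phi(M)+1$, giving $A\in C_{1,\sigma}(\mc T)$ and $C_{0,\sigma\sigma}(\mc T)=\{\alpha^j,\beta^j:j\le n\}$. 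This fills the remaining boxes of table \eqref{table for M b b}, and table \eqref{table for M' a a} follows by transporting everything through the autoequivalence $\zeta$ of \eqref{useful autoequivalence} via Remark \ref{action1}.
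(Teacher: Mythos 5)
Your overall architecture does track the paper's proof: semistability of all $b^j$ via Proposition \ref{proposition for  B in Tb}, the chain $\phi(b^j)\leq\phi(a^j)\leq\min\{\phi(b^{j+1}),\phi(M)+1\}$, the exclusion of $a^j$ for $j>n$, the limit formulas for the $\phi(b^j)$ in case (a), and the arguments in (a.1), (b.1), (b.3) are essentially what the paper does. However, there is a genuine gap in how you propose to obtain $M'\in\sigma^{ss}$ in (a.2) and (b.2). The five-case lemma \cite[Lemma 7.2]{dimkatz4} describes $\phi_-(a^j)$ for a \emph{non-semistable} $a^j$, and its exceptional fifth case yields semistability of $M$ (with $\phi_-(a^j)=\phi(M)+1$), not of $M'$: this is exactly how $M\in\sigma^{ss}$ is extracted in Proposition \ref{more careful part two} (a.2) for the chamber $(b^n,b^{n+1},M')$, and in the present chamber $(M,b^n,b^{n+1})$ the object $M$ is already semistable, so the surviving case tells you nothing about $M'$. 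Your claim that ruling out four cases ``leaves only the case that forces $M'\in\sigma^{ss}$ with $\phi(M')=V$'' is therefore unfounded; the proposition's assertions $M'\in\sigma^{ss}$, $\phi(M')=V$ are left unproved. The paper establishes them by a different tool: \cite[Lemma 2.12]{dimkatz4} applied to the Ext-exceptional triple $(a^u,M,b^{u+1}[-1])$ (or $(a^{u-1},M,b^{u}[-1])$ in the degenerate subcase $\phi(M)=\phi(b^u)-1$, $N<u$) together with the triangle $b^{u+1}[-1]\to M'\to a^u$ from \eqref{short filtration 1}; the same lemma with the triple $(a^n,M,b^{n+1}[-1])$ handles (b.2).

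A second, related problem: in the paper the Lemma 7.2 analysis is applied for $N\leq j<u$ and excludes \emph{all five} cases (case (e) is killed because $\phi_-(a^j)<\phi(M)+1$ there), which is what forces $a^j\in\sigma^{ss}$ in that range. If, as in your plan, the fifth case survived, you could not conclude semistability of these $a^j$ either; and ``tracking the arguments of $Z(a^j)$'' alone cannot establish semistability of an object. Also note that for $j<N$ the non-semistability of $a^j$ is obtained directly from $Z(a^j)=Z(M')+Z(b^{j+1})$, the inequality $\arg_{(t,t+1)}(Z(a^j))>\phi(b^{j+1})$ and $\hom(a^j,b^{j+1})\neq 0$, not from Lemma 7.2 -- that lemma takes non-semistability as an input, so it cannot be ``fed'' the Harder--Narasimhan data of $a^j$ in order to prove that $a^j$ is unstable.
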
	
 \begin{proof} From the  proposition \ref{proposition for  B in Tb}  we see that $\{b^j\}_{j\in \ZZ}\subset \sigma^{ss}$. 
 	
 	From Corollary \ref{nonvanishings} we see that $\hom(b^{j},a^{j})\neq 0$, $\hom(a^{j},b^{j+1})\neq 0$, $\hom^1(  a^j, M)\neq 0$ for any $j\in \ZZ$  and it follows \begin{gather} \label{help inequalities 3} \forall j \in \ZZ \qquad a^{j} \in \sigma^{ss}  \ \Rightarrow \ \phi(b^{j})\leq \phi(a^{j})\leq \phi(b^{j+1}) \ \ \mbox{and} \ \ \phi(a^j) \leq \phi(M)+1. \end{gather}	
 	
 	So, if $a^j \in \sigma^{ss}$ for some $j\in \ZZ$,  then  $\phi(a^j)\leq \phi(M)+1 $ and from table \eqref{left right M} it follows that $\phi(a^j)< \phi(b^{n+1})$, hence $\hom(b^{n+1},a^j)=0$. Now the non-vanishing $ \hom(b^p,a^q)\neq 0$ for $p\leq q$ in  Corollary \ref{nonvanishings} implies that $n\geq j$. So we see that 
 	
 	\begin{gather} \label{non-ss a for small j}  j>n  \ \ \Rightarrow  \ \  a^j \not \in \sigma^{ss}
 	\end{gather}  and therefore indeed $A\not \in C_{1,\sigma \sigma}$. 
 	
 	(a) Here we  have 	$\phi(b^{n})<\phi(b^{n+1})< \phi(b^{n})+1$. The  arguments as in the beginning of the proof of (a) in Proposition \ref{more careful part two} are applicable in this case and they imply the same formulas \eqref{inequalities help 2'}, \eqref{inequalities help 5'} in this case. From \eqref{inequalities help 2'} and the inequality $\phi(M)+1 <\phi(b^{n+1})$ in \eqref{left right M} imply that $\phi(M)< t$.
 	
 	(a.1) If in this sub-case $a^j\in \sigma^{ss}$ for some $j\in \ZZ$,  \eqref{help inequalities 3} implies $t<\phi(a^j)$ and $\phi(a^j)\leq \phi(M)+1\leq t$, which is a contradiction. Therefore in this case $a^j \not \in \sigma^{ss}$ for each $j\in \ZZ$. 
 	
 		(a.2) In this case $t-1 <\phi(M)<t$. In table \eqref{left right M} we have $\phi(M)+1<\phi(b^{n+1})$ and due to \eqref{inequalities help 2'} we see that for some $u\in \ZZ$ holds $ u \leq n$ and \eqref{small u and big U1}.
 		
 		Table \eqref{left right M} ensure that $\sigma \in (M,b^u,b^{u+1})$ and recalling that $\hom(M,b^u)\neq 0$ in \eqref{nonvanishing1} we can use Corollary \ref{stable curve} to deduce that \begin{gather} \label{betau}  \beta^{u}=\langle M,b^{u} \rangle = \langle M,b^{u}, a^u \rangle \in C_{0,\sigma \sigma}(\mc T). \end{gather}

 		For $j>u$ we use \eqref{help inequalities 3} and $\phi(M)+1<\phi(b^{u+1})$ to deduce that $a^j\not \in \sigma^{ss}$ for  $j>u$. From the second triangle in \eqref{short filtration 1} we have  for $j<u$ holds $Z(a^j)= Z(b^{j})-Z(M)$,  $a^j\in \mc P([\phi(b^{j}), \phi(M)+1])\subset \mc P((t,t+1])$ hence from \eqref{small u and big U1} and \cite[Remark 2.1]{dimkatz4} we see that \begin{gather}\label{phases of ajbjp111} j<u \ \ \Rightarrow  \ \ \phi(M)+1 > \arg_{(t,t+1)}(Z(a^j)) > \arg_{(t,t+1)}(Z(b^{j}))=\phi(b^{j}) \\ 
 		\label{phiminusaaj1} j<u, \  a^j\not \in \sigma^{ss}  \ \ \Rightarrow  \ \   \phi_-(a^j) < \arg_{(t,t+1)}(Z(a^j)) ,  \ \  \phi(b^{j+1})\leq \phi_-(a^j)\leq \phi(M)+1 \end{gather}
 		
 		In Remark \ref{remark for delta} we see that $Z(M')=-Z(M)+Z(\delta)$ and \eqref{small u and big U1} implies 
 		
 		\begin{gather} \label{u is bigger than mprime1} Z(M') \in Z(\delta)^c_+ \ \  \qquad  t<V=\arg_{(t,t+1)}(Z(M')) < \phi(M)+1<  t+1  \end{gather} decreasing the superscript starting from $u$ and taking into account \eqref{small u and big U1} we get an integer $N\leq u$ such that \eqref{tbmprime1} holds. In Remark \ref{remark for delta} we see that 
 		$Z(a^j)= Z(M')+Z(b^{j+1})$ and from \eqref{tbmprime1} we see that
 		 \begin{gather} j<N \Rightarrow \arg_{(t,t+1)}(Z(a^j))>\arg_{(t,t+1)}(Z(b^{j+1}))=\phi(b^{j+1}) \\ \label{phases of ajbjp11}
 		 j\geq N \Rightarrow \arg_{(t,t+1)}(Z(a^j))\leq \arg_{(t,t+1)}(Z(b^{j+1}))=\phi(b^{j+1})
 		 \end{gather} Therefore, using \eqref{property of stability} and \eqref{help inequalities 3} we deduce that for $j<N$ the inclusion $a^j\in \sigma^{ss}$ implies $\phi(a^j) > \phi(b^{j+1})$, $\hom(a^j,b^{j+1} )=0$ which contradicts \eqref{nonvanishing4}.

 		Now we will show that $a^j \in \sigma^{ss}$ for $N\leq j<u$ with the help of  \cite[Lemma 7.2]{dimkatz4}. Let  $N\leq j<u$ and $ \ a^j\not \in \sigma^{ss}$. One of the five  cases given there must appear. In case (a) of \cite[Lemma 7.2]{dimkatz4}   we have $a^k\in \sigma^{ss}$,   $\phi_-(a^j) =\phi(a^k)+1 $ for some $k<j-1$, however then \eqref{phiminusaaj1} and \eqref{property of stability} imply $Z(a^k)\in Z(\delta)_-^c$, which contradicts \eqref{inequalities help 5'}. In case (b) of \cite[Lemma 7.2]{dimkatz4} we have   $\phi_-(a^j) =\phi(a^k) $ for some $k>j$, however  then \eqref{phiminusaaj1}\ \eqref{help inequalities 3'}, and \eqref{property of stability} imply $\arg_{(t,t+1)}(Z(a^k))<\arg_{(t,t+1)}(Z(a^j))$ with $k>j$, which contradicts \eqref{inequalities help 5'}.  In case (c) of \cite[Lemma 7.2]{dimkatz4}  we have  $\phi_-(a^j) = \phi(b^k)+1$  for some $k<j$, which would imply  $Z(b^k)\in Z(\delta)_-^c$, which contradicts \eqref{inequalities help 2'}. Case  (d)  of \cite[Lemma 7.2]{dimkatz4} ensures  $\phi_-(a^j) =\phi(b^k) $  for some $ j<k$, and then  \eqref{phiminusaaj1},  \eqref{property of stability} ensure   $\arg_{(t,t+1)}(Z(b^k))=\phi(b^k)<\arg_{(t,t+1)}(Z(a^j))$, which contradicts \eqref{phases of ajbjp11}, \eqref{inequalities help 2'} and $j<k$. In the last  case (e) of  Lemma \cite[Lemma 7.2]{dimkatz4}  holds $\phi_-(a^j) =\phi(M)+1$, however \eqref{phases of ajbjp111} and \eqref{phiminusaaj1} imply that  $\phi_-(a^j) <\phi(M)+1$ (here we take into account $j<u$), which is again contradiction and we proved that $a^j \in \sigma^{ss}$ for $N\leq j<u$. And   in \eqref{betau}  we also proved that $a^u \in \sigma^{ss}$.
 		
 		Finally,  we want  to prove that $M'\in \sigma^{ss}$.  We will use \cite[Lemma 2.12]{dimkatz4} with either the triple $(a^u,M, b^{u+1}[-1])$ or the triple  $(a^{u-1},M, b^{u}[-1])$ and taking into account the first triangle in \eqref{short filtration 1}. From Corollary \ref{nonvanishings} we see that this triple is Ext-exceptional triple.    From the second triangle in \eqref{short filtration 1}   it follows that $\phi(a^u)-1 = \phi(M) = \phi(b^u)-1$ if $\phi(M)=\phi(b^u)-1$ and $ \phi(b^u)-1<\phi(a^u)-1 < \phi(M) $ if  $\phi(M)>\phi(b^u)-1$. The latter case combined with \eqref{small u and big U1} and \eqref{help inequalities 3}  imply  $\phi(a^u)-1 < \phi(M) <\phi(b^{u+1}[-1])< \phi(a^u)$ and indeed the second system of inequalities in the conditions of \cite[Lemma 2.12]{dimkatz4} holds, and then this lemma ensures that $M'\in \sigma^{ss}$. Note that, if $N=u$, then the already proved \eqref{tbmprime1}, \eqref{small u and big U1} and $V<\phi(M)+1$ imply the latter case.  Thus, it remains to consider the case  $\phi(a^u)-1 = \phi(M) = \phi(b^u)-1$ and $N<u$. From \eqref{inequalities help 5'} and since $a^{u-1}, a^u,\in \sigma^{ss}$  (recall also \eqref{help inequalities 3}) we see that $\phi(a^{u-1})<\phi(a^{u})=\phi(b^{u})=\phi(M)+1$, hence recalling also that $\phi(M)<t<\phi(b^{u-1})\leq \phi(a^{u-1})$ we derive $\phi(a^{u-1})-1<\phi(b^{u}[-1])=\phi(M) < \phi(a^{u-1})$, and now \cite[Lemma 2.12]{dimkatz4} applied to the triple $(a^{u-1},M, b^{u}[-1])$ ensures $M'\in \sigma^{ss}$. Now Subsection \ref{first remarks} shows that $\alpha^j \in C_{o,\sigma\sigma}(\mc T)$ for $N\leq j \leq u$.

 	(b)  Assume now  	$\phi(b^{n+1})= \phi(b^{n})+1$.  Let us denote $\phi(b^n)=t$, then $\phi(b^{n+1})= t+1$.  Using that  $\{b^j\}_{j\in \ZZ}\subset \sigma^{ss}$, $\hom(b^n,b^m)\neq 0$ for $n\leq m$ and $\hom^1(b^m,b^n)\neq 0$ for $m >n+1$ it follows that:  
 		\begin{gather} \label{help equalities 1}   \phi(b^j)=t \ \ \mbox{for} \ j\leq n \ \  \qquad  \phi(b^j)=t+1  \ \ \mbox{for} \ \  j\geq n+1. \end{gather}

 	(b.1)   If $\phi(M)< \phi(b^n)-1$ and $a^j \in \sigma^{ss}$ for some $j\in \ZZ$, then \eqref{nonvanishing2}  imply  $\phi(a^j)<\phi(b^n)$, hence \eqref{help equalities 1}  imply that  $\phi(a^j)<\phi(b^k)$, $\hom(b^k, a^j)$ for small enough $k$, which contradicts \eqref{nonvanishing3}.
 	
 	 If $\phi(M)< \phi(b^n)-1$ and $M' \in \sigma^{ss}$, then from $\hom^1(b^{n+1}, M')\neq 0$ in \eqref{nonvanishing2} it follows that $\phi(b^{n+1})-1\leq \phi(M')$ and using that  $\phi(b^n)=\phi(b^{n+1})-1$ we obtain  $\phi(M)< \phi(M')-1$ and hence $\hom^1(M', M)=0$, which contradicts \eqref{nonvanishing7}.

 	(b.2) In this case $t-1 < \phi(M)$ and from table \eqref{left right M} we have also $\phi(M)<t$. Recalling that $\hom(M,b^n)\neq 0$ (see \eqref{nonvanishing1}) we can use  Corollary \ref{stable curve} to deduce that $\beta^n=\langle M, b^n, a^n \rangle \in C_{0,\sigma \sigma}(\mc T)$.  Since $Z(a^n) = Z(b^n)-Z(M)$ and  $\hom(b^n, a^n)\neq 0$ \eqref{nonvanishing3},   $\hom^1(a^n, M )\neq 0$ \eqref{nonvanishing2} we have $\phi(b^{n+1})-1 =\phi(b^n)<\phi(a^n)<\phi(M)+1$.

 	If $a^j\in \sigma^{ss}$ for some  $j < n$, then from \eqref{help inequalities 3} and \eqref{help equalities 1}, \eqref{property of stability} we see that   $Z(b^{j+1})$, $Z(a^{j})$, $Z(b^{j})$ are non-zero and collinear and  from Remark \ref{remark for delta} we know that $Z(M) = Z(b^j)-Z(a^j)$, which contradicts $t-1<\phi(M)<t=\phi(b^n)$. So for  $j<n$ we have $a^j \not \in \sigma^{ss}$.  Recalling \eqref{non-ss a for small j}  and Subsection \ref{first remarks} we deduce that $\alpha^j,\beta^j \not \in C_{0,\sigma \sigma}(\mc T)$ for $j\neq n$. If we prove that $M' \in \sigma^{ss}$, then using again  Subsection \ref{first remarks} we would obtain $  C_{0,\sigma \sigma}(\mc T)=\{\alpha^n,\beta^n\}$. To prove that  $M' \in \sigma^{ss}$ we use  \cite[Lemma 2.12]{dimkatz4} with  the Ext-exceptional  triple $(a^n,M, b^{n+1}[-1])$ and the first triangle in \eqref{short filtration 1}. We showed above that $\phi(b^n)<\phi(a^n)<\phi(M)+1$, and recalling that $t-1< \phi(M)<t=\phi(b^n)=\phi(b^{n+1})-1$  it follows  $\phi(a^n)-1<\phi(M)<\phi(a^n)$, $\phi(a^n)-1<\phi(b^{n+1}[-1])<\phi(a^n)$. Thus, the second system of the inequalities in the presumptions in \cite[Lemma 2.12]{dimkatz4} is satisfied and this lemma ensures that $M' \in \sigma^{ss}$ due to the first triangle in \eqref{short filtration 1} wit $q=n$.
 	
 	Finally, 	from $\hom^1(b^{n+1}, M')\neq 0$ in \eqref{nonvanishing2} and $\hom(M', a^n)\neq 0$ in \eqref{nonvanishing1} it follows that $\phi(b^{n+1})-1 \leq \phi(M')\leq \phi(a^n)$ and now from the first triangle in \eqref{short filtration 1} we get $\phi(b^{n+1})-1 <\phi(M')<\phi(a^n)<\phi(M)+1$.
 	
 		(b.3)   Now we have $\phi(M)=\phi(b^n)-1 $ and hence $\phi(M)=t-1$. Now for $q\leq n$ the second distinguished triangle in \eqref{short filtration 1}  ensures that $a^q\in \sigma^{ss}$ and $\phi(a^q)=t$. In particular, we have $\phi(a^n)+1 = \phi(b^{n+1})$ and from the first triangle in \eqref{short filtration 1}  it follows that $M'\in \sigma^{ss}$, $\phi(M')=\phi(a^n)$. Recalling \eqref{non-ss a for small j} and using Subsection \ref{first remarks} we obtain $C_{0,\sigma \sigma}(\mc T)=\{\alpha^j, \beta^j: j\leq n\}$.
 		\end{proof}

  \begin{lemma} \label{lemma for semistability of M'} Let us assume that we are in the situation of {\rm (a.1)} of  Proposition \ref{more careful study}. In this case  $M'\in \sigma^{ss}\iff \phi(M)=t-1$ .
  	\end{lemma}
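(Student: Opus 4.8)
The plan is to mirror the proof of Lemma \ref{lemma for semistability of M}, establishing the two implications separately, while relying on the description of the phases $\phi(b^j)$ obtained in part (a) of Proposition \ref{more careful study}. Recall in particular that in situation (a.1) one has $\{b^j\}_{j\in\ZZ}\subset\sigma^{ss}$ together with the asymptotics \eqref{inequalities help 2'}, namely $\lim_{j\to+\infty}\phi(b^j)=t+1$, $\lim_{j\to-\infty}\phi(b^j)=t$ and $\phi(b^j)<\phi(b^{j+1})$ for all $j$, and that by hypothesis $\phi(M)\leq t-1$.

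For the implication $M'\in\sigma^{ss}\Rightarrow\phi(M)=t-1$ I would argue by contraposition. Assume $\phi(M)<t-1$ and, for contradiction, $M'\in\sigma^{ss}$. The non-vanishing $\hom^1(b^j,M')\neq0$ from \eqref{nonvanishing2} forces $\phi(b^j)\leq\phi(M')+1$ for every $j$ by axiom (2) of Definition \ref{slicing}; letting $j\to+\infty$ and using \eqref{inequalities help 2'} gives $\phi(M')\geq t$. Then $\phi(M)+1<t\leq\phi(M')$, so $\phi(M')>\phi(M[1])$, and axiom (2) yields $\hom^1(M',M)=0$, contradicting \eqref{nonvanishing7}. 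Hence $M'\in\sigma^{ss}$ forces $\phi(M)\geq t-1$, which combined with the standing hypothesis $\phi(M)\leq t-1$ gives $\phi(M)=t-1$.

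For the converse I would assume $\phi(M)=t-1$, so that $\phi(M[1])=t$, and exhibit $M'$ as a semistable object by squeezing its Harder--Narasimhan phases. Combining the two distinguished triangles of \eqref{short filtration 1} (the first with parameter $p$ and the second with $q=p$) through the octahedral axiom, exactly as in \eqref{T12Zcap(E_1)the filtation}, produces for every $p\in\ZZ$ a filtration of $M'$ whose factors are $b^{p+1}[-1]$, $b^p$ and $M[1]$; this is the natural analogue for $M'$ of the filtration of $M$ used in Lemma \ref{lemma for semistability of M}. Since $\phi(b^{p+1})<t+1$ and $\phi(b^p)>t$ by \eqref{inequalities help 2'}, and $\phi(M[1])=t$, all three factors have phase in $[\phi(b^{p+1})-1,\phi(b^p)]$, whence $M'\in\mc P[\phi(b^{p+1})-1,\phi(b^p)]$ and therefore $\phi(b^{p+1})-1\leq\phi_-(M')\leq\phi_+(M')\leq\phi(b^p)$. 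Letting $p\to+\infty$ gives $\phi_-(M')\geq t$ and letting $p\to-\infty$ gives $\phi_+(M')\leq t$, so $\phi_-(M')=\phi_+(M')=t$ and $M'\in\sigma^{ss}$.

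The only delicate point is the construction of the filtration in the converse: one must splice the correct morphisms, i.e. the map $M'\to a^p$ from the first triangle with the map $a^p\to M[1]$ obtained by rotating the second triangle, so that the factor coming from $M$ is $M[1]$ rather than $M[-1]$. The assumption $\phi(M)=t-1$ is exactly what places $\phi(M[1])=t$ inside the shrinking interval $[\phi(b^{p+1})-1,\phi(b^p)]$, and everything else is a routine transcription of the argument in Lemma \ref{lemma for semistability of M}.
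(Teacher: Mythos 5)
Your proof is correct and follows essentially the same route as the paper: the forward implication is the identical contradiction via $\hom^1(b^j,M')\neq 0$, the limit $\lim_{j\to+\infty}\phi(b^j)=t+1$, and $\hom^1(M',M)\neq 0$; the converse is the same squeeze of $\phi_\pm(M')$ between $\phi(b^{p+1})-1$ and $\phi(b^p)$ as $p\to\pm\infty$. The only cosmetic difference is that the paper first places $a^p$ in $\mc P[t,\phi(b^p)]$ via the triangle $b^p\to a^p\to M[1]$ and then uses the extension $b^{p+1}[-1]\to M'\to a^p$, whereas you splice both triangles into one three-factor filtration of $M'$; the two are interchangeable.
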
 \begin{proof}
  	First,  we note that for $\phi(M)<t-1$ holds $M'\not \in  \sigma^{ss}$. Indeed, if $M'\in \sigma^{ss}$, then from \eqref{nonvanishing2} and \eqref{inequalities help 2'} it follows that $\phi(M')\geq t$ and $\hom^1(M', M)=0$, which contradicts \eqref{nonvanishing7}.
  	
  	So, it remains to consider the case  $\phi(M)=t-1$. From the second   triangle in \eqref{short filtration 1} we have a distinguished triangle  $\begin{diagram}[size=1em] 
  	b^p  & \rTo      &        &       & a^p     \\
  	& \luDashto &        & \ldTo &        \\
  	&           &  	M[1]   &       &        
  	\end{diagram}$   for each $p\in \ZZ$, and since $\phi(b^p)> \phi(M[1])=t$, it follows that this triangle is the HN filtration of $a^p$, hence $a^p\in \mc P[t,\phi(b^p)]$ for each $p\in \ZZ$. 	From the first triangle in    \eqref{short filtration 1} we see that $M'$ is in the extension closure of $b^{p+1}[-1]$ and $a^p$, and taking into account that $\phi(b^{p+1}[-1])<t$, we deduce  that $M'\in \mc P[\phi(b^{p+1}[-1]),\phi(b^{p})]$ for each $p\in \ZZ$. Therefore $\phi(b^{p+1}[-1])\leq \phi_-(M')$  and $\phi_+(M')\leq \phi(b^{p})$ for each $p\in \ZZ$.  Now using \eqref{inequalities help 2'} and limiting $p$ to $+\infty$ we obtain $t\leq  \phi_-(M')$, whereas limiting $p$ to  $-\infty$ ensures $\phi_+(M')\leq t$, therefore $\phi_-(M')=\phi_+(M')=t$ and indeed $M'\in \sigma^{ss}$.
  \end{proof}

 From  tables \eqref{table for a a M}, \eqref{table for M' a a}, \eqref{table for b b M'}, \eqref{table for M b b}, \eqref{table for b a b} and subsection \ref{homeomoeprhism} follow the following two corollaries:  
   \begin{corollary} \label{in TaTb only one} For $\sigma \in \mk{T}_{a}^{st} \cup \mk{T}_{b}^{st}$ we have $\abs{C_{1,\sigma \sigma}(\mc T)}\leq 1$.
   	
   \end{corollary}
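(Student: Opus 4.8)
The plan is to reduce the bound to a chart-by-chart inspection of the last columns of the five tables already established. By the description recalled before \eqref{st with T}, the region $\mk{T}_a^{st}$ is the union, over all integers, of the charts listed in the first column of table \eqref{left right M}, i.e. $(M',a^j,a^{j+1})$, $(a^p,b^{p+1},a^{p+1})$ and $(a^m,a^{m+1},M)$; likewise $\mk{T}_b^{st}$ is the union of the charts in the second column, $(M,b^n,b^{n+1})$, $(b^q,a^q,b^{q+1})$ and $(b^i,b^{i+1},M')$. Hence every $\sigma \in \mk{T}_a^{st} \cup \mk{T}_b^{st}$ lies in at least one of these six chart families. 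Since $C_{1,\sigma \sigma}(\mc T) \subseteq C_1(\mc T) = \{A,B\}$ by Proposition \ref{noncomcurves} and Remark \ref{remark for subsets}, it is enough to verify in each family that the recorded value of $C_{1,\sigma \sigma}(\mc T)$ is never the full set $\{A,B\}$.

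I would then simply read off the relevant entries. For $\sigma \in (b^p,b^{p+1},M')$ and $\sigma \in (M,b^p,b^{p+1})$, tables \eqref{table for b b M'} and \eqref{table for M b b} give $C_{1,\sigma \sigma}(\mc T) \in \{\emptyset,\{B\}\}$ in every row. Applying the autoequivalence $\zeta$ of \eqref{useful autoequivalence} together with Remark \ref{action1} — which is precisely how tables \eqref{table for a a M} and \eqref{table for M' a a} were derived from the previous two — yields $C_{1,\sigma \sigma}(\mc T) \in \{\emptyset,\{A\}\}$ for $\sigma \in (a^p,a^{p+1},M)$ and $\sigma \in (M',a^p,a^{p+1})$. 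Finally, for the two remaining families $\sigma \in (b^p,a^p,b^{p+1})$ and $\sigma \in (a^p,b^{p+1},a^{p+1})$, table \eqref{table for b a b} records $C_{1,\sigma}(\mc T) = \emptyset$, and Remark \ref{remark for helix} (c) gives $C_{1,\sigma \sigma}(\mc T) \subseteq C_{1,\sigma}(\mc T) = \emptyset$. In all six cases $\abs{C_{1,\sigma \sigma}(\mc T)} \leq 1$, which is the assertion.

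There is no analytic obstacle left at this stage: all the substance has been spent in proving the five tables (Propositions \ref{proposition for B in Tb}, \ref{more careful part two}, \ref{more careful study} and the corollaries feeding them), so the statement is a pure bookkeeping consequence. The one step I would take care to justify explicitly is the covering claim — that no stability condition in $\mk{T}_a^{st} \cup \mk{T}_b^{st}$ escapes all six chart families — which rests on \eqref{st with T} and on the identification of $\mk{T}_a^{st}$, $\mk{T}_b^{st}$ with the unions of the first, respectively second, columns of table \eqref{left right M}. The overlaps between charts are irrelevant to the argument, since it suffices to exhibit a single chart containing $\sigma$ in order to read off the bound.
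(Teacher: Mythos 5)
Your proposal is correct and is essentially the paper's own argument: the paper derives Corollary \ref{in TaTb only one} precisely by reading off the $C_{1,\sigma\sigma}(\mc T)$ columns of tables \eqref{table for b b M'}--\eqref{table for b a b}, which cover $\mk{T}_a^{st}\cup\mk{T}_b^{st}$ via the chart decomposition following \eqref{st with T}, and observing that the value $\{A,B\}$ never occurs. Your explicit attention to the covering claim and to the reduction $C_{1,\sigma\sigma}(\mc T)\subset C_{1,\sigma}(\mc T)=\emptyset$ in the $(b^p,a^p,b^{p+1})$ and $(a^p,b^{p+1},a^{p+1})$ cases fills in exactly the bookkeeping the paper leaves implicit.
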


 \begin{corollary} \label{infinite genus zero part 1} The set $\{\sigma \in \mk{T}_a^{st}\cup  \mk{T}_b^{st} : \abs{C_{0,\sigma \sigma}(\mc T)} = \infty \}$ is a disjoint  union (any two summands are disjoint) 
 	
 	$\bigcup_{p\in \ZZ} \{\sigma \in (M,b^p, b^{p+1}) : \phi(b^{p+1})=\phi(b^p)+1, \phi(b^p)=\phi(M)+1 \} \cup $
 	
 	$\bigcup_{p\in \ZZ} \{ \sigma \in (M,a^p, a^{p+1}): \phi(a^{p+1})=\phi(a^p)+1, \phi(a^p) =\phi(M')+1 \} \cup$

 		$\bigcup_{p\in \ZZ} \{ \sigma \in (b^p, b^{p+1}, M'): \phi(b^{p+1})=\phi(b^p)+1=\phi(M') \}  \cup$
 		
 		$\bigcup_{p\in \ZZ} \{\sigma \in (a^p, a^{p+1}, M):\phi(a^{p+1})=\phi(a^p)+1=\phi(M) \}$.
 	
 \end{corollary}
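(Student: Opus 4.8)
The plan is to read the statement directly off the five tables, exploiting that $C_{0,\sigma \sigma}(\mc T)$ is a well-defined function of $\sigma$ alone (Definition \ref{def of stable derived points}). First I would recall from \eqref{st with T} and the two columns of \eqref{left right M} that every $\sigma \in \mk{T}_a^{st}\cup \mk{T}_b^{st}$ lies in one of the six regions $(M',a^p,a^{p+1})$, $(a^p,b^{p+1},a^{p+1})$, $(a^p,a^{p+1},M)$, $(M,b^p,b^{p+1})$, $(b^p,a^p,b^{p+1})$, $(b^p,b^{p+1},M')$, and that these are precisely the regions treated in tables \eqref{table for b b M'}, \eqref{table for a a M}, \eqref{table for M b b}, \eqref{table for M' a a}, \eqref{table for b a b}. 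Because $C_{0,\sigma \sigma}(\mc T)$ does not depend on which region-description of $\sigma$ one uses, the cardinality $\abs{C_{0,\sigma \sigma}(\mc T)}$ can be determined from whichever table applies.

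Next I would inspect the $C_{0,\sigma \sigma}(\mc T)$-column of each table and isolate the rows producing an infinite set. In table \eqref{table for b a b} every entry is contained in a two-element set, so those regions never contribute. In each of tables \eqref{table for b b M'}, \eqref{table for a a M}, \eqref{table for M b b}, \eqref{table for M' a a} all entries are either empty, contained in a two-element set, or of the finite form $\{\alpha^j,\beta^j : N\leq j\leq u\}$ (resp. $\{\beta^{j+1},\alpha^j : N\leq j\leq u\}$), except for a single row whose entry is infinite. Translating the defining inequalities of that row through the abbreviations in the top-left cell of each table gives exactly the four families: the row $\phi(b^{p+1})=v,\ q=v$ of \eqref{table for b b M'} yields $\{\alpha^j,\beta^j : j\geq p\}$ under $\phi(b^{p+1})=\phi(b^p)+1=\phi(M')$; the row $v=\phi(b^p),\ s=\phi(b^p)$ of \eqref{table for M b b} yields $\{\alpha^j,\beta^j : j\leq p\}$ under $\phi(b^{p+1})=\phi(b^p)+1,\ \phi(b^p)=\phi(M)+1$; and the analogous rows of \eqref{table for a a M}, \eqref{table for M' a a} yield $\{\beta^{j+1},\alpha^j : j\geq p\}$ under $\phi(a^{p+1})=\phi(a^p)+1=\phi(M)$ and $\{\beta^{j+1},\alpha^j : j\leq p\}$ under $\phi(a^{p+1})=\phi(a^p)+1,\ \phi(a^p)=\phi(M')+1$. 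This settles both inclusions at once: any $\sigma$ in the left-hand set satisfies one family's conditions, and any $\sigma$ in a summand lands in the corresponding infinite row.

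For the disjointness I would avoid discussing overlaps of the open regions entirely, and again use that $\sigma\mapsto C_{0,\sigma \sigma}(\mc T)$ is single-valued: a point lying in two summands would force the two associated infinite sets to coincide. It then remains to check that the sets $\{\alpha^j,\beta^j : j\leq p\}$, $\{\alpha^j,\beta^{j+1} : j\leq p\}$, $\{\alpha^j,\beta^j : j\geq p\}$, $\{\alpha^j,\beta^{j+1} : j\geq p\}$ are pairwise distinct as $p$ ranges over $\ZZ$. This is a short combinatorial comparison of the index-sets of the $\alpha$'s and $\beta$'s appearing: a down-set never equals an up-set of the same symbol, two down-sets (resp. up-sets) of the same kind coincide only for equal $p$, and matching the $\alpha$- and $\beta$-index-sets simultaneously between a $\{\alpha^j,\beta^j\}$-family and a $\{\alpha^j,\beta^{j+1}\}$-family forces the incompatible relations $p=p'$ and $p=p'+1$. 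I expect the only genuinely careful point to be bookkeeping rather than ideas: confirming that within each table the listed rows are mutually exclusive and exhaustive over the region, so that the unique infinite row is correctly singled out, and that the phase-abbreviations $v,q,s,t$ are substituted correctly. Once the tables of Subsection \ref{homeomoeprhism} are granted, the structural content of the corollary is immediate.
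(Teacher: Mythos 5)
Your proposal is correct and follows essentially the same route as the paper: the paper's proof also simply reads the unique infinite rows off tables \eqref{table for b b M'}--\eqref{table for b a b} and deduces disjointness from the fact that $C_{0,\sigma\sigma}(\mc T)$ is constant on each summand and takes pairwise distinct values on different summands. Your explicit combinatorial check that the four families of index-sets are pairwise distinct is a slightly more self-contained version of the paper's appeal to "the corresponding sets ... are different" (the paper additionally invokes $\mk{T}_a^{st}\cap\mk{T}_b^{st}=\emptyset$, which your argument does not need), but this is a difference of detail, not of method.
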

 \begin{proof}
 	In the tables  we see that $\abs{C_{0,\sigma \sigma}(\mc T)}=\infty$ if and only iff $\sigma$ is in the shown union. The fact that the union is disjoint follows from the behavior of $C_{0, \sigma \sigma}(\mc T)$. In the tables we see that when $\sigma$ varies in any subset, which is a  summand of  the given union,  the set $C_{0, \sigma \sigma}(\mc T)$  does not change and that for  any two different summands the corresponding sets of stable genus zero non-commutative curves are different (recall also that $\mk{T}_a^{st} \cap \mk{T}_b^{st}=\emptyset$,  see \eqref{st with T}). 
 \end{proof}

 \subsection{Study of $C_{1,\sigma}(\mc T)$, $C_{1,\sigma \sigma}(\mc T)$ as $\sigma \in (\_,M,\_) \cup (\_,M',\_)$ and results  for the genus zero case. } \label{section for middle MMprime}

 Here Remark \ref{action1} shows that the auto-equivalence $\zeta$ transforms $(\_,M,\_)$ to $(\_,M',\_)$ and more precisely $ \sigma \in  (a^p,M,b^{p+1})$ is mapped to  $\zeta \cdot \sigma \in  (b^p,M',a^{p})$ as $\phi_{\zeta \cdot \sigma}(b^p)=\phi_{ \sigma}(a^p)$, $\phi_{\zeta \cdot \sigma}(M')=\phi_{ \sigma}(M)$, $\phi_{\zeta \cdot \sigma}(a^p)=\phi_{ \sigma}(b^{p+1})$.  Therefore it is enough to   study the behavior of $C_{l,\sigma}(\mc T)$, $C_{l,\sigma \sigma}(\mc T)$ in $ (a^p,M,b^{p+1})$. To that end, we  need first to strengthen \cite[
 lemma 7.5 (e)]{dimkatz4}.  So, we prove first:
\begin{lemma} \label{semi-stability of b} Let $\sigma \in (a^p,M,b^{p+1})$,  and let the following inequalities hold:  \begin{gather} \label{semi-stability of b ineq}    \phi\left (a^{p}\right)-1< \phi\left ( M\right)<\phi\left (a^{p}\right)\\  \label{semi-stability of b ineq1}  \phi\left (a^{p}\right)-1< \phi\left ( b^{p+1}\right)-1<\phi\left (a^{p}\right). \end{gather} 	
	
	Then  $ M' \in \sigma^{ss}$ and  $\phi(b^{p+1})-1<\phi(M')=\arg_{(\phi\left (a^{p}\right)-1,\phi\left (a^{p}\right))}(Z(a^p)-Z(b^{p+1}))<\phi(a^{p})$.
	
	 Furthermore, if  $\phi(M)=\phi(M')$, then $\sigma \in (a^{j}, M, b^{j+1})\cap (b^j, M', a^j)$ for all $j\in \ZZ$, and therefore $C_{D^b(pt),\sigma \sigma}(\mc T)= C_{D^b(pt), \KK}(\mc T)$. In particular  $\cap_{j\in \ZZ}  (a^{j}, M, b^{j+1})\cap (b^j, M', a^j) \neq \emptyset$. 
	 
\end{lemma}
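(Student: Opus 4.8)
The plan is to treat the two assertions separately, the first unconditionally and the second under the extra hypothesis $\phi(M)=\phi(M')$.

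For the first assertion I would observe that, by Corollary \ref{nonvanishings}, the triple $(a^p,M,b^{p+1}[-1])$ is Ext-exceptional, and that the hypotheses \eqref{semi-stability of b ineq}, \eqref{semi-stability of b ineq1} are exactly the phase inequalities $\phi(a^p)-1<\phi(M)<\phi(a^p)$ and $\phi(a^p)-1<\phi(b^{p+1}[-1])<\phi(a^p)$ needed to invoke \cite[Lemma 2.12]{dimkatz4}. Applying that lemma together with the first distinguished triangle $b^{p+1}[-1]\to M'\to a^p\to b^{p+1}$ of \eqref{short filtration 1} gives $M'\in\sigma^{ss}$; this is the strengthening of \cite[Lemma 7.5 (e)]{dimkatz4} that is required. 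To pin down the phase I would use $Z(M')=Z(a^p)-Z(b^{p+1})$ (Remark \ref{remark for delta}) together with the standard bounds $\phi_-(M')\ge\min(\phi(b^{p+1})-1,\phi(a^p))$ and $\phi_+(M')\le\max(\phi(b^{p+1})-1,\phi(a^p))$ for the same triangle; since $M'$ is semistable and $\phi(b^{p+1})-1<\phi(a^p)$ these force $\phi(b^{p+1})-1\le\phi(M')\le\phi(a^p)$. Strictness at both ends follows because equality would make $Z(b^{p+1})$ (resp.\ $Z(a^p)$) collinear with $e^{\ri\pi\phi(a^p)}$ (resp.\ with $Z(b^{p+1})$), contradicting $\phi(a^p)<\phi(b^{p+1})<\phi(a^p)+1$ from \eqref{semi-stability of b ineq1}. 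Hence $\phi(M')\in(\phi(b^{p+1})-1,\phi(a^p))\subset(\phi(a^p)-1,\phi(a^p))$, and the asserted formula $\phi(M')=\arg_{(\phi(a^p)-1,\phi(a^p))}(Z(a^p)-Z(b^{p+1}))$ is immediate from the definition of $\arg$.

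For the second assertion suppose $\phi(M)=\phi(M')=:\varphi$. The first step is computational: since $Z(\delta)=Z(M)+Z(M')$ (Remark \ref{remark for delta}) and $Z(M),Z(M')$ are now collinear, $Z(\delta)$ points in the direction $e^{\ri\pi\varphi}$, while $Z(a^j)-Z(a^{j+1})=Z(b^j)-Z(b^{j+1})=Z(\delta)$. Thus all $Z(a^j)$ lie on one line parallel to $e^{\ri\pi\varphi}$ and all $Z(b^j)$ on another; combining this with $Z(a^j)=Z(M')+Z(b^{j+1})$, $Z(b^j)=Z(M)+Z(a^j)$ and the fact (from the first part via \eqref{semi-stability of b ineq}, \eqref{semi-stability of b ineq1}) that $Z(a^p),Z(b^{p+1})$ lie strictly in the open half-plane $e^{\ri\pi\varphi}(\RR+\ri\RR_{>0})$, I would conclude that every $Z(a^j),Z(b^j)$ lies in that open half-plane and that the would-be phases $\arg_{(\varphi,\varphi+1)}$ are strictly interleaved, $\varphi<\dots<\arg Z(b^j)<\arg Z(a^j)<\arg Z(b^{j+1})<\dots<\varphi+1$, with limits $\varphi$ and $\varphi+1$ as $j\to\mp\infty$. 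The heart of the proof — and the \emph{main obstacle} — is to upgrade these would-be phases to genuine phases, i.e.\ to show that every $a^j$ and every $b^j$ is $\sigma$-semistable. I would argue by induction along the triangles \eqref{short filtration 1}, starting from the known semistable objects $a^p,b^{p+1},M,M'$: each new $a^{j+1}$ (resp.\ $b^{j+1}$) fits into a triangle whose other two terms are an already-constructed neighbour and one of $M,M'$, so that \cite[Lemma 2.12]{dimkatz4}, applied to the relevant Ext-exceptional triple from Corollary \ref{nonvanishings}, yields its semistability, the required strict phase inequalities being supplied at each step by the interleaving above. It is precisely the collinearity $\phi(M)=\phi(M')$ that keeps the two lines off the axis, hence all phases strictly inside $(\varphi,\varphi+1)$, which is what prevents the degeneracy that would break the induction. (Alternatively one could construct the slicing directly in the spirit of Proposition \ref{nonentangled}, defining $\mc P(\psi)$ by the appropriate extension closures and verifying the axioms; I expect the inductive route to be shorter.) Selecting, at each step, the correct triple and checking its phase hypotheses is where most of the bookkeeping lies.

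Once all of $\{a^j,b^j:j\in\ZZ\}\cup\{M,M'\}$ are semistable with the interleaved phases and $\phi(M)=\phi(M')=\varphi$, the remaining conclusions are formal. For each $j$ the three defining inequalities of table \eqref{middle M} for $(a^j,M,b^{j+1})$ read $\phi(a^j)<\varphi+1$, $\phi(a^j)<\phi(b^{j+1})$, $\varphi<\phi(b^{j+1})$, all valid, and symmetrically for $(b^j,M',a^j)$; hence $\sigma\in(a^j,M,b^{j+1})\cap(b^j,M',a^j)$ for every $j$. Since by \eqref{derived points} the objects $a^j,b^j,M,M'$ exhaust the exceptional objects generating the derived points, Definition \ref{def of stable derived points} then gives $C_{D^b(pt),\sigma\sigma}(\mc T)=C_{D^b(pt),\KK}(\mc T)$. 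Finally, for $\bigcap_{j}(a^j,M,b^{j+1})\cap(b^j,M',a^j)\neq\emptyset$ I would exhibit one such $\sigma$ through the homeomorphism of Remark \ref{remark for fe} (b): choose a central charge on the chart $(a^p,M,b^{p+1})$ with $\phi(M)=\phi(M')$ and satisfying \eqref{semi-stability of b ineq}, \eqref{semi-stability of b ineq1} — a codimension-one but clearly solvable condition — and apply the two assertions just established.
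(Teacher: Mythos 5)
Your proposal is correct in substance but follows a genuinely different route from the paper at the two places where real work is needed. For the first assertion the paper simply cites \cite[Lemma 7.5 (a), (c)]{dimkatz4}, whereas you re-derive it from the Ext-exceptional triple $(a^p,M,b^{p+1}[-1])$, the first triangle of \eqref{short filtration 1} and the extension-closure bounds on $\phi_{\pm}(M')$; this is sound and is essentially how that lemma is proved in the source. The more significant divergence is in the second assertion: after both arguments set up the collinearity of $Z(\delta)$ with $e^{\ri\pi\varphi}$ and the interleaving of the would-be phases (the paper via \cite[Corollary 3.20]{dimkatz4}), the paper proves semistability of $a^j,b^j$ for $j\ge p+1$ \emph{by contradiction}, running through the five possible shapes of the lowest HN factor listed in \cite[Lemmas 7.2, 7.3]{dimkatz4} and ruling each out, while you propose a \emph{forward induction} along the triangles \eqref{short filtration 1}, producing each new object inside the extension closure of an already-established $\sigma$-exceptional triple. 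Your induction does go through — at each step the relevant triple (e.g.\ $(a^j,M,b^{j+1}[-1])$, then $(b^{j+1},M',a^{j+1}[-1])$, etc.) is Ext by Corollary \ref{nonvanishings} and has phases in a half-open interval of length at most one by the interleaving established so far — but two small points deserve care: the tool you actually need at each step is the statement that exceptional objects in the extension closure of a (full) $\sigma$-exceptional collection are semistable (\cite[Proposition 2.2]{dimkatz4}, as used in Proposition \ref{stable curve part 1}), which is broader than the specific triangle-plus-inequalities form in which \cite[Lemma 2.12]{dimkatz4} is invoked elsewhere in the paper; and when you upgrade a would-be phase to a genuine phase you must also fix the branch of $\arg$, which your extension-closure bounds do supply but which should be said explicitly. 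Your route trades the paper's five-case eliminations for per-step bookkeeping of Ext-ness and phase windows; both are legitimate, and yours is arguably more transparent about \emph{why} everything is semistable, at the cost of not reusing the ready-made HN-factor classification from \cite{dimkatz4}. The closing steps (verifying the inequalities of table \eqref{middle M} for every $j$, the conclusion about derived points, and exhibiting a point of the intersection through the chart of Remark \ref{remark for fe}) agree with the paper.
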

 \begin{proof}
 From \cite[Lemma 7.5 (a), (c)]{dimkatz4}	we obtain   $b^{p}, M' \in \sigma^{ss}$, and   $\phi(M)<\phi(b^{p})<\phi(a^{p})$, $\phi(b^{p+1})-1<\phi(M')=\arg_{(\phi\left (a^{p}\right)-1,\phi\left (a^{p}\right))}(Z(a^p)-Z(b^{p+1}))<\phi(a^{p})$.
 
Assume that $\phi(M)=\phi(M')$ holds.
 	
   In this case   we have $\phi(a^{p})-1<\phi(M)=\phi(M')<\phi(a^{p})$. Recalling that  $Z(\delta)=Z(M')+Z(M)$, we see that   $t=\phi(M)=\phi(M')$ satisfies    $Z(\delta)=\abs{Z(\delta)}\exp(\ri \pi t)$ and $t<\phi(a^{p})<t+1$.   From the already obtained  $\phi(M)<\phi(b^{p})<\phi(a^{p})$  we get $t<\phi(b^{p})<\phi(a^{p})<t+1$.   Now we can apply \cite[Corollary 3.20]{dimkatz4}, which,  besides   $\{Z(a^j), Z(b^j)\}_{j\in \ZZ}\subset Z(\delta)^c_+$, gives us the formulas 
    \begin{gather} \label{noncolinear ab1}  \forall j \in \ZZ \ \ \  \arg_{(t,t+1)}(Z(x^j))<\arg_{(t,t+1)}(Z(x^{j+1})) \\
    \label{noncolinear ab2}    \lim_{j\rightarrow -\infty } \arg_{(t,t+1)}(Z(x^j))=t; \quad  \lim_{j\rightarrow+ \infty } \arg_{(t,t+1)}(Z(x^j))=t+1,  \end{gather}
   where  $\{x^i\}_{i\in \ZZ}$ is   either the sequence $\{a^i\}_{i\in \ZZ}$ or   the sequence $\{b^i\}_{i\in \ZZ}$. On the other hand, \cite[Corollary 3.20]{dimkatz4} claims   that  for any three integers $i,j,m$ we have:
    \begin{gather} \label{j<mleqi} j<m\leq i \ \ \Rightarrow \ \  \arg_{(t,t+1)}(Z(a^j))< \arg_{(t,t+1)}(Z(b^m))<\arg_{(t,t+1)}(Z(a^i)).\end{gather}

   We extend  the inequality $t<\phi(b^{p})<\phi(a^{p})<t+1$  to \eqref{semi-stability of b ineq3} as follows.    We already have  that $a^p, b^p, b^{p+1}\in \sigma^{ss}$.   In \eqref{semi-stability of b ineq1} is given that  $\phi(a^{p})<\phi(b^{p+1})$. 
   From $\hom^1(b^{p+1},M')\neq 0$ (see \eqref{nonvanishing2}) it follows $\phi(b^{p+1})\leq t+1$, and from $Z(b^{p+1})\in Z(\delta)^c_+$ we get  $\phi(b^{p+1})< t+1=\phi(M)+1$.  We have also  $\phi(M)<\phi(b^{p+1})$ (due to $\sigma \in (a^p,M,b^{p+1})$ and \eqref{middle M}). Therefore $\phi(b^{p+1})-1<\phi(M)< \phi(b^{p+1})$, and from \cite[Lemma 7.4 (a)]{dimkatz4}   we get  $a^{p+1}\in \sigma^{ss}$ and   $\phi(b^{p+1})-1<\phi(a^{p+1})-1<\phi(M)$. Thus, we derive:
   \begin{gather} \label{semi-stability of b ineq3} \phi(a^{p})-1<\phi(b^{p+1})-1<\phi(a^{p+1})-1<t<\phi(b^{p})<\phi(a^{p})<\phi(b^{p+1})<\phi(a^{p+1})<t+1.\end{gather}

   In  \cite[Lemma 7.5  (e)]{dimkatz4} it is shown that $ \sigma \in (a^j,M,b^{j+1})$ for each $j\leq p$, and therefore  $a^j, b^{j+1} \in \sigma^{ss}$   for each $j\leq p$.

 Let $j\geq p+1$. We will work now to prove that $a^j, b^j \in \sigma^{ss}$.  From \cite[Remark 3.9]{dimkatz4} we see that,  if $x^j$ is $a^j$ or $b^j$, then  $x^{j}[-1]$ is in the extension closure of $x^p,x^{p+1}[-1]$, therefore we have:
 \begin{gather} \label{phipm} j\geq p+1   \qquad x^j\in\mc P[\phi(x^{p+1}),\phi(x^p)+1]   \Rightarrow \phi_\pm(x^j) \in [\phi(x^{p+1}),\phi(x^p)+1].\end{gather}

  Suppose that $a^j\not \in \sigma^{ss}$ for some $j\geq p+1$. Due to \eqref{semi-stability of b ineq3} and \eqref{phipm} we have  $a^j\in\mc P[\phi(a^{p+1}),\phi(a^p)+1] \subset  \mc P(\phi(b^{p+1}),\phi(b^{p+1})+1]$, and  \cite[Remark 2.1(c)]{dimkatz4} shows that  $\arg_{(\phi(b^{p+1}),\phi(b^{p+1})+1]}(Z(a^j))=\arg_{(t,t+1)}(Z(a^j))$. Combining with \cite[Remark 2.1 (a)]{dimkatz4}, we put together: 
  \begin{gather} \label{semi-stability of b ineq5} t<\phi(a^{p+1})\leq \phi_-(a^j) < \arg_{(t,t+1)}(Z(a^j))<\phi_+(a^j)\leq  \phi(a^p)+1 <\phi(a^{p+1})+1. \end{gather}
  
  We use \cite[Lemma 7.2]{dimkatz4} to get a contradiction.  One of the five  cases given there must appear.

  In case (a) of \cite[Lemma 7.2]{dimkatz4}   we have $a^k\in \sigma^{ss}$,   $\phi_-(a^j) =\phi(a^k)+1 $ for some $k<j-1$, and since we already have that $a^q\in \sigma^{ss}$  and $\phi(a^{q-1})<\phi(a^{q})$ for  $q \leq p$, it follows that $\phi(a^{q})<\phi(a^{k})$ for small enough $q$, hence   $\hom^1(a^{j},a^q)=0$ for small enough $q$, which contradicts \eqref{nonvanishing3}. 
  
  Case (b) of \cite[Lemma 7.2]{dimkatz4}  ensures  $\phi_-(a^j) =\phi(a^k) $ for some $k>j$. Using \eqref{semi-stability of b ineq5} and \cite[Remark 2.1 (c)]{dimkatz4} we get  $\phi(a^k)=\arg_{(t,t+1)}(Z(a^k))$, hence   by \eqref{semi-stability of b ineq5} we get $\arg_{(t,t+1)}(Z(a^k))<\arg_{(t,t+1)}(Z(a^j))$, which contradicts  \eqref{noncolinear ab1}.
  
  In case (c) of \cite[Lemma 7.2]{dimkatz4}  we have  $\phi_-(a^j) = \phi(b^k)+1$  for some $k<j$, and since we already have that $b^q\in \sigma^{ss}$  and $\phi(b^{q-1})<\phi(b^{q})$ for  $q \leq p$, it follows that $\phi(b^{q})<\phi(b^{k})$ for small enough $q$, hence   $\hom^1(a^{j},b^q)=0$ for small enough $q$, which contradicts \eqref{nonvanishing4}.

  Case  (d)  of \cite[Lemma 7.2]{dimkatz4} ensures  $\phi_-(a^j) =\phi(b^k) $  for some $ j<k$, and then  \eqref{semi-stability of b ineq5},  \cite[Remark 2.1 (c)]{dimkatz4} ensure   $\arg_{(t,t+1)}(Z(b^k))=\phi(b^k)$, 
  hence by  \eqref{semi-stability of b ineq5} we get $\arg_{(t,t+1)}(Z(b^k))<\arg_{(t,t+1)}(Z(a^j))$, which contradicts \eqref{j<mleqi} and $j<k$.

  In case  (e) of \cite[Lemma 7.2]{dimkatz4} we have   $\phi_-(a^j) =\phi(M)+1=t+1$, and  \eqref{semi-stability of b ineq5}, \eqref{semi-stability of b ineq3} imply  $t+1 < {\rm arg}_{(t,t+1)}(Z(a^j))<t+2$, which contradicts the incidence $Z(a^j)\in Z(\delta)^c_+$.
 
  So far, we proved that $a^j \in \sigma^{ss}$ for all $j\in \ZZ$. 
 
 Suppose that $b^j\not \in \sigma^{ss}$ for some $j\geq p+1$.  Due to \eqref{semi-stability of b ineq3} and \eqref{phipm} we have  $b^j\in\mc P[\phi(b^{p+1}),\phi(b^p)+1] \subset  \mc P(\phi(a^{p}),\phi(a^{p})+1]$, and  \cite[Remark 2.1(c)]{dimkatz4} shows that  $\arg_{(\phi(a^{p}),\phi(a^{p})+1]}(Z(b^j))=\arg_{(t,t+1)}(Z(b^j))$. Combining with \cite[Remark 2.1 (a)]{dimkatz4} we put together: 
 \begin{gather} \label{semi-stability of b ineq4} t<\phi(b^{p+1})\leq \phi_-(b^j) < \arg_{(t,t+1)}(Z(b^j))<\phi_+(b^j)\leq  \phi(b^p)+1 <\phi(b^{p+1})+1. \end{gather}

 We use \cite[Lemma 7.3]{dimkatz4}  to obtain a  contradiction.  Some of the five cases given there must appear. 
 
 Case (a) in \cite[Lemma 7.3]{dimkatz4} ensures $\phi_-(b^j) = \phi(a^k)+1$  for some $k<j-1$, and since we have already proved that $a^q \in \sigma^{ss}$ and $\phi(a^{q-1})<\phi(a^{q-1})$ for $q\in \ZZ$,  it follows that \eqref{semi-stability of b ineq4} implies that $\hom^1(b^{j},a^q)=0$ for small enough $q$, which contradicts \eqref{nonvanishing3}.

 Case  (b) in \cite[Lemma 7.3]{dimkatz4} ensures  $\phi_-(b^j) =\phi(a^k) $  for some $k\geq j$, and then  \eqref{semi-stability of b ineq4}  and  $Z(a^k) \in Z(\delta)^c_+$ 
 imply  $\arg_{(t,t+1)}(Z(a^k))=\phi(a^k)$, hence by  \eqref{semi-stability of b ineq4} we get $\arg_{(t,t+1)}(Z(a^k))<\arg_{(t,t+1)}(Z(b^j))$, which contradicts \eqref{j<mleqi} and  $k\geq j$. 
 
 Case (c) in \cite[Lemma 7.3]{dimkatz4} ensures   $\phi_-(b^j) =\phi(b^k)+1 $ for some $k<j-1$, and since we already have that $b^q\in \sigma^{ss}$  and $\phi(b^{q-1})<\phi(b^{q})$ for  $q \leq p$, it follows that $\hom^1(b^{j},b^q)=0$ for small enough $q$, which contradicts \eqref{nonvanishing6}. 
 
 In case (d) in \cite[Lemma 7.3]{dimkatz4} we have   $\phi_-(b^j) =\phi(b^k) $ for some $k>j$. By $Z(b^k)\in Z(\delta)^c_+$ and \eqref{semi-stability of b ineq4} it follows  that  $\phi(b^k)=\arg_{(t,t+1)}(Z(b^k))$, and then \eqref{semi-stability of b ineq4} gives $\arg_{(t,t+1)}(Z(b^k))<\arg_{(t,t+1)}(Z(b^j))$, which contradicts   \eqref{noncolinear ab1} and $k>j$.

 In case  (e)  using \eqref{semi-stability of b ineq4} we obtain $\phi_-(b^j)= \phi(M')+1 =t+1  < \arg_{(t,t+1)}(Z(b^j))<\phi_+(b^j) < t+2$, which contradicts  the incidence $Z(b^j)\in Z(\delta)^c_+$.

So, we proved that $b^j \in \sigma^{ss}$ for all $j\in \ZZ$. 
 
 We claim that  $\phi(M')= \phi(M)<\phi(a^{j})<\phi(b^{j+1})<\phi(a^{j+1})<\phi(M)+1$  for each $j \in \ZZ$, and then from table \eqref{middle M} follows that   $\sigma \in (a^{j}, M, b^{j+1})\cap (b^j, M', a^j)$ for all $j\in \ZZ$. The desired inequality for $j=p$ is in \eqref{semi-stability of b ineq3}. In \cite[p. 869]{dimkatz4} is shown that $\phi(a^j)=\arg_{(t,t+1)}(Z(a^j))$ and  $\phi(b^j)=\arg_{(t,t+1)}(Z(b^j))$ for each $j<p$, if we show that this holds for   $j\geq  p+1$, then the desired inequalities follow from \eqref{j<mleqi}.   Let $j>p$.  Since  $a^j, b^j \in \sigma^{ss}$ for each  $j$,  then by  \eqref{nonvanishing5}, \eqref{nonvanishing6} we get $\phi(a^{p+1}) \leq \phi(a^j)\leq \phi(a^{p+1})+1$ and $\phi(b^{p+1})\leq \phi(b^j)\leq \phi(b^{p+1})+1$, which combined with  $t<\phi(b^{p+1})<\phi(a^{p+1})<t+1$ and $Z(a^j), Z(b^j)\in Z(\delta)^c_+$ implies that $\phi(a^j),\phi(b^j) \in (t,t+1)$, in particular   $\phi(a^j)=\arg_{(t,t+1)}(Z(a^j))$ and  $\phi(b^j)=\arg_{(t,t+1)}(Z(b^j))$ for each $j\geq  p+1$. 
 \end{proof}
 \begin{lemma} \label{equality of phases oh Ms and interesection} If  $\sigma \in \cap_{j\in \ZZ}  (a^{j}, M, b^{j+1})\cap (b^j, M', a^j)$, then $\phi(M)=\phi(M')$ and \eqref{semi-stability of b ineq}, \eqref{semi-stability of b ineq1} hold for each $p\in \ZZ$. In particular for any $p\in \ZZ$ holds 
 	
 	\begin{gather} \nonumber
 	\cap_{j\in \ZZ}  (a^{j}, M, b^{j+1})\cap (b^j, M', a^j) = \left \{ \sigma \in (a^p, M, b^{p+1}):  
 	\begin{array}{c} \phi\left (a^{p}\right)-1< \phi\left ( M\right)<\phi\left (a^{p}\right)\\   \phi\left (a^{p}\right)-1< \phi\left ( b^{p+1}\right)-1<\phi\left (a^{p}\right) \\ \phi(M)=\arg_{(\phi\left (a^{p}\right)-1,\phi\left (a^{p}\right))}(Z(a^p)-Z(b^{p+1}))
 	\end{array}
  \right \}
 	\end{gather} 
 	\end{lemma}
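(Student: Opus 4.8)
The plan is to extract from the hypothesis the complete system of phase inequalities valid for every index, then use that \emph{all} the $a^j$ and \emph{all} the $b^j$ are semistable to pin down the asymptotic phases, and finally to squeeze $\phi(M)$ and $\phi(M')$ to a common value.

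First I would unpack the membership conditions via table \eqref{middle M}. Since $\sigma$ lies in $(a^p,M,b^{p+1})$ for every $p$, we get $\phi(a^p)<\phi(M)+1$, $\phi(a^p)<\phi(b^{p+1})$, $\phi(M)<\phi(b^{p+1})$ for all $p$; since $\sigma$ lies in $(b^q,M',a^q)$ for every $q$, we get $\phi(b^q)<\phi(M')+1$, $\phi(b^q)<\phi(a^q)$, $\phi(M')<\phi(a^q)$ for all $q$. In particular all $a^j,b^j,M,M'$ are semistable, and the phases interleave into a strictly increasing doubly-infinite chain $\cdots<\phi(b^p)<\phi(a^p)<\phi(b^{p+1})<\phi(a^{p+1})<\cdots$, where $\phi(b^p)<\phi(a^p)$ comes from $(b^p,M',a^p)$ and $\phi(a^p)<\phi(b^{p+1})$ from $(a^p,M,b^{p+1})$.

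Next I would invoke the asymptotics. Because $\{b^j\}_{j\in\ZZ}\subset\sigma^{ss}$, the argument that produces \eqref{inequalities help 2'} (via \cite[Corollaries 3.18, 3.19]{dimkatz4} together with Remark \ref{remark for delta}, using $Z(b^j)-Z(b^{j+1})=Z(\delta)$) shows that $\phi(b^j)$ is strictly increasing with $\lim_{j\to-\infty}\phi(b^j)=t_0$ and $\lim_{j\to+\infty}\phi(b^j)=t_0+1$, where $t_0$ is the phase of $Z(\delta)$ in the branch just below $\phi(b^0)$. The identical statement holds for $\{a^j\}_{j\in\ZZ}$ since $Z(a^j)-Z(a^{j+1})=Z(\delta)$ as well. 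The only delicate point, and what I regard as the main obstacle, is that the two constants coincide: this is forced by the interleaving of the previous paragraph, because both phase-sequences are confined to intervals of the form $(t_0^{(\bullet)},t_0^{(\bullet)}+1)$ and they alternate inside one strictly increasing chain, so the intervals must be equal. Then I would squeeze: from $\phi(M)<\phi(b^{p+1})$ for all $p$, letting $p\to-\infty$ gives $\phi(M)\le t_0$; from $\phi(a^p)-1<\phi(M)$ for all $p$, letting $p\to+\infty$ gives $\phi(M)\ge t_0$; hence $\phi(M)=t_0$. Symmetrically $\phi(M')<\phi(a^q)$ and $\phi(b^q)-1<\phi(M')$ force $\phi(M')=t_0$, so $\phi(M)=\phi(M')$. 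Since every $\phi(a^p)>t_0=\phi(M)$ while $\phi(b^{p+1})<t_0+1$, combined with $\phi(a^p)<\phi(b^{p+1})$ this yields $t_0<\phi(a^p)<\phi(b^{p+1})<t_0+1$, which is exactly \eqref{semi-stability of b ineq} and \eqref{semi-stability of b ineq1} for each $p$.

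Finally, for the displayed set equality I would argue both inclusions. The inclusion $\subseteq$ is what the previous steps establish, where the third defining condition follows because $Z(a^p)-Z(b^{p+1})=Z(M')$ by Remark \ref{remark for delta} and $\phi(M')=t_0\in(\phi(a^p)-1,\phi(a^p))$, so that $\arg_{(\phi(a^p)-1,\phi(a^p))}(Z(a^p)-Z(b^{p+1}))=\phi(M')=\phi(M)$. The reverse inclusion $\supseteq$ is immediate from Lemma \ref{semi-stability of b}: its conclusion identifies $\arg_{(\phi(a^p)-1,\phi(a^p))}(Z(a^p)-Z(b^{p+1}))$ with $\phi(M')$, so the third listed condition reads $\phi(M)=\phi(M')$, and then the ``Furthermore'' clause of Lemma \ref{semi-stability of b} places $\sigma$ in $\bigcap_{j\in\ZZ}(a^j,M,b^{j+1})\cap(b^j,M',a^j)$, completing the proof.
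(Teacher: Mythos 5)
Your proof is correct and follows essentially the same route as the paper's: extract the phase inequalities from table \eqref{middle M}, use the asymptotics of $\phi(a^j)$, $\phi(b^j)$ from \cite[Corollaries 3.18, 3.19]{dimkatz4} to identify a common limit $t$, squeeze $\phi(M)=\phi(M')=t$, and deduce the set equality from Lemma \ref{semi-stability of b}. The only cosmetic difference is that the paper squeezes along the single interleaved chain $\phi(M')<\phi(a^j)<\phi(b^{j+1})<\phi(a^{j+1})<\phi(M')+1$ rather than treating the $a$- and $b$-sequences separately and then matching their limits.
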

 	\begin{proof} Let $\sigma \in \cap_{j\in \ZZ}  (a^{j}, M, b^{j+1})\cap (b^j, M', a^j)$. From table \ref{middle M}, and the non-vanishings $\hom(M,b^j)\neq 0$, $\hom^1(b^j, M')\neq 0$ in Corollary \ref{nonvanishings}  it follows that for each $j\in \ZZ$ we have \begin{gather} \phi(M') < \phi(a^j) < \phi (b^{j+1})< \phi(a^{j+1}) < \phi(M')+1 \quad \phi(M) < \phi(a^j) < \phi (b^{j+1})< \phi(a^{j+1}) < \phi(M)+1 \nonumber  \end{gather} From \cite[Corollaries 3.18,Corollary 3.19]{dimkatz4} it follows that there exists $t\in \RR$ such that 
 		\begin{gather} \lim_{j\rightarrow +\infty}\phi(a^j) = t+1, \lim_{j\rightarrow -\infty}\phi(a^j) = t. \end{gather} And therefore $t = \phi(M)=\phi(M')$. The last sentence now follows from Lemma \ref{semi-stability of b}.
 		\end{proof}
 
 Now we pass to the behavior of $C_{1,\sigma}(\mc T)$, $C_{1,\sigma \sigma}(\mc T)$ in $ (a^p,M,b^{p+1})$. 
 
 \footnotetext[1]{for some $x\in \ZZ$}
 \begin{proposition} \label{porposition for middle M} Let $\sigma \in (a^p,M,b^{p+1})$ for some $p\in \ZZ$. Then $a^p,M,b^{p+1}\in \sigma^{ss}$ and $ \phi\left ( M\right) < \phi\left (b^{p+1}\right)$, $\phi(a^p) < \phi(b^{p+1})$, $\phi(a^p)<\phi(M)+1$.

 	{\rm (a)} If   $	\phi(b^{p+1})-1<\phi(M) <\phi(b^{p+1})$, then  $a^{p+1} \in \sigma^{ss}$ and  \begin{gather} \label{arg of equals phase of app1} \phi(a^{p+1})={\rm arg}_{(\phi(b^{p+1}), \phi(b^{p+1})+1]}(Z(b^{p+1})-Z(M)). \end{gather}
 	
 	{\rm(a.1)} if  $\phi(a^p)<\phi(M)$, then  $\sigma\in (a^p,a^{p+1},M) \subset  \mk{T}_a^{st}$. 
 	
 		{\rm(a.2)} if  $\phi(a^p)\geq \phi(M)$, then  $M'\in \sigma^{ss}$ and $\phi(M')={\rm arg}_{(\phi(b^{p+1})-1, \phi(b^{p+1}))}\left (Z(a^p)-Z(b^{p+1})\right )$ and:
 		\begin{gather} \label{middle M a2} \phi(b^{p+1})-1 < \phi(a^{p+1})-1 < \phi(M)\leq \phi(a^{p}) <  \phi(b^{p+1}); \ \ \phi(b^{p+1})-1 <  \phi(M') < \phi(a^{p})
 		\end{gather}

 	{\rm(a.2.1)} if $\phi(M') < \phi(M)$, then $\sigma \in (a^j,a^{j+1}, M)$ for   $j \in \ZZ_{\leq x}$\footnotemark[1] and  $C_{1,\sigma \sigma}(\mc T)=C_{1,\sigma }(\mc T) = \{A\}$,
 	
 		{\rm(a.2.2)}  if $\phi(M') > \phi(M)$, then $\sigma \in (b^j,b^{j+1}, M')$ for   $j \in \ZZ_{\leq x}$\footnotemark[1] and   $C_{1,\sigma \sigma}(\mc T)=C_{1,\sigma }(\mc T) = \{B\}$,
 	
 		{\rm(a.2.3)}  if $\phi(M') = \phi(M)$, then $\sigma \in \bigcap_{j\in \ZZ}  (a^{j}, M, b^{j+1})\cap (b^j, M', a^j)$  and $C_{1,\sigma \sigma}(\mc T)=C_{1,\sigma }(\mc T) = \{A, B\}=C_{1}(\mc T)$ and $C_{0,\sigma \sigma}(\mc T)=C_{0}(\mc T)$.
 		
 	{\rm (b)} If   $\phi(M)+1 <	\phi(b^{p+1})$ and $\phi(M) < \phi(a^p)$, then $\sigma \in (M,b^p,b^{p+1})$ 	and we use table \eqref{table for M b b} with
 	\begin{gather} \label{the phase of bp} \phi(b^p)={\rm arg}_{(\phi(M), \phi(M)+1]}(Z(M)+Z(a^p)), \qquad \phi(M)<\phi(b^p)<\phi(a^p)<\phi(M)+1 \end{gather}

 		{\rm (c)} If   $\phi(M)+1 <	\phi(b^{p+1})$ and $\phi(a^{p})\leq  \phi(M)$, then $C_{1,\sigma}(\mc T) = \emptyset$, $ M' \not \in \sigma^{ss}$, and if $\sigma  \in (A,B,C)$, then $(A,B,C) = (a^p,M,b^{p+1})$.	Furthermore $C_{0,\sigma \sigma}(\mc T) =\{\beta^{p}\}$, when $\phi(a^{p})=  \phi(M)$, whereas for  $\phi(a^{p})<  \phi(M)$, we have  $C_{0,\sigma \sigma}(\mc T) =\emptyset$
 		
 			{\rm (d)} If   $\phi(M)+1 =	\phi(b^{p+1})$ and $\phi(a^{p})<  \phi(M)$, then $C_{1,\sigma}(\mc T) = \emptyset$, $ M' \not \in \sigma^{ss}$, and if $\sigma  \in (A,B,C)$, then $(A,B,C) = (a^p,M,b^{p+1})$.	Furthermore $C_{0,\sigma \sigma}(\mc T) =\{\beta^{p+1}\}$.
 			
 				{\rm (e)} If   $\phi(M)+1 =	\phi(b^{p+1})$ and $\phi(a^{p})>  \phi(M)$, then $\sigma \in (b^j,b^{j+1},M')$ for  $j\in \ZZ_{\leq x}$\footnotemark[1] and  $C_{1,\sigma \sigma}(\mc T)=C_{1,\sigma }(\mc T) = \{B\}$.

	{\rm (f)} If   $\phi(M)+1 =	\phi(b^{p+1})=  \phi(a^p)+1$, then  $a^j,b^j \in \sigma^{ss}$ for all $j\in \ZZ$ and  $\phi(a^j)=\phi(b^j)=t+1$   for each $j\geq p+1$,  $\phi(a^j)=\phi(b^j)=t$ for $j\leq p$. In particular $C_{1,\sigma \sigma}(\mc T)=C_{1,\sigma }(\mc T) = C_{1}(\mc T)$. In this case $M'\in \sigma^{ss}$ and $\phi(M)=\phi(M')$, and in particular $C_{0,\sigma \sigma}(\mc T)=C_{0 }(\mc T)$. In this case,  if $\sigma  \in (A,B,C)$, then $(A,B,C) = (a^p,M,b^{p+1})$.

 \end{proposition}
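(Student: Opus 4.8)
The plan is to regard $(a^p,M,b^{p+1})$ as a single chart and cut it into the subregions matching the hypotheses of (a)--(f), according to the order relations among $\phi(a^p),\phi(M),\phi(b^{p+1})$. The three opening assertions (semistability of $a^p,M,b^{p+1}$ together with $\phi(a^p)<\phi(M)+1$, $\phi(a^p)<\phi(b^{p+1})$, $\phi(M)<\phi(b^{p+1})$) are exactly the defining conditions of the chart read off from table \eqref{middle M}. The workhorse throughout is the pair of distinguished triangles \eqref{short filtration 1}: they realize $a^{p+1}$ as the cone of $M\to b^{p+1}$, $b^p$ as the cone of $a^p[-1]\to M$, and $M'$ as the cone of $a^p[-1]\to b^{p+1}[-1]$; combined with the central-charge relations $Z(a^q)=Z(M')+Z(b^{q+1})$ and $Z(b^q)=Z(M)+Z(a^q)$ of Remark \ref{remark for delta} they pin down the phase of each cone once it is known to be semistable. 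Semistability of such a cone I would obtain from Corollary \ref{stable curve}, whose phase hypothesis $\phi(E_i)\ge\phi(E_{i+1}[\alpha])$ becomes precisely the inequality assumed in the respective subcase.

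In case (a), where $\phi(b^{p+1})-1<\phi(M)<\phi(b^{p+1})$, the triangle $M\to b^{p+1}\to a^{p+1}$ has outer terms semistable with phases $\phi(b^{p+1})<\phi(M)+1<\phi(b^{p+1})+1$, so $a^{p+1}\in\sigma^{ss}$; reading $Z(a^{p+1})=Z(b^{p+1})-Z(M)$ gives \eqref{arg of equals phase of app1}, and non-splitness forces $\phi(b^{p+1})<\phi(a^{p+1})<\phi(M)+1$. If $\phi(a^p)<\phi(M)$ (a.1) these bounds verify the three inequalities of the $(a^p,a^{p+1},M)$ row of table \eqref{left right M}, placing $\sigma\in\mk{T}_a^{st}$. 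If $\phi(a^p)\ge\phi(M)$ (a.2) then $\phi(b^{p+1})-1<\phi(a^p)$, so the strong exceptional pair $(a^p,b^{p+1})$ and the $M'$-triangle give $M'\in\sigma^{ss}$ with the stated phase and the inequalities \eqref{middle M a2}. The sign of $\phi(M')-\phi(M)$ then splits (a.2): for $\phi(M')<\phi(M)$ (a.2.1) resp. $\phi(M')>\phi(M)$ (a.2.2) one lands in $\mk{T}_a^{st}$ resp. $\mk{T}_b^{st}$ and reads $C_{1,\sigma},C_{1,\sigma\sigma}$ off the established tables (using Proposition \ref{proposition for B in Tb} and Corollary \ref{in TaTb only one}); for $\phi(M')=\phi(M)$ (a.2.3) the hypotheses \eqref{semi-stability of b ineq},\eqref{semi-stability of b ineq1} of Lemma \ref{semi-stability of b} hold, so by that lemma and Lemma \ref{equality of phases oh Ms and interesection} $\sigma\in\bigcap_{j}(a^j,M,b^{j+1})\cap(b^j,M',a^j)$, all $a^j,b^j$ are semistable, and hence $C_{1,\sigma\sigma}=C_1(\mc T)$, $C_{0,\sigma\sigma}=C_0(\mc T)$.

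For (b) the cone $b^p$ of $a^p[-1]\to M$ is semistable since $\phi(M)<\phi(a^p)<\phi(M)+1$, its phase is \eqref{the phase of bp}, the inequalities of the $(M,b^p,b^{p+1})$ row of table \eqref{left right M} then hold, so $\sigma$ lies in that chart and (b) defers to table \eqref{table for M b b}. For (c) and (d), where $\phi(a^p)\le\phi(M)$, I would first show $M'\notin\sigma^{ss}$: were it semistable, $\hom(M',a^p)\ne 0$ and $\hom^1(b^{p+1},M')\ne 0$ from Corollary \ref{nonvanishings} together with axiom (2) of Definition \ref{slicing} would give $\phi(M')\le\phi(a^p)\le\phi(M)<\phi(b^{p+1})-1\le\phi(M')$, a contradiction; then, as in Lemma \ref{no semistable}, only finitely many $a^j,b^j$ are semistable, so $C_{1,\sigma}=\emptyset$, and the lone surviving genus-zero curve is pinned down by the triangle making $b^p$ semistable when $\phi(a^p)=\phi(M)$ (case (c): $\beta^p$) or $a^{p+1}$ semistable when $\phi(b^{p+1})=\phi(M)+1$ (case (d): $\beta^{p+1}$), while $\phi(a^p)<\phi(M)$ destabilizes $b^p$ and leaves $C_{0,\sigma\sigma}=\emptyset$. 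Part (e) is parallel to (a.2.2): the $M'$-triangle yields $M'\in\sigma^{ss}$ with $\phi(M')>\phi(M)$, placing $\sigma$ in some $(b^j,b^{j+1},M')\subset\mk{T}_b^{st}$ and giving $C_{1,\sigma}=C_{1,\sigma\sigma}=\{B\}$. Finally for (f), the corner $\phi(a^p)=\phi(M)$, $\phi(b^{p+1})=\phi(M)+1$, I would argue directly: writing $t=\phi(M)=\phi(a^p)$ one has $a^p,M\in\mc P(t)$ and $b^{p+1}\in\mc P(t+1)$, and the triangles \eqref{short filtration 1} successively place $b^p$, $a^{p+1}$, $M'$, and then inductively every $a^j,b^j$, into $\mc P(t)\cup\mc P(t+1)$; hence all are semistable with $\phi(M')=\phi(M)=t$, so $C_{1,\sigma\sigma}=C_1(\mc T)$ and, via \eqref{a criterion} and Subsection \ref{first remarks}, $C_{0,\sigma\sigma}=C_0(\mc T)$. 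The uniqueness claims ``if $\sigma\in(A,B,C)$ then $(A,B,C)=(a^p,M,b^{p+1})$'' in (c),(d),(f) follow because the objects any competing triple would require semistable are shown not to be.

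The genuinely delicate loci are the collinear ones, (a.2.3) and (f), where $Z(M),Z(M')$ and hence all $Z(a^j),Z(b^j)$ become positive multiples of a single direction $Z(\delta)$, and one must prove that \emph{every} $a^j,b^j$ is semistable at once rather than merely finitely many. This is exactly what Lemma \ref{semi-stability of b} is designed to deliver, but its strict hypothesis $\phi(M)<\phi(a^p)$ excludes the corner (f) (where $\phi(M)=\phi(a^p)$), so (f) must be done by the separate direct argument inside $\mc P(t)\cup\mc P(t+1)$. The pervasive secondary difficulty is boundary bookkeeping: the half-open argument functions $\arg_{(\cdot,\cdot]}$ and the many strict-versus-weak phase inequalities at the chart walls are what decide which generators are semistable, hence which table row $\sigma$ occupies, and they must be tracked carefully to separate the seven subcases cleanly.
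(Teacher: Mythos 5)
Your overall route coincides with the paper's: the same subdivision into subcases according to the order relations among $\phi(a^p),\phi(M),\phi(b^{p+1})$, the same use of the triangles \eqref{short filtration 1} and the central-charge identities of Remark \ref{remark for delta} to compute phases of cones, Lemma \ref{no semistable} for (c)--(d), Lemma \ref{semi-stability of b} and Lemma \ref{equality of phases oh Ms and interesection} for (a.2.3), and the same direct induction for (f). The difference is that where the paper cites \cite[Lemmas 7.4, 7.5, 5.2, 6.3]{dimkatz4}, you try to substitute Corollary \ref{stable curve}. For (a), (a.1), (b) and the claim $M'\notin\sigma^{ss}$ in (c)--(d) (where your Hom-vanishing contradiction is a legitimate alternative to the paper's observation that the first triangle in \eqref{short filtration 1} is a non-trivial HN filtration of $M'$) this substitution works.

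There are, however, two genuine gaps. First, in (a.2) your mechanism for $M'\in\sigma^{ss}$ --- Corollary \ref{stable curve} applied to the pair $(a^p,b^{p+1})$ --- does not apply there: the only full exceptional triple containing $(a^p,b^{p+1})$ as consecutive members is $(a^p,b^{p+1},a^{p+1})$, and under the inequalities of (a.2) no shift of it is Ext-exceptional (since $\hom(b^{p+1},a^{p+1})\neq 0$ forces the shift of $a^{p+1}$ to exceed that of $b^{p+1}$ by at least one, pushing the three phases outside a half-open unit interval), so $\sigma\notin\Theta_{(a^p,b^{p+1},a^{p+1})}$ and the hypothesis of Corollary \ref{stable curve} fails; one needs the extension-closure semistability criterion of \cite[Lemma 2.12]{dimkatz4} (equivalently the paper's citation of \cite[Lemma 7.4 (c)]{dimkatz4}) instead. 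Second, and more seriously, in (a.2.1), (a.2.2) and (e) you assert that $\sigma$ ``lands in'' $\mk{T}_a^{st}$ resp.\ $\mk{T}_b^{st}$ and then read off the tables, but membership in a chart $(a^j,a^{j+1},M)$ or $(b^j,b^{j+1},M')$ requires $a^j,a^{j+1}$ (resp.\ $b^j,b^{j+1}$) to be semistable with the table's phase inequalities for some $j$ that is necessarily strictly below $p$ (for $j=p$ the inequality $\phi(a^p)<\phi(M)$ fails in (a.2.1), and $a^{j+1}$ for $j>p$ is not under control). Nothing in the local data at level $p$ yields this: it is precisely the descent arguments of \cite[Lemmas 7.4 (d), 7.5 (d) and 5.2]{dimkatz4}, which propagate semistability to $a^j$ or $b^j$ for $j\ll p$, that the paper invokes here. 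Without that step the conclusions $C_{1,\sigma\sigma}(\mc T)=\{A\}$ or $\{B\}$ in these subcases are unsupported, since they require knowing the semistability status of all the objects $a^j$, $b^j$, not just those indexed near $p$.
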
	
 \begin{proof}
 	Looking in table \eqref{middle M} we see that  $a^p,M,b^{p+1}  \in \sigma^{ss}$ and  the inequalities given there hold.
 	
  (a)  \cite[Lemma 7.4  (a)]{dimkatz4} ensures that $a^{p+1} \in \sigma^{ss}$ and 
 		\begin{gather} \label{middle M (a)} \phi(b^{p+1})<\phi(a^{p+1})< \phi(M)+1. \end{gather} Recalling from Remark \ref{remark for delta} that $Z(a^{p+1})= Z(b^{p+1})-Z(M)$ it follows \eqref{arg of equals phase of app1}.
 	
 	(a.1) Now $\phi(a^p)<\phi(M)$ and \cite[Lemma 7.4 (b)]{dimkatz4}  ensure that  $\sigma\in (a^p,a^{p+1},M)$ and hence $\sigma \in \mk{T}_a^{st}$.
 		
 	(a.2) Now combining \eqref{middle M (a)} and \eqref{middle M} we see that hold  the first series of inequalities in  \eqref{middle M a2}, and \cite[Lemma 7.4 (c)]{dimkatz4} ensure that $M'\in \sigma^{ss}$, $\phi(M')={\rm arg}_{(\phi(b^{p+1})-1, \phi(b^{p+1}))}\left  (Z(a^p)-Z(b^{p+1})\right )$ and the second series of inequalities in \eqref{middle M a2}. 
 	
 	(a.2.1) In the \cite[proof of Lemma 7.4 (d)]{dimkatz4} is shown that $\sigma \in  (a^{j-1}, a^j, M)$ for small enough $j$. From \cite[Lemma 5.2.]{dimkatz4} it follows that $\sigma \in  (a^{j-1}, a^j, M)$ and  $\sigma \in (a^{j-1}, a^j, M)$ and $\phi(a^{j})-1 < \phi(a^{j-1})< \phi(a^{j})$ for all small enough  $j\in \ZZ$.  From  Propositions \eqref{proposition for  B in Tb}, \eqref{more careful part two}  we see that $A$ is stable and $B$ is not semistable. 
 	
 	(a.2.2) Now combining with \eqref{middle M a2}  we obtain 
 	$ \phi(b^{p+1})-1 < \phi(a^{p+1})-1 < \phi(M) <\phi(M') < \phi(a^{p}) <  \phi(b^{p+1})
 		$ and therefore all the three conditions  $\phi(a^{p})-1 < \phi(M)< \phi(a^{p})$, $ \phi(a^{p})-1 < \phi(b^{p+1})-1< \phi(a^{p})$, $\phi(M) <\phi(M')$ required in \cite[Lemma 7.5 (d)]{dimkatz4} are satisfied, which ensures that $\sigma \in (b^j,b^{j+1}, M')$ for some $j\in \ZZ$.  Furthermore, since we have 	$ \phi(b^{p+1}) < \phi(M)+1$, then in the  \cite[proof of Lemma 7.5 (d)]{dimkatz4} is actually shown that $\sigma \in (b^j,b^{j+1}, M')$ for all small enough  $j\in \ZZ$.  From \cite[Lemma 5.2.]{dimkatz4} it follows that $\sigma \in  (b^{j-1}, b^j, M')$ and  $\phi(b^{j})-1 < \phi(b^{j-1})< \phi(b^{j})$ for all small enough  $j\in \ZZ$.  From  Propositions \eqref{proposition for  B in Tb}, \eqref{more careful part two}  we see that $B$ is stable and $A$ is not semistable. 
 	
 	(a.2.3) In \eqref{middle M} we have $\phi(a^p)<\phi(b^{p+1})$ and \eqref{middle M a2} together with $\phi(M)=\phi(M')$ give rise to  \eqref{semi-stability of b ineq},    \eqref{semi-stability of b ineq1}.  Now we apply  Lemma    \ref{semi-stability of b} and \eqref{a criterion}. 
 	
 	(b) The given inequalities and \eqref{middle M} ensure that $\phi(a^p)-1 < \phi(M) < \phi(a^p)$ and \cite[Lemma 7.5 (a), (b)]{dimkatz4} shows that $b^p \in \sigma^{ss}$, $\phi(M)<\phi(b^p)<\phi(a^p)<\phi(M+1)$, $\sigma \in (M,b^{p}, b^{p+1}) $. Recalling that in Remark \ref{remark for delta} we have $Z(b^p)=Z(M)+ Z(a^{p})$ and \eqref{the phase of bp} follows.
 	
 	(c) and (d): From the given inequalities follows  $\phi(b^{p+1})>\phi(a^p)+1$ and  Lemma \ref{no semistable} we see that both $A$, $B$ are not semistable. Furthermore, using \cite[Lemmas 7.6,  7.7]{dimkatz4} it follows that  
 	 	 $\sigma \not \in \mk{T}_a^{st} \cup \mk{T}_b^{st}$.	From Lemma \ref{no semistable} (c) it follows that $b^j, a^j\not \in \sigma^{ss}$ for $j\not \in \{p,p+1\}$ and hence $\sigma \not \in (a^j, M, b^{j+1})$ for $j\neq p$. On the other hand the first triangle in \eqref{short filtration 1} for $q=p$ is a non-trivial HN filtration of $M'$ and therefore $M'\not \in \sigma^{ss}$, hence $\sigma \not \in (\_,M',\_)$.  From \eqref{st with T} we deduce that $\sigma \not \in \st(\mc T) \setminus (a^p, M,b^{p+1})$.
 From Subsection \ref{first remarks} it follows that $\alpha^m \not \in C_{0,\sigma \sigma}(\mc T)$ for $m\in \ZZ$. On the other hand, since 	$b^j, a^j\not \in \sigma^{ss}$ for $j\not \in \{p,p+1\}$, it follows that 
 $\beta^p \in C_{0,\sigma \sigma}(\mc T)$ iff $b^p \in \sigma^{ss}$ and $\beta^{p+1} \in C_{0,\sigma \sigma}(\mc T)$ iff $a^{p+1} \in \sigma^{ss}$.
 In case (d) we have $\phi(M)=\phi(b^{p+1})-1$ and we can use Lemma \ref{stable curve} and Corollary \ref{nonvanishings}  to get $a^{p+1}\in \sigma^{ss}$,  $\beta^{p+1} \in C_{0,\sigma \sigma}(\mc T)$. In case (d) we have also $\phi(a^p)<\phi(M)$, and due to the non-vanishings $\hom(M,b^p)\neq 0$, $\hom(b^p, a^p) \neq  0$ in \eqref{nonvanishing1}, \eqref{nonvanishing3} we see that $b^p \not \in \sigma^{ss}$, therefore $\beta^{p} \not \in C_{0,\sigma \sigma}(\mc T)$. In case (c),  $a^{p+1} \not \in \sigma^{ss}$, since we have $\hom^1(a^{p+1},M)\neq 0$, $\hom(b^{p+1}, a^{p+1}) \neq  0$ in \eqref{nonvanishing2}, \eqref{nonvanishing3}. In case (c) with $\phi(a^p)<\phi(M)$ we have also $b^p \not \in \sigma^{ss}$ by the arguments used in case (d). Finally, in case (c) with $\phi(a^p)=\phi(M)$ we use Corollary \ref{stable curve} and \eqref{nonvanishing2} to deduce that $\beta^p \in C_{0,\sigma \sigma}(\mc T)$.

 	(e)  The given inequalities and \eqref{middle M} give rise to $\phi(a^p)<\phi(M)+1=\phi(b^{p+1})< \phi(a^p)+1$. Therefore $\phi(a^p)-1<\phi(M)=\phi(b^{p+1})-1< \phi(a^p)$ and from \cite[Lemma 7.5 (c)]{dimkatz4} we see that $M'\in \sigma^{ss}$ and  $\phi(b^{p+1})-1<\phi(M')=\arg_{(\phi\left (a^{p}\right)-1,\phi\left (a^{p}\right))}(Z(a^p)-Z(b^{p+1}))<\phi(a^{p})$. On the other hand $\phi(M)=\phi(b^{p+1})-1$ implies that $\phi(M')=\arg_{(\phi\left (a^{p}\right)-1,\phi\left (a^{p}\right))}(Z(a^p)-Z(b^{p+1}))> \phi(M)$, hence we apply \cite[Lemma 7.5 (d)]{dimkatz4}. Furthermore, we are given now that $\phi(b^{p+1})=\phi(M)+1$. Looking in the proof of \cite[Lemma 7.5 (d)]{dimkatz4} we see that    $\sigma \in (b^j,b^{j+1},M')$ for  small enough $j\in \ZZ$ and now the same  arguments as in the proof of 
 	(a.2.2) imply that $C_{1,\sigma \sigma}(\mc T)=C_{1,\sigma }(\mc T) = \{B\}$.

 	(f) Let us denote $t=\phi(M)$. In this case we have $t=\phi(M)=\phi(a^p)=\phi(b^{p+1})-1$ and by the  triangles in \eqref{short filtration 1}  it follows that $M', b^p, a^{p+1} \in \sigma^{ss}$ and $\phi(b^p)=\phi(M)=\phi(M')=\phi(a^p)=t$,  $\phi(a^{p+1})=t+1$. Using induction and \eqref{short filtration 1} one sees that  $a^j,b^j \in \sigma^{ss}$,  $\phi(a^j)=\phi(b^j)=t+1$   for each $j\geq p+1$,  and  $a^j,b^j \in \sigma^{ss}$,  $\phi(a^j)=\phi(b^j)=t$ for $j\leq p$.  Due to the last sentence and table \eqref{middle M} it follows that $\sigma \not \in (\_,M',\_)$ and $\sigma \not \in (a^j,M,b^{j+1})$ for $j\neq p$. We showed that $C_{1,\sigma \sigma}(\mc T)=\{A,B\}$ and from Corollary \ref{in TaTb only one} it follows that $\sigma \not \in \mk{T}_a^{st} \cup  \mk{T}_b^{st}$.  From \eqref{st with T} we deduce that $\sigma \not \in \st(\mc T) \setminus (a^p, M,b^{p+1})$.
 		\end{proof}

 	\begin{corollary}\label{phases of MMprime2}  For $\sigma \in (\_, M, \_)\cup  (\_, M', \_)$ we have $C_{1,\sigma \sigma}(\mc T)= \{A,B\}$ iff $M,M'\in \sigma^{ss}$ and $\phi(M)=\phi(M')$. 
 	\end{corollary}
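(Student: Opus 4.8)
The plan is to reduce everything to the region $(\_,M,\_)$ and then read off the answer from the case analysis already carried out in Proposition \ref{porposition for middle M}. First I would invoke the autoequivalence $\zeta$ of \eqref{useful autoequivalence} together with Remark \ref{action1}: since $\zeta$ carries $(\_,M',\_)$ onto $(\_,M,\_)$ while interchanging $M\leftrightarrow M'$ and $A\leftrightarrow B$ and relating the phases by $\phi_{\zeta\cdot\sigma}(M)=\phi_\sigma(M')$, $\phi_{\zeta\cdot\sigma}(M')=\phi_\sigma(M)$, both sides of the desired biconditional are symmetric under this exchange. Hence the statement for $\sigma\in(\_,M',\_)$ is equivalent to the statement for $\zeta\cdot\sigma\in(\_,M,\_)$, and it suffices to treat $\sigma\in(\_,M,\_)$.

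So I would fix $\sigma\in(a^p,M,b^{p+1})$ for some $p$ (such a $p$ exists because $(\_,M,\_)=\bigcup_p(a^p,M,b^{p+1})$), noting that membership in this region already forces $M\in\sigma^{ss}$ via table \eqref{middle M}. The cases (a)--(f) of Proposition \ref{porposition for middle M} are exhaustive for $\sigma\in(a^p,M,b^{p+1})$, so the remainder is bookkeeping over them. The key point to extract is that $C_{1,\sigma\sigma}(\mc T)=\{A,B\}$ occurs in exactly two of these cases, namely (a.2.3) and (f): in case (a.1) one has $\sigma\in\mk{T}_a^{st}$, so $\abs{C_{1,\sigma\sigma}(\mc T)}\leq 1$ by Corollary \ref{in TaTb only one}; cases (a.2.1) and (e) give $\{A\}$ or $\{B\}$; case (b) forces $C_{1,\sigma\sigma}(\mc T)\subseteq\{B\}$ through table \eqref{table for M b b}; and cases (c),(d) give $C_{1,\sigma}(\mc T)=\emptyset$, hence $C_{1,\sigma\sigma}(\mc T)=\emptyset$. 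Both (a.2.3) and (f) come with $M,M'\in\sigma^{ss}$ and $\phi(M)=\phi(M')$, which settles the forward implication.

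For the converse I would assume $M,M'\in\sigma^{ss}$ and $\phi(M)=\phi(M')$ and eliminate every case except (a.2.3) and (f) by a phase contradiction. Cases (c) and (d) are excluded because there $M'\not\in\sigma^{ss}$. Case (a.1) lies in $\mk{T}_a^{st}$ and cases (b),(e) lie in $\mk{T}_b^{st}$, so Corollary \ref{phases of MMprime1} forces $\phi(M)>\phi(M')$, respectively $\phi(M)<\phi(M')$, contradicting $\phi(M)=\phi(M')$; cases (a.2.1) and (a.2.2) are ruled out directly by their defining inequalities $\phi(M')<\phi(M)$ and $\phi(M')>\phi(M)$. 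What remains is (a.2.3) or (f), in which $C_{1,\sigma\sigma}(\mc T)=\{A,B\}$.

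Since every ingredient is already proved, I do not expect a serious obstacle; the one point that needs care is the reduction by $\zeta$, where I must verify that both the conclusion $C_{1,\sigma\sigma}(\mc T)=\{A,B\}$ and the hypothesis ``$M,M'\in\sigma^{ss}$ with $\phi(M)=\phi(M')$'' are genuinely invariant under the substitution $M\leftrightarrow M'$, $A\leftrightarrow B$ induced by $\zeta$ through the equivalences of Remark \ref{action1}. The only other subtlety is confirming that the listed cases of Proposition \ref{porposition for middle M} are exhaustive, which follows from the inequalities $\phi(M)<\phi(b^{p+1})$ and $\phi(a^p)<\phi(M)+1$ splitting the region $(a^p,M,b^{p+1})$ precisely into (a)--(f).
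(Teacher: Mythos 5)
Your proposal is correct and follows essentially the same route as the paper's own proof: reduce to $(\_,M,\_)$ via $\zeta$ and Remark \ref{action1}, run through the exhaustive cases of Proposition \ref{porposition for middle M} using the tables to isolate (a.2.3) and (f) as the only cases with $C_{1,\sigma\sigma}(\mc T)=\{A,B\}$, and invoke Corollary \ref{phases of MMprime1} to match these cases with the condition $M,M'\in\sigma^{ss}$, $\phi(M)=\phi(M')$. No substantive differences.
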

 	\begin{proof} Let $\sigma \in (\_, M, \_)$.  Then  $\sigma \in (a^p, M, b^{p+1})$ for some $p\in \ZZ$.   This $\sigma$ must satisfy exactly  one of the cases (a.1), (a.2.1), (a.2.2), (a.2.3), (b), (c), (d), (e),(f) in Proposition \ref{porposition for middle M}.  Taking into account tables \eqref{table for a a M}, \eqref{table for M' a a},  \eqref{table for M b b}, \eqref{table for b b M'} we see that $C_{1,\sigma \sigma}(\mc T)= \{A,B\}$ if and only if $\sigma$ is in case (a.2.3) or in case (f). On the other hand using Corollary \ref{phases of MMprime1} we see that $\sigma$ satisfies case (a.2.3) or  case (f) if and only if $M,M' \in \sigma^{ss}$ and $\phi(M)=\phi(M')$. If $\sigma \in (\_, M', \_)$ we use the already proved case and Remark \ref{action1}.  \end{proof}

	Now we can give also proof of  Proposition \ref{sigma outside Tab}.
\textit{Proof of Proposition} \ref{sigma outside Tab}
		If $\sigma \not \in \mk{T}_a^{st} \cup \mk{T}_b^{st} $  due to \eqref{st with T} we have  either $\sigma \in (a^p, M,b^{p+1})$ or $\sigma \in (b^p, M',a^{p})$ for some $p\in \ZZ$. If  $\sigma \in (a^p, M,b^{p+1})$ we go through all possible cases  (a.1), (a.2.1), (a.2.2), (a.2.3), (b), (c), (d), (e),(f) in Proposition \ref{porposition for middle M} and from $\sigma \not \in \mk{T}_a^{st} \cup \mk{T}_b^{st} $ we must be in some of the cases (a.2.3),  (c), (d), (f). 
		
		In cases (a.2.3) and (f) we have 	$\{M, M'\} \subset \sigma^{ss}$,   $\phi(M)=\phi(M')$ and $C_{1,\sigma \sigma}(\mc T)=\{A,B\}$, $C_{0,\sigma \sigma}(\mc T)=C_{0}(\mc T)$. Furthermore using also Lemma \ref{equality of phases oh Ms and interesection} we see that $\sigma$ satisfies case (a.2.3) if and only if  $\sigma \in \bigcap_{p\in \ZZ} (a^p, M, b^{p+1}) \cap (b^p, M', a^{p})$. 
		
		In cases (c), (d) we have  	$\{M, M'\} \not \subset \sigma^{ss}$ and $C_{1,\sigma \sigma}(\mc T)=C_{1,\sigma }(\mc T)=\emptyset$.

	 The case  $\sigma \in (b^p, M',a^{p})$ follows from the already proved case and Remark \ref{action1}. 
		
	The fact that the unions are disjoint follows from the properties $\sigma \not \in \st(\mc T)\setminus (a^p, M,b^{p+1}) $  obtained in (c), (d), (f) of Proposition \ref{porposition for middle M}. 
	
	The behavior of $C_{0,\sigma \sigma}(\mc T)$ specified in the first three sentences after \eqref{second disjoint union} is the same as explained in (c), (d) of Proposition \ref{porposition for middle M}.
	
		If  $\sigma \in (b^p, M',a^{p})$ we use \eqref{zeta on alpha beta} and  Remark \ref{action1}. \qed \\
	
\section{The set of stabilities, where $\abs{C_{0,\sigma \sigma}(\mc T)}<\infty$ is open and  dense in $\st(\mc T)$} \label{dense open}

Note first that due to \eqref{finitenes of genus zero stables} we have that $\abs{C_{0,\sigma \sigma}(\mc T)}=\infty$ implies $C_{1,\sigma}(\mc T)=\{A,B\}$. On the other hand from the already proved tables  \eqref{table for M' a a}, \eqref{table for M b b}, \eqref{table for a a M}, \eqref{table for b b M'}, \eqref{table for b a b}, Proposition \ref{sigma outside Tab}, and \eqref{st with T} we see that $C_{1,\sigma}(\mc T)=\{A,B\}$ implies  $\abs{C_{0,\sigma \sigma}(\mc T)}=\infty$. Therefore, we see that 
\begin{gather} \label{inffinitely genus zero stable}
C_{1,\sigma}(\mc T)=\{A,B\} \iff \abs{C_{0,\sigma \sigma}(\mc T)}=\infty.
\end{gather}
Note that from the results in previous sections follows Proposition \ref{MeqMprime} (see the proof  in the end of Subsection \ref{details on}), i. e. $C_{1,\sigma \sigma}(\mc T)=\{A,B\} \iff M,M' \in \sigma^{ss} \ \mbox{and} \ \phi(M)=\phi(M')$.  From  \eqref{inffinitely genus zero stable} tables \eqref{table for a a M}, \eqref{table for M' a a}, \eqref{table for b b M'}, \eqref{table for M b b}, \eqref{table for b a b}, Proposition \ref{sigma outside Tab}, \eqref{st with T} and Subsection \ref{homeomoeprhism} it follows that the set $\{\sigma \in \st(\mc T):  \abs{C_{0,\sigma \sigma}(\mc T)}=\infty\}$ is union of the following three subsets in \eqref{closed subset 1}, \eqref{closed subset 2}, \eqref{closed subset 3}
\begin{gather} \label{closed subset 1} \{\sigma \in  \st(\mc T): C_{1,\sigma \sigma}(\mc T)=\{A,B\}\}  \end{gather}   \begin{gather} 
 \{\sigma \in  \st(\mc T): B \in C_{1,\sigma \sigma}(\mc T) \ \mbox{and} \ M,M' \in \sigma^{ss} \ \mbox{and} \ \phi(M)+1=\phi(M') \}  \setminus \nonumber  \\[-2mm] \label{closed subset 2} \\ \nonumber  \left (\bigcup_{p\in \ZZ} \{\sigma \in (M,b^p, b^{p+1}): \phi(b^{p+1})<  \phi(b^{p})+1 \} \cup  \{\sigma \in (b^p, b^{p+1}, M'): \phi(b^{p+1})<  \phi(b^{p})+1 \} \right ) \nonumber 
\end{gather}

\begin{gather}
\{\sigma \in  \st(\mc T): A \in C_{1,\sigma \sigma}(\mc T) \ \mbox{and} \ M,M' \in \sigma^{ss} \ \mbox{and} \ \phi(M')+1=\phi(M) \}  \setminus \nonumber  \\ \label{closed subset 3} \\ \nonumber  \left (\bigcup_{p\in \ZZ} \{\sigma \in (M',a^p, a^{p+1}): \phi(a^{p+1})<  \phi(a^{p})+1 \} \cup  \{\sigma \in (a^p, a^{p+1}, M): \phi(a^{p+1})<  \phi(a^{p})+1 \} \right ) \nonumber 
\end{gather}
From Corollary \ref{closed subset} we know that $\{\sigma \in  \st(\mc T): B \in C_{1,\sigma \sigma}(\mc T) \}$ is a closed subset in $\st(\mc T)$.  From \cite[p. 342]{Bridgeland} is known that for any object $X$ in $\mc T$ the set $\{\sigma \in \st(\mc T): X\in \sigma^{ss}\}$ is closed subset and the function assigning $\phi_{\sigma}(X)$ to any $\sigma$ in this subset, is continuous in this subset. Therefore  $\{\sigma \in  \st(\mc T): B \in C_{1,\sigma \sigma}(\mc T) \ \mbox{and} \ M,M' \in \sigma^{ss} \ \mbox{and} \ \phi(M)+1=\phi(M') \}$ is a closed subset in $\st(\mc T)$. 

\begin{remark} \label{remark for a homeomorphism} From Remark \ref{remark for fe}  (b) and table \eqref{left right M} we know that  the map \eqref{assignment} gives a homeomorphism  from $(M,b^p,b^{p+1})\subset \st(\mc T)$ to  $\RR_{>0}^3 \times \left \{  \begin{array}{c} y_0 - y_1 < 0 \\  y_0 - y_2 < -1 \\ y_1 - y_2 < 0\end{array}\right\}$. It follows that  the subset $\{\sigma \in (M,b^p, b^{p+1}): \phi(b^{p+1})<  \phi(b^{p})+1 \}$ is the inverse image of the open  subset  $\RR_{>0}^3 \times \left \{  \begin{array}{c} y_0 - y_1 < 0 \\  y_0 - y_2 < -1 \\ -1<y_1 - y_2 < 0\end{array}\right\}$ with respect to this  homeomorphism, therefore $\{\sigma \in (M,b^p, b^{p+1}): \phi(b^{p+1})<  \phi(b^{p})+1 \}$ is an open subset in $\st(\mc T)$.
	\end{remark}
	 By similar arguments one shows that  $\{\sigma \in (b^p, b^{p+1}, M'): \phi(b^{p+1})<  \phi(b^{p})+1 \}$ is also  an open subset in $\st(\mc T)$. Therefore \eqref{closed subset 2} is a closed subset in $\st (\mc T)$. Since \eqref{closed subset 3} is obtained from \eqref{closed subset 2} via the auto-equivalence  $\zeta$ in \eqref{useful autoequivalence} it follows that \eqref{closed subset 3} is closed subset as well. From Corollary \ref{closed subset} it follows that \eqref{closed subset 1} is a closed subset as well (see also Proposition \ref{MeqMprime}). We proved that \eqref{closed subset 1}, \eqref{closed subset 2}, \eqref{closed subset 3} are closed subset,\textbf{ therefore}  \textbf{ $\{\sigma \in \st(\mc T):  \abs{C_{0,\sigma \sigma}(\mc T)}=\infty\}$  is a closed subset in $\st(\mc T)$, hence  $\{\sigma \in \st(\mc T):  \abs{C_{0,\sigma \sigma}(\mc T)}<\infty\}$  is an open subset in $\st(\mc T)$.}
 \begin{corollary}  \label{infinite genus zero} The set $\{\sigma \in \st(\mc T) : \abs{C_{0,\sigma \sigma}(\mc T)} < \infty \}$ is  dense subset in $\st(\mc T)$. 
\end{corollary}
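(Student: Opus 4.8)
The plan is to exploit the fact, just established, that the complement $F:=\{\sigma\in\st(\mc T):\abs{C_{0,\sigma\sigma}(\mc T)}=\infty\}$ is \emph{closed}. Since density of an open set is equivalent to its complement having empty interior, the task reduces to showing that $F$ has empty interior. We already have $F=\eqref{closed subset 1}\cup\eqref{closed subset 2}\cup\eqref{closed subset 3}$, a union of three closed sets, and a finite union of closed sets with empty interior again has empty interior: if a nonempty open $U$ were contained in $F_1\cup F_2$, then $U\setminus F_1$ would be open and, unless empty, an open subset of $F_2$, so either $U\subseteq F_1$ or $\mathrm{int}(F_2)\neq\emptyset$, contradicting the hypotheses. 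Thus it suffices to prove that each of \eqref{closed subset 1}, \eqref{closed subset 2}, \eqref{closed subset 3} has empty interior, and since \eqref{closed subset 3} is the image of \eqref{closed subset 2} under the self-homeomorphism of $\st(\mc T)$ induced by the auto-equivalence $\zeta$ of \eqref{useful autoequivalence}, it is enough to treat \eqref{closed subset 1} and \eqref{closed subset 2}.

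I would reduce ``empty interior'' to a chart-by-chart statement. By \eqref{st with T} the open charts $(A,B,C)$, $(A,B,C)\in\mk{T}$, cover $\st(\mc T)$; hence a subset $S\subseteq\st(\mc T)$ has empty interior once $S\cap (A,B,C)$ has empty interior for every triple, because a nonempty open $U\subseteq S$ would meet some chart and thereby contain a nonempty open subset of it. On each chart the homeomorphism \eqref{assignment} of Remark \ref{remark for fe}(b) provides coordinates $(\abs{Z(A)},\abs{Z(B)},\abs{Z(C)},\phi(A),\phi(B),\phi(C))$ ranging over a connected open subset of $\RR^6$, and my aim in each chart is to pin the relevant set to a locus on which one phase coordinate is a function of the remaining coordinates; such a graph has empty interior in $\RR^6$.

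For \eqref{closed subset 1}, Proposition \ref{MeqMprime} identifies it with $\{M,M'\in\sigma^{ss},\ \phi(M)=\phi(M')\}$. On a chart inside $\mk{T}_a^{st}\cup\mk{T}_b^{st}$ this is empty, since Corollary \ref{phases of MMprime1} gives $\phi(M)\neq\phi(M')$ whenever both are semistable there. On a middle chart $(a^p,M,b^{p+1})$ the quantity $\phi(M)$ is one coordinate, while $Z(M')=Z(a^p)-Z(b^{p+1})$ by Remark \ref{remark for delta}, so $\phi(M')$ depends only on the coordinates of $a^p$ and $b^{p+1}$ and is independent of the two coordinates of $M$; the equation $\phi(M)=\phi(M')$ therefore determines $\phi(M)$ from the others, a graph, hence empty interior. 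The chart $(b^p,M',a^p)$ is symmetric via $Z(M)=Z(b^p)-Z(a^p)$. For \eqref{closed subset 2}, which lies in $\{M,M'\in\sigma^{ss},\ \phi(M')=\phi(M)+1\}\cap\{B\in C_{1,\sigma\sigma}(\mc T)\}$, the middle charts are handled exactly as before (now the equation $\phi(M')=\phi(M)+1$ fixes $\phi(M)$), and on $\mk{T}_a^{st}$-charts the set is empty because there $\abs{C_{1,\sigma\sigma}(\mc T)}\le 1$ with $A$ the only possible member, so $B\notin C_{1,\sigma\sigma}(\mc T)$ (Corollary \ref{in TaTb only one} and tables \eqref{table for a a M}, \eqref{table for M' a a}). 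The decisive $\mk{T}_b^{st}$-charts split further: on $(b^q,a^q,b^{q+1})$ one has $C_{1,\sigma\sigma}(\mc T)=\emptyset$ (Proposition \ref{proposition for  B in Tb}), while on $(M,b^q,b^{q+1})$ and $(b^q,b^{q+1},M')$ the condition $B\in C_{1,\sigma\sigma}(\mc T)$ forces $\phi(b^{q+1})\le\phi(b^q)+1$ (Proposition \ref{proposition for  B in Tb}) and the explicit removal of $\{\phi(b^{q+1})<\phi(b^q)+1\}$ in \eqref{closed subset 2} forces $\phi(b^{q+1})\ge\phi(b^q)+1$; together they pin the coordinates to $\phi(b^{q+1})=\phi(b^q)+1$, again a graph. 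Thus \eqref{closed subset 2}, and by $\zeta$-symmetry \eqref{closed subset 3}, have empty interior, so $F$ does, and the open set $\{\abs{C_{0,\sigma\sigma}(\mc T)}<\infty\}$ is dense.

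The step I expect to be the main obstacle is the chart-by-chart verification that the phase identities really cut out a graph rather than an open subset. The key leverage is twofold: first, Remark \ref{remark for delta} expresses $Z(M')$ (resp.\ $Z(M)$) through the triple's central charges so that it is independent of the middle object's two coordinates, making $\phi(M)=\phi(M')$ or $\phi(M')=\phi(M)+1$ a genuine constraint on a single coordinate; and second, the subtraction already built into \eqref{closed subset 2}, which converts the inequality $\phi(b^{q+1})\le\phi(b^q)+1$ coming from $B$-semistability into the equality $\phi(b^{q+1})=\phi(b^q)+1$ on the $\mk{T}_b^{st}$-charts, thereby avoiding any delicate colinearity analysis of $Z(M),Z(M'),Z(\delta)$.
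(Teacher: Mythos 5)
Your argument is correct, and it rests on the same geometric fact that drives the paper's proof: the locus $\{\sigma:\abs{C_{0,\sigma\sigma}(\mc T)}=\infty\}$ is confined, chart by chart, to subsets cut out by phase equalities, hence of positive codimension and nowhere dense. The organization, however, is genuinely different. The paper does not revisit the three closed sets \eqref{closed subset 1}--\eqref{closed subset 3}; for density it invokes Corollary \ref{infinite genus zero part 1}, Proposition \ref{sigma outside Tab} and Lemma \ref{equality of phases oh Ms and interesection} to rewrite the entire infinite locus as an explicit \emph{disjoint} union of summands, each contained in a single chart $(A,B,C)$ and defined there by one or two phase equalities, and then perturbs an arbitrary point of a representative summand inside a coordinate box for the homeomorphism \eqref{assignment} to escape the locus. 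You instead recycle the decomposition $F=\eqref{closed subset 1}\cup\eqref{closed subset 2}\cup\eqref{closed subset 3}$ already established for openness, reduce via two elementary topological facts (a finite union of closed sets with empty interior has empty interior; empty interior can be tested on an open cover), and verify a graph condition in each chart. Your route buys a cleaner logical skeleton -- no disjointness statements and no appeal to Lemma \ref{equality of phases oh Ms and interesection} -- and the built-in subtraction in \eqref{closed subset 2} converts the inequality $\phi(b^{q+1})\le\phi(b^q)+1$ into the equality $\phi(b^{q+1})=\phi(b^q)+1$ exactly as you say; the paper's route, in exchange, produces the explicit list of codimension-one loci that is reused as the wall-and-chamber description in Section \ref{section walls and chambers}. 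One point you pass over quickly but which does hold: on a middle chart $(a^p,M,b^{p+1})$ the phase $\phi(M')$ is not merely determined modulo $2$ by $Z(M')=Z(a^p)-Z(b^{p+1})$; the non-vanishings \eqref{nonvanishing1}, \eqref{nonvanishing2} pin it to the single branch $\arg_{[\phi(b^{p+1})-1,\phi(b^{p+1}))}$ (as in Proposition \ref{porposition for middle M} (a.2)), so the locus is genuinely one graph and no countable-union or Baire-type refinement is needed.
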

\begin{proof} We already proved that the set in question is open subset, therefore it is enough to show that for any $\sigma \in \st(\mc T)$ such that $\abs{C_{0,\sigma \sigma}(\mc T)}=\infty$ and for any neighborhood $U$ of $\sigma$ there exists $\sigma'\in U$ such that  $\abs{C_{0,\sigma' \sigma'}(\mc T)}<\infty$. So let $\sigma \in \st(\mc T)$ be such that $\abs{C_{0,\sigma \sigma}(\mc T)}=\infty$ and let $\sigma \in U$ for some open subset $U$. 
We use Corollary \ref{infinite genus zero part 1}, Proposition \ref{sigma outside Tab},  Lemma \ref{equality of phases oh Ms and interesection} and deduce that the set  $\{\sigma \in \st(\mc T) : \abs{C_{0,\sigma \sigma}(\mc T)} = \infty \}$ is the following disjoint  union (in the last summand $q\in \ZZ$):

 $\bigcup_{p\in \ZZ} \{\sigma \in (M,b^p, b^{p+1}) : \phi(b^{p+1})=\phi(b^p)+1, \phi(b^p)=\phi(M)+1 \} \cup $
 
 $\bigcup_{p\in \ZZ} \{ \sigma \in (M,a^p, a^{p+1}): \phi(a^{p+1})=\phi(a^p)+1, \phi(a^p) =\phi(M')+1 \} \cup$

 $\bigcup_{p\in \ZZ} \{ \sigma \in (b^p, b^{p+1}, M'): \phi(b^{p+1})=\phi(b^p)+1=\phi(M') \}  \cup$
 
 $\bigcup_{p\in \ZZ} \{\sigma \in (a^p, a^{p+1}, M):\phi(a^{p+1})=\phi(a^p)+1=\phi(M) \} \cup$
 \begin{gather} \bigcup_{p\in \ZZ} \{\sigma \in (a^p, M, b^{p+1}):  \phi(M)+1 =	\phi(b^{p+1})=  \phi(a^p)+1 \} \nonumber \\ \cup  \bigcup_{p\in \ZZ} \{\sigma \in (b^p, M', a^{p}):  \phi(M')+1 =	\phi(a^{p})=  \phi(b^p)+1 \} \nonumber \\ \cup \left \{ \sigma \in (a^q, M, b^{q+1}):  
 \begin{array}{c} \phi\left (a^{q}\right)-1< \phi\left ( M\right)<\phi\left (a^{q}\right)\\   \phi\left (a^{q}\right)-1< \phi\left ( b^{q+1}\right)-1<\phi\left (a^{q}\right) \\ \phi(M)=\arg_{(\phi\left (a^{q}\right)-1,\phi\left (a^{q}\right))}(Z(a^q)-Z(b^{q+1}))
 \end{array}
 \right \}. \nonumber 
 \end{gather}

Assume  that  $\sigma \in (M,b^p, b^{p+1})$  and  $\phi(b^{p+1})=\phi(b^p)+1, \phi(b^p)=\phi(M)+1$ (the arguments in the other cases are analogous).  

The subset  $\{\sigma \in (M,b^p, b^{p+1}) : \phi(b^{p+1})=\phi(b^p)+1, \phi(b^p)=\phi(M)+1 \}$   is the inverse image of the subset  $\RR_{>0}^3 \times \left \{  \begin{array}{c} y_1= y_0+1 \\  y_2= y_1+1\end{array}\right\}$ in  $\RR_{>0}^3 \times \left \{  \begin{array}{c} y_0 - y_1 < 0 \\  y_0 - y_2 < -1 \\ y_1 - y_2 < 0 \end{array}\right\}$  with respect to the homeomorphism discussed in Remark \ref{remark for a homeomorphism}. Denote this homeomorphism by $f$.\footnote{$f$ can be viewed as a chart in a $C^{\infty} $ manifold.} In particular, $f(\sigma)=(r_0, r_1, r_2, y_0, y_0+1, y_0+2)$ for some $r_i > 0$ and $y_0 \in \RR$. Since $(M,b^p, b^{p+1})$ is an open subset in $\st(\mc T)$, it follows that $f(U\cap (M,b^p, b^{p+1}))$ is an open subset in $\RR^6$ containing  $f(\sigma)$, therefore  for some $\varepsilon>0$ we have open subset $V \ni \sigma$, such that $V\subset U \cap (M,b^p, b^{p+1})$ and such that $f(V)=(r_0-\varepsilon,r_0+\varepsilon)\times(r_1-\varepsilon,r_1+\varepsilon)\times(r_2-\varepsilon,r_2+\varepsilon)\times(y_0-\varepsilon,y_0+\varepsilon)\times (y_0+1-\varepsilon,y_0+1+\varepsilon)\times (y_0+2-\varepsilon,y_0+2+\varepsilon)$. Hence $f(v) \not \in \RR_{>0}^3 \times \left \{  \begin{array}{c} y_1= y_0+1 \\  y_2= y_1+1\end{array}\right\}$ for many $v \in V$, hence by table \eqref{table for M b b} we have $\abs{C_{0,v v}(\mc T)} < \infty$ for many $v \in V$.
\end{proof}

\section{Walls and chambers for $C_{1,\sigma\sigma}$} \label{walls and chambers} 
\label{section walls and chambers}
In tables \eqref{table for b b M'}, \eqref{table for a a M}, \eqref{table for M b b}, \eqref{table for M' a a} and \eqref{table for b a b} and Proposition \ref{sigma outside Tab}, we have summed up the local behaviour of $C_{1,\sigma \sigma}$. However, to understand how many walls there are, how they attach to one another and how the chambers with given subset of $C_{1,\sigma\sigma }$ are separated by them, we need to combine these results with knowing how two subset $(A,B,C),(A',B',C')\subset \textnormal{Stab}(\mathcal{T})$  intersect or equivalently how the corresponding charts given by the homoemorphism in Remark \ref{remark for fe} glue. We begin by labeling some of the subsets appearing in the aforementioned tables and proposition. 
\begin{definition}
The following subsets of $\textnormal{Stab}(\mathcal{T})$ and their connected unions will be called walls
\begin{align*}
    \mathcal{W}(a^m,a^{m+1},M) &= \{\sigma\in (a^m,a^{m+1},M): \phi(a^{m+1}) = \phi(a^m)+1\}\,,\\
     \mathcal{W}(b^m,b^{m+1},M') &= \{\sigma\in (b^m,b^{m+1},M'): \phi(b^{m+1}) = \phi(b^m)+1\}\,,\\
      \mathcal{W}(M',a^m,a^{m+1}) &= \{\sigma\in (M',a^m,a^{m+1}): \phi(a^{m+1}) = \phi(a^m)+1\}\,,\\
       \mathcal{W}(M,b^m,b^{m+1}) &= \{\sigma\in (M,b^m,b^{m+1}): \phi(b^{m+1}) = \phi(b^m)+1\}\,,\\
       \mathcal{W}^0(A,B) &= \bigcap_{p\in\mathbbm{Z}}\big((a^p,M,b^{p+1})\cap (b^p,M',a^p)\big)\,,\\
       \mathcal{G}(a^m,M,b^{m+1}) &= \{\sigma\in (a^m,M,b^{m+1}): \phi(a^m) + 1 = \phi(M) + 1 = \phi(b^{m+1}) \}\,,\\
       \mathcal{G}(b^m,M',a^{m}) &= \{\sigma\in (b^m,M,a^{m}): \phi(b^m) + 1 = \phi(M') + 1 = \phi(a^{m}) \}\,,\\
\end{align*}
for all $m\in \mathbbm{Z}$.\\

If $X$ is a submanifold and $Y$ a submanifold with a boundary in $\textnormal{Stab}(\mathcal{T})$ both of real codimension 1 and disjoint. Let $Z\subset X$ be a submanifold of real codimension 2 contained in the boundary $\partial Y$, then we say that $X$ \textit{attaches to} $Y$ \textit{in} $Z$ if $Z=\bar{X}\cap Y$. 
\end{definition}

Notice that except for the last 2 walls which are of real codimension 2 submanifolds the other walls are submanifolds of real codimension 1. For $\mathcal{W}^0(A,B)$ this follows from Lemma \ref{equality of phases oh Ms and interesection}. For our purposes the following definition of a chamber will be sufficient:

\begin{definition}
A \textit{closed chamber with $N\subset C_{1}(\mathcal{T})$ stable} is a closure of a non-empty maximal open connected subset $\mathcal{C}h^0$ of $\textnormal{Stab}(\mathcal{T})$ with $C_{1,\sigma\sigma} =  N$ for all $\sigma\in \mathcal{C}h^0$.
\end{definition}
We now state the main result summing up the global behaviour of $C_{1,\sigma \sigma}$. 

\begin{proposition}
\label{proposition walls and chambers}
There is a unique closed chamber $\mathcal{C}h(A)$ with $A$ stable and a unique closed chamber $\mathcal{C}h(B)$ with $B$ stable . The complement of $\mathcal{C}h(A)\cup\mathcal{C}h(B)$ is an open subset of $\textnormal{Stab}(\mathcal{T})$ with $C_{1,\sigma \sigma} = \emptyset$ whenever $\sigma$ lies in it. It is given by the disjoint union of open subsets $\mathcal{O}_m$ (see \eqref{nullchambers}) going over all $m\in \mathbbm{Z}$. In particular, the closures $\overline{\mathcal{O}_m}$ are the closed chambers with $\emptyset$ stable.\\

There is a unique connected set with $|C_{1,\sigma\sigma}| = 2$ given by
$$
\mathcal{C}h(A)\cap\mathcal{C}h(B) = \overline{\mathcal{W}^0(A,B)} = \mathcal{W}^0(A,B)\cup\bigcup_{p}\big(\mathcal{G}(a^p,M,b^{p+1})\cup\mathcal{G}(b^p,M',a^p)\big)\,.
$$

The boundary of $\mathcal{C}h(A)$ is the union of the walls:
\begin{equation}
\label{boundary}
  \bigcup_{p} \big(\mathcal{W}(a^p,a^{p+1},M)\cup \mathcal{W}(M',a^p,a^{p+1})\big)\cup \overline{\mathcal{W}^0(A,B)}\,, 
\end{equation}

where $\mathcal{W}(a^p,a^{p+1},M)\cup \mathcal{W}(M',a^p,a^{p+1})$ is a connected sub-manifold of real codimension 1. The union of the sets $\mathcal{W}(a^p,a^{p+1},M)\cup \mathcal{W}(M',a^p,a^{p+1})$ is disjoint, and moreover it is disjoint from $\overline{\mathcal{W}^0(A,B)}$. However, the intersection $\mathcal{W}(a^p,a^{p+1},M)\cap \mathcal{W}(M',a^p,a^{p+1})$ attaches to $\overline{\mathcal{W}^0(A,B)}$ in $\mathcal{G}(a^m,M,b^{m+1})$ and $\mathcal{G}(b^m,M', a^m)$. Therefore, \eqref{boundary} is a connected real codimension 1 wall. 

The boundary of $\mathcal{C}h(B)$ is the union of the walls:
\begin{equation}
\label{boundary1}
  \bigcup_{p} \big(\mathcal{W}(b^p,b^{p+1},M')\cup \mathcal{W}(M,b^p,b^{p+1})\big)\cup \overline{\mathcal{W}^0(A,B)}\,, 
\end{equation}

where $\mathcal{W}(b^m,b^{m+1},M')\cup \mathcal{W}(M,b^m,a^{m+1})$ is a connected sub-manifold of real codimension 1. The union of the sets $\mathcal{W}(b^m,b^{m+1},M')\cup \mathcal{W}(M,b^m,b^{m+1})$ is disjoint, and moreover it is disjoint from $\overline{\mathcal{W}^0(A,B)}$. However, the intersection $\mathcal{W}(b^m,b^{m+1},M')\cap \mathcal{W}(M,b^m,a^{m+1})$ attaches to $\overline{\mathcal{W}^0(A,B)}$ in $\mathcal{G}(a^m,M,b^{m+1})$ and $\mathcal{G}(b^m,M', a^m)$. Therefore, \eqref{boundary1} is a connected real codimension 1 wall. 
\end{proposition}
\begin{proof}
Recall that $\mathfrak{T}^{\textnormal{st}}_a\cap \mathfrak{T}^{\textnormal{st}}_b = \emptyset$ and there exists an auto-equivalence $\xi\in\textnormal{Aut}(\textnormal{Stab}(\mathcal{T}))$ given by \eqref{useful autoequivalence} which identifies the two disjoint sets. We will therefore first understand the picture in $\mathfrak{T}^{\textnormal{st}}_a$ and then glue $(a^p,M,b^{p+1})$ and $(b^p,M',a^p)$ (where the last two sets are again identified using the auto-equivalence). \\

First, let us describe the situation in $\bigcup_p(a^p,a^{p+1}, M)$. Using \cite[Lemma 5.4]{dimkatz4}, we can describe the intersection $(a^p,a^{p+1}, M)\cap(a^j,a^{j+1},M)$ for $p>j$ as 
$$ \{\sigma\in (a^p,a^{p+1},M) : \phi(a^{p+1})<\phi(a^p)+1 \}\,.$$
By looking at Table \eqref{table for a a M}, we see that this corresponds to the interior of the unique chamber with $A$ stable in $(a^p,a^{p+1},M)$. It implies that this set is contained in the interior of the chamber with $A$ stable in $(a^j,a^{j+1},M)$. The union
$$\bigcup_p \{\sigma\in (a^p,a^{p+1},M) : \phi(a^{p+1}<\phi(a^p)+1 \}\,.$$
is connected and therefore contained in a single chamber which we denote by $\mathcal{C}h(A,1)$. Moreover, the walls $\mathcal{W}(a^p,a^{p+1},M)$ and $\mathcal{W}(a^j,a^{j+1},M)$ are disjoint whenever $p\neq j$. \\

We do the same for $\bigcup_p(M',a^p,a^{p+1})$.  By \cite[Lemma 5.6]{dimkatz4}, we see that 
$$
(M',a^p,a^{p+1})\cap (M',a^j,a^{j+1}) = \{\sigma\in (M',a^p,a^{p+1}) : \phi(a^{p+1})<\phi(a^p)+1 \}
$$
for $p<j$. By Table \eqref{table for M' a a} this coincides with the interior of the unique chamber with $A$ stable in $(M',a^p,a^{p+1})$. The union 
$$\bigcup_p \{\sigma\in (M',a^p,a^{p+1}) : \phi(a^{p+1})<\phi(a^p)+1 \}\,.$$
is connected and thus contained in a single chamber which we denote by $\mathcal{C}h(A,2)$. The walls $\mathcal{W}(M',a^p,a^{p+1})$ and $\mathcal{W}(M',a^j,a^{j+1})$ are disjoint whenever $p\neq j$. \\

Let us now glue the sets $(-,-,M)$ and $(M',-,-)$. The intersection $(a^m,a^{m+1},M)\cap (M',-,-)$ is contained by \cite[Lemma 6.3]{dimkatz4} in the following subset of $(a^m,a^{m+1},M)$:
\begin{equation}
\label{intersection a a M M a a}
    \{\sigma\in (a^m,a^{m+1},M):\phi(a^{m+1})<\phi(a^m)+1 \textnormal{ or } \phi(M)<\phi(a^{m+1})\}\,.
\end{equation}

From the proof of \cite[Lemma 6.3]{dimkatz4}, we see that if the condition $\phi(a^{m+1})<\phi(a^m)+1$ is true, then there exists a $j\gg m$  with $\sigma \in (M',a^j,a^{j+1})$. This shows that the interior of the unique chamber with $A$ stable in $(a^m,a^{m+1},M)$ intersects with the interior of the chamber $\mathcal{C}h(A,2)$ non-trivially and therefore $\mathcal{C}h(A,1) = \mathcal{C}h(A,2)=:\mathcal{C}h(A)$. Notice that we have already considered all subsets of $\textnormal{Stab}(\mathcal{T})$ with $C_{1,\sigma\sigma} = \{A\}$ as either walls of $\mathcal{C}h(A)$ or as contained in its interior. We can conclude that $\mathcal{C}h(A)$ is the unique closed chamber with $A$ stable.  \\

If in \eqref{intersection a a M M a a} the condition $\phi(M)<\phi(a^{m+1})$ is true and $\phi(a^{m+1})=\phi(a^m)+1$, then by the proof of \cite[Lemma 6.3]{dimkatz4}, $\sigma$ lies in $(M',a^{m},a^{m+1})$ and in particular in $\mathcal{W}(M',a^{m},a^{m+1})$. Therefore, we see that 
$$
\mathcal{W}(a^{m},a^{m+1},M) \cap \mathcal{W}(M',a^{m},a^{m+1}) = \{\sigma\in \mathcal{W}(a^{m},a^{m+1},M): \phi(M)<\phi(a^{m+1})\}\,,
$$
which is an open subset of $\mathcal{W}(a^m,a^{m+1},M)$. Moreover, as we see from \eqref{intersection a a M M a a} that $\mathcal{W}(a^m,a^{m+1},M)\cap (M',-,-) = \mathcal{W}(a^{m},a^{m+1},M) \cap \mathcal{W}(M',a^{m},a^{m+1})$, we conclude that $$\mathcal{W}(a^m,a^{m+1},M)\cap \mathcal{W}(M',a^j,a^{j+1}) = \emptyset\,,$$ whenever $j\neq m$. \\

Let us now attach $(a^p,b^{p+1},a^{p+1})$ which will help us understand chambers with $\emptyset$ stable of which they are open subsets by Table \eqref{table for b a b}. From \cite[Lemma 6.1 and 6.2]{dimkatz4}, we know that they are pairwise disjoint, and that 
$$
(a^m,b^{m+1},a^{m+1})\cap (a^{l},a^{l+1},M)=(a^m,b^{m+1},a^{m+1})\cap (M',a^{l},a^{l+1}) = \emptyset\,,
$$
whenever $m\neq l$, while if $m=l$ the intersections are non-trivial. Looking at the proof of \cite[Lemma 6.3]{dimkatz4} we see that if $\sigma\in(a^m,a^{m+1},M)$ and $\phi(M)<\phi(a^{m+1}), \phi(a^{m+1})>\phi(a^m)+1$, then $\sigma\in (a^m,b^{m+1},a^{m+1})$. Therefore, we can say that 
$$
(a^{m},a^{m+1},M) \cap (a^{m},b^{m+1},a^{m+1})=\{\sigma\in (a^m,a^{m+1},M):\phi(M)<\phi(a^{m+1}),\phi(a^{m+1})>\phi(a^m)+1\}\,.
$$
Additionally, as $(a^{m},a^{m+1},M)\cap(M',a^{m},a^{m+1})$ has to contain a neighborhood of $\mathcal{W}(a^m,a^{m+1},M)\cap\mathcal{W}(M',a^{m},a^{m+1})$, it must intersect with $(a^{m},a^{m+1},M) \cap (a^{m},b^{m+1},a^{m+1})$ in a nonempty open set. If a set of the form $(X,Y,Z)\subset \textnormal{Stab}(\mathcal{T})$ for $(X,Y,Z)$ full exceptional triple has a unique closed chamber with $\emptyset$ stable, we denote by $(X,Y,Z)_{\emptyset}$ its interior. We can conclude from the above that 
$$
(a^m,a^{m+1},M)_{\emptyset}\cup (M',a^{m},a^{m+1})_{\emptyset}\cup (a^{m},b^{m+1},a^{m+1})_{\emptyset}
$$
are connected open subsets of $\textnormal{Stab}(\mathcal{T})$ for all $m\in\mathbb{Z}$ and they are pairwise disjoint. \\

We now need to consider what happens in $(a^p,M,b^{p+1})$. This is partly handled by Proposition \ref{porposition for middle M}. However, we do some extra work to obtain the qualitative picture given in Figure \eqref{a M b}, where we use $\phi(a^m)-\phi(M)$ and $\phi(M) + 1 - \phi(b^{m+1})$ as our coordinates under the assumption that they are the only variables that are allowed to vary. The reader should note that this picture is meant as a helpful tool for the discussion below and is not meant to depict it precisely. From now on, when we talk about a sub-case without mentioning its proposition, we mean a sub-case of Proposition \ref{porposition for middle M}.\\

In case (a.2.1), the stability condition $\sigma$ lies in the interior of $\mathcal{C}h(A)$ as it lies  in the intersection of multiple $(a^j,a^{j+1},M')$ and it connects to the case (a.1) which lies in $(a^p,a^{p+1},M)$: this can be seen, because if $\phi(a^m)-\phi(M)$ decreases while keeping $\phi(M)+1 - \phi(b^{m+1})$ constant, then from the formula for $\phi(M')$ in (a.2), we see that $\phi(M)-\phi(M')$ increases. Therefore by doing so, we eventually land in (a.2.1) on our way to $\phi(a^p) = \phi(M)$. 
Now as (a.1) connects to both (a.2.1) and (d) with $C_{1,\sigma\sigma}=\emptyset$, it needs to contain an open subset of $\mathcal{W}(a^p,a^{p+1},M)$. We show that $\mathcal{W}(a^p,a^{p+1},M)\cap (a^p,M,b^{p+1})$ is contained in $\mathcal{W}(a^p,a^{p+1},M)\cap \mathcal{W}(M',a^p,a^{p+1})$ and that it attaches to the closure of $\mathcal{W}^0(A,B)$ in $(a^p,M,b^{p+1})$ through $\mathcal{G}(a^p,M,b^{p+1})$. \\

\sloppy From Proposition \ref{porposition for middle M}, we see that in (a.1) $\sigma\in (a^p,a^{p+1},M)$ and $\phi(a^{p+1})>\phi(b^{p+1})>\phi(M)$. Using again the proof of \cite[Lemma 6.3]{dimkatz4}, we see that if additionally $\sigma\in \mathcal{W}(M',a^p,a^{p+1})$, then $\sigma\in \mathcal{W}(a^p,a^{p+1},M)\cap \mathcal{W}(M',a^p,a^{p+1})$. Let us now assume that we are in case (a.1) and that $\phi(M)+1<\phi(b^{m+1})+\epsilon_1$ and $\phi(a^m)>\phi(M) + \epsilon_2$ for some small $\epsilon_1,\epsilon_2>0$. This corresponds to taking a small neighborhood of $\mathcal{G}(a^p,a^{p+1},M)$ and intersecting it with (a.1). It follows then using Table \eqref{left right M} that
$$
 \phi(a^m)+1-\epsilon_1<\phi(M)+1-\epsilon_1<\phi(b^{p+1})<\phi(a^{p+1})<\phi(M)+1<\phi(a^m)+1+\epsilon_2\,.
$$
and we see that we can chose $\phi(a^{m+1}) = \phi(a^m)+1$ in this, to obtain the intersection with $\mathcal{W}(M', a^p,a^{p+1})$. Therefore, each neighborhood of $\mathcal{G}(a^p,M,b^{p+1})$ contains a point of $\mathcal{W}(M', a^p,a^{p+1})\cap \mathcal{W}( a^p,a^{p+1},M)$.

Similar arguments show that (a.2.2) connects through (e) to (b) and that the open subset of $\mathcal{W}(M,b^p,b^{p+1})$ attaches to the closure of $\mathcal{W}^0(A,B)$ in $(a^p,M,b^{p+1})$ through $\mathcal{G}(a^p,M,b^{p+1})$. We additionally want to show that $\mathcal{W}(M,b^p,b^{p+1})\cap (a^p,M,b^{p+1})$ is contained in $\mathcal{W}(b^p,b^{p+1},M')\cap \mathcal{W}(M,b^p,b^{p+1})$. Let us first show that in case (b) with $\phi(b^{p+1})=\phi(b^p)+1$, we have $M'\in\sigma^{ss}$. This follows from $\phi(b^p)<\phi(a^p)<\phi(M) + 1$ and looking at the Table \eqref{table for M b b}. To show that the inequalities for $(b^p,b^{p+1},M')$ from Table \eqref{left right M} hold, we only need to show $\phi(M')+1>\phi(b^{p+1})$, but this follows from looking at the proof of (b.2) in Proposition \ref{more careful study}.\\

Let us now conclude the final results of the proposition. As the closure of $\mathcal{W}^0(A,B)$ in $(a^p,M,b^{p+1})$ is given by $\mathcal{W}^0(A,B)\cup\mathcal{G}(a^p,M,b^{p+1)}$, we see that 
$$
\overline{\mathcal{W}^0(A,B)} =\mathcal{W}^0(A,B)\cup\bigcup_{p}\big(\mathcal{G}(a^p,M,b^{p+1})\cup\mathcal{G}(b^p,M',a^p)\big)\,.
$$
Moreover, this lies in the intersection of $\mathcal{C}h(A)$ and $\mathcal{C}h(B)$. As it is the only set, where both $A$ and $B$ are stable, we see that it is equal to it. Using $\xi$, we also know that $\mathcal{W}(a^p,a^{p+1},M)\cap\mathcal{W}(M',a^p,a^{p+1})$ and $\mathcal{W}(b^p,b^{p+1},M')\cap\mathcal{W}(M,b^p,b^{p+1})$ attach to $\overline{\mathcal{W}^0(A,B)}$ in $\mathcal{G}(a^p,M,b^{p+1})\cup \mathcal{G}(b^p,M',a^p)$. \\

Finally, the sets 
\begin{align*}
    \mathcal{O}_m &= (a^m,a^{m+1},M)_{\emptyset}\cup (M',a^{m},a^{m+1})_{\emptyset}\cup (a^{m},b^{m+1},a^{m+1})_{\emptyset}\cup(b^m,b^{m+1},M')_{\emptyset}\\
    &\cup (M',b^{m},b^{m+1})_{\emptyset}\cup (b^{m},a^{m},b^{m+1})_{\emptyset}
    \cup (a^m,M,b^{m+1})_{\emptyset}\cup(b^m,M',a^m)_{\emptyset}
    \numberthis
    \label{nullchambers}
\end{align*}
are open connected subsets with $C_{1,\sigma\sigma} = \emptyset$ and they are pairwise disjoint as the cases $(c)$ and $(d)$ in Proposition \ref{porposition for middle M} are only contained in $(a^p,M,b^{p+1})$. Therefore, the set of closed chambers with $\emptyset$ stable is given by $\{\overline{\mathcal{O}_m}\}_{m\in\mathbb{Z}}$. This completes the proof of the statements in the proposition.

\begin{figure}[h]
		\includegraphics[scale=0.7]{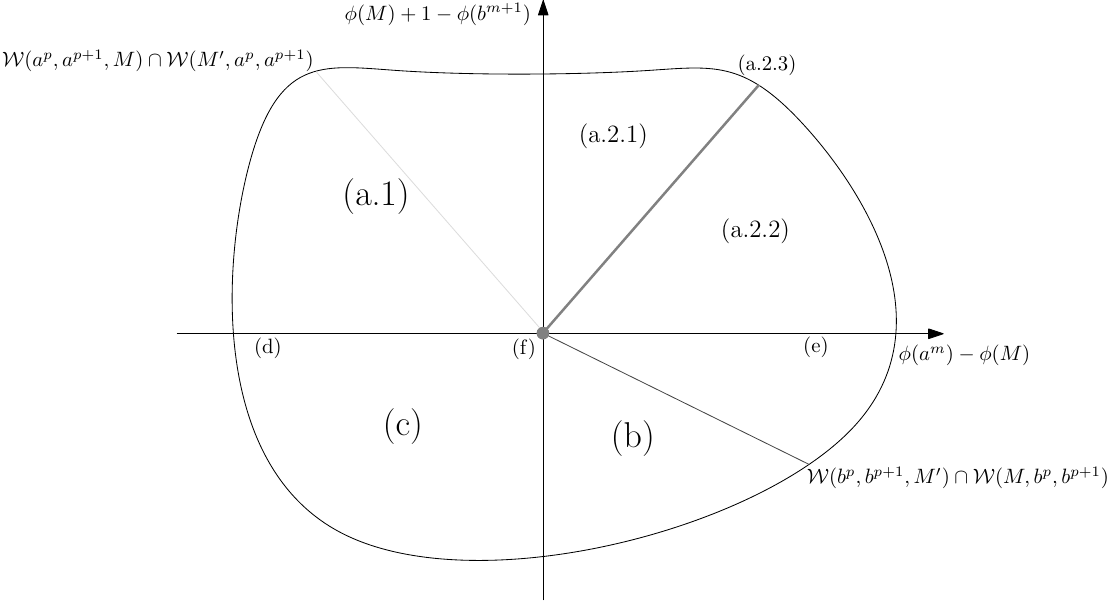}
		\centering
		\caption{$(a^p,M,b^{p+1})$ with subsets as in Proposition \ref{porposition for middle M} and its walls}\label{a M b}
	\end{figure}

\end{proof}
 \begin{remark}
     Notice that to go from the chamber $\mathcal{C}h(A)$ into the chamber $\mathcal{C}h(B)$, one needs to either pass through the wall $\overline{\mathcal{W}^0(A,B)}$ or to go through $\mathcal{O}_m$ for some $m$ and pass through the walls $\mathcal{W}(a^m,a^{m+1},M)\cup \mathcal{W}(M',a^m,a^{m+1})$ and $\mathcal{W}(b^m,b^{m+1},M')\cup \mathcal{W}(M,b^m,b^{m+1})$. 
 \end{remark}

\appendix
\section{Details on $C_{l,\sigma \sigma}(\mc T)$, $C_{l,\sigma}(\mc T)$ as $\sigma$ varies in $\st(\mc T)$ and $l\geq -1$} \label{details on} 
\label{sectab}

In Subsection \ref{subsection for Tab} we determine  $C_{l,\sigma}(\mc T)$,  $C_{l,\sigma \sigma}(\mc T)$ for    $\sigma \in \mk{T}_{a}^{st}  \cup \mk{T}_{b}^{st}$ and $l\geq -1$. The results are contained in the tables  \eqref{table for b b M'}, \eqref{table for a a M}, \eqref{table for M b b}, \eqref{table for M' a a} and  \eqref{table for b a b}. The sets specified in the  first column in each table  are  non-empty.

The following table collects results from  Propositions \ref{proposition for B in Tb}, \ref{more careful part two},  Corollary \ref{coro for b b M'}, Lemma \ref{lemma for semistability of M}: 
\begin{gather} \label{table for b b M'} \begin{array}{| c   | c  | c| c |}   \hline  
\begin{array}{c} \sigma \in  (b^p,b^{p+1},M') \\ v=\phi(b^{p})+1,   q=\phi(M')\\ 
t = \arg_{(\phi(b^{p})-1,\phi(b^{p}))}(Z(b^p)-Z(b^{p+1})) \end{array} &    C_{0,\sigma \sigma}(\mc T) = ?  \ \mbox{and} \ M \in \sigma^{ss} ? & C_{1,\sigma }(\mc T) &  C_{1,\sigma \sigma}(\mc T) \\
\hline
\phi(b^{p+1})> v\ \mbox{and} \ q> \phi(b^{p+1})  &  C_{0,\sigma \sigma}(\mc T) =\emptyset \ \mbox{and} \ M \not \in \sigma^{ss} & \emptyset& \emptyset \\
\hline
\phi(b^{p+1})> v \ \mbox{and} \ q\leq \phi(b^{p+1})  &  \begin{array}{c} \{\alpha^p \} \subset  C_{0,\sigma \sigma}(\mc T) \subset \{\alpha^p, \beta^p \} \\ \hline  \beta^p \in C_{0,\sigma \sigma}(\mc T) \iff  M \in \sigma^{ss} \iff  \\   \arg_{[q, q+1)}(Z(b^{p+1})+ Z(M')) \leq v \end{array}  & \emptyset& \emptyset\\
\hline
\phi(b^{p+1})= v\ \mbox{and} \ q> v& \begin{array}{c} C_{0,\sigma \sigma}(\mc T)=\emptyset \ \ \mbox{and}  \ \ M\not \in \sigma^{ss} \end{array}  & \{B\} & \{B\}\\   \hline 
\phi(b^{p+1})=v\ \mbox{and} \ q < v & \begin{array}{c} C_{0,\sigma \sigma}(\mc T)=\{\alpha^p, \beta^p \}  \\ M\in \sigma^{ss} \ \mbox{and} \  q<\phi(M)+1  \textnormal{ \footnotemark}   \end{array} &\{B\} &\{B\} \\   \hline 
\phi(b^{p+1})= v\ \mbox{and} \ q = v & \begin{array}{c}C_{0,\sigma \sigma}(\mc T)= \{\alpha^j, \beta^j : j \geq p \} \\  M  \in \sigma^{ss} \ \mbox{and} \   q=\phi(M)+1   \end{array} &\{A,B\} &\{B\} \\   \hline  
\begin{array}{c} \phi(b^{p+1})<v \ \mbox{and} \   q-1 \geq t \end{array}&  C_{0,\sigma \sigma}(\mc T) = \emptyset, M\in \sigma^{ss} \iff q-1 = t  &\{B\} &\{B\} \\
\hline 
\begin{array}{c} \phi(b^{p+1})<v \ \mbox{and} \   q - 1 < t \end{array}&\begin{array}{c} C_{0,\sigma \sigma}(\mc T) = \{\alpha^j, \beta^j: N\leq j \leq u\}, \\  \mbox{where $N\leq u$ are determined } \\ \mbox{ in  Proposition \ref{more careful part two} (a.2)} \end{array}& \{B\} &\{B\}  \\
\hline 
\end{array}   
\end{gather} \footnotetext{In details, the second row of this box says that for  $\sigma \in  (b^p,b^{p+1},M')$, $\phi(b^{p+1})= \phi(b^{p})+1$, $\phi(M')< \phi(b^{p})+1$ hold  both $M\in \sigma^{ss}$ and $\phi(M')<\phi(M)+1$. We will use this  for the proof of \eqref{closed subset 2} in Section \ref{dense open}.}
The last  table and Remark \ref{action1} imply:
\begin{gather} \label{table for a a M} \begin{array}{| c   | c  |c| c |}   \hline  
\begin{array}{c} \sigma \in  (a^p,a^{p+1},M) \\ v=\phi(a^{p})+1,   q=\phi(M) \\ 
t= \arg_{(\phi(a^{p})-1,\phi(a^{p}))}(Z(a^p)-Z(a^{p+1})) \end{array}
 &   C_{0,\sigma \sigma}(\mc T) = ? \ \mbox{and} \ M' \in \sigma^{ss} ? & C_{1,\sigma }(\mc T) &  C_{1,\sigma \sigma}(\mc T)\\
\hline
\phi(a^{p+1})> v\ \mbox{and} \ q> \phi(a^{p+1})  & C_{0,\sigma \sigma}(\mc T) = \emptyset \ \mbox{and} \ M'\not \in \sigma^{ss} & \emptyset  &  \emptyset \\
\hline
\phi(a^{p+1})> v \ \mbox{and} \ q\leq \phi(a^{p+1})  &  \begin{array}{c} \{\beta^{p+1} \} \subset  C_{0,\sigma \sigma}(\mc T)  \subset \{\alpha^p, \beta^{p+1} \}    \\ \hline \alpha^p \in C_{0,\sigma \sigma}(\mc T) \iff M' \in \sigma^{ss} \iff   \\  \arg_{[q, q+1)}(Z(a^{p+1})+ Z(M)) \leq v \end{array} & \emptyset & \emptyset \\
\hline
\phi(a^{p+1})= v\ \mbox{and} \ q> v&  \begin{array}{c} C_{0,\sigma \sigma}(\mc T) =\emptyset \ \ \mbox{and} \ \ M' \not \in \sigma^{ss}   \end{array} & \{A\} & \{A\} \\   \hline 
\phi(a^{p+1})= v\ \mbox{and} \ q < v & \begin{array}{c} C_{0,\sigma \sigma}(\mc T) = \{\beta^{p+1}, \alpha^p \} \\ M'\in \sigma^{ss} \ \mbox{and} \  q<\phi(M')+1 \end{array} & \{A\} & \{A\} \\   \hline 
\phi(a^{p+1})= v\ \mbox{and} \ q = v & \begin{array}{c} C_{0,\sigma \sigma}(\mc T) = \{\beta^{j+1}, \alpha^j : j \geq p \} \\ M'\in \sigma^{ss} \ \mbox{and} \  q=\phi(M')+1  \end{array} & \{A, B \} & \{A\} \\   \hline  
\begin{array}{c} \phi(a^{p+1})<v \ \mbox{and} \   q-1 \geq t \end{array}&  C_{0,\sigma \sigma}(\mc T) = \emptyset, M'\in \sigma^{ss} \iff q-1=t & \{A\} & \{A\} \\
\hline 
\begin{array}{c} \phi(a^{p+1})<v \ \mbox{and} \ q - 1 < t \end{array}&\begin{array}{c} C_{0,\sigma \sigma}(\mc T) = \{\beta^{j+1}, \alpha^j: N\leq j \leq u\}, \\  \mbox{where $N\leq u$ are determined in} \\ \mbox{Proposition \ref{more careful part two} (a.2) by replacing}  \\  \mbox{$b$ with $a$ and exchanging  $M'$ and $M$}\end{array} & \{A\} & \{A\}  \\
\hline 
\end{array}   
\end{gather}
The following table collects results from  Propositions \ref{proposition for B in Tb}, \ref{more careful study},   Corollary \ref{coro for M b b}, Lemma \ref{lemma for semistability of M'}: 
\begin{gather} \label{table for M b b} \begin{array}{| c   | c  | c  | c  |}   \hline  
\begin{array}{c}\sigma \in  (M,b^p,b^{p+1}) \\   v = \phi(b^{p+1}) -1,  s=\phi(M)+1, \\ t=\arg_{(\phi(b^{p})-1,\phi(b^{p}))}(Z(b^p)-Z(b^{p+1}))  \end{array}  &    C_{0,\sigma \sigma}(\mc T) = ?  \ \mbox{and} \ M' \in \sigma^{ss} ?	&   C_{1,\sigma}(\mc T) &  C_{1,\sigma \sigma}(\mc T)  \\
\hline
v> \phi(b^{p})\ \mbox{and} \ s< \phi(b^p)  &  C_{0,\sigma \sigma}(\mc T) =\emptyset \ \mbox{and} \ M' \not \in \sigma^{ss}  & \emptyset & \emptyset \\
\hline
v> \phi(b^{p}) \ \mbox{and} \ s\geq \phi(b^p)  &  \begin{array}{c} \{\beta^p \} \subset C_{0,\sigma \sigma}(\mc T) \subset \{\alpha^p, \beta^{p}\} \\  \hline \alpha^p \in  C_{0,\sigma \sigma}(\mc T) \iff M'  \in \sigma^{ss} \iff  \\ v \leq   \arg_{(s-1, s]}(Z(b^p)- Z(M)) \end{array} &  \emptyset & \emptyset \\ 
\hline
v= \phi(b^{p})\ \mbox{and} \ s< \phi(b^{p})&  \begin{array}{c} C_{0,\sigma \sigma}(\mc T) = \emptyset \ \ \mbox{and} \ \  M' \not  \in \sigma^{ss} \end{array} & \{B\} & \{B\}  \\   \hline 
v= \phi(b^{p})\ \mbox{and} \ s > \phi(b^{p}) &  \begin{array}{c} C_{0,\sigma \sigma}(\mc T) = \{\alpha^p, \beta^p \}\\ M' \in \sigma^{ss} \ \mbox{and} \  \phi(M')< s\end{array} & \{B\} & \{B\} \\   \hline 
v= \phi(b^{p})\ \mbox{and} \ s = \phi(b^{p}) & \begin{array}{c} C_{0,\sigma \sigma}(\mc T) = \{\alpha^j, \beta^j : j \leq p \} \\ M' \in \sigma^{ss} \ \mbox{and} \    \phi(M')= s\end{array}  & \{A, B\} &\{B\} \\   \hline  
\begin{array}{c} v<\phi(b^{p}) \ \mbox{and} \  s \leq t \end{array}&  C_{0,\sigma \sigma}(\mc T) = \emptyset, M'  \in \sigma^{ss} \iff s=t & \{B\} &\{B\} \\
\hline 
\begin{array}{c} v<\phi(b^{p}) \ \mbox{and} \   s > t \end{array}&\begin{array}{c} C_{0,\sigma \sigma}(\mc T) = \{\alpha^j, \beta^j: N\leq j \leq u\}, \\  \mbox{where $N\leq u$ are determined} \\ \mbox{in Proposition \ref{more careful study} (a.2)} \end{array} & \{B\} & \{B\}\\
\hline 
\end{array}   
\end{gather}
This   table and Remark \ref{action1} imply:
\begin{gather} \label{table for M' a a} \begin{array}{| c   | c  | c | c |}   \hline  
\begin{array}{c}\sigma \in  (M',a^p,a^{p+1})\\   v = \phi(a^{p+1}) -1,  s=\phi(M')+1, \\ t=\arg_{(\phi(a^{p})-1,\phi(a^{p}))}(Z(a^p)-Z(a^{p+1})) \end{array}      &    C_{0,\sigma \sigma}(\mc T) = ?  \ \mbox{and} \ M \in \sigma^{ss} ?  & C_{1,\sigma }(\mc T) & C_{1,\sigma \sigma}(\mc T)  \\
\hline
v> \phi(a^{p})\ \mbox{and} \ s< \phi(a^p)  &  C_{0,\sigma \sigma}(\mc T)= \emptyset \ \mbox{and} \ M \not \in \sigma^{ss} & \emptyset & \emptyset  \\
\hline
v> \phi(a^{p}) \ \mbox{and} \ s\geq \phi(a^p)  &  \begin{array}{c} \{\alpha^p \} \subset C_{0,\sigma \sigma}(\mc T) \subset \{\alpha^p, \beta^{p+1}\} \\ \hline \beta^{p+1} \in C_{0,\sigma \sigma}(\mc T) \iff M  \in \sigma^{ss}\\  v \leq   \arg_{(s-1, s]}(Z(a^p)- Z(M')) \end{array}& \emptyset & \emptyset  \\
\hline
v= \phi(a^{p})\ \mbox{and} \ s< \phi(a^{p})& \begin{array}{c}C_{0,\sigma \sigma}(\mc T)= \emptyset \ \ \mbox{and} \ \  M \not  \in \sigma^{ss} \end{array} & \{A\} & \{A\}\\   \hline 
v= \phi(a^{p})\ \mbox{and} \ s > \phi(a^{p}) & \begin{array}{c}C_{0,\sigma \sigma}(\mc T)= \{\beta^{p+1}, \alpha^p \} \\ M \in \sigma^{ss} \ \mbox{and} \  \phi(M)< s\end{array} & \{A\} & \{A\} \\   \hline 
v= \phi(a^{p})\ \mbox{and} \ s = \phi(a^{p}) & \begin{array}{c}C_{0,\sigma \sigma}(\mc T)= \{\beta^{j+1}, \alpha^j : j \leq p \} \\ M \in \sigma^{ss} \ \mbox{and} \ \phi(M)= s\end{array} & \{A, B\} & \{A\} \\   \hline  
\begin{array}{c} v<\phi(a^{p}) \ \mbox{and} \  s \leq t \end{array}& C_{0,\sigma \sigma}(\mc T)= \emptyset, M  \in \sigma^{ss} \iff s=t  & \{A\} & \{A\} \\
\hline 
\begin{array}{c} v<\phi(a^{p}) \ \mbox{and} \   s > t \end{array}&\begin{array}{c}C_{0,\sigma \sigma}(\mc T)= \{\beta^{j+1}, \alpha^j: N\leq j \leq u\}, \\  \mbox{where $N\leq u$ are determined in}  \\ \mbox{ Proposition \ref{more careful study} (a.2) by replacing}\\  \mbox{ $b$ with $a$ and  exchanging  $M'$ and $M$}  \end{array}  & \{A\} & \{A\} \\
\hline 
\end{array}   
\end{gather}
The following table collects results from Corollary \ref{genus 0 in bab} and  Propositions \ref{proposition for B in Tb}, \ref{proposition for  M,Mp in Tb}
\begin{gather} \label{table for b a b} \begin{array}{| c   | c  | c |  }   \hline  \mbox{ }
&   X= C_{0,\sigma \sigma}(\mc T) = ? \ \mbox{and} \ M,M' \in \sigma^{ss} ? & C_{1,\sigma}(\mc T) \\
\hline
\sigma \in (b^p,a^p,b^{p+1}) &  \begin{array}{c} X\subset \{\alpha^p,\beta^p\} \\ M' \in \sigma^{ss} \iff \alpha^p \in X \iff \phi(b^{p+1})-1 \leq \phi(a^{p})< \phi(b^{p+1}) \\M \in \sigma^{ss} \iff  \beta^p \in X \iff \phi(a^{p})-1 \leq \phi(b^{p})<\phi(a^{p})\end{array} &\emptyset \\
\hline 
\sigma \in (a^p,b^{p+1},a^{p+1}) &  \begin{array}{c} X\subset \{\beta^{p+1},\alpha^p\} \\M \in \sigma^{ss} \iff   \beta^{p+1} \in X \iff \phi(a^{p+1})-1 \leq \phi(b^{p+1})< \phi(a^{p+1}) \\M' \in \sigma^{ss} \iff  \alpha^p \in X \iff \phi(b^{p+1})-1 \leq \phi(a^{p})<\phi(b^{p+1})\end{array} & \emptyset  \\
\hline 
\end{array}
\end{gather}

In Subsection \ref{section for middle MMprime} we determine  the behavior of $C_{l,\sigma}(\mc T)$,  $C_{l,\sigma \sigma}(\mc T)$ in   $\st(\mc T)\setminus ( \mk{T}_a^{st}\cup  \mk{T}_b^{st})$. The results there imply Proposition \ref{sigma outside Tab} (the proof is given after Corollary \ref{phases of MMprime2}):

\begin{proposition}\label{sigma outside Tab}  Let $\sigma \not \in \mk{T}_a^{st}\cup  \mk{T}_b^{st}$. 
	
	If 	$\{M, M'\} \subset \sigma^{ss}$, then   $\phi(M)=\phi(M')$ and $C_{1,\sigma \sigma}(\mc T)=C_{1,\sigma }(\mc T)=\{A,B\}$, $C_{0,\sigma \sigma}(\mc T)=C_{0}(\mc T)$.
	
	If 	$\{M, M'\} \not \subset \sigma^{ss}$, then  $C_{1,\sigma \sigma}(\mc T)=C_{1,\sigma }(\mc T)=\emptyset$, $\abs{C_{0,\sigma \sigma}(\mc T)}\leq 1$. 
	
	The set $\{\sigma: \sigma \not \in \mk{T}_a^{st} \cup \mk{T}_b^{st} \ \mbox{and} \  \{M, M'\} \subset \sigma^{ss} \}$ is disjoint union (any two summands are non-empty and disjoint):
	\begin{gather} \bigcup_{p\in \ZZ} \{\sigma \in (a^p, M, b^{p+1}):  \phi(M)+1 =	\phi(b^{p+1})=  \phi(a^p)+1 \} \nonumber \\ \label{union with emptiness}  \cup  \bigcup_{p\in \ZZ} \{\sigma \in (b^p, M', a^{p}):  \phi(M')+1 =	\phi(a^{p})=  \phi(b^p)+1 \}  \\ \cup 
	\bigcap_{p\in \ZZ} (a^p, M, b^{p+1}) \cap (b^p, M', a^{p}). \nonumber 
	\end{gather}
	The set $\{\sigma: \sigma \not \in \mk{T}_a^{st} \cup \mk{T}_b^{st} \ \mbox{and} \  \{M, M'\} \not \subset \sigma^{ss} \}$ is disjoint union  (any two summands are disjoint and non-empty):
	\begin{gather} \bigcup_{p\in \ZZ} \left \{\sigma \in (a^p, M, b^{p+1}): \begin{array}{c}  \phi(M)+1 <	\phi(b^{p+1}) \  \mbox{and} \ \phi(a^{p})\leq  \phi(M)  \\ \mbox{or} \ \  (\phi(M)+1 =	\phi(b^{p+1}) \ \ \mbox{and}  \  \ \phi(a^{p}) <  \phi(M) ) \end{array} \right  \} \nonumber \\[-2mm] \label{second disjoint union}  \\[-2mm] \nonumber \cup   \bigcup_{p\in \ZZ} \left \{\sigma \in (b^p, M', a^{p}): \begin{array}{c}  \phi(M')+1 <	\phi(a^{p}) \  \mbox{and} \ \phi(b^{p})\leq  \phi(M')  \\ \mbox{or} \ \ (\phi(M')+1 =	\phi(a^{p}) \ \ \mbox{and}  \  \ \phi(b^{p}) <  \phi(M') )  \end{array} \right  \}. \nonumber
	\end{gather}

		For $\sigma \in (a^p,M,b^{p+1})$  we have: 
		{\rm (a) } $C_{0,\sigma \sigma}(\mc T)=\{\beta^{p}\}$, when $\phi(a^p)=\phi(M)$, $\phi(M)+1<\phi(b^{p+1})$;  	{\rm (b) } $C_{0,\sigma \sigma}(\mc T)=\{\beta^{p+1}\}$, when    $\phi(a^p)<\phi(M)$, $\phi(M)+1=\phi(b^{p+1})$; 	{\rm (c) }   $C_{0,\sigma \sigma}(\mc T)=\emptyset$, when $\phi(a^p)<\phi(M)$, $\phi(M)+1<\phi(b^{p+1})$.

			For $\sigma \in (b^p,M',a^{p})$  we have: 	{\rm (a) }  $C_{0,\sigma \sigma}(\mc T)=\{\alpha^{p-1}\}$, when $\phi(b^p)=\phi(M')$, $\phi(M')+1<\phi(a^{p})$;  	{\rm (b) } $C_{0,\sigma \sigma}(\mc T)=\{\alpha^{p}\}$, when $\phi(b^p)<\phi(M')$, $\phi(M')+1=\phi(a^{p})$;  {\rm (c) } $C_{0,\sigma \sigma}(\mc T)=\emptyset$, when $\phi(b^p)<\phi(M')$, $\phi(M')+1<\phi(a^{p})$.
	
\end{proposition}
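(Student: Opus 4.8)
The plan is to exploit the decomposition \eqref{st with T}: since $\mk{T}_{a}^{st}\cap\mk{T}_{b}^{st}=\emptyset$ and $\st(\mc T)=\mk{T}_{a}^{st}\cup(\_,M,\_)\cup(\_,M',\_)\cup\mk{T}_{b}^{st}$, the hypothesis $\sigma\not\in\mk{T}_{a}^{st}\cup\mk{T}_{b}^{st}$ forces $\sigma\in(\_,M,\_)\cup(\_,M',\_)$, i.e. $\sigma\in(a^p,M,b^{p+1})$ or $\sigma\in(b^p,M',a^p)$ for some $p\in\ZZ$; in particular at least one of $M,M'$ is $\sigma$-semistable. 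Because the auto-equivalence interchanging the two helices swaps $M\leftrightarrow M'$, $a^m\leftrightarrow b^m$ and the two families of regions (Remark \ref{action1}), I would fix $\sigma\in(a^p,M,b^{p+1})$, carry out the whole analysis there, and transport every conclusion by this symmetry. Then $M\in\sigma^{ss}$ and the defining inequalities of table \eqref{middle M} constrain $\phi(a^p),\phi(M),\phi(b^{p+1})$, so the remaining dichotomy is whether $M'\in\sigma^{ss}$.

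The decisive step is Corollary \ref{phases of MMprime2}, and I expect it to be the main obstacle: one must pin down exactly when $M'$ becomes semistable inside a region whose definition only mentions $M$, and then prove $\phi(M)=\phi(M')$. Here I would use that $(M,M')$ is the unique pair of exceptional objects in the heart with ${\rm Ext}^1(M,M')\neq 0\neq{\rm Ext}^1(M',M)$ and $\Hom(M,M')=\Hom(M',M)=0$, so that if both are $\sigma$-semistable the nonzero maps $M\to M'[1]$ and $M'\to M[1]$ give $\phi(M)\leq\phi(M')+1$ and $\phi(M')\leq\phi(M)+1$, hence $\abs{\phi(M)-\phi(M')}\leq 1$. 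Feeding this bound into the region inequalities of table \eqref{middle M} should collapse it to the boundary equalities $\phi(a^p)=\phi(M)$, $\phi(b^{p+1})=\phi(M)+1$ precisely when $M'$ is also semistable, at which point a central-charge computation for $[M']$ in the basis of the full triple yields $\phi(M')=\phi(M)$. This is also where the exceptional intersection region $\bigcap_p(a^p,M,b^{p+1})\cap(b^p,M',a^p)$ of \eqref{union with emptiness} must be isolated and shown non-empty.

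Granting the phase relations, the three invariants are read off through Lemma \ref{sigmasemistablesubcategory}, which reduces membership in $C_{1,\sigma}(\mc T)$, $C_{1,\sigma\sigma}(\mc T)$ to the semistability of the derived points \eqref{derived points} lying in $A=\langle M\rangle^{\perp}$ and $B=\langle M'\rangle^{\perp}$. In the both-semistable case $\phi(M)=\phi(M')$ I would conclude $C_{1,\sigma\sigma}(\mc T)=C_1(\mc T)=\{A,B\}$ and $C_{0,\sigma\sigma}(\mc T)=C_0(\mc T)$ --- either directly, by applying the stability criterion of Corollary \ref{stable curve part 1} to each equal-phase consecutive pair occurring in the triples, or via the equivalence already recorded in Proposition \ref{MeqMprime} --- and then $C_{1,\sigma}(\mc T)=\{A,B\}$ follows from the sandwich $C_{1,\sigma\sigma}(\mc T)\subseteq C_{1,\sigma}(\mc T)\subseteq C_1(\mc T)$. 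In the remaining case exactly one of $M,M'$ is semistable; here Corollary \ref{phases of MMprime2} shows the phases $\phi(a^m)$ (respectively $\phi(b^m)$) become strictly monotone off a bounded range of indices, so only finitely many derived points are semistable, which forces $C_{1,\sigma}(\mc T)=C_{1,\sigma\sigma}(\mc T)=\emptyset$ and $\abs{C_{0,\sigma\sigma}(\mc T)}\leq 1$.

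Finally, the explicit disjoint decompositions \eqref{union with emptiness}, \eqref{second disjoint union} and the sub-cases (a)--(c) are obtained by translating the condition ``$M'\in\sigma^{ss}$'' into phase inequalities within each region as in Subsection \ref{section for middle MMprime}; disjointness and non-emptiness of the listed summands follow by comparing these inequalities and exhibiting one sample $\sigma$ in each. For the single surviving genus-$0$ curve of cases (a) and (b), the stability criterion of Corollary \ref{stable curve part 1} (a failure of the strict ordering \eqref{condition for fmsdp} together with the Hom-degree conditions) identifies it via the unique phase coincidence: $\phi(a^p)=\phi(M)$ makes the pair $(a^p,M)$ produce $\beta^p$, while $\phi(b^{p+1})=\phi(M)+1$ produces $\beta^{p+1}$; when neither boundary equality holds (case (c)) no extra derived point is semistable, so $C_{0,\sigma\sigma}(\mc T)=\emptyset$. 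The statements for $\sigma\in(b^p,M',a^p)$ then follow verbatim from the symmetry of Remark \ref{action1}.
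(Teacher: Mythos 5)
Your skeleton matches the paper's: reduce via \eqref{st with T} to $\sigma\in(a^p,M,b^{p+1})$, transport everything by the auto-equivalence of Remark \ref{action1}, and case-split on the phases of $a^p, M, b^{p+1}$. But two of your central steps do not hold up. First, the claim that the inequalities of table \eqref{middle M} ``collapse to the boundary equalities $\phi(a^p)=\phi(M)$, $\phi(b^{p+1})=\phi(M)+1$ precisely when $M'$ is also semistable'' is false: $M'\in\sigma^{ss}$ throughout the region $\phi(b^{p+1})-1<\phi(M)\leq\phi(a^p)$ (case (a.2) of Proposition \ref{porposition for middle M}), and even after removing $\mk{T}_a^{st}\cup\mk{T}_b^{st}$ the both-semistable locus contains $\bigcap_p(a^p,M,b^{p+1})\cap(b^p,M',a^p)$, where all the relevant inequalities are strict (Lemma \ref{equality of phases oh Ms and interesection}). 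The soft bound $\abs{\phi(M)-\phi(M')}\leq1$ coming from $\hom^1(M,M')\neq0\neq\hom^1(M',M)$ is nowhere near enough to force $\phi(M)=\phi(M')$; what the paper actually does is show, via cases (a.2.1) and (a.2.2) (resting on \cite[Lemmas 7.4, 7.5]{dimkatz4}), that $\phi(M')\neq\phi(M)$ pushes $\sigma$ back into $\mk{T}_a^{st}$ or $\mk{T}_b^{st}$, so only $\phi(M)=\phi(M')$ survives the hypothesis. The same omission means you never verify that cases (a.1), (b), (e) also land in $\mk{T}_a^{st}\cup\mk{T}_b^{st}$, which is exactly what is needed for the unions \eqref{union with emptiness} and \eqref{second disjoint union} to exhaust $\{\sigma\not\in\mk{T}_a^{st}\cup\mk{T}_b^{st}\}$.

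Second, in the both-semistable case your two proposed routes to $C_{1,\sigma\sigma}(\mc T)=\{A,B\}$ and $C_{0,\sigma\sigma}(\mc T)=C_0(\mc T)$ both fail. Citing Proposition \ref{MeqMprime} is circular: its proof in the paper invokes the present proposition. And Corollary \ref{stable curve part 1} only yields semistability of exceptional objects lying in the subcategory generated by two consecutive members of the given $\sigma$-exceptional collection; it cannot reach $a^j$ and $b^j$ for $\abs{j}\gg0$, which is what membership of $A$ and $B$ in $C_{1,\sigma\sigma}(\mc T)$ requires. The paper's Lemma \ref{semi-stability of b} does this by a lengthy elimination of every possible destabilizing HN factor using \cite[Lemmas 7.2, 7.3]{dimkatz4}, and some substitute for that argument is indispensable. (A smaller slip: finiteness of the semistable derived points in the one-semistable cases follows from Lemma \ref{no semistable} (c), not from Corollary \ref{phases of MMprime2}, though the idea you describe there is the right one.)
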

Let us now discuss the proofs of the main statements which appeared in the introduction. Most of them follow straightforwardly from tables  \eqref{table for b b M'}, \eqref{table for a a M}, \eqref{table for M b b}, \eqref{table for M' a a},  \eqref{table for b a b} and Proposition \ref{sigma outside Tab}, however in some cases more work is required and it was carried out in the paper. 

\begin{proof}[Proof of Proposition \ref{positive answer}]
 Proposition \ref{positive answer} (a) is obtained by looking at all the cases in the tables and  Proposition \ref{sigma outside Tab}, recalling that  all summands in the unions \eqref{union with emptiness}, \eqref{second disjoint union} are non-empty and that  stability conditions in each row of the listed tables exist.  Proposition \ref{positive answer} (b) follows again by  looking at all the cases in the tables and  Proposition \ref{sigma outside Tab}. To prove that all of the subsets in the list are taken we need to use    Proposition \ref{arbitrary even integer} for  	$\{\alpha^j, \beta^j: i\leq j \leq k \}$ and then Remark \ref{action1} for $\{\alpha^j, \beta^{j+1}: i\leq j \leq k \}$. 
\end{proof}

\begin{proof}[Proof of the first sentence in Proposition \ref{some equal subsets}.]  We first prove  that the three sets are equal.  
 	
 	 The first equality  is in \eqref{first equality of}. If $\sigma \in \mk{T}_a^{st} \cup \mk{T}_b^{st} $  we see  in tables \eqref{table for a a M}, \eqref{table for M' a a}, \eqref{table for b b M'}, \eqref{table for M b b}, \eqref{table for b a b} that  $\abs{C_{1,\sigma}(\mc T)}=0$ iff   $\abs{C_{1,\sigma \sigma}(\mc T)}=0$ . Otherwise 
 	 we fall in Proposition \ref{sigma outside Tab}, where $C_{1,\sigma \sigma}(\mc T) =C_{1,\sigma }(\mc T)$.
 	
  We prove that the   set on the RHS  of the proved equality  is open. To that end,  we note that it is  the complement of $\{\sigma \in \st(\mc T): A \in C_{1,\sigma\sigma}(\mc T)  \} \cup \{\sigma \in \st(\mc T): B \in C_{1,\sigma \sigma}(\mc T)  \}$.  And we use Corollary \ref{closed subset} to deduce that the latter union is a closed subset. 
  
  Looking at tables \eqref{table for M b b}, \eqref{table for b b M'}, etc we see that there are non-empty open subsets, where $C_{1,\sigma \sigma}(\mc T) \not = \emptyset$ (see also  Remark \ref{remark for a homeomorphism}), hence  the set in question is not dense. 
  \end{proof}

\begin{proof}[Proof of Proposition \ref{no semistable1}.]  In tables \eqref{table for a a M}, \eqref{table for M' a a}, \eqref{table for b b M'}, \eqref{table for M b b} we see that there are $\sigma$ such that $C_{1,\sigma}(\mc T)=\{A,B\}$, $C_{1,\sigma \sigma}(\mc T)=\{A\}$ and there are $\sigma$ such that $C_{1,\sigma}(\mc T)=\{A,B\}$, $C_{1,\sigma \sigma}(\mc T)=\{B\}$. 
\end{proof}

\begin{proof}[Proof of Proposition \ref{MeqMprime}]  The first equivalence follows from    \eqref{st with T} and Corollaries \ref{in TaTb only one}, \ref{phases of MMprime1},  \ref{phases of MMprime2}. The second equivalence follows from  \eqref{st with T}, tables \eqref{table for a a M}, \eqref{table for M' a a}, \eqref{table for b b M'}, \eqref{table for M b b}, \eqref{table for b a b} and Proposition \ref{sigma outside Tab}. The last equivalence is in \eqref{a criterion}.
\end{proof}

\begin{proof}[Proof of Theorem \ref{main theorem}] This relies also on the tables  and  Proposition \ref{sigma outside Tab}, however, as for the proof of  Proposition \ref{MeqMprime}  it requires  additional work. which is carried out  in  Section \ref{dense open}. 
\end{proof}

\end{document}